\def\RSthmtxt{theorem~}\newref{thm}{name = \RSthmtxt}}
\def\RSlemtxt{lemma~}\newref{lem}{name = \RSlemtxt}}
\numberwithin{equation}{section}
\numberwithin{figure}{section}
\theoremstyle{plain}
\newtheorem{thm}{\protect\theoremname}[section]
  \theoremstyle{plain}
  \newtheorem*{conjecture*}{\protect\conjecturename}
 \theoremstyle{definition}
 \newtheorem*{defn*}{\protect\definitionname}
  \theoremstyle{definition}
  \newtheorem{defn}[thm]{\protect\definitionname}
  \theoremstyle{plain}
  \newtheorem{prop}[thm]{\protect\propositionname}
  \theoremstyle{plain}
  \newtheorem{cor}[thm]{\protect\corollaryname}
  \theoremstyle{remark}
  \newtheorem*{rem*}{\protect\remarkname}
  \theoremstyle{plain}
  \newtheorem{lem}[thm]{\protect\lemmaname}
  \providecommand{\conjecturename}{Conjecture}
  \providecommand{\corollaryname}{Corollary}
  \providecommand{\definitionname}{Definition}
  \providecommand{\lemmaname}{Lemma}
  \providecommand{\propositionname}{Proposition}
  \providecommand{\remarkname}{Remark}
\providecommand{\theoremname}{Theorem}
\begin{document}

\title[Non-cooperative KPP systems]{Non-cooperative Fisher\textendash KPP systems: traveling waves and
long-time behavior}

\author{Léo Girardin}

\thanks{Laboratoire Jacques-Louis Lions, CNRS UMR 7598, Université Pierre
et Marie Curie, 4 place Jussieu, 75005 Paris, France}

\email{girardin@ljll.math.upmc.fr}
\begin{abstract}
This paper is concerned with non-cooperative parabolic reaction\textendash diffusion
systems which share structural similarities with the scalar Fisher\textendash KPP
equation. These similarities make it possible to prove, among other
results, an extinction and persistence dichotomy and, when persistence
occurs, the existence of a positive steady state, the existence of
traveling waves with a half-line of possible speeds and a positive
minimal speed and the equality between this minimal speed and the
spreading speed for the Cauchy problem. Non-cooperative KPP systems
can model various phenomena where the following three mechanisms occur:
local diffusion in space, linear cooperation and superlinear competition.
\end{abstract}

\keywords{KPP nonlinearities, reaction\textendash diffusion system, steady
states, structured population, traveling waves.}

\subjclass[2000]{35K40, 35K57, 92D25.}

\maketitle
\tableofcontents{}

\section{Introduction}

In this paper, we study a large class of parabolic reaction\textendash diffusion
systems whose prototype is the so-called Lotka\textendash Volterra
mutation\textendash competition\textendash diffusion system:
\[
\left\{ \begin{matrix}\partial_{t}u_{1}-d_{1}\partial_{xx}u_{1}=r_{1}u_{1}-\left(\sum\limits _{j=1}^{N}c_{1,j}u_{j}\right)u_{1}-\mu u_{1}+\mu u_{2}\\
\partial_{t}u_{2}-d_{2}\partial_{xx}u_{2}=r_{2}u_{2}-\left(\sum\limits _{j=1}^{N}c_{2,j}u_{j}\right)u_{2}-2\mu u_{2}+\mu u_{1}+\mu u_{3}\\
\vdots\\
\partial_{t}u_{N}-d_{N}\partial_{xx}u_{N}=r_{N}u_{N}-\left(\sum\limits _{j=1}^{N}c_{N,j}u_{j}\right)u_{N}-\mu u_{N}+\mu u_{N-1}
\end{matrix}\right.
\]
where $N$ is an integer larger than or equal to $2$ and the coefficients
$d_{i}$, $r_{i}$, $c_{i,j}$ (with $i,j\in\{1,\text{\dots},N\}$)
and $\mu$ are positive real numbers.

This system can be understood as an ecological model, where $\left(u_{1},\text{\dots},u_{N}\right)$
is a metapopulation density phenotypically structured, $\mu u_{i-1}-\mu u_{i}$
and $\mu u_{i+1}-\mu u_{i}$ are the step-wise mutations of the $i$-th
phenotype with a mutation rate $\mu$, $d_{i}$ is its dispersal rate,
$r_{i}$ is its growth rate per capita in absence of mutation, $c_{i,j}$
is the rate of the competition exerted by the $j$-th phenotype on
the $i$-th phenotype, $\frac{r_{i}}{c_{i,i}}$ is the carrying capacity
of the $i$-th phenotype in absence of mutation and interphenotypic
competition.

We are especially interested in spreading properties which describe
the invasion of the population in an uninhabited environment and which
are expected to involve so-called traveling wave solutions. Such solutions
were first studied, independently and both in 1937, by Fisher \cite{Fisher_1937}
on one hand and by Kolmogorov, Petrovsky and Piskunov \cite{KPP_1937}
on the other hand for the equation that is now well-known as the Fisher\textendash KPP
equation, Fisher equation or KPP equation:
\[
\partial_{t}u-\partial_{xx}u=u\left(1-u\right).
\]

While a lot of work has been accomplished about traveling waves and
spreading properties for scalar reaction\textendash diffusion equations,
the picture is much less complete regarding coupled systems of reaction\textendash diffusion
equations. In particular, almost nothing is known about non-cooperative
systems like the system above.

Before going any further, let us introduce more precisely the problem.

\subsection{Notations}

Let $\left(n,n'\right)\in\left(\mathbb{N}\cap[1,+\infty)\right)^{2}$.
The set of the first $n$ positive integers $\left[1,n\right]\cap\mathbb{N}$
is denoted $\left[n\right]$ (and $\left[0\right]=\emptyset$ by convention).

\subsubsection{Typesetting conventions}

In order to ease the reading, we reserve the italic typeface ($x$,
$f$, $X$) for reals, real-valued functions or subsets of $\mathbb{R}$,
the bold typeface ($\mathbf{v}$, $\mathbf{A}$) for euclidean vectors
or vector-valued functions, in lower case for column vectors and in
upper case for other matrices\footnote{This convention being superseded by the previous one when the dimension
is specifically equal to $1$.}, the sans serif typeface in upper case ($\mathsf{B}$, $\mathsf{K}$)
for subsets of euclidean spaces\footnote{Same exception.} and the
calligraphic typeface in upper case ($\mathscr{C}$, $\mathscr{L}$)
for functional spaces and operators.

\subsubsection{Linear algebra notations}
\begin{itemize}
\item The canonical basis of $\mathbb{R}^{n}$ is denoted $\left(\mathbf{e}_{n,i}\right)_{i\in\left[n\right]}$.
The euclidean norm of $\mathbb{R}^{n}$ is denoted $\left|\bullet\right|_{n}$.
The open euclidean ball of center $\mathbf{v}\in\mathbb{R}^{n}$ and
radius $r>0$ and its boundary are denoted $\mathsf{B}_{n}\left(\mathbf{v},r\right)$
and $\mathsf{S}_{n}\left(\mathbf{v},r\right)$ respectively.
\item The space $\mathbb{R}^{n}$ is equipped with one partial order $\geq_{n}$
and two strict partial orders $>_{n}$ and $\gg_{n}$, defined as
\[
\mathbf{v}\geq_{n}\hat{\mathbf{v}}\text{ if }v_{i}\geq\hat{v}_{i}\text{ for all }i\in\left[n\right],
\]
\[
\mathbf{v}>_{n}\hat{\mathbf{v}}\text{ if }\mathbf{v}\geq_{n}\hat{\mathbf{v}}\text{ and }\mathbf{v}\neq\hat{\mathbf{v}},
\]
\[
\mathbf{v}\gg_{n}\hat{\mathbf{v}}\text{ if }v_{i}>\hat{v}_{i}\text{ for all }i\in\left[n\right].
\]
The strict orders $>_{n}$ and $\gg_{n}$ coincide if and only if
$n=1$. \\
A vector $\mathbf{v}\in\mathbb{R}^{n}$ is \textit{nonnegative} if
$\mathbf{v}\geq_{n}\mathbf{0}$, \textit{nonnegative nonzero} if $\mathbf{v}>_{n}\mathbf{0}$,
\textit{positive} if $\mathbf{v}\gg_{n}\mathbf{0}$. The sets of all
nonnegative, nonnegative nonzero and positive vectors are respectively
denoted $\mathsf{K}_{n}$, $\mathsf{K}_{n}^{+}$ and $\mathsf{K}_{n}^{++}$.
\item The sets $\mathsf{K}_{n}^{+}\cap\mathsf{S}_{n}\left(\mathbf{0},1\right)$
and $\mathsf{K}_{n}^{++}\cap\mathsf{S}_{n}\left(\mathbf{0},1\right)$
are respectively denoted $\mathsf{S}_{n}^{+}\left(\mathbf{0},1\right)$
and $\mathsf{S}_{n}^{++}\left(\mathbf{0},1\right)$.
\item For any $X\subset\mathbb{R}$, the sets of $X$-valued matrices of
dimension $n\times n'$ and $n\times n$ are respectively denoted
$\mathsf{M}_{n,n'}\left(X\right)$ and $\mathsf{M}_{n}\left(X\right)$
. If $X=\mathbb{R}$ and if the context is unambiguous, we simply
write $\mathsf{M}_{n,n'}$ and $\mathsf{M}_{n}$. As usual, the entry
at the intersection of the $i$-th row and the $j$-th column of the
matrix $\mathbf{A}\in\mathsf{M}_{n,n'}$ is denoted $a_{i,j}$ and
the $i$-th component of the vector $\mathbf{v}\in\mathbb{R}^{n}$
is denoted $v_{i}$. For any vector $\mathbf{v}\in\mathbb{R}^{n}$,
$\text{diag}\mathbf{v}$ denotes the diagonal matrix whose $i$-th
diagonal entry is $v_{i}$.
\item Matrices are vectors and consistently we may apply the notations $\geq_{nn'}$,
$>_{nn'}$ and $\gg_{nn'}$ as well as the vocabulary nonnegative,
nonnegative nonzero and positive to matrices. We emphasize this convention
because of the possible confusion with the notion of \textquotedblleft positive
definite square matrix\textquotedblright . 
\item A matrix $\mathbf{A}\in\mathsf{M}_{n}$ is \textit{essentially nonnegative},
\textit{essentially nonnegative nonzero}, \textit{essentially positive}
if $\mathbf{A}-\min\limits _{i\in\left[n\right]}\left(a_{i,i}\right)\mathbf{I}_{n}$
is nonnegative, nonnegative nonzero, positive respectively.
\item The identity of $\mathsf{M}_{n}$ and the element of $\mathsf{M}_{n,n'}$
whose every entry is equal to $1$ are respectively denoted $\mathbf{I}_{n}$
and $\mathbf{1}_{n,n'}$ ($\mathbf{1}_{n}$ if $n=n'$) . 
\item We recall the definition of the Hadamard product of a pair of matrices
$\left(\mathbf{A},\mathbf{B}\right)^{2}\in\left(\mathsf{M}_{n,n'}\right)^{2}$:
\[
\mathbf{A}\circ\mathbf{B}=\left(a_{i,j}b_{i,j}\right)_{\left(i,j\right)\in\left[n\right]\times\left[n'\right]}.
\]
 The identity matrix under Hadamard multiplication is $\mathbf{1}_{n,n'}$. 
\item The spectral radius of any $\mathbf{A}\in\mathsf{M}_{n}$ is denoted
$\rho\left(\mathbf{A}\right)$. Recall from the Perron\textendash Frobenius
theorem that if $\mathbf{A}$ is nonnegative and irreducible, $\rho\left(\mathbf{A}\right)$
is the dominant eigenvalue of $\mathbf{A}$, called the \textit{Perron\textendash Frobenius
eigenvalue} $\lambda_{PF}\left(\mathbf{A}\right)$, and is the unique
eigenvalue associated with a positive eigenvector. Recall also that
if $\mathbf{A}\in\mathsf{M}_{n}$ is essentially nonnegative and irreducible,
the Perron\textendash Frobenius theorem can still be applied. In such
a case, the unique eigenvalue of $\mathbf{A}$ associated with a positive
eigenvector is $\lambda_{PF}\left(\mathbf{A}\right)=\rho\left(\mathbf{A}-\min\limits _{i\in\left[n\right]}\left(a_{i,i}\right)\mathbf{I}_{n}\right)+\min\limits _{i\in\left[n\right]}\left(a_{i,i}\right)$.
Any eigenvector associated with $\lambda_{PF}\left(\mathbf{A}\right)$
is referred to as a \textit{Perron\textendash Frobenius eigenvector}
and the unit one is denoted $\mathbf{n}_{PF}\left(\mathbf{A}\right)$.
\end{itemize}

\subsubsection{Functional analysis notations}
\begin{itemize}
\item We will consider a parabolic problem of two real variables, the \textquotedblleft time\textquotedblright{}
$t$ and the \textquotedblleft space\textquotedblright{} $x$. A (straight)
\textit{parabolic cylinder} in $\mathbb{R}^{2}$ is a subset of the
form $\left(t_{0},t_{f}\right)\times\left(a,b\right)$ with $\left(t_{0},t_{f},a,b\right)\in\overline{\mathbb{R}}^{4}$,
$t_{0}<t_{f}$ and $a<b$. The parabolic boundary $\partial_{P}\mathsf{Q}$
of a bounded parabolic cylinder $\mathsf{Q}$ is defined classically.
A \textit{classical solution} of some second-order parabolic problem
of dimension $n$ set in a parabolic cylinder $\mathsf{Q}=\left(t_{0},t_{f}\right)\times\left(a,b\right)$
is a solution in
\[
\mathscr{C}^{1}\left(\left(t_{0},t_{f}\right),\mathscr{C}^{2}\left(\left(a,b\right),\mathbb{R}^{n}\right)\right)\cap\mathscr{C}\left(\mathsf{Q}\cup\partial\mathsf{Q},\mathbb{R}^{n}\right).
\]
Similarly, a classical solution of some second-order elliptic problem
of dimension $n$ set in an interval $\left(a,b\right)\subset\overline{\mathbb{R}}$
is a solution in
\[
\mathscr{C}^{2}\left(\left(a,b\right),\mathbb{R}^{n}\right)\cap\mathscr{C}\left(\left(a,b\right)\cup\partial\left(a,b\right),\mathbb{R}^{n}\right).
\]
\item Consistently with $\mathbb{R}^{n}$, the set of functions $\left(\mathbb{R}^{n}\right)^{\left(\mathbb{R}^{n'}\right)}$
is equipped with
\[
\mathbf{f}\geq_{\mathbb{R}^{n'},\mathbb{R}^{n}}\hat{\mathbf{f}}\text{ if }\mathbf{f}\left(\mathbf{v}\right)-\hat{\mathbf{f}}\left(\mathbf{v}\right)\in\mathsf{K}_{n}\text{ for all }\mathbf{v}\in\mathbb{R}^{n'},
\]
\[
\mathbf{f}>_{\mathbb{R}^{n'},\mathbb{R}^{n}}\hat{\mathbf{f}}\text{ if }\mathbf{f}\geq_{\mathbb{R}^{n'},\mathbb{R}^{n}}\hat{\mathbf{f}}\text{ and }\mathbf{f}\neq\hat{\mathbf{f}},
\]
\[
\mathbf{f}\gg_{\mathbb{R}^{n'},\mathbb{R}^{n}}\hat{\mathbf{f}}\text{ if }\mathbf{f}\left(\mathbf{v}\right)-\hat{\mathbf{f}}\left(\mathbf{v}\right)\in\mathsf{K}_{n}^{++}\text{ for all }\mathbf{v}\in\mathbb{R}^{n'}.
\]
We define consistently nonnegative, nonnegative nonzero and positive
functions\footnote{Regarding functions, some authors use $>$ to denote what is here
denoted $\gg$. Thus the use of these two functional notations will
be as sparse as possible and we will prefer the less ambiguous expressions
\textquotedblleft nonnegative nonzero\textquotedblright{} and \textquotedblleft positive\textquotedblright .}. 
\item The composition of two compatible functions $\mathbf{f}$ and $\hat{\mathbf{f}}$
is denoted $\mathbf{f}\left[\hat{\mathbf{f}}\right]$, the usual $\circ$
being reserved for the Hadamard product.
\item If the context is unambiguous, a functional space $\mathscr{F}\left(\mathsf{X},\mathbb{R}\right)$
is denoted $\mathscr{F}\left(\mathsf{X}\right)$. 
\item For any smooth open bounded connected set $\Omega\subset\mathbb{R}^{n'}$
and any second order linear elliptic operator $\mathscr{L}:\mathscr{C}^{2}\left(\Omega,\mathbb{R}^{n}\right)\to\mathscr{C}\left(\Omega,\mathbb{R}^{n}\right)$
with coefficients in $\mathscr{C}_{b}\left(\Omega,\mathbb{R}^{n}\right)$,
the \textit{Dirichlet principal eigenvalue} of $\mathscr{L}$ in $\Omega$,
denoted $\lambda_{1,Dir}\left(-\mathscr{L},\Omega\right)$, is well-defined
if $\mathscr{L}$ is order-preserving in $\Omega$. Recall from the
Krein\textendash Rutman theorem that $\lambda_{1,Dir}\left(-\mathscr{L},\Omega\right)$
is the unique eigenvalue associated with a principal eigenfunction
positive in $\Omega$ and null on $\partial\Omega$. Sufficient conditions
for the order-preserving property are:
\begin{itemize}
\item $n=1$;
\item $n\geq2$ and the system is \textit{weakly coupled} (the coupling
occurs only in the zeroth order term) and \textit{fully coupled} (the
zeroth order coefficient is an essentially nonnegative irreducible
matrix). When $n\geq2$, order-preserving operators are also referred
to as \textit{cooperative} operators.
\end{itemize}
\end{itemize}

\subsection{Setting of the problem}

From now on, an integer $N\in\mathbb{N}\cap[2,+\infty)$ is fixed.
For the sake of brevity, the subscripts depending only on $1$ or
$N$ in the various preceding notations will be omitted when the context
is unambiguous. 

We also fix $\mathbf{d}\in\mathsf{K}^{++}$, $\mathbf{D}=\text{diag}\mathbf{d}$,
$\mathbf{L}\in\mathsf{M}$ and $\mathbf{c}\in\mathscr{C}^{1}\left(\mathbb{R}^{N},\mathbb{R}^{N}\right)$. 

The semilinear parabolic evolution system under scrutiny is
\[
\partial_{t}\mathbf{u}-\mathbf{D}\partial_{xx}\mathbf{u}=\mathbf{L}\mathbf{u}-\mathbf{c}\left[\mathbf{u}\right]\circ\mathbf{u},\quad\left(E_{KPP}\right)
\]
 the unknown being $\mathbf{u}:\mathbb{R}^{2}\to\mathbb{R}^{N}$ (although
$\left(E_{KPP}\right)$ might occasionally be restricted to a parabolic
cylinder).

The associated semilinear elliptic stationary system is
\[
-\mathbf{D}\mathbf{u}''=\mathbf{L}\mathbf{u}-\mathbf{c}\left[\mathbf{u}\right]\circ\mathbf{u},\quad\left(S_{KPP}\right)
\]
 the unknown being $\mathbf{u}:\mathbb{R}\to\mathbb{R}^{N}$ (although
$\left(S_{KPP}\right)$ might occasionally be restricted to an interval).

\subsubsection{Restrictive assumptions}

The main restrictive assumptions are the following ones.

{\renewcommand\labelenumi{(${H}_\theenumi$)}
\begin{enumerate}
\item $\mathbf{L}$ is essentially nonnegative and irreducible. 
\item $\mathbf{c}\left(\mathsf{K}\right)\subset\mathsf{K}$.
\item $\mathbf{c}\left(\mathbf{0}\right)=\mathbf{0}$.
\item There exist
\[
\left(\underline{\alpha},\delta,\underline{\mathbf{c}}\right)\in[1,+\infty)^{2}\times\mathsf{K}^{++}
\]
such that
\[
\sum_{j=1}^{N}l_{i,j}n_{j}\geq0\implies\alpha^{\delta}\underline{c}_{i}\leq c_{i}\left(\alpha\mathbf{n}\right)
\]
for all
\[
\left(\mathbf{n},\alpha,i\right)\in\mathsf{S}^{+}\left(\mathbf{0},1\right)\times[\underline{\alpha},+\infty)\times\left[N\right].
\]
\end{enumerate}
}

A few immediate consequences of these assumptions deserve to be pointed
out.
\begin{itemize}
\item $\left(E_{KPP}\right)$ and $\left(S_{KPP}\right)$ are not cooperative
and do not satisfy a comparison principle.
\item The Perron\textendash Frobenius eigenvalue $\lambda_{PF}\left(\mathbf{L}\right)$
is well-defined and the system $\mathbf{u}'=\mathbf{L}\mathbf{u}$
is cooperative.
\item For all $\mathbf{v}\in\mathbb{R}^{N}$, the Jacobian matrix of $\mathbf{w}\mapsto\mathbf{c}\left(\mathbf{w}\right)\circ\mathbf{w}$
at $\mathbf{v}$ is
\[
\text{diag}\mathbf{c}\left(\mathbf{v}\right)+\left(\mathbf{v}\mathbf{1}_{1,N}\right)\circ D\mathbf{c}\left(\mathbf{v}\right).
\]
 In particular, at $\mathbf{v}=\mathbf{0}$, this Jacobian is null
if and only if $\left(H_{3}\right)$ is satisfied. Also, if $D\mathbf{c}\left(\mathbf{v}\right)\geq\mathbf{0}$
for all $\mathbf{v}\in\mathsf{K}$, then the system $\mathbf{u}'=-\mathbf{c}\left[\mathbf{u}\right]\circ\mathbf{u}$
is competitive.
\item This framework contains both the Lotka\textendash Volterra linear
competition $\mathbf{c}\left(\mathbf{u}\right)=\mathbf{C}\mathbf{u}$
and the Gross\textendash Pitaevskii quadratic competition $\mathbf{c}\left(\mathbf{u}\right)=\mathbf{C}\left(\mathbf{u}\circ\mathbf{u}\right)$
(with, in both cases, $\mathbf{C}\gg\mathbf{0}$).
\end{itemize}

\subsubsection{KPP property}

The system $\left(E_{KPP}\right)$ is, in some sense, a \textquotedblleft multidimensional
KPP equation\textquotedblright . Let us recall the main features of
scalar KPP nonlinearities:
\begin{enumerate}
\item $f'\left(0\right)>0$ (instability of the null state),
\item $f'\left(0\right)v\geq f\left(v\right)$ for all $v\geq0$ (no Allee
effect),
\item there exists $K>0$ such that $f\left(v\right)<0$ if and only if
$v>K$ (saturation).
\end{enumerate}
Of course, our assumptions $\left(H_{1}\right)$\textendash $\left(H_{4}\right)$
aim to put forward a possible generalization of these features. A
few comments are in order.

Regarding the saturation property, the growth at least linear of $\mathbf{c}$
$\left(H_{4}\right)$ will imply an analogous statement. Ensuring
uniform $\mathscr{L}^{\infty}$ estimates is really the main mathematical
role of the competitive term. 

Regarding the presence of an Allee effect, $\mathbf{c}\left(\mathsf{K}\right)\subset\mathsf{K}$
$\left(H_{2}\right)$ and $\mathbf{c}\left(\mathbf{0}\right)=\mathbf{0}$
$\left(H_{3}\right)$ clearly yield that $\partial_{t}\mathbf{u}-\mathbf{D}\partial_{xx}\mathbf{u}=\mathbf{L}\mathbf{u}$
is the linearization at $\mathbf{0}$ of $\left(E_{KPP}\right)$ and
moreover that $\mathbf{f}:\mathbf{v}\mapsto\mathbf{L}\mathbf{v}-\mathbf{c}\left(\mathbf{v}\right)\circ\mathbf{v}$
satisfies
\[
D\mathbf{f}\left(\mathbf{0}\right)\mathbf{v}\geq\mathbf{f}\left(\mathbf{v}\right)\text{ for all }\mathbf{v}\in\mathsf{K}.
\]

Regarding the instability of the null state, we stress here that the
notion of positivity of matrices is somewhat ambiguous and, consequently,
finding a natural generalization of $f'\left(0\right)>0$ is not completely
straightforward. 

In order to decide which positivity sense is the right one, we offer
the following criterion. On one hand, a suitable multidimensional
generalization of the KPP equation should enable generalizations of
the striking results concerning its scalar counterpart. On the other
hand, the most remarkable result about the KPP equation is that the
answer to many natural questions (value of the spreading speed, persistence
in bounded domains, etc.) only depends on $f'\left(0\right)$ (the
importance of $f'\left(0\right)$ can already be seen in the features
above). Thus, in our opinion, a KPP system should also be linearly
determinate regarding these questions. 

With this criterion in mind, let us explain for instance why positivity
understood as positive definite matrices (i.e. positive spectrum)
is not satisfying. In such a case, Lotka\textendash Volterra competition\textendash diffusion
nonlinearities, whose linearization at $\mathbf{0}$ has the form
$\text{diag}\mathbf{r}$ with $\mathbf{r}\in\mathsf{K}^{++}$, would
be KPP nonlinearities. Nevertheless, it is known that the spreading
speed of a competition\textendash diffusion system is not necessarily
linearly determinate (for instance, see Lewis\textendash Li\textendash Weinberger
\cite{Lewis_Weinberg}). 

On the contrary, the main theorems of the present paper will show
unambiguously that irreducibility and essential nonnegativity $\left(H_{1}\right)$
supplemented with $\lambda_{PF}\left(\mathbf{L}\right)>0$ is the
right notion. This confirmation of the relevance of $\left(H_{1}\right)$\textendash $\left(H_{4}\right)$
will then lead us to a general definition of multidimensional KPP
nonlinearity. 

\subsection{Main results}

\subsubsection{KPP-type theorems established under $\left(H_{1}\right)$\textendash $\left(H_{4}\right)$}
\begin{thm}
\label{thm:Strong_positivity} {[}Strong positivity{]} For all nonnegative
classical solutions $\mathbf{u}$ of $\left(E_{KPP}\right)$ set in
$\left(0,+\infty\right)\times\mathbb{R}$, if $x\mapsto\mathbf{u}\left(0,x\right)$
is nonnegative nonzero, then $\mathbf{u}$ is positive in $\left(0,+\infty\right)\times\mathbb{R}$.

Consequently, all nonnegative nonzero classical solutions of $\left(S_{KPP}\right)$
are positive. 
\end{thm}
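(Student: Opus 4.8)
The plan is to exploit that, restricted to the nonnegative orthant, the system essentially \emph{decouples} into $N$ scalar parabolic differential inequalities, one per component, to which the scalar strong maximum principle applies; the non-cooperativity and the (super)linear competition are irrelevant here because the only nonlinearity, $-\mathbf{c}\left[\mathbf{u}\right]\circ\mathbf{u}$, has the favourable sign along any nonnegative trajectory. So fix a nonnegative classical solution $\mathbf{u}$ of $\left(E_{KPP}\right)$ in $\mathsf{Q}=\left(0,+\infty\right)\times\mathbb{R}$. For each $i\in\left[N\right]$, isolating the diagonal and competition terms in the $i$-th equation gives
\[
\partial_{t}u_{i}-d_{i}\partial_{xx}u_{i}+\left(c_{i}\left[\mathbf{u}\right]-l_{i,i}\right)u_{i}=\sum_{j\neq i}l_{i,j}u_{j}\geq0\quad\text{in }\mathsf{Q},
\]
because $l_{i,j}\geq0$ for $j\neq i$ since $\mathbf{L}$ is essentially nonnegative $\left(H_{1}\right)$, and $c_{i}\left[\mathbf{u}\right]\geq0$ by $\mathbf{c}\left(\mathsf{K}\right)\subset\mathsf{K}$ $\left(H_{2}\right)$ together with $\mathbf{u}\geq\mathbf{0}$. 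Since $\mathbf{u}$ is continuous and $\mathbf{c}\in\mathscr{C}^{1}$, the coefficient $c_{i}\left[\mathbf{u}\right]-l_{i,i}$ is continuous, hence locally bounded, on $\mathsf{Q}$, so $u_{i}$ is a nonnegative classical supersolution of a scalar parabolic operator with locally bounded coefficients. The scalar strong maximum principle then gives the dichotomy: for each $i$, either $u_{i}>0$ everywhere in $\mathsf{Q}$, or $u_{i}$ vanishes at some $\left(s,y\right)\in\mathsf{Q}$ and then $u_{i}\equiv0$ on $\left(0,s\right]\times\mathbb{R}$, the parabolic past of $\left(s,y\right)$ inside $\mathsf{Q}$.

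Next I argue by contradiction, this time using the \emph{irreducibility} in $\left(H_{1}\right)$. Suppose $\mathbf{u}$ is not positive in $\mathsf{Q}$; by the dichotomy there exist $i_{0}\in\left[N\right]$ and $t_{0}>0$ with $u_{i_{0}}\equiv0$ on the open slab $\mathsf{Q}_{t_{0}}:=\left(0,t_{0}\right)\times\mathbb{R}$. Let $S=\left\{ k\in\left[N\right]:u_{k}\equiv0\text{ on }\mathsf{Q}_{t_{0}}\right\}$, so $i_{0}\in S\neq\emptyset$. If $k\in S$, then every derivative of $u_{k}$ vanishes on the open set $\mathsf{Q}_{t_{0}}$, so the $k$-th equation of $\left(E_{KPP}\right)$ reduces there to $\sum_{j\neq k}l_{k,j}u_{j}=0$; each summand being nonnegative, $l_{k,j}u_{j}\equiv0$ on $\mathsf{Q}_{t_{0}}$, hence $u_{j}\equiv0$ on $\mathsf{Q}_{t_{0}}$, i.e.\ $j\in S$, whenever $l_{k,j}>0$. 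Thus $S$ is invariant under following the arcs of the directed graph associated with $\mathbf{L}$, and since $\mathbf{L}$ is irreducible this forces $S=\left[N\right]$; that is, $\mathbf{u}\equiv\mathbf{0}$ on $\mathsf{Q}_{t_{0}}$. Letting $t\downarrow0$ and using the continuity of $\mathbf{u}$ up to $\left\{ 0\right\} \times\mathbb{R}$ yields $\mathbf{u}\left(0,\cdot\right)\equiv\mathbf{0}$, contradicting the hypothesis that $x\mapsto\mathbf{u}\left(0,x\right)$ is nonnegative nonzero. Hence $\mathbf{u}$ is positive in $\mathsf{Q}$. The claim for $\left(S_{KPP}\right)$ then follows at once by applying this to the time-independent function $\left(t,x\right)\mapsto\mathbf{u}\left(x\right)$, which is a nonnegative classical solution of $\left(E_{KPP}\right)$ in $\mathsf{Q}$ whose value at $t=0$ is the given nonnegative nonzero $\mathbf{u}$.

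I expect the only real difficulty to be conceptual — recognizing the componentwise decoupling and that the competition term should be kept on the left so that each $u_{i}$ becomes a \emph{super}solution — rather than technical. Beyond that, the points needing care are verifying the hypotheses of the scalar strong maximum principle (local boundedness of $c_{i}\left[\mathbf{u}\right]-l_{i,i}$, after which an exponential shift $u_{i}=e^{Mt}v_{i}$ absorbs any sign change of the zeroth-order coefficient), and remembering that parabolic propagation of a zero runs backward in time, which is exactly why the contradiction is produced at the initial time and hence why the hypothesis bears on $\mathbf{u}\left(0,\cdot\right)$.
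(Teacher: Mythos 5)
Your argument is correct, but it takes a more elementary, self-contained route than the paper's. The paper rewrites $\left(E_{KPP}\right)$ as a linear, weakly and fully coupled system $\partial_{t}\mathbf{u}-\mathbf{D}\partial_{xx}\mathbf{u}-\mathbf{A}\left(t,x\right)\mathbf{u}=\mathbf{0}$ with $\mathbf{A}=\mathbf{L}-\operatorname{diag}\mathbf{c}\left[\mathbf{u}\right]$ essentially nonnegative, irreducible and bounded, and then directly invokes the Protter--Weinberger strong maximum principle for cooperative parabolic systems, stated first as a local dichotomy (null or positive) on bounded parabolic cylinders. You instead unbundle that vectorial principle: you run the scalar Nirenberg strong maximum principle on each componentwise differential inequality $\partial_{t}u_{i}-d_{i}\partial_{xx}u_{i}+\left(c_{i}\left[\mathbf{u}\right]-l_{i,i}\right)u_{i}\geq0$, obtain a per-component dichotomy, and then propagate a vanishing component through the directed graph of $\mathbf{L}$ using irreducibility. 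This is essentially a proof of the relevant case of the cited Protter--Weinberger theorem rather than an appeal to it, which makes your argument longer but makes the mechanism transparent: the scalar principle supplies the ``all-or-nothing'' behaviour of each component, and irreducibility of the off-diagonal coupling is exactly what stitches the components together. One advantage of your formulation is that you work directly in the unbounded slab $\left(0,+\infty\right)\times\mathbb{R}$ with locally bounded coefficients, avoiding the exhaustion by bounded cylinders that the paper's local-to-global passage implicitly requires; the trade-off is that you must be careful that the scalar strong maximum principle propagates a zero backwards in time across the whole connected slab $\left(0,s\right]\times\mathbb{R}$, which you correctly note. Your derivation of the elliptic case from the parabolic one (view a stationary solution as a $t$-independent solution of $\left(E_{KPP}\right)$) matches the paper's intent and is correct.

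Two small points of care, neither a gap: (i) when you apply the scalar strong maximum principle to a supersolution with a zeroth-order coefficient of indefinite sign, the standard justification (the exponential shift you mention, or the version of the principle for coefficients merely bounded below) should be stated up front rather than in a parenthetical afterthought, since nonnegativity of $u_{i}$ is what makes the sign of $c_{i}\left[\mathbf{u}\right]-l_{i,i}$ harmless; (ii) when you conclude that $S$ is invariant under out-arcs of the digraph of $\mathbf{L}$ and hence equals $\left[N\right]$, you are using that irreducibility is equivalent to strong connectivity of that digraph, which is worth one line since it is the only place $\left(H_{1}\right)$'s irreducibility enters.
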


\begin{thm}
\label{thm:Absorbing_set} {[}Absorbing set and upper estimates{]}
There exists a positive function $\mathbf{g}\in\mathscr{C}\left([0,+\infty),\mathsf{K}^{++}\right)$,
component-wise non-decreasing, such that all nonnegative classical
solutions $\mathbf{u}$ of $\left(E_{KPP}\right)$ set in $\left(0,+\infty\right)\times\mathbb{R}$
satisfy
\[
\mathbf{u}\left(t,x\right)\leq\left(g_{i}\left(\sup_{x\in\mathbb{R}}u_{i}\left(0,x\right)\right)\right)_{i\in\left[N\right]}\text{ for all }\left(t,x\right)\in[0,+\infty)\times\mathbb{R}
\]
and furthermore, if $x\mapsto\mathbf{u}\left(0,x\right)$ is bounded,
then
\[
\left(\limsup_{t\to+\infty}\sup_{x\in\mathbb{R}}u_{i}\left(t,x\right)\right)_{i\in\left[N\right]}\leq\mathbf{g}\left(0\right).
\]

Consequently, all bounded nonnegative classical solutions $\mathbf{u}$
of $\left(S_{KPP}\right)$ satisfy
\[
\mathbf{u}\leq\mathbf{g}\left(0\right).
\]
\end{thm}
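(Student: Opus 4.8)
The plan is to bound each component separately, by controlling $\bar u_i(t):=\sup_{x\in\mathbb R}u_i(t,x)$. If $\bar u_i(0)=+\infty$ the asserted inequality is vacuous, so I may assume $\mathbf u(0,\cdot)$ bounded; then standard parabolic theory together with a continuation argument (using the bound about to be derived) lets me treat $\mathbf u$ as globally bounded. The starting observation is that at a point $x$ realising, or—after penalising $u_i(t,\cdot)$ by $\varepsilon(1+x^2)$ and letting $\varepsilon\to0$—nearly realising the spatial maximum of $u_i(t,\cdot)$ one has $\partial_{xx}u_i(t,x)\le0$, so since $\mathbf D$ is diagonal with positive entries, $\left(E_{KPP}\right)$ gives
\[
\frac{d^{+}}{dt}\bar u_i(t)\le\left(\mathbf L\mathbf u(t,x)\right)_i-c_i\left(\mathbf u(t,x)\right)u_i(t,x).
\]
I would then bring in $\left(H_{4}\right)$. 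Write $w:=\max_{k\in[N]}u_k(t,x)$, so $\bar u_i(t)=u_i(t,x)\le w\le\left|\mathbf u(t,x)\right|$. If $\left(\mathbf L\mathbf u(t,x)\right)_i<0$ the right-hand side is already negative. Otherwise $\left(\mathbf L\bigl(\mathbf u(t,x)/\left|\mathbf u(t,x)\right|\bigr)\right)_i\ge0$, so as soon as $\bar u_i(t)\ge\underline\alpha$ (whence $\left|\mathbf u(t,x)\right|\ge\underline\alpha$) assumption $\left(H_{4}\right)$ yields $c_i\left(\mathbf u(t,x)\right)\ge\left|\mathbf u(t,x)\right|^{\delta}\underline c_i\ge w^{\delta}\underline c_i$; combining this with the essential nonnegativity of $\mathbf L$, which gives $\left(\mathbf L\mathbf u(t,x)\right)_i\le\Sigma_i w$ where $\Sigma_i:=\max(l_{i,i},0)+\sum_{j\ne i}l_{i,j}$, and with $w\ge\bar u_i(t)$ and $\delta\ge1$, yields
\[
\frac{d^{+}}{dt}\bar u_i(t)\le w\bigl(\Sigma_i-\underline c_i w^{\delta-1}\bar u_i(t)\bigr)\le w\bigl(\Sigma_i-\underline c_i\,\bar u_i(t)^{\delta}\bigr).
\]
Hence there is a coefficient-dependent $B_i\ge\underline\alpha$ with $\frac{d^{+}}{dt}\bar u_i(t)\le0$ whenever $\bar u_i(t)\ge B_i$, and the usual comparison argument for such a differential inequality gives $\bar u_i(t)\le\max\bigl(\bar u_i(0),B_i\bigr)$ for all $t$. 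I would then set $g_i(s):=\max(s,B_i)$, which is continuous, nondecreasing, valued in $(0,+\infty)$ because $B_i\ge\underline\alpha\ge1$, and independent of the solution; this settles the first assertion.

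For the long-time estimate I must additionally prevent the superlinear competition from being asymptotically defeated, i.e. sharpen the above to $\limsup_{t\to+\infty}\bar u_i(t)\le B_i$ after possibly enlarging $B_i$. Here I would argue by contradiction and compactness: if $L_i:=\limsup_{t\to+\infty}\bar u_i(t)>B_i$, choose $t_n\to+\infty$ and $x_n$ with $u_i(t_n,x_n)\to L_i$; since $\mathbf u$ is bounded, parabolic estimates make the translates $\mathbf u(t_n+\cdot,x_n+\cdot)$ precompact and a subsequence converges to an entire nonnegative solution $\mathbf u_\infty$ of $\left(E_{KPP}\right)$ with $u_{\infty,j}\le L_j:=\limsup_{t\to+\infty}\bar u_j(t)$ for every $j$ and $u_{\infty,i}$ attaining its global maximum $L_i$ at $(0,0)$. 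At this interior extremum $\partial_t u_{\infty,i}=0$ and $\partial_{xx}u_{\infty,i}\le0$, so $\left(\mathbf L\mathbf u_\infty(0,0)\right)_i\ge c_i\left(\mathbf u_\infty(0,0)\right)L_i\ge0$; since $L_i\ge\underline\alpha$, $\left(H_{4}\right)$ forces $c_i\left(\mathbf u_\infty(0,0)\right)\ge L_i^{\delta}\underline c_i$, whence $\underline c_i L_i^{1+\delta}\le\left(\mathbf L\mathbf u_\infty(0,0)\right)_i\le\sum_j l_{i,j}L_j$. Evaluating this inequality at the index $i$ that maximises $L_i/\left(\mathbf n_{PF}(\mathbf L)\right)_i$ among $\{i:L_i>B_i\}$, and using $\mathbf L\,\mathbf n_{PF}(\mathbf L)=\lambda_{PF}(\mathbf L)\,\mathbf n_{PF}(\mathbf L)$ together with $l_{i,j}\ge0$ for $j\ne i$ and $L_j\le B_j$ for the remaining indices, bounds $\sum_j l_{i,j}L_j$ above by $\lambda_{PF}(\mathbf L)L_i$ plus a coefficient-dependent constant, which forces $L_i$ below a coefficient-dependent constant; taking $B_i$ larger than that constant from the start yields the contradiction. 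Thus $L_i\le B_i=g_i(0)$ for all $i$, i.e. $\limsup_{t\to+\infty}\sup_x u_i(t,x)\le g_i(0)$. Finally a bounded nonnegative classical solution of $\left(S_{KPP}\right)$ is a stationary solution of $\left(E_{KPP}\right)$, so $\sup_x u_i=\limsup_{t\to+\infty}\sup_x u_i\le g_i(0)$, i.e. $\mathbf u\le\mathbf g(0)$.

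The hard part will be the two technical reductions rather than the algebra: rigorously turning the pointwise PDE inequality at a maximum point into a differential inequality for $\bar u_i$ when the spatial supremum over $\mathbb R$ need not be attained (a routine but careful penalisation/localisation), and the bookkeeping in the long-time step needed to keep the threshold $B_i$ genuinely independent of the particular solution. By contrast the core mechanism—superlinearity $\delta\ge1$ of the competition term defeating the at most linear mutation term $\left(\mathbf L\mathbf u\right)_i$, via $\left(H_{4}\right)$—is elementary once $\bar u_i$ is singled out as the right quantity to track.
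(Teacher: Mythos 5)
Your argument is correct and reaches the same kind of bound, but by a genuinely different route than the paper's. For the $\mathscr{L}^{\infty}$ estimate, the paper first isolates a sharper geometric saturation lemma (\lemref{Saturation_constant}): there is a threshold $k_i$ such that the $i$-th component of the reaction $\left(\mathbf{L}\mathbf{v}-\mathbf{c}\left(\mathbf{v}\right)\circ\mathbf{v}\right)_i$ is strictly negative whenever $v_i>k_i$, no matter how large the other components are; the bound on a bounded cylinder is then obtained by an integration-by-parts argument ($t\mapsto\int\eta\left(u_i\right)$ is nonincreasing for a convex $\eta$ vanishing below the right level), and passed to $\mathbb{R}$ by truncation. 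You instead track $\bar u_i\left(t\right)=\sup_x u_i\left(t,\cdot\right)$ by a pointwise second-order test at a penalised maximum, invoke $\left(H_{4}\right)$ directly, and close the loop with the crude linear bound $\left(\mathbf{L}\mathbf{u}\right)_i\le\Sigma_i w$ and the observation that $w^{\delta-1}\bar u_i\ge\bar u_i^{\delta}$ because $w\ge\bar u_i$ and $\delta\ge1$. Both are sound; the paper's detour through \lemref{Saturation_constant} pays off later because that lemma is reused in the proofs of \thmref{Existence_of_steady_states} and \thmref{Traveling_waves}, whereas your route is more self-contained but gives a cruder threshold $B_i$ and requires the penalisation machinery on the unbounded line (which the paper's cylinder-then-truncation route avoids). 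For the absorbing set, the paper shows $t\mapsto\|u_i\left(t,\cdot\right)\|_{\mathscr{L}^{\infty}}$ is strictly decreasing above $g_i\left(0\right)$ and then derives a contradiction from a time-translation-and-compactness argument; you also translate in time and pass to an entire limiting solution, but instead evaluate the equation at the interior space-time maximum of the limit and invoke a Perron--Frobenius comparison to cap $L_{i^{\star}}$, which works because $1+\delta\ge2$ makes $\underline c_{i^{\star}}L_{i^{\star}}^{1+\delta}\le\lambda_{PF}\left(\mathbf{L}\right)L_{i^{\star}}+C$ self-defeating for $L_{i^{\star}}$ large (irrespective of the sign of $\lambda_{PF}\left(\mathbf{L}\right)$). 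The two care points you flag yourself -- rigourising the penalisation and making the $B_j$'s mutually consistent -- are genuine but routine; the rest is correct as written.
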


\begin{thm}
\label{thm:Extinction_or_persistence} {[}Extinction or persistence
dichotomy{]}
\begin{enumerate}[label=\roman*)]
\item  Assume $\lambda_{PF}\left(\mathbf{L}\right)<0$. Then all bounded
nonnegative classical solutions of $\left(E_{KPP}\right)$ set in
$\left(0,+\infty\right)\times\mathbb{R}$ converge asymptotically
in time, exponentially fast, and uniformly in space to $\mathbf{0}$.
\item Conversely, assume $\lambda_{PF}\left(\mathbf{L}\right)>0$. Then
there exists $\nu>0$ such that all bounded positive classical solutions
$\mathbf{u}$ of $\left(E_{KPP}\right)$ set in $\left(0,+\infty\right)\times\mathbb{R}$
satisfy, for all bounded intervals $I\subset\mathbb{R}$,
\[
\left(\liminf_{t\to+\infty}\inf_{x\in I}u_{i}\left(t,x\right)\right)_{i\in\left[N\right]}\geq\nu\mathbf{1}_{N,1}.
\]
Consequently, all bounded nonnegative classical solutions of $\left(S_{KPP}\right)$
are valued in
\[
\prod_{i=1}^{N}\left[\nu,g_{i}\left(0\right)\right].
\]
\end{enumerate}
\end{thm}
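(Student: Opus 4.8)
For (i) the plan is a comparison with the linearization at $\mathbf{0}$. Since $\mathbf{c}(\mathsf{K})\subset\mathsf{K}$, every nonnegative solution of $(E_{KPP})$ satisfies $\partial_{t}\mathbf{u}-\mathbf{D}\partial_{xx}\mathbf{u}\leq\mathbf{L}\mathbf{u}$, hence is a subsolution of the weakly coupled cooperative system $\partial_{t}\mathbf{v}-\mathbf{D}\partial_{xx}\mathbf{v}=\mathbf{L}\mathbf{v}$ — cooperative by $(H_{1})$ — for which the comparison principle holds on $\mathbb{R}$ for bounded data. Picking $C>0$ with $C\,\mathbf{n}_{PF}(\mathbf{L})\geq(\sup_{x}u_{i}(0,x))_{i}$ and observing that $(t,x)\mapsto Ce^{\lambda_{PF}(\mathbf{L})t}\mathbf{n}_{PF}(\mathbf{L})$ is an exact solution of that linear system yields
\[
\mathbf{0}\leq\mathbf{u}(t,x)\leq Ce^{\lambda_{PF}(\mathbf{L})t}\mathbf{n}_{PF}(\mathbf{L})\quad\text{on }[0,+\infty)\times\mathbb{R},
\]
which, as $\lambda_{PF}(\mathbf{L})<0$, is precisely the asserted uniform exponential decay; in particular a bounded nonnegative steady state is a bounded solution of $(E_{KPP})$ tending to $\mathbf{0}$ while being constant in time, hence is $\mathbf{0}$.

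For (ii) I would first normalise: by Theorem~\ref{thm:Absorbing_set} every bounded nonnegative solution eventually lies in a fixed box $\prod_{i}[0,K_{i}]$, on which $\mathbf{c}$ is bounded and, since $\mathbf{c}(\mathbf{0})=\mathbf{0}$, there is a modulus $\kappa$ with $\kappa(\varepsilon)\to0$ as $\varepsilon\to0^{+}$ and $c_{i}[\mathbf{w}]\leq\kappa(\varepsilon)$ whenever $\mathbf{0}\leq\mathbf{w}\leq\varepsilon\mathbf{1}_{N,1}$. The Dirichlet principal eigenvalue $\lambda_{1}(R):=\lambda_{1,Dir}(-\mathbf{D}\partial_{xx}-\mathbf{L},(-R,R))$ is well defined (the operator is cooperative) and decreases to $-\lambda_{PF}(\mathbf{L})<0$ as $R\to+\infty$; fix $R^{*}$ with $\lambda_{1}(R^{*})<0$, an associated principal eigenfunction $\boldsymbol{\varphi}$ positive in $(-R^{*},R^{*})$ and vanishing at $\pm R^{*}$, and $\varepsilon_{1}>0$ with $\kappa(\varepsilon_{1})<-\frac{1}{2}\lambda_{1}(R^{*})$. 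The engine is the following \emph{localized linear lower bound}: if a bounded positive solution $\mathbf{u}$ stays $\leq\varepsilon_{1}\mathbf{1}_{N,1}$ on $(-R^{*},R^{*})$ throughout $[T_{0},T_{1}]$, then there $\partial_{t}\mathbf{u}-\mathbf{D}\partial_{xx}\mathbf{u}\geq(\mathbf{L}+\frac{1}{2}\lambda_{1}(R^{*})\mathbf{I})\mathbf{u}$, so $\mathbf{u}$ is, with nonnegative lateral data, a supersolution on $(T_{0},T_{1})\times(-R^{*},R^{*})$ of the \emph{cooperative} linear system with reaction matrix $\mathbf{L}+\frac{1}{2}\lambda_{1}(R^{*})\mathbf{I}$; comparing with the exact solution $\eta\,e^{-\frac{1}{2}\lambda_{1}(R^{*})(t-T_{0})}\boldsymbol{\varphi}$, where $\eta:=\inf_{(-R^{*},R^{*})}\min_{i}u_{i}(T_{0},\cdot)/\varphi_{i}>0$, gives
\[
\mathbf{u}(t,x)\geq\eta\,e^{-\frac{1}{2}\lambda_{1}(R^{*})(t-T_{0})}\boldsymbol{\varphi}(x)\quad\text{on }[T_{0},T_{1}]\times[-R^{*},R^{*}].
\]
Since $-\frac{1}{2}\lambda_{1}(R^{*})>0$, a bounded solution cannot stay $\leq\varepsilon_{1}\mathbf{1}_{N,1}$ on $(-R^{*},R^{*})$ for \emph{all} large times, for then this bound would hold on an unbounded time interval and force $\mathbf{u}$ to be unbounded.

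Next I rule out extinction on $(-R^{*},R^{*})$. Any bounded positive solution $\mathbf{u}$ solves the cooperative linear nonautonomous system $\partial_{t}\mathbf{u}-\mathbf{D}\partial_{xx}\mathbf{u}=(\mathbf{L}-\text{diag}\,\mathbf{c}[\mathbf{u}])\mathbf{u}$, whose reaction matrix is bounded and keeps the irreducible coupling of $\mathbf{L}$; hence the parabolic Harnack inequality for cooperative systems applies, and if $\inf_{(-R^{*},R^{*})}\min_{i}u_{i}(t,\cdot)$ were small for all large $t$, then \emph{every} component of $\mathbf{u}$ would be small on $(-R^{*}-1,R^{*}+1)$ for all large $t$ — the forbidden regime above. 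In particular no bounded positive solution converges to $\mathbf{0}$. On the compact attractor of $(E_{KPP})$ (furnished by Theorem~\ref{thm:Absorbing_set} and parabolic smoothing) the state $\mathbf{0}$ is then the sole invariant set outside the persistent set, and it is automatically isolated and acyclic, so the standard uniform persistence theorems give $\nu_{0}>0$ with $\liminf_{t\to+\infty}\sup_{(-M,M)}|\mathbf{u}(t,\cdot)|\geq\nu_{0}$ for some fixed $M$ and all bounded positive $\mathbf{u}$; a further application of the cooperative Harnack inequality (this time propagating a large value at one point forward to a lower bound on the whole of $(-M,M)$, with constants uniform over solutions lying in the box) upgrades this to $\liminf_{t\to+\infty}\inf_{(-M,M)}\min_{i}u_{i}(t,\cdot)\geq\nu$ for a uniform $\nu>0$.

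Finally, for a bounded interval $I\subseteq(-M,M)$ the estimate follows by monotonicity, and for a larger bounded $I$ I would run once more the entire-solution compactness argument: were $\liminf_{t}\inf_{I}\min_{i}u_{i}(t,\cdot)<\nu$, a time-shifted subsequence would converge, by Theorem~\ref{thm:Absorbing_set} and interior parabolic estimates, to an \emph{entire} bounded nonnegative solution $\mathbf{w}$; if $\mathbf{w}$ touches $\mathbf{0}$ then the strong maximum principle applied to one scalar equation, followed by propagation of the vanishing along the coupling by irreducibility of $\mathbf{L}$, forces $\mathbf{w}\equiv\mathbf{0}$, contradicting the $(-M,M)$-bound along that subsequence; otherwise $\mathbf{w}$ is positive and one derives a contradiction from the $(-M,M)$-bound applied to $\mathbf{w}$ together with the structure of its past. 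This yields the inequality on every bounded interval with the same $\nu$, and the corollary for $(S_{KPP})$ follows since a bounded nonnegative nonzero steady state is positive by Theorem~\ref{thm:Strong_positivity}, hence a bounded positive stationary solution of $(E_{KPP})$ to which the lower bound $\nu$ applies, while the upper bound $g_{i}(0)$ is Theorem~\ref{thm:Absorbing_set}. I expect the main obstacle to be precisely the absence of a comparison principle for $(E_{KPP})$: one may only compare $\mathbf{u}$ with the cooperative linear systems it dominates or is dominated by, so every quantitative lower bound has to be bootstrapped from the regime where $\mathbf{u}$ is small enough for the competitive term to be absorbed into the spectral gap $-\lambda_{1}(R^{*})$, and carrying this through uniformly — in the solution and in the interval, via the entire-solution compactness and the triviality of extinction limits — is the delicate point.
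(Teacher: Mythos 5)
Your part (i) is correct and matches the paper's proof verbatim: compare with the exact exponential solution $Ce^{\lambda_{PF}(\mathbf{L})t}\mathbf{n}_{PF}(\mathbf{L})$ of the cooperative linearization, which dominates $\mathbf{u}$ by Protter--Weinberger since $\mathbf{c}[\mathbf{u}]\circ\mathbf{u}\geq\mathbf{0}$.

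For part (ii) your opening moves are sound and closely parallel the paper's \lemref{Instability_of_0}: choosing $R^{*}$ with negative Dirichlet eigenvalue, choosing $\varepsilon_{1}$ so that the competition term is absorbed into the spectral gap, and comparing $\mathbf{u}$ from below with the cooperative linear eigenfunction supersolution to forbid the solution from staying small on $(-R^{*},R^{*})$ for too long. However, from that point on you take a genuinely different route from the paper, and it is where the gaps sit. The paper never invokes abstract uniform-persistence theory; instead it iterates the instability lemma (any excursion below $\alpha_{\nicefrac{1}{2}}$ is quantitatively time-bounded by $T_{\mu}$) and combines this with two explicit applications of the F\"{o}ldes--Pol\'a\v{c}ik parabolic Harnack inequality to extract a constant $\nu$ that is visibly uniform in $\mathbf{u}$ and in $I$ (because it is produced locally, at each $x\in\overline{I}$, with translation-invariant constants). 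Your appeal to ``standard uniform persistence theorems'' is the weak link: to apply them you would have to verify asymptotic compactness of the semiflow on $\mathscr{C}_{b}(\mathbb{R},\mathsf{K})$ in a metrizable local-uniform topology, that $\{\mathbf{0}\}$ is isolated and its stable set avoids the interior, and then translate the abstract $\liminf d(\Phi(t)\mathbf{u}_{0},\{\mathbf{0}\})\geq\eta$ into the concrete pointwise lower bound --- none of which is free, and the last step already requires exactly the Harnack machinery the paper deploys. Moreover your ``rule out extinction'' paragraph only excludes $\lim\mathbf{u}=\mathbf{0}$, not $\liminf=0$ along a subsequence, which is the actual issue; the paper handles this by showing time intervals below $\alpha_{\nicefrac{1}{2}}$ have uniformly bounded length, a step your argument does not reproduce. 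Finally, the closing entire-solution argument for bounded intervals $I\not\subset(-M,M)$ is too vague to guarantee the same $\nu$: the natural Harnack propagation from $(-M,M)$ to a larger interval degrades the constant, whereas the paper bypasses this by running the localized estimate at each fixed $x\in\overline{I}$ with constants that depend only on $R_{\nicefrac{1}{2}}$ and the coefficient bounds, not on $I$. In short, your plan has the right ingredients but delegates the key quantitative work to an abstract theorem whose hypotheses you do not verify, and the uniformity of $\nu$ (both in $\mathbf{u}$ and in $I$) --- which is the non-trivial part of the statement --- is not actually secured.
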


As will be explained later on, the critical case $\lambda_{PF}\left(\mathbf{L}\right)=0$
is more challenging than expected and is not solved here, in spite
of the following extinction conjecture.
\begin{conjecture*}
Assume $\lambda_{PF}\left(\mathbf{L}\right)=0$ and
\[
\text{span}\left(\mathbf{n}_{PF}\left(\mathbf{L}\right)\right)\cap\mathsf{K}\cap\mathbf{c}^{-1}\left(\left\{ \mathbf{0}\right\} \right)=\left\{ \mathbf{0}\right\} .
\]

Then all bounded nonnegative classical solutions of $\left(E_{KPP}\right)$
set in $\left(0,+\infty\right)\times\mathbb{R}$ converge asymptotically
in time and locally uniformly in space to $\mathbf{0}$.
\end{conjecture*}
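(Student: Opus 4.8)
The plan is to prove that $M:=\limsup_{t\to+\infty}\sup_{x\in\mathbb{R}}\boldsymbol{\phi}\cdot\mathbf{u}\left(t,x\right)$ vanishes for every bounded nonnegative classical solution $\mathbf{u}$ of $\left(E_{KPP}\right)$ set in $\left(0,+\infty\right)\times\mathbb{R}$, where $\boldsymbol{\phi}:=\mathbf{n}_{PF}\left(\mathbf{L}^{\top}\right)\gg\mathbf{0}$ is the positive left Perron--Frobenius eigenvector of $\mathbf{L}$, so that $\mathbf{L}^{\top}\boldsymbol{\phi}=\mathbf{0}$ since $\lambda_{PF}\left(\mathbf{L}\right)=0$; as $\boldsymbol{\phi}\gg\mathbf{0}$ and $\mathbf{u}\geq\mathbf{0}$, the equality $M=0$ forces $\mathbf{u}\to\mathbf{0}$ uniformly in space, a fortiori the claim. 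By \thmref{Absorbing_set}, $M$ is finite and $\mathbf{u}$ is eventually uniformly bounded, whence uniform interior parabolic estimates on $\left(1,+\infty\right)\times\mathbb{R}$; therefore every sequence $\left(t_{n},x_{n}\right)$ with $t_{n}\to+\infty$ admits a subsequence along which $\left(t,x\right)\mapsto\mathbf{u}\left(t+t_{n},x+x_{n}\right)$ converges, locally uniformly together with its derivatives, to a bounded nonnegative entire solution of $\left(E_{KPP}\right)$. The KPP structure enters through the scalar observable $z:=\boldsymbol{\phi}\cdot\mathbf{u}\geq0$, which, along any nonnegative solution, satisfies
\[
\partial_{t}z=\boldsymbol{\phi}\cdot\left(\mathbf{D}\partial_{xx}\mathbf{u}\right)-g,\qquad g:=\boldsymbol{\phi}\cdot\left(\mathbf{c}\left[\mathbf{u}\right]\circ\mathbf{u}\right)\geq0,
\]
the sign of the dissipation term $g$ coming from $\mathbf{u}\geq\mathbf{0}$, $\boldsymbol{\phi}\gg\mathbf{0}$ and $\left(H_{2}\right)$.

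Suppose by contradiction $M>0$ and pick $\left(t_{n},x_{n}\right)$ realizing the $\limsup$; extracting as above gives a bounded nonnegative entire solution $\mathbf{v}$ with $z_{\mathbf{v}}:=\boldsymbol{\phi}\cdot\mathbf{v}\leq M$ on $\mathbb{R}^{2}$ and $z_{\mathbf{v}}\left(0,0\right)=M$. The crux is to deduce from this interior maximum that $g_{\mathbf{v}}\equiv0$; granted that, $\boldsymbol{\phi}\gg\mathbf{0}$, $\mathbf{v}\geq\mathbf{0}$ and $\left(H_{2}\right)$ give $\mathbf{c}\left[\mathbf{v}\right]\circ\mathbf{v}\equiv\mathbf{0}$, so $\mathbf{v}$ is a bounded entire solution of the linear cooperative system $\partial_{t}\mathbf{v}-\mathbf{D}\partial_{xx}\mathbf{v}=\mathbf{L}\mathbf{v}$. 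A standard Liouville argument then applies --- via the Fourier transform in $x$, $\mathbf{v}$ is assembled from solutions of $\dot{\mathbf{y}}=\left(\mathbf{L}-\xi^{2}\mathbf{D}\right)\mathbf{y}$, the matrix $\mathbf{L}-\xi^{2}\mathbf{D}$ is essentially nonnegative irreducible with $\lambda_{PF}\left(\mathbf{L}-\xi^{2}\mathbf{D}\right)<\lambda_{PF}\left(\mathbf{L}\right)=0$, hence stable, for $\xi\neq0$, whereas $\mathbf{L}$ itself has $0$ as a simple, strictly dominant eigenvalue with eigenvector $\mathbf{n}_{PF}\left(\mathbf{L}\right)$ --- forcing $\mathbf{v}\equiv a\,\mathbf{n}_{PF}\left(\mathbf{L}\right)$ for some $a\geq0$. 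Then $\mathbf{c}\left(a\,\mathbf{n}_{PF}\left(\mathbf{L}\right)\right)\circ a\,\mathbf{n}_{PF}\left(\mathbf{L}\right)=\mathbf{0}$ with $\mathbf{n}_{PF}\left(\mathbf{L}\right)\gg\mathbf{0}$ gives $a\,\mathbf{n}_{PF}\left(\mathbf{L}\right)\in\mathrm{span}\left(\mathbf{n}_{PF}\left(\mathbf{L}\right)\right)\cap\mathsf{K}\cap\mathbf{c}^{-1}\left(\left\{ \mathbf{0}\right\} \right)=\left\{ \mathbf{0}\right\}$ by hypothesis, hence $a=0$ and $M=\boldsymbol{\phi}\cdot\mathbf{v}=0$, a contradiction.

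The genuine obstacle is exactly the step ``interior maximum of $z_{\mathbf{v}}$ implies $g_{\mathbf{v}}\equiv0$''. When all diffusivities are equal, $\mathbf{D}=d\mathbf{I}$, one has $\boldsymbol{\phi}\cdot\left(\mathbf{D}\partial_{xx}\mathbf{u}\right)=d\,\partial_{xx}z$, so $z$ is a scalar subsolution of $\partial_{t}z-d\partial_{xx}z=-g\leq0$; at the interior global maximum $\left(0,0\right)$ of $z_{\mathbf{v}}$ one has $\partial_{t}z_{\mathbf{v}}\left(0,0\right)=0$ and $\partial_{xx}z_{\mathbf{v}}\left(0,0\right)\leq0$, so $g_{\mathbf{v}}\left(0,0\right)=d\,\partial_{xx}z_{\mathbf{v}}\left(0,0\right)\leq0$, whence $g_{\mathbf{v}}\left(0,0\right)=0$; the strong maximum principle then propagates $z_{\mathbf{v}}\equiv M$ and, via the equation, $g_{\mathbf{v}}\equiv0$, and the scheme above goes through, even yielding uniform convergence to $\mathbf{0}$. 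For \emph{unequal} diffusivities, however, the right-hand side of the equation for $z$ is $\boldsymbol{\phi}\cdot\left(\mathbf{D}\partial_{xx}\mathbf{v}\right)-g_{\mathbf{v}}$, and at a maximum of $z_{\mathbf{v}}=\boldsymbol{\phi}\cdot\mathbf{v}$ one only controls $\partial_{xx}z_{\mathbf{v}}\left(0,0\right)=\boldsymbol{\phi}\cdot\partial_{xx}\mathbf{v}\left(0,0\right)\leq0$, not $\boldsymbol{\phi}\cdot\left(\mathbf{D}\partial_{xx}\mathbf{v}\right)\left(0,0\right)$, because $\mathbf{D}$ unbalances the weights: $z$ satisfies no scalar parabolic inequality, $\sup_{x}z\left(t,\cdot\right)$ need not be monotone in $t$, and --- the system being non-cooperative, so that no componentwise comparison principle is available either --- closing the argument seems to demand a genuinely new ingredient, such as a Lyapunov functional robust to cross-diffusion, or a Liouville-type rigidity theorem valid directly for bounded nonnegative entire solutions of $\left(E_{KPP}\right)$ in the critical regime $\lambda_{PF}\left(\mathbf{L}\right)=0$. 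This is presumably why the statement is left as a conjecture.
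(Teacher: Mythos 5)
This statement is labeled as a \emph{conjecture} in the paper, and the paper provides no proof: it only offers a short subsubsection ("Regarding the critical case") explaining why two natural Lyapunov-type strategies --- the quadratic functional $\mathbf{u}\mapsto\frac{1}{2}\left|\mathbf{u}\right|^{2}$ when $\mathbf{L}$ is symmetric, and the exponential change of unknown $\mathbf{z}\left(t\right)=\exp\left(\int_{0}^{t}\mathbf{a}^{T}\mathbf{u}\right)\mathbf{u}\left(t\right)$ when $\mathbf{c}\left(\mathbf{v}\right)=\left(\mathbf{a}^{T}\mathbf{v}\right)\mathbf{1}_{N,1}$ --- break down once the spatial diffusion is introduced. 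Your proposal is therefore not expected to be a proof, and to your credit you say so explicitly.

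What you do is nonetheless a genuinely different route from either of the two the paper dismisses, and it buys a real partial result. You replace the quadratic or exponential functionals by the linear observable $z=\boldsymbol{\phi}\cdot\mathbf{u}$ built on the left Perron eigenvector; this sidesteps the integration-by-parts-on-$\mathbb{R}$ problem the paper emphasizes and lets you use the parabolic maximum principle instead. When $\mathbf{D}=d\mathbf{I}$, the identity $\boldsymbol{\phi}\cdot\left(\mathbf{D}\partial_{xx}\mathbf{u}\right)=d\,\partial_{xx}z$ turns $z$ into a scalar subsolution of the heat equation, and your argument --- interior maximum of the extracted limit $\Rightarrow g_{\mathbf{v}}=0$ at the maximum $\Rightarrow$ strong maximum principle forces $z_{\mathbf{v}}\equiv M$ backward in time $\Rightarrow$ $\mathbf{v}$ solves the linear cooperative system on $\left(-\infty,0\right]\times\mathbb{R}$ $\Rightarrow$ Liouville rigidity via the decomposition $\mathbf{v}=a\,\mathbf{n}_{PF}\left(\mathbf{L}\right)+\mathbf{w}$ with $\boldsymbol{\phi}\cdot\mathbf{w}\equiv0$ and $\mathbf{L}|_{\ker\boldsymbol{\phi}^{T}}$ strictly stable $\Rightarrow$ contradiction with the non-degeneracy hypothesis --- does close. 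One small imprecision: you should say that $\mathbf{v}$ is an \emph{ancient} solution of the linear system (the strong maximum principle only propagates $z_{\mathbf{v}}\equiv M$ to $t\leq0$), not an entire one; but the Liouville step only needs boundedness as $t\to-\infty$ combined with the exponential decay of $\mathrm{e}^{\mathbf{L}|_{\ker\boldsymbol{\phi}^{T}}\,T}$ and contractivity of the heat semigroup, so the conclusion stands. This resolves the conjecture (even with the stronger uniform-in-$x$ convergence) under the extra assumption $d_{1}=\dots=d_{N}$, which is more than the paper's discussion concedes for the diffusive case.

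Your diagnosis of the obstruction for unequal diffusivities is also correct and is, in substance, the same wall the paper's two strategies hit: $\boldsymbol{\phi}\cdot\left(\mathbf{D}\partial_{xx}\mathbf{v}\right)$ has no sign at an interior maximum of $z_{\mathbf{v}}$, so $z$ is no longer a scalar parabolic subsolution, no componentwise comparison principle is available either, and the maximum-principle scheme cannot be closed. This is precisely the missing ingredient, and your proposal, like the paper, leaves the conjecture open in general.
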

Although \thmref{Extinction_or_persistence} proves that the attractor
of the induced semiflow is reduced to $\left\{ \mathbf{0}\right\} $
in the extinction case, in the persistence case the long-time behavior
is unclear and might not be reduced to a locally uniform convergence
toward a unique stable steady state. This direct consequence of the
multidimensional structure of $\left(E_{KPP}\right)$ is a major difference
with the scalar KPP equation. Still, the following theorem provides
some additional information about the steady states of $\left(E_{KPP}\right)$
and confirms in some sense the preceding conjecture.
\begin{thm}
\label{thm:Existence_of_steady_states} {[}Existence of steady states{]}
\begin{enumerate}[label=\roman*)]
\item  If $\lambda_{PF}\left(\mathbf{L}\right)<0$, there exists no positive
classical solution of $\left(S_{KPP}\right)$.
\item If $\lambda_{PF}\left(\mathbf{L}\right)=0$ and 
\[
\text{span}\left(\mathbf{n}_{PF}\left(\mathbf{L}\right)\right)\cap\mathsf{K}\cap\mathbf{c}^{-1}\left(\left\{ \mathbf{0}\right\} \right)=\left\{ \mathbf{0}\right\} ,
\]
 there exists no bounded positive classical solution of $\left(S_{KPP}\right)$.
\item If $\lambda_{PF}\left(\mathbf{L}\right)>0$, there exists a constant
positive classical solution of $\left(S_{KPP}\right)$.
\end{enumerate}
\end{thm}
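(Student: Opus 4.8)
The plan is to treat the three assertions separately: \emph{(i)} and \emph{(ii)} are nonexistence statements that I would establish by a \emph{sliding} comparison against the constant Perron--Frobenius profile, while \emph{(iii)} is an existence statement that I would reduce to the spatially homogeneous kinetic system and settle by a topological fixed-point argument. Throughout, recall that if $\mathbf{u}$ is a positive classical solution of $\left(S_{KPP}\right)$ then, by $\left(H_{2}\right)$--$\left(H_{3}\right)$, $\mathbf{c}\left[\mathbf{u}\right]\circ\mathbf{u}\geq\mathbf{0}$ and hence $\mathscr{L}_{0}\mathbf{u}=-\mathbf{c}\left[\mathbf{u}\right]\circ\mathbf{u}\leq\mathbf{0}$, where $\mathscr{L}_{0}:=-\mathbf{D}\partial_{xx}-\mathbf{L}$ is a cooperative operator; so $\mathbf{u}$ is a positive subsolution of $\mathscr{L}_{0}$, while the constant $\mathbf{n}_{PF}\left(\mathbf{L}\right)$ satisfies $\mathscr{L}_{0}\mathbf{n}_{PF}\left(\mathbf{L}\right)=-\lambda_{PF}\left(\mathbf{L}\right)\mathbf{n}_{PF}\left(\mathbf{L}\right)$, which is $\gg\mathbf{0}$ when $\lambda_{PF}\left(\mathbf{L}\right)<0$ and $=\mathbf{0}$ when $\lambda_{PF}\left(\mathbf{L}\right)=0$.

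For \emph{(i)} and \emph{(ii)} I would set $\gamma^{*}=\inf\left\{ \gamma>0:\gamma\mathbf{n}_{PF}\left(\mathbf{L}\right)\geq\mathbf{u}\text{ on }\mathbb{R}\right\} $, which is finite since $\mathbf{u}$ is bounded and positive since $\mathbf{u}\left(x_{0}\right)\gg\mathbf{0}$ for any fixed $x_{0}$, and study $\mathbf{w}:=\gamma^{*}\mathbf{n}_{PF}\left(\mathbf{L}\right)-\mathbf{u}\geq\mathbf{0}$, for which $\mathscr{L}_{0}\mathbf{w}=-\gamma^{*}\lambda_{PF}\left(\mathbf{L}\right)\mathbf{n}_{PF}\left(\mathbf{L}\right)+\mathbf{c}\left[\mathbf{u}\right]\circ\mathbf{u}$. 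In case \emph{(i)} this right-hand side is $\gg\mathbf{0}$; if $\mathbf{w}$ vanishes in some component at a finite point, the strong maximum principle for the cooperative operator $\mathscr{L}_{0}$, together with the irreducibility of $\mathbf{L}$, forces $\mathbf{w}\equiv\mathbf{0}$, hence $\mathscr{L}_{0}\mathbf{w}=\mathbf{0}$, contradicting $\mathscr{L}_{0}\mathbf{w}\gg\mathbf{0}$; if the infimum defining $\gamma^{*}$ is attained only along a sequence $x_{n}\to\pm\infty$, a translation of $\mathbf{u}$ together with interior elliptic estimates (legitimate since $\left(S_{KPP}\right)$ is autonomous and $\mathbf{u}$ bounded) produces a limiting profile $\mathbf{u}_{\infty}$, positive by \ref{thm:Strong_positivity}, to which the same argument applies. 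In case \emph{(ii)} the right-hand side is only $\geq\mathbf{0}$, and the identical maximum-principle/translation argument yields $\mathbf{w}\equiv\mathbf{0}$ along with $\mathbf{c}\left[\mathbf{u}\right]\circ\mathbf{u}\equiv\mathbf{0}$; since then $\mathbf{u}=\gamma^{*}\mathbf{n}_{PF}\left(\mathbf{L}\right)\gg\mathbf{0}$, this gives $\mathbf{c}\left(\gamma^{*}\mathbf{n}_{PF}\left(\mathbf{L}\right)\right)=\mathbf{0}$ with $\gamma^{*}>0$, i.e.\ a nonzero element of $\text{span}\left(\mathbf{n}_{PF}\left(\mathbf{L}\right)\right)\cap\mathsf{K}\cap\mathbf{c}^{-1}\left(\left\{ \mathbf{0}\right\} \right)$, contradicting the hypothesis. (Statement \emph{(i)} also follows more directly: such a steady state is a bounded nonnegative classical solution of $\left(E_{KPP}\right)$ independent of $t$, so the extinction part of \ref{thm:Extinction_or_persistence} forces it to vanish.)

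For \emph{(iii)}, constant positive classical solutions of $\left(S_{KPP}\right)$ are exactly the equilibria in $\mathsf{K}^{++}$ of the kinetic system $\dot{\mathbf{w}}=\mathbf{f}\left(\mathbf{w}\right)$ with $\mathbf{f}\left(\mathbf{v}\right):=\mathbf{L}\mathbf{v}-\mathbf{c}\left(\mathbf{v}\right)\circ\mathbf{v}$, so I would work with this ODE. Essential nonnegativity of $\mathbf{L}$ gives $f_{i}\left(\mathbf{v}\right)=\left(\mathbf{L}\mathbf{v}\right)_{i}\geq0$ on $\left\{ v_{i}=0\right\} $, so $\mathsf{K}$ is forward invariant; $\left(H_{4}\right)$ (the ODE counterpart of \ref{thm:Absorbing_set}) makes a large box $\mathsf{R}=\prod_{i}\left[0,M_{i}\right]$ forward invariant, so $\mathbf{f}$ points strictly inward on the outer faces and the flow on the compact convex set $\mathsf{K}\cap\mathsf{R}$ is dissipative. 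Irreducibility of $\mathbf{L}$ forces the only equilibrium on $\partial\mathsf{K}$ to be $\mathbf{0}$, and $\left(H_{3}\right)$ gives $D\mathbf{f}\left(\mathbf{0}\right)=\mathbf{L}$; since $\lambda_{PF}\left(\mathbf{L}\right)>0$ with positive eigenvector $\mathbf{n}_{PF}\left(\mathbf{L}\right)$, the equilibrium $\mathbf{0}$ is a uniform repeller within $\mathsf{K}$ (this is also where the persistence part of \ref{thm:Extinction_or_persistence}, applied to the spatially constant solutions of $\left(E_{KPP}\right)$, provides a uniform lower bound placing the global attractor of the flow in $\mathsf{K}^{++}$). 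Then a fixed-point-index argument on the time-one map of the flow on $\mathsf{K}\cap\mathsf{R}$ — whose total index is $1$ by convexity, while its index at the repeller $\mathbf{0}$ is $0$ — yields a nontrivial fixed point, which the absence of other boundary equilibria places in $\mathsf{K}^{++}$; letting the time-step tend to $0$ and extracting a limit upgrades this periodic point to an equilibrium $\mathbf{v}^{*}\gg\mathbf{0}$, which is the desired constant positive classical solution. (Equivalently, one builds a compact forward-invariant neighborhood of the attractor inside $\mathsf{K}^{++}$ on whose boundary $\mathbf{f}$ points strictly inward, so that its Brouwer degree is $\pm1$ and $\mathbf{f}$ vanishes in $\mathsf{K}^{++}$.)

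The routine parts are the maximum-principle manipulations and the invariance checks; the real difficulties are, for \emph{(ii)}, arranging that the sliding contact between $\gamma^{*}\mathbf{n}_{PF}\left(\mathbf{L}\right)$ and $\mathbf{u}$ is realized at a finite point (the translation--compactness reduction), and, for \emph{(iii)}, the topological step converting dissipativity plus the instability of $\mathbf{0}$ into an interior zero of $\mathbf{f}$ — i.e.\ the index computation at $\mathbf{0}$ or the construction of an interior trapping region — which is precisely where $\left(H_{1}\right)$, $\left(H_{3}\right)$ and $\lambda_{PF}\left(\mathbf{L}\right)>0$ enter.
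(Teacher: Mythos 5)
Your proof of \emph{(ii)} is essentially the paper's: slide a multiple $\kappa\,\mathbf{n}_{PF}\left(\mathbf{L}\right)$ down onto $\mathbf{v}$, translate along a minimizing sequence, pass to the limit with Harnack and elliptic estimates, and conclude via the strong maximum principle that the contact forces $\mathbf{v}\equiv\kappa^{\star}\mathbf{n}_{PF}\left(\mathbf{L}\right)$ and $\mathbf{c}\left(\kappa^{\star}\mathbf{n}_{PF}\left(\mathbf{L}\right)\right)=\mathbf{0}$, contradicting the degeneracy hypothesis. For \emph{(i)} you take a genuinely different route: you run the same sliding machinery (and also invoke the extinction half of Theorem~\ref{thm:Extinction_or_persistence}), whereas the paper compares generalized principal eigenvalues — by $\left(H_{2}\right)$ and monotonicity of the Dirichlet eigenvalue in the zeroth-order coefficient, $\lambda_{1}\left(-\mathbf{D}\frac{\text{d}^{2}}{\text{d}x^{2}}-\left(\mathbf{L}-\text{diag}\,\mathbf{c}\left[\mathbf{v}\right]\right)\right)\geq-\lambda_{PF}\left(\mathbf{L}\right)>0$, while a positive steady state is a generalized principal eigenfunction for eigenvalue $0$. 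Both arguments are fine for bounded solutions; your sliding proof is arguably more self-contained since it does not rely on Theorem~\ref{thm:Generalized_principal_eigenvalue}.

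For \emph{(iii)}, the idea (reduce to the kinetic ODE, exploit forward-invariance of $\mathsf{K}$, dissipativity, and the repulsiveness of $\mathbf{0}$) is right, and your parenthetical alternative — build a compact convex set in $\mathsf{K}^{++}$ on whose boundary $\mathbf{f}$ points strictly inward and use Brouwer — is precisely what the paper does: it takes $\mathsf{C}=\left\{\mathbf{v}\in\mathsf{K}\ :\ \mathbf{n}_{PF}\left(\mathbf{L}^{T}\right)^{T}\mathbf{v}\geq\eta,\ \mathbf{v}\leq\mathbf{k}+\mathbf{1}_{N,1}\right\}$ (lower cut by the dual Perron--Frobenius covector with $\eta$ small enough that $\mathbf{c}\leq\frac{1}{2}\lambda_{PF}\left(\mathbf{L}\right)\mathbf{1}$ there, upper cut by the saturation vector of Lemma~\ref{lem:Saturation_constant}), checks the inward condition on $\partial\mathsf{C}$, and derives a contradiction from Brouwer applied to $\mathbf{v}\mapsto\mathbf{v}+\delta_{\mathbf{v}}\mathbf{f}\left(\mathbf{v}\right)$. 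Your primary route, however, has a gap: the assertion that ``the fixed-point index of the time-one map at the repeller $\mathbf{0}$ is $0$'' is not a routine computation. For an interior source the time-one-map index is $\left(-1\right)^{N}$, not $0$; the index is $0$ here only because $\mathbf{0}$ sits on $\partial\mathsf{K}$ and the flow on the boundary is globally attracted to it while the interior flow is uniformly persistent away from it — this is the content of a Hofbauer-type permanence index theorem, not a generic property of repellers. You do flag the index computation as ``the real difficulty,'' which is fair, but as written the step is asserted rather than proved, and it is exactly the step the paper's explicit trapping-region construction is designed to avoid.
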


Due to the unclear long-time behavior of $\left(E_{KPP}\right)$ when
$\lambda_{PF}\left(\mathbf{L}\right)>0$, it seems inappropriate to
consider only traveling wave solutions connecting $\mathbf{0}$ to
some stable positive steady state (as is usually done in the monostable
scalar setting). Hence we resort to the following more flexible definition. 
\begin{defn*}
A \textit{traveling wave solution} of $\left(E_{KPP}\right)$ is a
pair
\[
\left(\mathbf{p},c\right)\in\mathscr{C}^{2}\left(\mathbb{R},\mathbb{R}^{N}\right)\times[0,+\infty)
\]
 which satisfies:
\begin{enumerate}
\item $\mathbf{u}:\left(t,x\right)\mapsto\mathbf{p}\left(x-ct\right)$ is
a bounded positive classical solution of $\left(E_{KPP}\right)$;
\item $\left(\liminf\limits _{\xi\to-\infty}p_{i}\left(\xi\right)\right)_{i\in\left[N\right]}\in\mathsf{K}^{+}$;
\item $\lim\limits _{\xi\to+\infty}\mathbf{p}\left(\xi\right)=\mathbf{0}$.
\end{enumerate}
We refer to $\mathbf{p}$ as the \textit{profile} of the traveling
wave and to $c$ as its \textit{speed}. \footnote{Let us emphasize once and for all that the vector field $\mathbf{c}$
is not to be confused with the real number $c$. The former is named
after \textquotedblleft competition\textquotedblright{} whereas the
latter is traditionally named after \textquotedblleft celerity\textquotedblright . }
\end{defn*}
\begin{thm}
\label{thm:Traveling_waves} {[}Traveling waves{]} Assume $\lambda_{PF}\left(\mathbf{L}\right)>0$.
\begin{enumerate}[label=\roman*)]
\item  \label{enu:Existence_minimal_wave_speed} There exists $c^{\star}>0$
such that:
\begin{enumerate}
\item there exists no traveling wave solution of $\left(E_{KPP}\right)$
with speed $c$ for all $c\in[0,c^{\star})$;
\item if, furthermore,
\[
D\mathbf{c}\left(\mathbf{v}\right)\geq\mathbf{0}\text{ for all }\mathbf{v}\in\mathsf{K},
\]
then there exists a traveling wave solution of $\left(E_{KPP}\right)$
with speed $c$ for all $c\geq c^{\star}$.
\end{enumerate}
\item \label{enu:Upper_bound_for_the_profiles} All profiles $\mathbf{p}$
satisfy
\[
\mathbf{p}\leq\mathbf{g}\left(0\right).
\]
\item \label{enu:Persistence_at_the_back_of_the_front} All profiles $\mathbf{p}$
satisfy
\[
\left(\liminf\limits _{\xi\to-\infty}p_{i}\left(\xi\right)\right)_{i\in\left[N\right]}\geq\nu\mathbf{1}_{N,1}.
\]
\item \label{enu:Monotonicity_at_the_edge_of_the_front} All profiles are
component-wise decreasing in a neighborhood of $+\infty$.
\end{enumerate}
\end{thm}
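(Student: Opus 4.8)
\emph{Step 0: the dispersion relation.} By $(H_3)$ the linearization of $(E_{KPP})$ at $\mathbf{0}$ is $\partial_t\mathbf{u}-\mathbf{D}\partial_{xx}\mathbf{u}=\mathbf{L}\mathbf{u}$, and seeking exponential solutions $(t,x)\mapsto e^{-\lambda(x-ct)}\mathbf{v}$ leads me to the matrix $\lambda^2\mathbf{D}+\mathbf{L}$, which is essentially nonnegative and irreducible by $(H_1)$ (the added term being diagonal); I write $\mu(\lambda):=\lambda_{PF}(\lambda^2\mathbf{D}+\mathbf{L})$ and let $\mathbf{n}(\lambda)\gg\mathbf{0}$ be the associated unit Perron--Frobenius eigenvector. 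The function $\mu$ is continuous, $\mu(0)=\lambda_{PF}(\mathbf{L})>0$, and $\mu(\lambda)\geq(\min_i d_i)\lambda^2+\lambda_{PF}(\mathbf{L})\to+\infty$, so $\lambda\mapsto\mu(\lambda)/\lambda$ is coercive on $(0,+\infty)$ and attains a positive minimum; set $c^\star:=\min_{\lambda>0}\mu(\lambda)/\lambda>0$. For $c\geq c^\star$ the equation $\mu(\lambda)=c\lambda$ has a smallest root $\lambda_c>0$ (simple, with a second root $\lambda_c'>\lambda_c$, when $c>c^\star$; double when $c=c^\star$), and $(t,x)\mapsto e^{-\lambda_c(x-ct)}\mathbf{n}(\lambda_c)$ solves the linearized equation. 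These exponentials and the constant state $\mathbf{g}(0)$ of \thmref{Absorbing_set} will serve as barriers throughout.

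\emph{The upper bound, the back behavior, and the monotonicity at the front.} Let $(\mathbf{p},c)$ be a traveling wave and $\mathbf{u}(t,x):=\mathbf{p}(x-ct)$, a bounded positive classical solution of $(E_{KPP})$ on $(0,+\infty)\times\mathbb{R}$. Since $\sup_x u_i(t,x)=\sup_\xi p_i(\xi)$ does not depend on $t$, \thmref{Absorbing_set} gives $\sup_\xi p_i(\xi)=\limsup_{t\to+\infty}\sup_x u_i(t,x)\leq g_i(0)$, i.e. $\mathbf{p}\leq\mathbf{g}(0)$. Because $c\geq c^\star>0$ once the non-existence below $c^\star$ is proved, for each $i$ the quantity $\inf_{x\in(-1,1)}u_i(t,x)=\inf_{\xi\in(-1,1)-ct}p_i(\xi)$ is the infimum of $p_i$ over a unit interval sliding to $-\infty$, so the persistence statement of \thmref{Extinction_or_persistence} yields $\liminf_{\xi\to-\infty}p_i(\xi)\geq\nu$. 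Finally, since $\mathbf{c}(\mathsf{K})\subset\mathsf{K}$, the profile is a subsolution of the cooperative linear system, $\mathbf{D}\mathbf{p}''+c\mathbf{p}'+\mathbf{L}\mathbf{p}=\mathbf{c}[\mathbf{p}]\circ\mathbf{p}\geq\mathbf{0}$; comparing $\mathbf{p}$ near $+\infty$ with the explicit solution $\kappa e^{-\lambda_c\xi}\mathbf{n}(\lambda_c)$, after gauging by the critical exponent so that the maximum principle becomes available on the half-line, one gets an (at worst logarithmically corrected) exponential bound $\mathbf{p}(\xi)\leq\kappa(1+|\xi|)e^{-\lambda_c\xi}\mathbf{n}(\lambda_c)$; feeding this into the traveling-wave ODE and running a variation-of-constants / stable-manifold analysis near the rest point $\mathbf{0}$ gives the leading asymptotics $\mathbf{p}(\xi)\sim\gamma\,\ell(\xi)\,e^{-\lambda_c\xi}\mathbf{n}(\lambda_c)$ with $\gamma>0$ and $\ell(\xi)\in\{1,\xi\}$, whence $\mathbf{p}'(\xi)\ll\mathbf{0}$ for $\xi$ large. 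The technical care here lies in absorbing the perturbative terms, which are $O(\mathbf{p})$, and in excluding a degenerate approach to $\mathbf{0}$ along a center direction when $\mathbf{L}$ is singular; this is where $\lambda_{PF}(\mathbf{L})>0$ and the strict positivity of \thmref{Strong_positivity} intervene.

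\emph{Non-existence for $c<c^\star$.} Suppose $(\mathbf{p},c)$ is a traveling wave with $c<c^\star$. As above $\mathbf{p}$ is a positive subsolution of $\mathbf{D}\mathbf{q}''+c\mathbf{q}'+\mathbf{L}\mathbf{q}=\mathbf{0}$ on $\mathbb{R}$; since $\mathbf{c}\in\mathscr{C}^1$, $\mathbf{c}(\mathbf{0})=\mathbf{0}$ and $\mathbf{p}(\xi)\to\mathbf{0}$, for every $\varepsilon>0$ there is $\xi_0$ with $c_i(\mathbf{p}(\xi))<\varepsilon$ on $(\xi_0,+\infty)$, hence $\mathbf{D}\mathbf{p}''+c\mathbf{p}'+(\mathbf{L}-\varepsilon\mathbf{I})\mathbf{p}\leq\mathbf{0}$ there: $\mathbf{p}$ is a bounded positive supersolution, tending to $\mathbf{0}$, of the cooperative operator with matrix $\mathbf{L}-\varepsilon\mathbf{I}$ on $(\xi_0,+\infty)$. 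Writing $\mu_\varepsilon:=\mu-\varepsilon$ and noting that $c^\star_\varepsilon:=\min_{\lambda>0}\mu_\varepsilon(\lambda)/\lambda\to c^\star$ as $\varepsilon\to0^+$, I fix $\varepsilon$ small enough that $c<c^\star_\varepsilon$, i.e. $\mu_\varepsilon(\lambda)>c\lambda$ for all $\lambda\geq0$. Evaluating on the exponentials $e^{-\lambda\xi}\mathbf{n}(\lambda)$ shows that the generalized principal eigenvalue of $-(\mathbf{D}\partial_{xx}+c\partial_x+(\mathbf{L}-\varepsilon\mathbf{I}))$ on $\mathbb{R}$ equals $-\inf_{\lambda\in\mathbb{R}}(\mu_\varepsilon(\lambda)-c\lambda)<0$; by domain monotonicity the Dirichlet principal eigenvalue of this operator on a long enough bounded subinterval $J\subset(\xi_0,+\infty)$ is then also negative. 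But a domain carrying a componentwise-positive supersolution of a cooperative operator has a nonnegative Dirichlet principal eigenvalue, and $\mathbf{p}|_J$ is such a supersolution --- a contradiction. Hence every traveling wave has speed $c\geq c^\star$, and $c^\star>0$ is a genuine threshold.

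\emph{Existence for $c\geq c^\star$, and the main obstacle.} Assume now $D\mathbf{c}(\mathbf{v})\geq\mathbf{0}$ on $\mathsf{K}$, so that $\mathbf{u}'=-\mathbf{c}[\mathbf{u}]\circ\mathbf{u}$ is competitive, and fix $c\geq c^\star$. The plan is to solve, on truncated intervals $(-a,a)$, the elliptic traveling-wave system with boundary data going from the constant positive steady state of \thmref{Existence_of_steady_states} at $-a$ to $\mathbf{0}$ at $a$, sandwiched between the supersolution $\xi\mapsto\min(\mathbf{g}(0),\kappa e^{-\lambda_c\xi}\mathbf{n}(\lambda_c))$ (from \thmref{Absorbing_set} and Step~0) and a positive subsolution obtained by truncating $\xi\mapsto(\kappa e^{-\lambda_c\xi}-\kappa'e^{-\lambda'\xi})\mathbf{n}(\lambda_c)$ with $\lambda'\in(\lambda_c,\min(2\lambda_c,\lambda_c'))$ (with the $c=c^\star$ logarithmic modification); the competitivity together with essential nonnegativity of $\mathbf{L}$ provides just enough order structure --- a comparison principle for the competitive subsystem, monotonicity of the truncated solutions --- to solve the truncated problem by a fixed-point or degree argument, after which Schauder interior estimates allow passing to a limit $\mathbf{p}$ along $a\to+\infty$, a translation fixing a level set so that $\mathbf{p}\not\equiv\mathbf{0}$. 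Then \thmref{Strong_positivity} makes $\mathbf{p}$ positive, the exponential supersolution forces $\mathbf{p}(+\infty)=\mathbf{0}$, and the persistence statement of \thmref{Extinction_or_persistence} gives $\liminf_{-\infty}p_i\geq\nu>0$, so $(\mathbf{p},c)$ is a traveling wave. The heart of the theorem is this last construction: $(E_{KPP})$ is non-cooperative, monotone iteration is unavailable, and the delicate points are to arrange the truncated problems so that the barriers genuinely trap a solution in the absence of a global comparison principle, to keep the limiting profile from collapsing onto $\mathbf{0}$ (hence the normalization and the lower barrier near the front), and to treat the borderline speed $c=c^\star$, where the coalescence of $\lambda_c$ and $\lambda_c'$ forces the logarithmically corrected sub-solution.
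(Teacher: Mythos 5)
Your dispersion-relation setup, the upper bound via \thmref{Absorbing_set}, and the back persistence via \thmref{Extinction_or_persistence} all match the paper. Your non-existence argument for $c<c^\star$ is a legitimate alternative: you perturb the cooperative linearization by $-\varepsilon\mathbf{I}$, note that since $\mathbf{p}\to\mathbf{0}$ at $+\infty$ the profile is a positive supersolution of the perturbed operator on a half-line, and contradict the negativity of the Dirichlet principal eigenvalue on a long subinterval. The paper's \lemref{Linearized_equation_at_the_edge_of_the_fronts} instead blows up the profile near $+\infty$, extracting via the Harnack inequality a positive solution of the linearized traveling-wave equation and contradicting \corref{Existence_of_solution_of_the_linearized_eq_for_the_waves}. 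Both routes are correct and rely on \thmref{Generalized_principal_eigenvalue}; the paper reuses its blow-up for \propref{Monotonicity_at_the_edge_of_the_fronts} and \lemref{Null_liminf_enforces_null_lim}, so the extra machinery is not wasted, whereas your principal-eigenvalue route is self-contained but used only once.

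There are two real gaps. First, your sub-solution $\xi\mapsto\left(\kappa e^{-\lambda_c\xi}-\kappa'e^{-\lambda'\xi}\right)\mathbf{n}(\lambda_c)$ puts the \emph{same} Perron--Frobenius vector $\mathbf{n}(\lambda_c)$ in front of both exponentials. For $N\geq2$ and $\mathbf{D}$ not a scalar multiple of $\mathbf{I}$, $\mathbf{n}(\lambda_c)$ is not an eigenvector of $\left(\lambda'\right)^{2}\mathbf{D}+\mathbf{L}$, so applying the traveling-wave operator to the second term produces $\left(\left(\lambda'\right)^{2}\mathbf{D}-c\lambda'\mathbf{I}+\mathbf{L}\right)\mathbf{n}(\lambda_c)$, which is not colinear with $\mathbf{n}(\lambda_c)$ and has no controlled sign: the sub-solution inequality is therefore not established. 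The paper's \propref{Sub-solution} uses the exponent-matched vector $\mathbf{n}_{\mu_1+\varepsilon}$ in the second term precisely for this reason, and even then the verification is intricate. Relatedly, the paper does not build the critical wave at $c=c^\star$ by a logarithmic modification of the sub-solution (which, for systems, would involve generalized Perron eigenvectors) but as a limit of supercritical waves $\mathbf{p}_{c_n}$ after a normalization fixing a level set, using \lemref{Uniform_lower_bound_for_positive_infima} to force $\mathbf{p}_{c^\star}\to\mathbf{0}$ at $+\infty$. Second, your monotonicity argument is only a sketch and its key step is not sound: a gauge transform does not ``make the maximum principle available on the half-line'' because there is no boundary condition at $+\infty$ and $\mathbf{p}$ is only a sub-solution of the cooperative linearization, not a supersolution. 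The paper's \propref{Monotonicity_at_the_edge_of_the_fronts} instead works with the logarithmic derivative $\left(p_i'/p_i\right)_i$, whose global boundedness follows from the Harnack inequality, and blows up along a sequence attaining its $\limsup$ at $+\infty$, forcing a pure exponential with strictly negative rate --- a concrete substitute for the stable-manifold analysis you allude to.
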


When traveling waves exist for all speeds $c\geq c^{\star}$, $c^{\star}$
is called the minimal wave speed.
\begin{thm}
\label{thm:Spreading_speed} {[}Spreading speed{]} Assume $\lambda_{PF}\left(\mathbf{L}\right)>0$.
For all $x_{0}\in\mathbb{R}$ and all bounded nonnegative nonzero
$\mathbf{v}\in\mathscr{C}\left(\mathbb{R},\mathbb{R}^{N}\right)$,
the classical solution $\mathbf{u}$ of $\left(E_{KPP}\right)$ set
in $\left(0,+\infty\right)\times\mathbb{R}$ with initial data $\mathbf{v}\mathbf{1}_{\left(-\infty,x_{0}\right)}$
satisfies
\[
\left(\lim_{t\to+\infty}\sup_{x\in\left(y,+\infty\right)}u_{i}\left(t,x+ct\right)\right)_{i\in\left[N\right]}=\mathbf{0}\text{ for all }c\in\left(c^{\star},+\infty\right)\text{ and all }y\in\mathbb{R},
\]
\[
\left(\liminf_{t\to+\infty}\inf_{x\in\left[-R,R\right]}u_{i}\left(t,x+ct\right)\right)_{i\in\left[N\right]}\in\mathsf{K}^{++}\text{ for all }c\in[0,c^{\star})\text{ and all }R>0.
\]
\end{thm}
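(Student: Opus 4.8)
I would treat the two displayed limits separately: the first one---the front cannot run faster than $c^{\star}$---by a linear super-solution, and the second one---the population invades at every speed below $c^{\star}$---by a linear sub-solution, the latter being the genuinely difficult half because $\left(E_{KPP}\right)$ lacks a comparison principle.

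For the upper estimate, I would first recall from the analysis behind \thmref{Traveling_waves} that $c^{\star}=\min_{\lambda>0}\lambda_{PF}\left(\lambda^{2}\mathbf{D}+\mathbf{L}\right)/\lambda$ and that this minimum is attained and positive. Assumptions $\left(H_{2}\right)$, $\left(H_{3}\right)$ make $\mathsf{K}$ invariant and give $\mathbf{c}\left[\mathbf{u}\right]\circ\mathbf{u}\geq\mathbf{0}$, so $\mathbf{u}\geq\mathbf{0}$ and $\mathbf{u}$ is a sub-solution of the \emph{cooperative} linear system $\partial_{t}\mathbf{w}-\mathbf{D}\partial_{xx}\mathbf{w}=\mathbf{L}\mathbf{w}$; by its comparison principle, $\mathbf{0}\leq\mathbf{u}\leq\mathbf{w}$, where $\mathbf{w}$ solves that linear system with datum $\mathbf{v}\mathbf{1}_{\left(-\infty,x_{0}\right)}$. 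Given $c>c^{\star}$, pick $\lambda>0$ with $\lambda_{PF}\left(\lambda^{2}\mathbf{D}+\mathbf{L}\right)<c\lambda$; the function $\left(t,x\right)\mapsto A\,e^{-\lambda\left(x-x_{0}\right)+\lambda_{PF}\left(\lambda^{2}\mathbf{D}+\mathbf{L}\right)t}\mathbf{n}_{PF}\left(\lambda^{2}\mathbf{D}+\mathbf{L}\right)$ solves the linear system, and for $A$ large it dominates $\mathbf{v}\mathbf{1}_{\left(-\infty,x_{0}\right)}$ at $t=0$ (using boundedness of $\mathbf{v}$ and positivity of the eigenvector), hence dominates $\mathbf{w}\geq\mathbf{u}$ for all times. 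Evaluating at $x+ct$ with $x\geq y$ produces a bound of order $e^{\left(\lambda_{PF}\left(\lambda^{2}\mathbf{D}+\mathbf{L}\right)-c\lambda\right)t}$, which tends to $0$; this is the first limit.

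For the lower estimate, the case $c=0$ is immediate: by \thmref{Strong_positivity} the solution is positive in $\left(0,+\infty\right)\times\mathbb{R}$, it is bounded by \thmref{Absorbing_set}, so \thmref{Extinction_or_persistence}(ii) applies and gives the stated lower bound on every bounded interval. For $c\in\left(0,c^{\star}\right)$ I would argue by contradiction. If $\liminf_{t\to+\infty}\inf_{\left|x\right|\leq R}u_{i_{0}}\left(t,x+ct\right)=0$ for some $i_{0}$ and $R$, choose $t_{n}\to+\infty$ and $x_{n}\in\left[-R,R\right]$ with $u_{i_{0}}\left(t_{n},x_{n}+ct_{n}\right)\to0$; by interior parabolic estimates the translated solutions $\left(t,x\right)\mapsto\mathbf{u}\left(t+t_{n},x+ct_{n}\right)$ converge, along a subsequence, locally uniformly to a bounded nonnegative entire solution $\mathbf{U}$ of $\left(E_{KPP}\right)$ whose $i_{0}$-th component vanishes at a point of $\left\{0\right\}\times\mathbb{R}$; \thmref{Strong_positivity} together with backward uniqueness for parabolic systems then forces $\mathbf{U}\equiv\mathbf{0}$, that is, $\mathbf{u}\left(\cdot+t_{n},\cdot+ct_{n}\right)\to\mathbf{0}$ locally uniformly. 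To contradict this I would use the KPP structure near $\mathbf{0}$: for $\epsilon>0$ set $\mathbf{a}\left(\epsilon\right)=\left(\max_{\left[0,\epsilon\right]^{N}}c_{i}\right)_{i\in\left[N\right]}$, which tends to $\mathbf{0}$ by $\left(H_{3}\right)$; wherever $\mathbf{0}\leq\mathbf{u}\leq\epsilon\mathbf{1}_{N,1}$, the solution $\mathbf{u}$ is a super-solution of the cooperative linear system $\partial_{t}\mathbf{w}-\mathbf{D}\partial_{xx}\mathbf{w}=\left(\mathbf{L}-\text{diag}\,\mathbf{a}\left(\epsilon\right)\right)\mathbf{w}$, whose linear spreading speed $c^{\star}_{\epsilon}=\min_{\lambda>0}\lambda_{PF}\left(\lambda^{2}\mathbf{D}+\mathbf{L}-\text{diag}\,\mathbf{a}\left(\epsilon\right)\right)/\lambda$ converges to $c^{\star}$ as $\epsilon\to0$; fix $\epsilon$ small enough that $c<c^{\star}_{\epsilon}$. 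Because $c<c^{\star}_{\epsilon}$, on a long enough interval $\left(-\ell,\ell\right)$ the operator $\mathbf{D}\partial_{\xi\xi}+c\partial_{\xi}+\mathbf{L}-\text{diag}\,\mathbf{a}\left(\epsilon\right)$ has a positive Dirichlet principal eigenvalue, and a small multiple of its principal eigenfunction, travelling at speed $c$ and extended by $\mathbf{0}$ outside its support, is a compactly supported generalised sub-solution $\underline{\mathbf{w}}$ of that cooperative system, with amplitude $\leq\epsilon$ and non-decreasing in time (it is stationary in the frame of speed $c$). One then wants to place a copy of $\underline{\mathbf{w}}$ under $\mathbf{u}$ and propagate the inequality forward; the resulting lower bound $u_{i}\left(t,x+ct\right)\geq\mathrm{const}>0$ on $\left[-R,R\right]$ for all large $t$ contradicts $\mathbf{u}\left(\cdot+t_{n},\cdot+ct_{n}\right)\to\mathbf{0}$.

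The main obstacle is precisely propagating $\mathbf{u}\geq\underline{\mathbf{w}}$: since $\left(E_{KPP}\right)$ is not cooperative, a naive first-contact argument fails, because at a contact point the component attaining equality is $\leq\epsilon$ while the others need not be, so $\mathbf{u}$ need not be a super-solution of the comparison system there. I expect this to be circumvented by running the sub-solution argument inside the region $\left\{x:\mathbf{u}\left(t,x\right)\leq\epsilon\mathbf{1}_{N,1}\right\}$---ahead of the invasion front, the only place where $\mathbf{u}$ genuinely is a super-solution of the cooperative system above---and by using the upper estimate already proved (together with \thmref{Strong_positivity}, \thmref{Absorbing_set} and \thmref{Extinction_or_persistence}(ii)) to locate and control the width of that region relative to the travelling sub-solution, so that the latter can indeed be kept below $\mathbf{u}$ on the relevant part of space--time. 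Handling this moving region and the interplay between the speed $c^{\star}_{\epsilon}>c$ and the threshold $\epsilon$ is the technical heart of the argument. Once the lower estimate holds for every $c\in\left[0,c^{\star}\right)$ and the upper estimate for every $c\in\left(c^{\star},+\infty\right)$, the two displayed identities---and the fact that $c^{\star}$ is the spreading speed---follow at once.
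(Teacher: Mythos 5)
Your upper estimate is correct and follows essentially the same route as the paper: treat $\mathbf{u}$ as a sub-solution of the cooperative linear system $\partial_{t}\mathbf{w}-\mathbf{D}\partial_{xx}\mathbf{w}=\mathbf{L}\mathbf{w}$ and compare with an exponential traveling solution thereof (the paper uses the exponential at rate $\mu_{c^{\star}}$ traveling at $c^{\star}$, you use one at rate $\lambda$ tailored to $c>c^{\star}$; both work). Your treatment of the lower estimate for $c=0$ via \thmref{Strong_positivity}, \thmref{Absorbing_set} and \thmref{Extinction_or_persistence}(ii) is also fine, and your contradiction setup for $c\in(0,c^{\star})$, including the reduction to an entire solution $\mathbf{U}$ forced to be $\mathbf{0}$ (iterated strong positivity suffices, no backward-uniqueness theorem is really needed), is sound.

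The genuine gap is precisely the one you flag: you never actually propagate $\mathbf{u}\geq\underline{\mathbf{w}}$, and a single compactly supported traveling sub-solution cannot be compared against $\mathbf{u}$ by a first-contact argument because $\left(E_{KPP}\right)$ is non-cooperative. At the first-contact time and point, equality holds in one component while the other components of $\mathbf{u}$ are uncontrolled, so $\mathbf{c}\left(\mathbf{u}\right)$ need not satisfy the pointwise bound that makes $\mathbf{u}$ a super-solution of the penalised cooperative system. Your idea of localising to the set where $\mathbf{u}\leq\epsilon\mathbf{1}_{N,1}$ is the right intuition but is left as a plan, not an argument. The paper closes this gap by a different route: it passes to the moving frame $\mathbf{u}_{c}(t,y)=\mathbf{u}(t,y+ct)$, observes that $\lambda_{1}\bigl(-\mathbf{D}\tfrac{\mathrm{d}^{2}}{\mathrm{d}\xi^{2}}-c\tfrac{\mathrm{d}}{\mathrm{d}\xi}-\mathbf{L}\bigr)<0$ for $c<c^{\star}$ (\lemref{Value_of_the_generalized_principal_eigenvalue_with_drift}), and then re-runs verbatim the persistence proof of \propref{Persistence} with the drifted operator. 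That proof does not attempt a single global comparison; instead it iterates a local-in-time Dirichlet sub-solution argument (\lemref{Instability_of_0}, on a fixed finite interval, valid only as long as \emph{all} components stay below the threshold $\alpha_{c}$) and interlaces it with F\"{o}ldes--Pol\'{a}\v{c}ik's Harnack inequality to transfer a ``$\max\geq\alpha_{c}$'' conclusion into a ``$\min\geq\nu_{c}$'' conclusion. The cylinder sub-solution vanishes on its lateral boundary, so the first-contact point is interior and, crucially, the hypothesis of \lemref{Instability_of_0} already controls every component of $\mathbf{u}$ on the whole space-time cylinder, so the non-cooperative contact difficulty simply does not arise; whenever that control fails it is because $\mathbf{u}$ has already become large, which is the desired conclusion. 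If you want to push your traveling-sub-solution strategy through, you will essentially need to reproduce this local-in-time iteration with Harnack, which is the substance of \propref{Persistence}; as written, the step is missing.
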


Of course, by well-posedness of $\left(E_{KPP}\right)$, the solution
with initial data $x\mapsto\mathbf{v}\left(-x\right)\mathbf{1}_{\left(-x_{0},+\infty\right)}$
is precisely $\left(t,x\right)\mapsto\mathbf{u}\left(t,-x\right)$
($\mathbf{u}$ being the solution with initial data $\mathbf{v}\mathbf{1}_{\left(-\infty,x_{0}\right)}$).
This gives the expected symmetrical spreading result (the solution
with initial data $x\mapsto\mathbf{v}\left(-x\right)\mathbf{1}_{\left(-x_{0},+\infty\right)}$
spreads on the left at speed $-c^{\star}$). Moreover, since these
two spreading results with front-like initial data actually cover
compactly supported $\mathbf{v}$, we also get straightforwardly the
spreading result for compactly supported initial data (the solution
spreads on the right at speed $c^{\star}$ and on the left at speed
$-c^{\star}$). 

Consequently, $c^{\star}$ is also called the spreading speed associated
with front-like or compactly supported initial data. We recall that
for generic KPP problems these two spreading speeds are different
as soon as the spatial domain is multidimensional. In such a case,
the spreading speed associated with front-like initial data generically
coincides with the minimal wave speed whereas the spreading speed
associated with compactly supported initial data is smaller.
\begin{thm}
\label{thm:Characterization_minimal_speed} {[}Characterization and
estimates for $c^{\star}${]} Assume $\lambda_{PF}\left(\mathbf{L}\right)>0$.
We have
\[
c^{\star}=\min_{\mu>0}\frac{\lambda_{PF}\left(\mu^{2}\mathbf{D}+\mathbf{L}\right)}{\mu}
\]
 and this minimum is attained at a unique $\mu_{c^{\star}}>0$.

Consequently, if we assume (without loss of generality)
\[
d_{1}\leq d_{2}\leq\text{\dots}\leq d_{N},
\]
the following estimates hold.
\begin{enumerate}[label=\roman*)]
\item  We have
\[
2\sqrt{d_{1}\lambda_{PF}\left(\mathbf{L}\right)}\leq c^{\star}\leq2\sqrt{d_{N}\lambda_{PF}\left(\mathbf{L}\right)}.
\]
If $d_{1}<d_{N}$, both inequalities are strict. If $d_{1}=d_{N}$,
both inequalities are equalities.
\item For all $i\in\left[N\right]$ such that $l_{i,i}>0$, we have
\[
c^{\star}>2\sqrt{d_{i}l_{i,i}}.
\]
\item Let $\mathbf{r}\in\mathbb{R}^{N}$ and $\mathbf{M}\in\mathsf{M}$
be given by the unique decomposition of $\mathbf{L}$ of the form
\[
\mathbf{L}=\text{diag}\mathbf{r}+\mathbf{M}\text{ with }\mathbf{M}^{T}\mathbf{1}_{N,1}=\mathbf{0}.
\]
Let $\left(\left\langle d\right\rangle ,\left\langle r\right\rangle \right)\in\left(0,+\infty\right)\times\mathbb{R}$
be defined as
\[
\left\{ \begin{matrix}\left\langle d\right\rangle =\frac{\mathbf{d}^{T}\mathbf{n}_{PF}\left(\mu_{c^{\star}}^{2}\mathbf{D}+\mathbf{L}\right)}{\mathbf{1}_{1,N}\mathbf{n}_{PF}\left(\mu_{c^{\star}}^{2}\mathbf{D}+\mathbf{L}\right)},\\
\left\langle r\right\rangle =\frac{\mathbf{r}^{T}\mathbf{n}_{PF}\left(\mu_{c^{\star}}^{2}\mathbf{D}+\mathbf{L}\right)}{\mathbf{1}_{1,N}\mathbf{n}_{PF}\left(\mu_{c^{\star}}^{2}\mathbf{D}+\mathbf{L}\right)}.
\end{matrix}\right.
\]
If $\left\langle r\right\rangle \geq0$, then
\[
c^{\star}\geq2\sqrt{\left\langle d\right\rangle \left\langle r\right\rangle }.
\]
\end{enumerate}
\end{thm}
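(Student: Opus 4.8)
The plan is to study the scalar function $\gamma(\mu)=\lambda_{PF}(\mu^{2}\mathbf{D}+\mathbf{L})/\mu$ on $(0,+\infty)$, to prove it attains its infimum at a unique point $\mu_{c^{\star}}$, to identify that infimum with $c^{\star}$, and then to read off the three estimates. On the analytic side: by $(H_{1})$ the matrix $\mu^{2}\mathbf{D}+\mathbf{L}$ is essentially nonnegative and irreducible for every $\mu$, so $\lambda_{PF}(\mu^{2}\mathbf{D}+\mathbf{L})$ is a simple isolated eigenvalue depending real-analytically on $\mu$ and $\gamma$ is smooth; monotonicity of $\lambda_{PF}$ for the entrywise order together with $\lambda_{PF}(\mathbf{A}+s\mathbf{I}_{N})=\lambda_{PF}(\mathbf{A})+s$ yields the sandwich
\[
\mu^{2}d_{1}+\lambda_{PF}(\mathbf{L})\leq\lambda_{PF}(\mu^{2}\mathbf{D}+\mathbf{L})\leq\mu^{2}d_{N}+\lambda_{PF}(\mathbf{L}),
\]
so that $\gamma(\mu)\to+\infty$ both as $\mu\to0^{+}$ (since $\lambda_{PF}(\mathbf{L})>0$) and as $\mu\to+\infty$; hence $\gamma$ attains a minimum at some $\mu_{c^{\star}}>0$. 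For uniqueness I would invoke the convexity of $\mathbf{v}\mapsto\lambda_{PF}(\mathbf{L}+\text{diag}\,\mathbf{v})$, which follows from the Collatz--Wielandt formula $\lambda_{PF}(\mathbf{L}+\text{diag}\,\mathbf{v})=\inf_{\mathbf{x}\in\mathsf{K}^{++}}\max_{i\in[N]}\bigl((\mathbf{L}\mathbf{x})_{i}/x_{i}+v_{i}\bigr)$ (an infimum of functions affine in $\mathbf{v}$): restricting to the ray $\mathbf{v}=\kappa\mathbf{d}$, the function $\phi(\kappa)=\lambda_{PF}(\mathbf{L}+\kappa\mathbf{D})$ is convex and increasing with $\phi(0)=\lambda_{PF}(\mathbf{L})>0$ and $\phi'(0^{+})>0$ (the perturbation formula gives $\phi'=\mathbf{w}^{T}\mathbf{D}\mathbf{n}/\mathbf{w}^{T}\mathbf{n}$ with Perron vectors $\mathbf{w},\mathbf{n}\gg\mathbf{0}$); then $\gamma(\mu)=\phi(\mu^{2})/\mu$, and with $h(\kappa)=2\kappa\phi'(\kappa)-\phi(\kappa)$ one has $h(0)<0$, $h'=\phi'+2\kappa\phi''>0$ and $h(\kappa)\geq\kappa\phi'(0^{+})-\phi(0)\to+\infty$, so $h$ has a single zero $\kappa^{\star}$, the map $\kappa\mapsto\phi(\kappa)/\sqrt{\kappa}$ decreases then increases, and $\mu_{c^{\star}}=\sqrt{\kappa^{\star}}$ is the unique minimizer of $\gamma$.

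The identification $c^{\star}=\min\gamma$ is essentially a by-product of the proofs of \thmref{Traveling_waves} and \thmref{Spreading_speed}. For $c^{\star}\geq\min\gamma$: by $(H_{2})$ one has $\mathbf{c}[\mathbf{p}]\circ\mathbf{p}\geq\mathbf{0}$, so the profile of any traveling wave is a nonnegative sub-solution of the \emph{cooperative} linear system $-\mathbf{D}\mathbf{q}''-c\mathbf{q}'=\mathbf{L}\mathbf{q}$ that vanishes at $+\infty$, which forces $c\geq\min\gamma$ (this is the linear obstruction behind the non-existence part of \thmref{Traveling_waves}, where the dispersion relation $c\mu=\lambda_{PF}(\mu^{2}\mathbf{D}+\mathbf{L})$ must admit a positive root). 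For $c^{\star}\leq\min\gamma$: for every $\mu>0$, $(t,x)\mapsto\mathbf{n}_{PF}(\mu^{2}\mathbf{D}+\mathbf{L})\,e^{-\mu(x-\gamma(\mu)t)}$ solves that same cooperative linear system, while (again by $(H_{2})$) any bounded nonnegative solution of $(E_{KPP})$ is a sub-solution of it; since this linear system obeys the comparison principle, taking $\mu=\mu_{c^{\star}}$ and a front-like initial datum bounds the solution from above by a profile travelling at speed $\gamma(\mu_{c^{\star}})=\min\gamma$, so its spreading speed is $\leq\min\gamma$ and \thmref{Spreading_speed} gives $c^{\star}\leq\min\gamma$. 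Hence $c^{\star}=\gamma(\mu_{c^{\star}})$ and $\lambda_{PF}(\mu_{c^{\star}}^{2}\mathbf{D}+\mathbf{L})=c^{\star}\mu_{c^{\star}}$.

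The three estimates then follow. For (i), divide the sandwich by $\mu$, apply the arithmetic--geometric mean inequality to each side and minimize over $\mu$; strict monotonicity of $\lambda_{PF}$ on irreducible essentially nonnegative matrices makes both sandwich inequalities strict when $d_{1}<d_{N}$ (hence, the minimum being attained, both conclusions strict), whereas $d_{1}=d_{N}$ forces $\mathbf{D}=d_{1}\mathbf{I}_{N}$, $\gamma(\mu)=\mu d_{1}+\lambda_{PF}(\mathbf{L})/\mu$, and both become equalities. For (ii), I would use that any irreducible $\mathbf{M}\in\mathsf{M}$ satisfies $\lambda_{PF}(\mathbf{M})>\max_{j}m_{j,j}$: since $N\geq2$, each row of the nonnegative irreducible matrix $\mathbf{M}-\min_{k}m_{k,k}\mathbf{I}_{N}$ has a positive off-diagonal entry, so its Perron eigenvector relation gives $\rho(\mathbf{M}-\min_{k}m_{k,k}\mathbf{I}_{N})>m_{j,j}-\min_{k}m_{k,k}$ for every $j$; applied to $\mathbf{M}=\mu^{2}\mathbf{D}+\mathbf{L}$ this yields $\gamma(\mu)>\mu d_{i}+l_{i,i}/\mu\geq2\sqrt{d_{i}l_{i,i}}$ whenever $l_{i,i}>0$, hence $c^{\star}>2\sqrt{d_{i}l_{i,i}}$. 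For (iii), set $\mathbf{n}=\mathbf{n}_{PF}(\mu_{c^{\star}}^{2}\mathbf{D}+\mathbf{L})$, so that $(\mu_{c^{\star}}^{2}\mathbf{D}+\mathbf{L})\mathbf{n}=c^{\star}\mu_{c^{\star}}\mathbf{n}$; left-multiplying by $\mathbf{1}_{1,N}$, using $\mathbf{1}_{1,N}\mathbf{L}=\mathbf{r}^{T}$ (the relation $\mathbf{L}^{T}\mathbf{1}_{N,1}=\mathbf{r}$ implicit in the decomposition) and dividing by $\mu_{c^{\star}}\,\mathbf{1}_{1,N}\mathbf{n}>0$ gives $c^{\star}=\mu_{c^{\star}}\langle d\rangle+\langle r\rangle/\mu_{c^{\star}}$, so that $\langle r\rangle\geq0$ yields $c^{\star}\geq2\sqrt{\langle d\rangle\langle r\rangle}$ by the arithmetic--geometric mean inequality.

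I expect the two delicate points to be the uniqueness of $\mu_{c^{\star}}$, which rests on the classical but non-elementary convexity of the Perron root under diagonal perturbation (monotonicity alone is insufficient), and the identification $c^{\star}=\min\gamma$, for which — when $\mathbf{c}$ is not competitive — the upper bound is not accessible through a traveling-wave construction and has to be squeezed out of \thmref{Spreading_speed} via comparison with the cooperative linearization.
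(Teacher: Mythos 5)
Your plan parallels the paper's: derive the sandwich, establish a unique interior minimizer by a convexity argument, identify $c^{\star}$ with $\min_{\mu>0}\gamma(\mu)$, and read off the three estimates. Items (i) and (iii) are proved as in the paper (the paper phrases (iii) via the discriminant of $\langle d\rangle\mu^{2}-c^{\star}\mu+\langle r\rangle$ rather than AM--GM, but these coincide). For (ii) you replace the paper's direct manipulation of the Perron eigenvector relation by the general fact that $\rho(\mathbf{A})>\max_{j}a_{j,j}$ for every irreducible nonnegative $N\times N$ matrix with $N\geq2$; that is a valid and slightly cleaner variant.

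The genuine gap is in the claimed proof of convexity of $\phi(\kappa)=\lambda_{PF}(\mathbf{L}+\kappa\mathbf{D})$, which is the hinge of the uniqueness argument. You assert that the Collatz--Wielandt formula $\lambda_{PF}(\mathbf{L}+\text{diag}\,\mathbf{v})=\inf_{\mathbf{x}\gg\mathbf{0}}\max_{i}\bigl((\mathbf{L}\mathbf{x})_{i}/x_{i}+v_{i}\bigr)$ exhibits the left-hand side as ``an infimum of functions affine in $\mathbf{v}$''. It does not: for each fixed $\mathbf{x}$ the quantity $\max_{i}\bigl((\mathbf{L}\mathbf{x})_{i}/x_{i}+v_{i}\bigr)$ is a maximum of affine functions of $\mathbf{v}$, hence convex, not affine, and an infimum of convex functions is in general neither convex nor concave. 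So Collatz--Wielandt by itself does not deliver the convexity you need. The fact is true --- it is classical (Kingman's log-convexity result, Cohen's convexity of the Perron root under diagonal perturbation) and in the form used here it is exactly Nussbaum's theorem \cite{Nussbaum_1986}, which the paper invokes at precisely this point to obtain \emph{strict} convexity of $\mu\mapsto\lambda_{PF}(\mu\mathbf{D}+\mu^{-1}\mathbf{L})$ directly, giving uniqueness at once. If you cite one of those results instead, your subsequent argument with $h(\kappa)=2\kappa\phi'(\kappa)-\phi(\kappa)$ does carry through and is in fact mildly more economical than the paper's, since it needs only ordinary convexity of $\phi$ plus $\phi'(0^{+})>0$ rather than strict convexity; as written, though, the convexity step has no justification. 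A secondary remark: the identification $c^{\star}=\min\gamma$ is definitional in the paper ($c^{\star}$ is constructed as $\min_{\mu>0}(-\kappa_{\mu}/\mu)$ in \lemref{Definition_c_star} before the wave and spreading theorems are established), so re-deriving it from \thmref{Traveling_waves} and \thmref{Spreading_speed} as you do is circular within the paper's logical order, though it is fine as an a posteriori consistency check.
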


\subsubsection{General definition of multidimensional KPP nonlinearity}

The set of assumptions $\left(H_{1}\right)$\textendash $\left(H_{4}\right)$
supplemented with $\lambda_{PF}\left(\mathbf{L}\right)>0$ can be
seen as a particular case of the following definition, which we expect
to be optimal with respect to the preceding collection of theorems.
\begin{defn}
A nonlinear function $\mathbf{f}\in\mathscr{C}^{1}\left(\mathbb{R}^{N},\mathbb{R}^{N}\right)$
is a \textit{KPP nonlinearity} if:
\begin{enumerate}
\item $\mathbf{f}\left(\mathbf{0}\right)=\mathbf{0}$;
\item $D\mathbf{f}\left(\mathbf{0}\right)$ is essentially nonnegative,
irreducible and $\lambda_{PF}\left(D\mathbf{f}\left(\mathbf{0}\right)\right)>0$;
\item $D\mathbf{f}\left(\mathbf{0}\right)\mathbf{v}\geq\mathbf{f}\left(\mathbf{v}\right)$
for all $\mathbf{v}\in\mathsf{K}$;
\item the semiflow induced by $\partial_{t}\mathbf{u}=\mathbf{D}\partial_{xx}\mathbf{u}+\mathbf{f}\left[\mathbf{u}\right]$
with globally bounded, sufficiently regular initial data admits an
absorbing set bounded in $\mathscr{L}^{\infty}\left(\mathbb{R}\right)$.
\end{enumerate}
\end{defn}

Let us explain more precisely how this definition differs from $\left(H_{1}\right)$\textendash $\left(H_{4}\right)$
supplemented with $\lambda_{PF}\left(\mathbf{L}\right)>0$. Defining
\[
\mathbf{L}=D\mathbf{f}\left(\mathbf{0}\right),
\]
\[
\mathbf{c}:\mathbf{v}\mapsto\left\{ \begin{matrix}\left(\frac{1}{v_{i}}\left(\left(\mathbf{L}\mathbf{v}\right)_{i}-f_{i}\left(\mathbf{v}\right)\right)\right)_{i\in\left[N\right]} & \text{if }\mathbf{v}\neq\mathbf{0}\\
\mathbf{0} & \text{if }\mathbf{v}=\mathbf{0}
\end{matrix}\right.,
\]
we find 
\[
\mathbf{f}\left(\mathbf{v}\right)=\mathbf{L}\mathbf{v}-\mathbf{c}\left(\mathbf{v}\right)\circ\mathbf{v}\text{ for all }\mathbf{v}\in\mathbb{R}^{N}.
\]
 On one hand, the irreducibility and essential nonnegativity of $\mathbf{L}$
$\left(H_{1}\right)$, the positivity of its Perron\textendash Frobenius
eigenvalue, as well as the nonnegativity of $\mathbf{c}$ on $\mathsf{K}$
$\left(H_{2}\right)$ with $\mathbf{c}\left(\mathbf{0}\right)=\mathbf{0}$
$\left(H_{3}\right)$ follow directly. On the other hand, the $\mathscr{C}^{1}$
regularity of $\mathbf{c}$ at $\mathbf{0}$ and its specific growth
at infinity $\left(H_{4}\right)$ are not satisfied in general. 

These two properties are satisfied indeed for the applications we
have in mind (which will be exposed in a moment). However it might
be mathematically interesting to consider the case where at least
one of them fails. For instance, let us discuss briefly $\left(H_{4}\right)$.

The only forthcoming result whose proof depends directly on $\left(H_{4}\right)$
is \lemref{Saturation_constant} (which is remarkably one of the main
assumptions of a related paper by Barles, Evans and Souganidis \cite[(F3)]{Barles_Evans_S}).
It is easily seen that if $\mathbf{c}$ grows sublinearly, we cannot
hope in general to recover \lemref{Saturation_constant} (in other
words, under some reasonable assumptions, Barles\textendash Evans\textendash Souganidis\textquoteright s
$\left(\text{F}3\right)$ is satisfied if and only if $\left(H_{4}\right)$;
of course this makes $\left(H_{4}\right)$ even more interesting). 

Nevertheless, this lemma is not a result in itself but a tool used
for the proofs of \thmref{Absorbing_set} as well as the existence
results of \thmref{Existence_of_steady_states} and \thmref{Traveling_waves}.
Hence relaxing $\left(H_{4}\right)$ mainly means finding new proofs
of these results.

Now, without entering into too much details, we point out that if
there exists $\eta>0$ such that the following dissipative assumption:
\[
\left(H_{diss,\eta}\right)\quad\left\{ \begin{matrix}\exists C_{1}\geq0\quad\forall\mathbf{v}\in\mathbb{R}^{N}\quad\left(\mathbf{f}\left(\mathbf{v}\right)+\eta\mathbf{v}\right)^{T}\mathbf{v}\leq C_{1}\\
\exists C_{2}\geq0\quad\forall\mathbf{v}\in\mathbb{R}^{N}\quad D\mathbf{f}\left(\mathbf{v}\right)+\eta\mathbf{I}\leq C_{2}\mathbf{1}\\
\exists\left(C_{3},p\right)\in[0,+\infty)^{2}\quad\forall\mathbf{v}\in\mathbb{R}^{N}\quad\left|\mathbf{f}\left(\mathbf{v}\right)+\eta\mathbf{v}\right|\leq C_{3}\left(1+\left|\mathbf{v}\right|^{p}\right),
\end{matrix}\right.
\]
holds, then the semiflow induced by $\partial_{t}\mathbf{u}=\mathbf{D}\partial_{xx}\mathbf{u}+\mathbf{f}\left[\mathbf{u}\right]$
admits an attractor in some locally uniform topology which is bounded
in $\mathscr{C}_{b}\left(\mathbb{R},\mathbb{R}^{N}\right)$ (see Zelik
\cite{Zelik_2003}). If the semiflow leaves $\mathsf{K}$ invariant
and if we only consider nonnegative initial data, then the quantifiers
$\forall\mathbf{v}\in\mathbb{R}^{N}$ above can all be replaced by
$\forall\mathbf{v}\in\mathsf{K}$. 

In particular, $\mathbf{v}\mapsto\mathbf{L}\mathbf{v}-\mathbf{c}\left(\mathbf{v}\right)\circ\mathbf{v}$
supplemented with $\left(H_{1}\right)$\textendash $\left(H_{3}\right)$
and
\[
\left(H_{4}'\right)\quad\lim_{\left|\mathbf{v}\right|\to+\infty,\mathbf{v}\in\mathsf{K}}\left|\mathbf{c}\left(\mathbf{v}\right)\right|=+\infty\text{ with at most algebraic growth}
\]
satisfies $\left(H_{diss,\eta}\right)$ for any $\eta>0$. (Clearly,
$\left(H_{4}\right)\cup\left(H_{4}'\right)$ contains every choice
of $\mathbf{c}$ such that $\lim\limits _{\left|\mathbf{v}\right|\to+\infty,\mathbf{v}\in\mathsf{K}}\left|\mathbf{c}\left(\mathbf{v}\right)\right|=+\infty$.)

Consequently, dissipative theory provides for some slowly decaying
KPP nonlinearities a proof of \thmref{Absorbing_set}. It should also
provide a proof of \propref{Global_bounds_for_the_elliptic_problem_with_drift},
which is the key estimate to derive the existence of traveling waves,
as well as a proof of the existence result of \thmref{Existence_of_steady_states}
. With these proofs at hand, all our results would be recovered.

\subsection{Related results in the literature}

\subsubsection{Cooperative or almost cooperative systems}

The bibliography about weakly and fully coupled elliptic and parabolic
linear systems is of course extensive. It is possible, for instance,
to define principal eigenvalues and eigenfunctions (Sweers \textit{et
al.} \cite{Birindelli_Mitidieiri_Sweers,Sweers_1992}), to prove the
weak maximum principle (the classical theorems of Protter\textendash Weinberger
\cite{Protter_Weinberger} were refined in the more involved elliptic
case by Figueiredo \textit{et al.} \cite{Figueiredo_1994,Figueiredo_Mit}
and Sweers \cite{Sweers_1992}) or Harnack inequalities (Chen\textendash Zhao
\cite{Chen_Zhao_97} or Arapostathis\textendash Gosh\textendash Marcus
\cite{Araposthathis_} for the elliptic case\footnote{They both prove the same type of results but we will refer hereafter
only to the latter because the former does not cover, as stated, the
one-dimensional space case.}, Földes\textendash Polá\v{c}ik \cite{Foldes_Polacik} for the parabolic
case) and to use the super- and sub-solution method to deduce existence
of solutions (Pao \cite{Pao_1992} among others). In some sense, weakly
and fully coupled systems form the \textquotedblleft right\textquotedblright ,
or at least the most straightforward, generalization of scalar equations.

For (possibly nonlinear) cooperative systems, results analogous to
\thmref{Traveling_waves} \enuref{Existence_minimal_wave_speed},
\enuref{Persistence_at_the_back_of_the_front}, \thmref{Spreading_speed}
and \thmref{Characterization_minimal_speed} were established by Lewis,
Li and Weinberger \cite{Li_Weinberger_,Weinberger_Lew}. Recently,
Al-Kiffai and Crooks \cite{Al_Kiffai_Croo} introduced a convective
term into a two-species cooperative system to study its influence
on linear determinacy.

For non-cooperative systems that can still be controlled from above
and from below by weakly and fully coupled systems whose linearizations
at $\mathbf{0}$ coincide with that of the non-cooperative system,
Wang \cite{Wang_2011} recovered the results of Lewis\textendash Li\textendash Weinberger
by comparison arguments. Before going any further, let us point out
that we will use extensively comparison arguments as well, nevertheless
we will not need equality of the linearizations at $\mathbf{0}$.
This is a crucial difference between the two sets of assumptions.
To illustrate this claim, let us present an explicit example of system
covered by our assumptions and not by Wang\textquoteright s ones:
take any $N\geq3$, $r>0$, $\mu\in\left(0,\frac{r}{2}\right)$ and
define $\mathbf{L}$ and $\mathbf{c}$ as follows:
\[
\mathbf{L}=r\mathbf{I}+\mu\left(\begin{matrix}-1 & 1 & 0 & \dots & 0\\
1 & -2 & \ddots & \ddots & \vdots\\
0 & \ddots & \ddots & \ddots & 0\\
\vdots & \ddots & \ddots & -2 & 1\\
0 & \dots & 0 & 1 & -1
\end{matrix}\right),
\]
\[
\mathbf{c}:\mathbf{v}\mapsto\mathbf{1}\mathbf{v}.
\]
On one hand, $\left(H_{1}\right)$\textendash $\left(H_{4}\right)$
are easily verified, but on the other hand, the function $\mathbf{f}:\mathbf{v}\mapsto\mathbf{L}\mathbf{v}-\mathbf{c}\left[\mathbf{v}\right]\circ\mathbf{v}$
is such that, for all $i\in\left[N\right]\backslash\left\{ 1,N\right\} $
and all $\mathbf{v}\in\mathsf{K}^{++}$,
\[
\frac{\partial\mathbf{f}_{i}}{\partial v_{j}}\left(\mathbf{v}\right)=-v_{i}<0\text{ for all }j\in\left[N\right]\backslash\left\{ i-1,i,i+1\right\} .
\]
 Consequently, the application $v\mapsto\mathbf{f}_{i}\left(v\mathbf{e}_{j}\right)$
is decreasing in $[0,+\infty)$. This clearly violates Wang\textquoteright s
assumptions: this instance of $\left(E_{KPP}\right)$ cannot be controlled
from below by a cooperative system whose linearization at $\mathbf{0}$
is $\partial_{t}\mathbf{u}-\mathbf{D}\partial_{xx}\mathbf{u}=\mathbf{L}\mathbf{u}$. 

Even if $\mathbf{L}$ is essentially positive and the cooperative
functions $\mathbf{f}^{-},\mathbf{f}^{+}$ satisfying
\[
\left\{ \begin{matrix}\mathbf{f}^{-}\left(\mathbf{v}\right)\leq\mathbf{L}\mathbf{v}-\mathbf{c}\left(\mathbf{v}\right)\circ\mathbf{v}\leq\mathbf{f}^{+}\left(\mathbf{v}\right)\\
\mathbf{f}^{-}\left(\mathbf{0}\right)=\mathbf{f}^{+}\left(\mathbf{0}\right)=\mathbf{0}\\
D\mathbf{f}^{-}\left(\mathbf{0}\right)=D\mathbf{f}^{+}\left(\mathbf{0}\right)=\mathbf{L}
\end{matrix}\right.
\]
are constructible, in general it is difficult to verify that $\mathbf{f}^{-}$
and $\mathbf{f}^{+}$ have each a minimal positive zero (another requirement
of Wang). Our setting needs not such a verification. 

Furthermore, even if these minimal zeros exist, several results presented
here are still new.
\begin{enumerate}
\item \thmref{Traveling_waves} \ref{enu:Existence_minimal_wave_speed}
adds to \cite[Theorem 2.1 iii)--v)]{Wang_2011} the existence of a
critical traveling wave (Wang obtained the existence of a bounded
non-constant nonnegative solution traveling at speed $c^{\star}$
but the limit at $+\infty$ of its profile was not addressed). 
\item \thmref{Strong_positivity}, \thmref{Absorbing_set}, \thmref{Extinction_or_persistence}
and \thmref{Existence_of_steady_states} as well as \thmref{Traveling_waves}
\enuref{Upper_bound_for_the_profiles}, \ref{enu:Monotonicity_at_the_edge_of_the_front}
rely more deeply on the KPP structure and are completely new to the
best of our knowledge. 
\end{enumerate}

\subsubsection{KPP systems}

Regarding weakly coupled systems equipped with KPP nonlinearities,
as far as we know most related works assume the essential positivity
of $\mathbf{L}$, some even requiring its positivity. Our results
tend to show that this collection of results should be generalizable
to the whole class of irreducible and essentially nonnegative $\mathbf{L}$
$\left(H_{1}\right)$ provided $\lambda_{PF}\left(\mathbf{L}\right)>0$. 

Dockery, Hutson, Mischaikow and Pernarowski \cite{Dockery_1998} studied
in a celebrated paper the solutions of $\left(S_{KPP}\right)$ in
a bounded and smooth domain with Neumann boundary conditions. Their
matrix $\mathbf{L}$ had the specific form $a\left(x\right)\mathbf{I}+\mu\mathbf{M}$
where $a$ is a non-constant function of the space variable and with
minimal assumptions on the constant matrix $\mathbf{M}$. They also
assumed strict ordering of the components of $\mathbf{d}$, explicit
and symmetric Lotka\textendash Volterra competition, vanishingly small
$\mu$. They proved the existence of a unique positive steady state,
globally attractive for the Cauchy problem with positive initial data,
and which converges as $\mu\to0$ to a steady state where only $u_{1}$
persists. 

More recently, the solutions of $\left(S_{KPP}\right)$, still in
a bounded and smooth domain with Neumann boundary conditions, were
studied under the assumptions of essential positivity of $\mathbf{L}$
and small Lipschitz constant of $\mathbf{v}\mapsto\mathbf{c}\left(\mathbf{v}\right)\circ\mathbf{v}$
by Hei and Wu \cite{Hei_Wu_2005}. They established by means of super-
and sub-solutions the equivalence between the negativity of the principal
eigenvalue of $-\mathbf{D}\frac{\text{d}^{2}}{\text{d}x^{2}}-\mathbf{L}$
and the existence of a positive steady state.

Provided the positivity of $\mathbf{L}$, the vanishing viscosity
limit of $\left(E_{KPP}\right)$ is the object of a work by Barles,
Evans and Souganidis \cite{Barles_Evans_S}. Although their paper
and the present one differ both in results and in techniques, they
share the same ambition: describing the spreading phenomenon for KPP
systems. Therefore our feeling is that together they give a more complete
answer to the problem.

For two-component systems with explicit Lotka\textendash Volterra
competition, $\mathbf{D}=\mathbf{I}_{2}$ and symmetric and positive
$\mathbf{L}$, \thmref{Existence_of_steady_states} and \thmref{Traveling_waves}
\ref{enu:Existence_minimal_wave_speed}, \ref{enu:Persistence_at_the_back_of_the_front},
\ref{enu:Monotonicity_at_the_edge_of_the_front} reduce to the results
of Griette and Raoul \cite{Griette_Raoul} (see Alfaro\textendash Griette
\cite{Alfaro_Griette} for a partial extension to space-periodic media).
Their paper uses very different arguments (topological degree, explicit
computations involving in particular the sum of the equations, weak
mutation limit, phase plane analysis) but was our initial motivation
to work on this question: our intent is really to extend their result
to a larger setting by changing the underlying mathematical techniques.
Let us emphasize that they obtained an algebraic formula for the minimal
wave speed, $c^{\star}=2\sqrt{\lambda_{PF}\left(\mathbf{L}\right)}$,
that we are able to generalize (\thmref{Characterization_minimal_speed}).
The case $\mathbf{D}\neq\mathbf{I}_{2}$ has been investigated heuristically
and numerically by Elliott and Cornell \cite{Elliott_Cornel}, who
considered the weak mutation limit as well and obtained further results. 

Let us point out that the problem of the spreading speed for the Cauchy
problem for the two-component system with explicit Lotka\textendash Volterra
competition was formulated but left open by Elliott and Cornell \cite{Elliott_Cornel}
as well as by Cosner \cite{Cosner_14} and not considered by Griette
and Raoul \cite{Griette_Raoul}. This problem is completely solved
here (see \thmref{Spreading_speed}). 

Just after the submission of this paper, a paper by Moris, Börger
and Crooks \cite{Moris_Borger_Crooks} submitted concurrently and
devoted to the analytical confirmation of Elliott and Cornell\textquoteright s
numerical observations was brought to our attention. By applying\textquoteright s
successfully Wang\textquoteright s framework, they obtained the existence
of traveling waves as well as the spreading speed for the Cauchy problem.
However, in order to apply Wang\textquoteright s framework, they had
to make additional assumptions (roughly speaking, small interphenotypic
competition and small mutations) and which are in fact, in view of
our results, unnecessary. They also obtained very interesting results
regarding the dependency on the mutation rate $\mu$ of the spreading
speed 
\[
\lambda_{PF}\left(\mu_{c^{\star}}\mathbf{D}+\mu_{c^{\star}}^{-1}\left(\text{diag}\mathbf{r}+\mu\mathbf{M}\right)\right)
\]
 and the associated distribution 
\[
\mathbf{n}_{PF}\left(\mu_{c^{\star}}\mathbf{D}+\mu_{c^{\star}}^{-1}\left(\text{diag}\mathbf{r}+\mu\mathbf{M}\right)\right).
\]

\subsection{From systems to non-local equations, from mathematics to applications}

It is well-known that systems can be seen as discretizations of continuous
models. In this subsection, we present briefly some equations structured
not only in time and space but also with a third variable and whose
natural discretizations are particular instances of our system $\left(E_{KPP}\right)$
satisfying the criterion $\lambda_{PF}\left(\mathbf{L}\right)>0$.
Our results bring therefore indirect insight into the spreading properties
of these equations.

Since these examples provide also examples of biomathematical applications
of our results, this subsection gives us the opportunity to present
more precisely these applications, to explain how non-cooperative
KPP systems arise in modeling situations and finally to comment our
results from this application point of view. Several fields of biology
are concerned: evolutionary invasion analysis (also known as adaptive
dynamics), population dynamics, epidemiology. Applications in other
sciences might also exist.

\subsubsection{The cane toads equation with non-local competition}

Recall the definition of the discrete laplacian in a finite domain
of cardinal $N$, 
\[
\mathbf{M}_{Lap,N}=\left(\begin{matrix}-1 & 1 & 0 & \dots & 0\\
1 & -2 & \ddots & \ddots & \vdots\\
0 & \ddots & \ddots & \ddots & 0\\
\vdots & \ddots & \ddots & -2 & 1\\
0 & \dots & 0 & 1 & -1
\end{matrix}\right)\text{ if }N\geq3,
\]
\[
\mathbf{M}_{Lap,2}=\left(\begin{matrix}-1 & 1\\
1 & -1
\end{matrix}\right)\text{ if }N=2.
\]

With this notation, the Lotka\textendash Volterra mutation\textendash competition\textendash diffusion
system exhibited earlier reads

\[
\partial_{t}\mathbf{u}-\mathbf{D}\partial_{xx}\mathbf{u}=\text{diag}\left(\text{\ensuremath{\mathbf{r}}}\right)\mathbf{u}+\mu\mathbf{M}_{Lap}\mathbf{u}-\left(\mathbf{C}\mathbf{u}\right)\circ\mathbf{u}.
\]

An especially interesting instance of it is the system where:
\begin{itemize}
\item for all $i\in\left[N\right]$, $d_{N,i}=\underline{\theta}+\left(i-1\right)\theta_{N}$
with $\theta_{N}=\frac{\overline{\theta}-\underline{\theta}}{N-1}$
and with some fixed $\overline{\theta}>\underline{\theta}>0$;
\item $\mathbf{r}_{N}=r\mathbf{1}_{N,1}$ with some fixed $r>0$;
\item $\mu_{N}=\frac{\alpha}{\theta_{N}^{2}}$ with some fixed $\alpha>0$;
\item $\mathbf{C}_{N}=\theta_{N}\mathbf{1}_{N}$.
\end{itemize}
Since $\lambda_{PF}\left(\mathbf{M}_{Lap,N}\right)=0$ (because $\mathbf{M}_{Lap,N}\mathbf{1}_{N,1}=\mathbf{0}$),
the Perron\textendash Frobenius eigenvalue of $\mathbf{L}$ is positive
indeed:
\[
\lambda_{PF}\left(r\mathbf{I}_{N}+\frac{\alpha}{\theta_{N}^{2}}\mathbf{M}_{Lap,N}\right)=r+\lambda_{PF}\left(\frac{\alpha}{\theta_{N}^{2}}\mathbf{M}_{Lap,N}\right)=r>0.
\]

As $N\to+\infty$, this system converges (at least formally) to the
cane toads equation with non-local competition and bounded phenotypes:
\[
\left\{ \begin{matrix}\partial_{t}n-\theta\partial_{xx}n-\alpha\partial_{\theta\theta}n=n\left(t,x,\theta\right)\left(r-\int_{\underline{\theta}}^{\overline{\theta}}n\left(t,x,\theta'\right)\text{d}\theta'\right)\\
\partial_{\theta}n\left(t,x,\underline{\theta}\right)=\partial_{\theta}n\left(t,x,\overline{\theta}\right)=0\text{ for all }\left(t,x\right)\in\mathbb{R}^{2}
\end{matrix}\right.
\]
 where $n$ is a function of $\left(t,x,\theta\right)$, $\theta\in\left[\underline{\theta},\overline{\theta}\right]$
is the motility trait, $\alpha$ is the mutation rate and $\int_{\underline{\theta}}^{\overline{\theta}}n\left(t,x,\theta'\right)\text{d}\theta'$
is the total population present at $\left(t,x\right)$. 

This equation is named after an invasive species currently invading
Australia. A startling ecologic fact is that this invasion is accelerating
whereas biological invasions usually occur at a constant speed (as
predicted by the KPP equation). However this issue is solved when
the phenotypical structure is taken into account and the following
spatial sorting phenomenon is understood: the fastest toads lead the
invasion, reproduce at the edge of the front, give birth to a new
generation of toads among which faster and slower toads can be found
(as a result of mutations), and the new fastest toads take the lead
of the invasion. 

The introduction of a motility trait $\theta$ with a local mutation
term $\alpha\partial_{\theta\theta}n$ into the scalar KPP equation
is then a way of verifying this theory: does it lead to accelerating
invasions? The answer is positive (transitory acceleration up to a
constant asymptotic speed if $\overline{\theta}<+\infty$, constant
acceleration if $\overline{\theta}=+\infty$) and this is why the
cane toads equation achieved some fame (we refer for instance to \cite{Benichou_Calvez,Bouin_Calvez_2014,Bouin_Calvez_2,Bouin_Henderso},
where more detailed modeling explanations can also be found).

The overcrowding effect, which is nowadays standardly taken into account
in population biology modeling, is modeled by the term $-n\left(t,x,\theta\right)\int_{\underline{\theta}}^{\overline{\theta}}n\left(t,x,\theta'\right)\text{d}\theta'$
which basically considers that one given toad competes with all other
toads surrounding it, independently of their phenotype, and does not
compete with distant toads. Mathematically, this term is the only
responsible for the nonlinearity, non-locality and non-cooperativity
of the model: it could be tempting to neglect it. However, linear
growth models (which go back to Malthus) generically lead to exponential
blow-up. The basic idea of the literature about the cane toads equation
is then exactly the same as the one we are going to use in the forthcoming
proofs: point out and use the KPP nature of the problem. 

The results of the present paper are consistent with the ones for
the cane toads equation with bounded phenotypes. Therefore it might
be possible, in a future sequel providing new estimates uniform with
respect to $N$, to rigorously derive the cane toads equation as the
continuous limit of a family of KPP systems. Since the discrete version
is easier to study, new results might be unfolded by this approach.
However, let us stress that the problem of finding these new uniform
estimates is not to be underestimated and is expected to be a very
difficult one. At least regarding biologists, whose field measurements
somehow always produce discrete classes of phenotypes instead of a
continuum of phenotypes, our results bring forth an interesting new
lead to address the general problem of adaptive dynamics. 

Let us point out that if, instead of phenotypes of cane toads, the
components of $\mathbf{u}$ model different strains of virus, then
we obtain an epidemiological model representing the invasion of a
population of sane individuals by a structured population of infected
individuals (Griette\textendash Raoul \cite{Griette_Raoul}). 

Notice that this cane toads equation is only the first step of a larger
research program: a more realistic model should replace clonal reproduction
by sexual reproduction and should take into account the possibility
of non-constant coefficients $\alpha$ and $r$ as well as that of
a more general competition term (logistic with a non-constant weight
or even non-logistic). It is also interesting to consider non-local
spatial or phenotypical dispersion. 

\subsubsection{The cane toads equation with non-local mutations and competition}

Actually, historically, the cane toads equation comes from a doubly
non-local model due to Prévost \textit{et al.} \cite{Arnold_Desvill,PrevostPhD}
(see also the earlier individual-based model by Champagnat and Méléard
\cite{Champagnat_Mel}). Since the non-local mutation operator is
too difficult to handle mathematically, the cane toads equation with
local mutations was favored as a simplified first approach. However
it remains unsatisfying from the modeling point of view and non-local
kernels, which could take into account large mutations, are the real
aim. 

Defining as above $\theta_{N}=\frac{\overline{\theta}-\underline{\theta}}{N-1}$
and $\left(\theta_{i}\right)_{i\in\left[N\right]}=\left(\underline{\theta}+\left(i-1\right)\theta_{N}\right)_{\ddot{\imath}\in\left[N\right]}$,
the natural discretization of the doubly non-local cane toads equation,
\[
\partial_{t}n-d\left(\theta\right)\partial_{xx}n=rn+\alpha\left(K\star_{\theta}n-n\right)-n\int_{\underline{\theta}}^{\overline{\theta}}n\left(t,x,\theta'\right)\text{d}\theta'
\]
with $d\in\mathscr{C}\left(\left[\underline{\theta},\overline{\theta}\right],\left(0,+\infty\right)\right)$
and $K\in\mathscr{C}\left(\mathbb{R},[0,+\infty)\right)$, is
\[
\partial_{t}\mathbf{u}-\mathbf{D}_{N}\partial_{xx}\mathbf{u}=\mathbf{L}_{N}\mathbf{u}-\left(\theta_{N}\mathbf{1}_{N}\mathbf{u}\right)\circ\mathbf{u},
\]
 with
\[
\mathbf{d}_{N}=\left(d\left(\theta_{i}\right)\right)_{i\in\left[N\right]},
\]
\begin{align*}
\mathbf{L}_{N} & =r\mathbf{I}_{N}+\alpha\left(\theta_{N}\left(K\left(\theta_{i}-\theta_{j}\right)\right)_{\left(i,j\right)\in\left[N\right]^{2}}-\mathbf{I}_{N}\right)\\
 & =\left(r-\alpha\right)\mathbf{I}_{N}+\alpha\theta_{N}\left(K\left(\left(i-j\right)\theta_{N}\right)\right)_{\left(i,j\right)\in\left[N\right]^{2}}.
\end{align*}

The assumptions on $\mathbf{c}$ $\left(H_{2}\right)$\textendash $\left(H_{4}\right)$
are obviously satisfied and, as soon as, say, $K$ is positive, the
assumption on $\mathbf{L}$$\left(H_{1}\right)$ is satisfied as well.
Subsequently, $\lambda_{PF}\left(\mathbf{L}_{N}\right)\geq r-\alpha$,
whence $r>\alpha$ is a sufficient condition to ensure $\lambda_{PF}\left(\mathbf{L}_{N}\right)>0$
for all $N\in\mathbb{N}$.

More generally, the system corresponding to the following equation
(see Prévost \textit{et al.} \cite{Arnold_Desvill,PrevostPhD}):
\begin{align*}
\partial_{t}n-d\left(\theta\right)\partial_{xx}n & =r\left(\theta\right)n\left(t,x,\theta\right)+\int_{\underline{\theta}}^{\overline{\theta}}n\left(t,x,\theta'\right)K\left(\theta,\theta'\right)\text{d}\theta'\\
 & -n\left(t,x,\theta\right)\int_{\underline{\theta}}^{\overline{\theta}}n\left(t,x,\theta'\right)C\left(\theta,\theta'\right)\text{d}\theta'
\end{align*}
with $d\in\mathscr{C}\left(\left[\underline{\theta},\overline{\theta}\right],\left(0,+\infty\right)\right)$,
$r\in\mathscr{C}\left(\left[\underline{\theta},\overline{\theta}\right],[0,+\infty)\right)$,
$K,C\in\mathscr{C}\left(\left[\underline{\theta},\overline{\theta}\right]^{2},[0,+\infty)\right)$
is
\[
\partial_{t}\mathbf{u}-\mathbf{D}_{N}\partial_{xx}\mathbf{u}=\mathbf{L}_{N}\mathbf{u}-\left(\mathbf{C}_{N}\mathbf{u}\right)\circ\mathbf{u},
\]
 with
\[
\mathbf{d}_{N}=\left(d\left(\theta_{i}\right)\right)_{i\in\left[N\right]},
\]
\[
\mathbf{L}_{N}=\text{diag}\left(r\left(\theta_{i}\right)\right)_{i\in\left[N\right]}+\theta_{N}\left(K\left(\theta_{i},\theta_{j}\right)\right)_{\left(i,j\right)\in\left[N\right]^{2}},
\]
\[
\mathbf{C}_{N}=\theta_{N}\left(C\left(\theta_{i},\theta_{j}\right)\right)_{\left(i,j\right)\in\left[N\right]^{2}}.
\]

Again, $\left(H_{3}\right)$ and $\left(H_{4}\right)$ are clearly
satisfied, $\left(H_{2}\right)$ is satisfied if $C$ is nonnegative
and both $\left(H_{1}\right)$ and $\lambda_{PF}\left(\mathbf{L}_{N}\right)>0$
are satisfied if, say, $K$ is positive.

In both cases, of course, the positivity of $K$ is a far from necessary
condition and might be relaxed.

To the best of our knowledge, these doubly non-local equations have
been the object of no study apart from \cite{Arnold_Desvill,PrevostPhD}
and are therefore still very poorly understood. In particular, the
traveling wave problem as well as the spreading problem are completely
open. Consequently, our results are highly valuable when applied to
this system. For mathematicians, they motivate the future work on
the limit $N\to+\infty$. For biologists, they provide new insight
into these modeling problems and show for instance how two different
mutation strategies can be compared and how the spreading speed can
be evaluated. 

\subsubsection{The Gurtin\textendash MacCamy equation with diffusion and overcrowding
effect}

In view of the preceding two examples, it is natural to investigate
the existence of completely different applications, that is applications
not concerned at all with evolutionary biology. Such applications
exist indeed, as shown by this third example. 

Consider the following age-structured equation with diffusion:
\[
\left\{ \begin{matrix}\partial_{t}n+\partial_{a}n-d\left(a\right)\partial_{xx}n=-n\left(t,x,a\right)\left(r\left(a\right)+\int_{0}^{A}n\left(t,x,a'\right)C\left(a,a'\right)\text{d}a'\right)\\
n\left(t,x,0\right)=\int_{a_{m}}^{A}n\left(t,x,a'\right)K\left(a'\right)\text{d}a'\text{ for all }\left(t,x\right)\in\mathbb{R}^{2}\\
n\left(t,x,A\right)=0\text{ for all }\left(t,x\right)\in\mathbb{R}^{2}
\end{matrix}\right.
\]
where $n$ is a function of $\left(t,x,a\right)$, $a\in\left[0,A\right]$
is the age variable, $a_{m}\geq0$ is the maturation age, $A>a_{m}$
is the maximal age, $d\in\mathscr{C}\left(\left[0,A\right],\left(0,+\infty\right)\right)$
is the diffusion rate, $r\in\mathscr{C}\left(\left[0,A\right],\left(0,+\infty\right)\right)$
is the mortality rate, $C\in\mathscr{C}\left(\left[0,A\right]^{2},[0,+\infty)\right)$
is the competition kernel and $K\in\mathscr{C}\left(\left[0,A\right],[0,+\infty)\right)$
is the birth rate. This equation is well-known, at least if $C=0$,
and detailed modeling explanations can be found in the classical Gurtin\textendash MacCamy
references \cite{Gurtin_MacCamy-2,Gurtin_MacCamy-1}. 

Defining
\[
a_{N+1}=\frac{A}{N},
\]
\[
\left(a_{i}\right)_{i\in\left[N\right]}=\left(\left(i-1\right)a_{N+1}\right)_{i\in\left[N\right]},
\]
\[
j_{m,N}=\min\left\{ j\in\left[N\right]\ |\ a_{j}\geq a_{m}\right\} ,
\]
\[
\mathbf{u}\left(t,x\right)=\left(n\left(t,x,a_{i}\right)\right)_{i\in\left[N\right]},
\]
\[
\mathbf{d}_{N}=\left(d\left(a_{i}\right)\right)_{i\in\left[N\right]},
\]
\[
\mathbf{L}_{mortality,N}=-\text{diag}\left(r\left(a_{i}\right)_{i\in\left[N\right]}\right),
\]
\[
\mathbf{L}_{birth,N}=a_{N+1}\left(\begin{matrix}0 & \dots & 0 & K\left(a_{j_{m,N}}\right) & \dots & K\left(a_{N}\right)\\
0 &  &  & \dots &  & 0\\
\vdots &  &  &  &  & \vdots\\
0 &  &  & \dots &  & 0
\end{matrix}\right),
\]
\[
\mathbf{L}_{aging,N}=\frac{1}{a_{N+1}}\left(\begin{matrix}0 & 0 & \dots & \dots & 0\\
1 & -1 & \ddots &  & \vdots\\
0 & \ddots & \ddots & \ddots & \vdots\\
\vdots & \ddots & \ddots & \ddots & 0\\
0 & \dots & 0 & 1 & -1
\end{matrix}\right),
\]
\[
\mathbf{L}_{N}=\mathbf{L}_{mortality,N}+\mathbf{L}_{birth,N}+\mathbf{L}_{aging,N},
\]
\[
\mathbf{C}_{N}=a_{N+1}\left(C\left(a_{i},a_{j}\right)\right)_{\left(i,j\right)\in\left[N\right]^{2}},
\]
 it follows again that
\[
\partial_{t}\mathbf{u}-\mathbf{D}_{N}\partial_{xx}\mathbf{u}=\mathbf{L}_{N}\mathbf{u}-\left(\mathbf{C}_{N}\mathbf{u}\right)\circ\mathbf{u}
\]
is the natural discretization with $\left(H_{3}\right)$ and $\left(H_{4}\right)$
automatically satisfied. $K$ nonnegative nonzero and $C$ nonnegative
are sufficient conditions to enforce $\left(H_{1}\right)$ and $\left(H_{2}\right)$.

Since we have
\[
\lambda_{PF}\left(\mathbf{L}_{N}\right)\geq\lambda_{PF}\left(\mathbf{L}_{birth,N}+\mathbf{L}_{aging,N}\right)-\max_{\left[0,A\right]}r
\]
 and since $\lambda_{PF}\left(\mathbf{L}_{birth,N}+\mathbf{L}_{aging,N}\right)$
is bounded from below by a positive constant independent of $N$ (the
proof of this claim being deliberately not detailed here for the sake
of brevity), if $\max\limits _{\left[0,A\right]}r$ is small enough,
then $\lambda_{PF}\left(\mathbf{L}_{N}\right)>0$ for all $N\in\mathbb{N}$. 

We point out that this KPP system differs noticeably from the Lotka\textendash Volterra
mutation\textendash competition\textendash diffusion system presented
up to now as the main instance of KPP system: here, the matrix $\mathbf{L}$
is highly non-symmetric. This should have important qualitative consequences,
numerically observable. It might even be unexpected that these two
systems share important properties and this makes our theorems even
more interesting. 

As far as we know, the traveling wave problem and the spreading problem
for the continuous age-structured problem are completely open. Therefore
the earlier remarks concerning the impact of our results on the doubly
non-local cane toads equation apply here as well.

\section{Strong positivity}

\thmref{Strong_positivity} is mainly straightforward and follows
from the following local result. 
\begin{prop}
\label{prop:Positivity_for_the_parabolic_problem} Let $\mathsf{Q}\subset\mathbb{R}^{2}$
be a bounded parabolic cylinder and $\mathbf{u}$ be a classical solution
of $\left(E_{KPP}\right)$ set in $\mathsf{Q}$. 

If $\mathbf{u}$ is nonnegative on $\partial_{P}\mathsf{Q}$, then
it is either null or positive in $\mathsf{Q}$. 
\end{prop}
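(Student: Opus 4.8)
The plan is to recast $\left(E_{KPP}\right)$, restricted to $\mathsf{Q}$, as a \emph{linear cooperative} parabolic system to which the classical maximum principle toolbox applies; the whole point is that the non-cooperative competitive term only perturbs the diagonal of the resulting zeroth-order matrix, so the essential nonnegativity and irreducibility of $\mathbf{L}$ $\left(H_{1}\right)$ are inherited.

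First I would freeze the solution inside the competition coefficient and write, for $\left(t,x\right)\in\mathsf{Q}$,
\[
\partial_{t}\mathbf{u}-\mathbf{D}\partial_{xx}\mathbf{u}=\mathbf{B}\left(t,x\right)\mathbf{u},\qquad\mathbf{B}\left(t,x\right):=\mathbf{L}-\text{diag}\,\mathbf{c}\left(\mathbf{u}\left(t,x\right)\right),
\]
which is legitimate since $\text{diag}\left(\mathbf{a}\right)\mathbf{b}=\mathbf{a}\circ\mathbf{b}$. As $\mathsf{Q}\cup\partial\mathsf{Q}$ is compact and $\mathbf{u}$ is continuous on it, $\mathbf{u}\left(\mathsf{Q}\cup\partial\mathsf{Q}\right)$ is bounded, and since $\mathbf{c}$ is $\mathscr{C}^{1}$ the coefficients of $\mathbf{B}$ are continuous and bounded on $\mathsf{Q}$. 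Crucially, subtracting a diagonal matrix from $\mathbf{L}$ leaves every off-diagonal entry unchanged, so by $\left(H_{1}\right)$ the matrix $\mathbf{B}\left(t,x\right)$ is essentially nonnegative and irreducible at every $\left(t,x\right)$. Hence $\mathbf{v}\mapsto\mathbf{D}\partial_{xx}\mathbf{v}+\mathbf{B}\mathbf{v}-\partial_{t}\mathbf{v}$ is a weakly and fully coupled, i.e. cooperative, linear parabolic operator, and $\mathbf{u}$ solves the associated homogeneous problem. (Note that neither $\left(H_{2}\right)$ nor $\left(H_{3}\right)$ enters here, only the algebraic structure of $\mathbf{L}$.)

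Next I would deduce $\mathbf{u}\geq\mathbf{0}$ in $\mathsf{Q}$ from the weak maximum principle for cooperative parabolic systems. After the standard substitution $\mathbf{w}:=e^{-\lambda t}\mathbf{u}$ with $\lambda>\sup_{\mathsf{Q}}\max_{i}b_{i,i}$, the new zeroth-order matrix $\mathbf{B}-\lambda\mathbf{I}$ has negative diagonal and nonnegative off-diagonal entries, so the comparison principle (in the Protter--Weinberger style) applies directly; since $\mathbf{w}$ and $\mathbf{u}$ have the same sign on $\partial_{P}\mathsf{Q}$, this gives $\mathbf{w}\geq\mathbf{0}$, hence $\mathbf{u}\geq\mathbf{0}$, throughout $\mathsf{Q}$. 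With nonnegativity in hand I would then run the strong maximum principle componentwise: for each $i$,
\[
\partial_{t}u_{i}-d_{i}\partial_{xx}u_{i}-b_{i,i}\left(t,x\right)u_{i}=\sum_{j\neq i}l_{i,j}u_{j}\geq0,
\]
using $l_{i,j}\geq0$ $\left(H_{1}\right)$ and $u_{j}\geq0$, so $u_{i}$ is a nonnegative supersolution of a scalar parabolic equation with bounded coefficients. By the scalar strong maximum principle, if $u_{i}$ vanishes at some $\left(t_{1},x_{1}\right)\in\mathsf{Q}$ then $u_{i}\equiv0$ on $\mathsf{Q}\cap\left\{ t\leq t_{1}\right\}$; feeding this back into the displayed identity forces $\sum_{j\neq i}l_{i,j}u_{j}\equiv0$ there, hence $u_{j}\equiv0$ for every $j$ adjacent to $i$ in the coupling graph of $\mathbf{L}$, and iterating along this graph, which is strongly connected by irreducibility, yields $\mathbf{u}\equiv\mathbf{0}$ on $\mathsf{Q}\cap\left\{ t\leq t_{1}\right\}$. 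Propagating this dichotomy from the bottom of $\mathsf{Q}$ upward (zeros spread backward in time, strict positivity spreads forward once $\mathbf{u}$ is nonzero somewhere) gives exactly the alternative $\mathbf{u}\equiv\mathbf{0}$ in $\mathsf{Q}$ versus $\mathbf{u}\gg\mathbf{0}$ in $\mathsf{Q}$. For \thmref{Strong_positivity} the null alternative is then excluded on any cylinder built over $t=0$ by the nonnegative nonzero initial datum, leaving only positivity.

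The step I expect to be the crux is the very first one: realizing that $\left(E_{KPP}\right)$, though genuinely non-cooperative as a nonlinear system with no comparison principle, turns into a \emph{cooperative linear} system the instant one freezes $\mathbf{u}$ inside $\mathbf{c}\left[\mathbf{u}\right]$, precisely because the competitive contribution lives only on the diagonal. Everything after that is classical weakly-coupled-cooperative theory; the only genuine care required is the boundedness of $\text{diag}\,\mathbf{c}\left(\mathbf{u}\right)$, which is exactly why $\mathsf{Q}$ is taken bounded.
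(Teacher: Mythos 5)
Your proof is correct and follows essentially the same route as the paper's: the crux is identical, namely that freezing $\mathbf{u}$ inside $\mathbf{c}\left[\mathbf{u}\right]$ turns $\left(E_{KPP}\right)$ into a linear weakly and fully coupled (cooperative) parabolic system with bounded coefficients, because the competition only shifts the diagonal of the zeroth-order matrix and therefore preserves the essential nonnegativity and irreducibility granted by $\left(H_{1}\right)$. Where the paper then cites Protter--Weinberger's strong maximum principle for cooperative systems (Chapter 3, Theorem 13) as a black box, you unpack that theorem into the standard chain -- weak maximum principle via the $e^{-\lambda t}$ shift, componentwise scalar strong maximum principle, and propagation of the zero set along the coupling graph of $\mathbf{L}$ by irreducibility -- which is a valid, self-contained rendering of the same result.
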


\begin{proof}
Let $K=\max\limits _{\overline{\mathsf{Q}}}\left|\mathbf{u}\right|$
and observe that, for all $i\in\left[N\right]$ and all $\left(t,x\right)\in\mathsf{Q}$,
\[
\left|l_{i,i}-c_{i}\left(\mathbf{u}\left(t,x\right)\right)\right|\leq\left|l_{i,i}\right|+\max_{\mathbf{v}\in\overline{\mathsf{B}\left(\mathbf{0},K\right)}}\left|c_{i}\left(\mathbf{v}\right)\right|.
\]

Then, define
\[
\mathbf{A}:\left(t,x\right)\mapsto\mathbf{L}-\text{diag}\left(\mathbf{c}\left(\mathbf{u}\left(t,x\right)\right)\right).
\]

By the irreducibility and the essential nonnegativity of $\mathbf{L}$
$\left(H_{1}\right)$, $\mathbf{A}\left(t,x\right)$ has these two
properties as well for all $\left(t,x\right)\in\overline{\mathsf{Q}}$.
By the boundedness of $\mathbf{u}$ in $\overline{\mathsf{Q}}$, $\mathbf{A}$
is bounded in $\overline{\mathsf{Q}}$ as well. 

Therefore $\mathbf{u}$ is a solution of the following linear weakly
and fully coupled system with bounded coefficients:
\[
\partial_{t}\mathbf{u}-\mathbf{D}\partial_{xx}\mathbf{u}-\mathbf{A}\mathbf{u}=\mathbf{0}.
\]

By virtue of Protter\textendash Weinberger\textquoteright s strong
maximum principle \cite[Chapter 3, Theorem 13]{Protter_Weinberger},
$\mathbf{u}$ is indeed either null or positive in $\mathsf{Q}$.
\end{proof}
Actually, noticing that the previous proof remains true without any
modification if we add to $\left(E_{KPP}\right)$ a diagonal drift
term $\mathbf{b}\circ\partial_{x}\mathbf{u}$ with $\mathbf{b}\in\mathbb{R}^{N}$,
we state right now a corollary that will be quite useful later on. 
\begin{cor}
\label{cor:Positivity_for_the_elliptic_problem_with_drift} Let $\left(a,b,c\right)\in\mathbb{R}^{3}$
such that $a<b$. Let $\mathbf{u}$ be a nonnegative classical solution
of
\[
-\mathbf{D}\mathbf{u}''-c\mathbf{u}'=\mathbf{L}\mathbf{u}-\mathbf{c}\left[\mathbf{u}\right]\circ\mathbf{u}\text{ in }\left(a,b\right).
\]

Then $\mathbf{u}$ is either null or positive in $\left(a,b\right)$. 
\end{cor}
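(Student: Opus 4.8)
The plan is to reduce this elliptic dichotomy to the parabolic \propref{Positivity_for_the_parabolic_problem} by viewing $\mathbf{u}$ as a time-independent solution of the parabolic system carrying the extra first-order term $c\,\partial_{x}$, as anticipated by the remark preceding the statement. First I would fix any two reals $t_{0}<t_{f}$ and form the parabolic cylinder $\mathsf{Q}=\left(t_{0},t_{f}\right)\times\left(a,b\right)$, which is bounded since $t_{0},t_{f},a,b$ are all finite. Setting $\tilde{\mathbf{u}}\left(t,x\right)=\mathbf{u}\left(x\right)$, the regularity built into the notion of classical elliptic solution, namely $\mathbf{u}\in\mathscr{C}^{2}\left(\left(a,b\right),\mathbb{R}^{N}\right)\cap\mathscr{C}\left(\left[a,b\right],\mathbb{R}^{N}\right)$, makes $\tilde{\mathbf{u}}$ a classical solution in $\mathsf{Q}$ of
\[
\partial_{t}\tilde{\mathbf{u}}-\mathbf{D}\partial_{xx}\tilde{\mathbf{u}}-c\,\partial_{x}\tilde{\mathbf{u}}=\mathbf{L}\tilde{\mathbf{u}}-\mathbf{c}\left[\tilde{\mathbf{u}}\right]\circ\tilde{\mathbf{u}},
\]
continuous up to $\partial\mathsf{Q}$ and nonnegative on all of $\overline{\mathsf{Q}}$, in particular on the parabolic boundary $\partial_{P}\mathsf{Q}$.

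Then I would invoke the version of \propref{Positivity_for_the_parabolic_problem} allowing a diagonal drift $\mathbf{b}\circ\partial_{x}\mathbf{u}$ (here $\mathbf{b}=c\mathbf{1}_{N,1}$), valid because its proof is unaffected by lower-order terms. Concretely, one repeats that proof verbatim: writing $\mathbf{A}\left(t,x\right)=\mathbf{L}-\text{diag}\left(\mathbf{c}\left(\mathbf{u}\left(x\right)\right)\right)$, the function $\tilde{\mathbf{u}}$ solves the linear weakly and fully coupled system $\partial_{t}\tilde{\mathbf{u}}-\mathbf{D}\partial_{xx}\tilde{\mathbf{u}}-c\,\partial_{x}\tilde{\mathbf{u}}-\mathbf{A}\tilde{\mathbf{u}}=\mathbf{0}$; since $\mathbf{u}$ is bounded on the compact set $\left[a,b\right]$ and $\mathbf{c}$ is continuous, $\mathbf{A}$ is bounded on $\overline{\mathsf{Q}}$, and it inherits essential nonnegativity and irreducibility from $\mathbf{L}$ because its off-diagonal entries (hence their zero pattern) are those of $\mathbf{L}$. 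Protter\textendash Weinberger's strong maximum principle then gives that $\tilde{\mathbf{u}}$, being nonnegative on $\partial_{P}\mathsf{Q}$, is either null or positive in $\mathsf{Q}$.

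Translating back to the $x$-variable, $\mathbf{u}$ is either identically $\mathbf{0}$ on $\left(a,b\right)$ or positive throughout $\left(a,b\right)$, which is the assertion. I do not expect a real obstacle here: the only points deserving a line of care are that $\tilde{\mathbf{u}}$ genuinely qualifies as a classical solution up to $\partial_{P}\mathsf{Q}$ — guaranteed by the continuity of $\mathbf{u}$ at the endpoints $a,b$ that is part of the definition of a classical elliptic solution — and that the drift term does not spoil Protter\textendash Weinberger's hypotheses, which it does not, being first order. If one preferred to avoid the parabolic detour, a direct appeal to a strong maximum principle for cooperative elliptic systems would serve equally well, but recycling \propref{Positivity_for_the_parabolic_problem} keeps the argument short and self-contained.
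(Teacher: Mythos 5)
Your proposal is correct and takes essentially the same route as the paper: the paper derives this corollary from the remark that the proof of Proposition~\ref{prop:Positivity_for_the_parabolic_problem} survives the addition of a diagonal first-order drift, applied implicitly to the time-independent extension $\tilde{\mathbf{u}}(t,x)=\mathbf{u}(x)$ on a bounded parabolic cylinder, which is precisely the reduction you spell out. The only thing you make explicit that the paper leaves tacit is the verification that $\tilde{\mathbf{u}}$ qualifies as a classical parabolic solution continuous up to $\partial_P\mathsf{Q}$, which is indeed immediate from $\mathbf{u}\in\mathscr{C}^2\left(\left(a,b\right),\mathbb{R}^N\right)\cap\mathscr{C}\left(\left[a,b\right],\mathbb{R}^N\right)$.
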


\begin{rem*}
This statement does not establish the non-negativity of all solutions
of $-\mathbf{D}\mathbf{u}''-c\mathbf{u}'=\mathbf{L}\mathbf{u}-\mathbf{c}\left[\mathbf{u}\right]\circ\mathbf{u}$;
it only enforces the interior positivity of the nonnegative nonzero
solutions. Regarding the weak maximum principle, we refer among others
to Figueiredo \cite{Figueiredo_1994}, Figueiredo\textendash Mitidieri
\cite{Figueiredo_Mit}, Sweers \cite{Sweers_1992}. In view of what
is known in the simpler scalar case, it is to be expected that, for
small $\left|c\right|$ and large enough intervals $\left(a,b\right)$,
sign-changing solutions exist.
\end{rem*}

\section{Absorbing set and upper estimates}

On the contrary, \thmref{Absorbing_set} requires some work.

\subsection{Saturation of the reaction term}

For all $i\in\left[N\right]$, let $\mathsf{H}_{i}\subset\mathbb{R}^{N}$
be the closed half-space defined as
\[
\mathsf{H}_{i}=\left\{ \mathbf{v}\in\mathbb{R}^{N}\ |\ \left(\mathbf{L}\mathbf{v}\right)_{i}\geq0\right\} .
\]
\begin{lem}
\label{lem:Saturation_constant} There exists $\mathbf{k}\in\mathsf{K}^{++}$
such that, for all $i\in\left[N\right]$ and for all $\mathbf{v}\in\mathsf{K}\backslash\mathbf{e}_{i}^{\perp}$,
\[
\left(\mathbf{L}\left(\mathbf{v}+k_{i}\mathbf{e}_{i}\right)-\mathbf{c}\left(\mathbf{v}+k_{i}\mathbf{e}_{i}\right)\circ\left(\mathbf{v}+k_{i}\mathbf{e}_{i}\right)\right)_{i}<0.
\]
\end{lem}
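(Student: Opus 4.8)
The plan is to rely on $(H_4)$, which forces $c_i(\alpha\mathbf{n})\geq\underline{c}_i\,\alpha^{\delta}$ along every ray $\mathbb{R}_{>0}\mathbf{n}$ meeting the half-space $\mathsf{H}_i$ once the scale $\alpha$ exceeds $\underline{\alpha}$, together with the bare nonnegativity of $\mathbf{c}$ on $\mathsf{K}$ provided by $(H_2)$, which disposes of the remaining rays for free. The translation by $k_i\mathbf{e}_i$ plays two roles at once: it keeps the argument of $\mathbf{c}$ inside $\mathsf{K}$, and, if $k_i$ is taken at least equal to $\underline{\alpha}$, it guarantees $\left|\mathbf{v}+k_i\mathbf{e}_i\right|\geq v_i+k_i\geq\underline{\alpha}$, so the scale restriction in $(H_4)$ is met automatically and the delicate bounded regime never arises.

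Concretely, I would set, for each $i\in\left[N\right]$,
\[
k_i=\max\left(\underline{\alpha},\ \frac{1+\sum_{j=1}^{N}\left|l_{i,j}\right|}{\underline{c}_i}\right)\in\left(0,+\infty\right),
\]
so that $\mathbf{k}=\left(k_i\right)_{i\in\left[N\right]}\in\mathsf{K}^{++}$. Fixing $i\in\left[N\right]$ and $\mathbf{v}\in\mathsf{K}\setminus\mathbf{e}_i^{\perp}$ and writing $\mathbf{w}=\mathbf{v}+k_i\mathbf{e}_i$, one has $\mathbf{w}\in\mathsf{K}$, $w_i\geq k_i>0$ (hence $\mathbf{w}\in\mathsf{K}^{+}$), and $\left|\mathbf{w}\right|\geq w_i\geq k_i\geq\underline{\alpha}$; the hypothesis $\mathbf{v}\notin\mathbf{e}_i^{\perp}$ is in fact not needed, as $w_i\geq k_i>0$ in any case. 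The quantity to control is $\left(\mathbf{L}\mathbf{w}\right)_i-c_i\left(\mathbf{w}\right)w_i$, and I would split on the sign of $\left(\mathbf{L}\mathbf{w}\right)_i$.

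If $\left(\mathbf{L}\mathbf{w}\right)_i<0$: by $(H_2)$ one has $c_i\left(\mathbf{w}\right)\geq0$, hence $c_i\left(\mathbf{w}\right)w_i\geq0$ and $\left(\mathbf{L}\mathbf{w}\right)_i-c_i\left(\mathbf{w}\right)w_i\leq\left(\mathbf{L}\mathbf{w}\right)_i<0$. If $\left(\mathbf{L}\mathbf{w}\right)_i\geq0$: putting $\mathbf{n}=\mathbf{w}/\left|\mathbf{w}\right|\in\mathsf{S}^{+}\left(\mathbf{0},1\right)$, positivity of $\left|\mathbf{w}\right|$ gives $\sum_j l_{i,j}n_j\geq0$, i.e. $\mathbf{n}\in\mathsf{H}_i$, so $(H_4)$ applies (recall $\left|\mathbf{w}\right|\geq\underline{\alpha}$) and yields $c_i\left(\mathbf{w}\right)=c_i\left(\left|\mathbf{w}\right|\mathbf{n}\right)\geq\left|\mathbf{w}\right|^{\delta}\underline{c}_i$. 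Then, using successively $w_i\geq k_i$, the inequality $\left|\mathbf{w}\right|^{\delta}\geq\left|\mathbf{w}\right|$ (valid since $\left|\mathbf{w}\right|\geq\underline{\alpha}\geq1$ and $\delta\geq1$), the defining bound $\underline{c}_i k_i>\sum_j\left|l_{i,j}\right|$, and the crude estimate $\left(\mathbf{L}\mathbf{w}\right)_i=\sum_j l_{i,j}w_j\leq\left|\mathbf{w}\right|\sum_j\left|l_{i,j}\right|$, one chains
\[
c_i\left(\mathbf{w}\right)w_i\ \geq\ \left|\mathbf{w}\right|^{\delta}\underline{c}_i k_i\ \geq\ \left|\mathbf{w}\right|\underline{c}_i k_i\ >\ \left|\mathbf{w}\right|\sum_{j=1}^{N}\left|l_{i,j}\right|\ \geq\ \left(\mathbf{L}\mathbf{w}\right)_i,
\]
whence again $\left(\mathbf{L}\mathbf{w}\right)_i-c_i\left(\mathbf{w}\right)w_i<0$ (in the borderline $\left(\mathbf{L}\mathbf{w}\right)_i=0$ this still reads $-c_i\left(\mathbf{w}\right)w_i<0$). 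This settles both cases and hence the lemma.

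I do not expect a genuine obstacle here; the only point needing real care is the double threshold on $k_i$: it must be large relative to $\mathbf{L}$ and $\underline{\mathbf{c}}$ in order to win the competition-versus-linear-growth comparison of the second case, \emph{and} at least $\underline{\alpha}$ so that $(H_4)$ can be invoked at all. A single finite positive $k_i$ meets both demands, which is exactly what makes the statement true; were $\mathbf{c}$ allowed to grow sublinearly (so that $\delta<1$), the step $\left|\mathbf{w}\right|^{\delta}\geq\left|\mathbf{w}\right|$ would collapse and no such $k_i$ would exist in general, in line with the introduction's remark that $(H_4)$ is essentially necessary for this lemma.
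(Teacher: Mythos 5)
Your proof is correct and in fact slightly strengthens the statement. The core is the same as the paper's: one splits according to whether $\mathbf{w}=\mathbf{v}+k_i\mathbf{e}_i$ lies in the half-space $\mathsf{H}_i$ (equivalently $\left(\mathbf{L}\mathbf{w}\right)_i\geq 0$); outside it, $\left(H_2\right)$ alone yields the conclusion, while inside it $\left(H_4\right)$ --- super-linear growth of $c_i$ along admissible rays, with $\delta\geq 1$ --- overtakes the linear term once the radius exceeds $\underline{\alpha}$ and $k_i$ is large enough. Where the paper produces $k_i$ implicitly, as the supremum $\sup_{\mathbf{n}\in\mathsf{F}_i}\alpha_{i,\mathbf{n}}n_i$ of per-ray thresholds $\alpha_{i,\mathbf{n}}$ over the cone $\mathsf{F}_i$ of admissible unit directions, and then invokes $\left(H_4\right)$ to prove this supremum finite, you short-circuit the construction by writing $k_i=\max\left(\underline{\alpha},\ \left(1+\sum_{j}\left|l_{i,j}\right|\right)/\underline{c}_i\right)$ explicitly and verifying the inequality pointwise. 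This is shorter, produces a concrete usable bound on $k_i$, and --- as you correctly observe --- establishes the conclusion for every $\mathbf{v}\in\mathsf{K}$, not only $\mathbf{v}\notin\mathbf{e}_i^{\perp}$: the paper's supremum-based argument needs $v_i>0$ to push strictly past $\alpha_{i,\mathbf{n}}$, whereas the additive margin built into your $k_i$ supplies the strict inequality for free. The only thing the paper's definition buys is an (irrelevant here) sharper constant tied to the actual geometry of $\mathbf{c}$.
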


\begin{proof}
Let $i\in\left[N\right]$ and let
\[
\mathsf{F}_{i}=\left(\mathsf{S}^{+}\left(\mathbf{0},1\right)\cap\mathsf{H}_{i}\right)\backslash\mathbf{e}_{i}^{\perp}.
\]

Let
\[
\begin{matrix}f_{i}: & \left(0,+\infty\right)\times\mathsf{S}\left(\mathbf{0},1\right) & \to & \mathbb{R}\\
 & \left(\alpha,\mathbf{n}\right) & \mapsto & \sum\limits _{j=1}^{N}l_{i,j}n_{j}-c_{i}\left(\alpha\mathbf{n}\right)n_{i}.
\end{matrix}
\]

Notice that for all $\mathbf{n}\in\mathsf{S}^{+}\left(\mathbf{0},1\right)\backslash\mathsf{F}_{i}$,
either $\sum\limits _{j=1}^{N}l_{i,j}n_{j}<0$ and then $f_{i}\left(\alpha,\mathbf{n}\right)<0$
for all $\alpha>0$ or $n_{i}=0$ and then $f_{i}\left(\alpha,\mathbf{n}\right)=\sum\limits _{j=1}^{N}l_{i,j}n_{j}\geq0$
does not depend on $\alpha$.

Let $\mathbf{n}\in\mathsf{F}_{i}$. By virtue of the behavior of $\mathbf{c}$
as $\alpha\to+\infty$ $\left(H_{4}\right)$ and since $\mathbf{n}\notin\mathbf{e}_{i}^{\perp}$,
\[
\lim_{\alpha\to+\infty}f_{i}\left(\alpha,\mathbf{n}\right)=-\infty.
\]

Therefore the following quantity is finite and nonnegative:
\[
\alpha_{i,\mathbf{n}}=\inf\left\{ \alpha\geq0\ |\ \forall\alpha'\in\left(\alpha,+\infty\right)\quad f_{i}\left(\alpha',\mathbf{n}\right)<0\right\} .
\]

Now, the set
\[
\left\{ \alpha_{i,\mathbf{n}}n_{i}\ |\ \mathbf{n}\in\mathsf{F}_{i}\right\} =\left\{ \alpha_{i,\mathbf{n}}n_{i}\ |\ \mathbf{n}\in\mathsf{F}_{i},\ \alpha_{i,\mathbf{n}}>\underline{\alpha}\right\} \cup\left\{ \alpha_{i,\mathbf{n}}n_{i}\ |\ \mathbf{n}\in\mathsf{F}_{i},\ \alpha_{i,\mathbf{n}}\leq\underline{\alpha}\right\} 
\]
 is bounded if and only if the set $\left\{ \alpha_{i,\mathbf{n}}n_{i}\ |\ \mathbf{n}\in\mathsf{F}_{i},\ \alpha_{i,\mathbf{n}}>\underline{\alpha}\right\} $
is bounded. Recall the definition of $\underline{\alpha}\geq1$ and
$\delta\geq1$ $\left(H_{4}\right)$. For all $\mathbf{n}\in\mathsf{F}_{i}$
such that $\alpha_{i,\mathbf{n}}>\underline{\alpha}$, thanks to $\left(H_{4}\right)$,
we have by virtue of the discrete Cauchy\textendash Schwarz inequality
\begin{align*}
\left|\alpha_{i,\mathbf{n}}n_{i}\right| & =\alpha_{i,\mathbf{n}}n_{i}\\
 & \leq\alpha_{i,\mathbf{n}}^{\delta}n_{i}\\
 & \leq\frac{\sum_{j=1}^{N}l_{i,j}n_{j}}{\underline{c}_{i}}\\
 & \leq\frac{\left|\left(l_{i,j}\right)_{j\in\left[N\right]}\right|}{\underline{c}_{i}},
\end{align*}
whence the finiteness of
\[
k_{i}=\sup\left\{ \alpha_{i,\mathbf{n}}n_{i}\ |\ \mathbf{n}\in\mathsf{F}_{i}\right\} 
\]
is established. Its positivity follows from the fact that $\mathbf{c}$
vanishes at $\mathbf{0}$ $\left(H_{3}\right)$ which implies that
for all $\mathbf{n}\in\text{int}\mathsf{F}_{i}$, $\alpha_{i,\mathbf{n}}>0$. 

The result about $\mathbf{v}+k_{i}\mathbf{e}_{i}$ with $\mathbf{v}\in\mathsf{K}\backslash\mathbf{e}_{i}^{\perp}$
is a direct consequence. 
\end{proof}
Assuming in addition strict monotonicity of $\alpha\mapsto c_{i}\left(\alpha\mathbf{n}\right)$
(which is for instance satisfied if $\mathbf{c}\left(\mathbf{v}\right)=\mathbf{C}\mathbf{v}$
with $\mathbf{C}\gg\mathbf{0}$, that is in the Lotka\textendash Volterra
competition case), we can obtain the following more precise geometric
description of the reaction term. The proof is quite straightforward
and is not detailed here.
\begin{lem}
Assume in addition that $\alpha\mapsto c_{i}\left(\alpha\mathbf{n}\right)$
is increasing for all $\mathbf{n}\in\mathsf{H}_{i}$. 

Then there exists a collection of connected $\mathscr{C}^{1}$-hypersurfaces
\[
\left(\mathsf{Z}_{i}\right)_{i\in\left[N\right]}\subset\prod\limits _{i=1}^{N}\left(\left(\mathsf{K}^{+}\cap\mathsf{H}_{i}\right)\backslash\mathbf{e}_{i}^{\perp}\right)
\]
 such that, for any $i\in\left[N\right]$ and any $\mathbf{v}\in\left(\mathsf{K}^{+}\cap\mathsf{H}_{i}\right)\backslash\mathbf{e}_{i}^{\perp}$,
\[
\left(\mathbf{L}\mathbf{v}-\mathbf{c}\left(\mathbf{v}\right)\circ\mathbf{v}\right)_{i}=0\text{ if and only if }\mathbf{v}\in\mathsf{Z}_{i}.
\]

For all $i\in\left[N\right]$, $\mathsf{Z}_{i}$ satisfies the following
properties.
\begin{enumerate}
\item For all $\mathbf{n}\in\left(\mathsf{S}^{+}\left(\mathbf{0},1\right)\cap\mathsf{H}_{i}\right)\backslash\mathbf{e}_{i}^{\perp}$,
$\mathsf{Z}_{i}\cap\mathbb{R}\mathbf{n}$ is a singleton. 
\item The function $\mathbf{z}_{i}$ which associates with any $\mathbf{n}\in\left(\mathsf{S}^{+}\left(\mathbf{0},1\right)\cap\mathsf{H}_{i}\right)\backslash\mathbf{e}_{i}^{\perp}$
the unique element of $\mathsf{Z}_{i}\cap\mathbb{R}\mathbf{n}$ is
continuous and is a $\mathscr{C}^{1}$-diffeomorphism of $\left(\mathsf{S}^{++}\left(\mathbf{0},1\right)\cap\text{int}\mathsf{H}_{i}\right)\backslash\mathbf{e}_{i}^{\perp}$
onto $\text{int}\mathsf{Z}_{i}$. 
\item For any $\mathbf{v}\in\mathsf{K}^{+}\backslash\mathbf{e}_{i}^{\perp}$,
$\left(\mathbf{L}\mathbf{v}-\mathbf{c}\left(\mathbf{v}\right)\circ\mathbf{v}\right)_{i}>0$
if and only if
\[
\mathbf{v}\in\mathsf{H}_{i}\text{ and }\left|\mathbf{v}\right|<\left|\mathbf{z}_{i}\left(\frac{\mathbf{v}}{\left|\mathbf{v}\right|}\right)\right|.
\]
\end{enumerate}
\end{lem}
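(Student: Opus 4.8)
The plan is to exploit the conical structure of the data by passing to polar coordinates. Fix $i\in\left[N\right]$. Since $\mathsf{H}_{i}$ and $\mathbf{e}_{i}^{\perp}$ are cones, a vector $\mathbf{v}$ belongs to $\left(\mathsf{K}^{+}\cap\mathsf{H}_{i}\right)\backslash\mathbf{e}_{i}^{\perp}$ if and only if it can be written $\mathbf{v}=\alpha\mathbf{n}$ with $\alpha=\left|\mathbf{v}\right|>0$ and $\mathbf{n}=\mathbf{v}/\left|\mathbf{v}\right|$ in the set $\mathsf{F}_{i}=\left(\mathsf{S}^{+}\left(\mathbf{0},1\right)\cap\mathsf{H}_{i}\right)\backslash\mathbf{e}_{i}^{\perp}$ from the proof of \lemref{Saturation_constant}, and in that case
\[
\left(\mathbf{L}\mathbf{v}-\mathbf{c}\left(\mathbf{v}\right)\circ\mathbf{v}\right)_{i}=\alpha f_{i}\left(\alpha,\mathbf{n}\right).
\]
Hence the sign of the $i$-th component of the reaction term at $\mathbf{v}$ is the sign of $g_{\mathbf{n}}:\alpha\mapsto f_{i}\left(\alpha,\mathbf{n}\right)$ at $\alpha=\left|\mathbf{v}\right|$, and the whole statement reduces to a one-variable study of $g_{\mathbf{n}}$, direction by direction.

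For $\mathbf{n}\in\mathsf{F}_{i}$ one has $n_{i}>0$ and $\sum_{j}l_{i,j}n_{j}\geq0$. The function $g_{\mathbf{n}}$ is continuous, extends continuously to $\alpha=0$ with $g_{\mathbf{n}}\left(0\right)=\sum_{j}l_{i,j}n_{j}\geq0$ by $\left(H_{3}\right)$, is strictly decreasing on $[0,+\infty)$ because $n_{i}>0$ and $\alpha\mapsto c_{i}\left(\alpha\mathbf{n}\right)$ is increasing, and satisfies $g_{\mathbf{n}}\left(\alpha\right)\to-\infty$ as $\alpha\to+\infty$ — this last limit being exactly the one derived from $\left(H_{4}\right)$ in the proof of \lemref{Saturation_constant}. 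Consequently $g_{\mathbf{n}}$ has a unique zero $\alpha_{i,\mathbf{n}}\in[0,+\infty)$, with $\alpha_{i,\mathbf{n}}>0$ exactly when $\mathbf{n}\in\text{int}\,\mathsf{H}_{i}$. I would then set $\mathbf{z}_{i}\left(\mathbf{n}\right)=\alpha_{i,\mathbf{n}}\mathbf{n}$, let $\mathsf{Z}_{i}$ be its image, and read off the conclusions: the equivalence $\left(\mathbf{L}\mathbf{v}-\mathbf{c}\left(\mathbf{v}\right)\circ\mathbf{v}\right)_{i}=0\iff\mathbf{v}\in\mathsf{Z}_{i}$ and property~(1) are immediate from the uniqueness of $\alpha_{i,\mathbf{n}}$; for property~(3) one splits cases — if $\mathbf{v}\notin\mathsf{H}_{i}$ then $\left(\mathbf{L}\mathbf{v}\right)_{i}<0$ and $c_{i}\left(\mathbf{v}\right)v_{i}\geq0$ by $\left(H_{2}\right)$, so the $i$-th component is negative; if $\mathbf{v}\in\mathsf{H}_{i}$ the $i$-th component has the sign of $g_{\mathbf{n}}\left(\left|\mathbf{v}\right|\right)$, which by strict monotonicity is positive precisely when $\left|\mathbf{v}\right|<\alpha_{i,\mathbf{n}}=\left|\mathbf{z}_{i}\left(\mathbf{n}\right)\right|$.

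For the regularity in property~(2) the plan is to invoke the implicit function theorem for $f_{i}$ at a zero lying over $\mathsf{S}^{++}\left(\mathbf{0},1\right)\cap\text{int}\,\mathsf{H}_{i}$. One has $\partial_{\alpha}f_{i}\left(\alpha,\mathbf{n}\right)=-n_{i}\frac{\text{d}}{\text{d}\alpha}c_{i}\left(\alpha\mathbf{n}\right)$, which is negative as soon as $\alpha\mapsto c_{i}\left(\alpha\mathbf{n}\right)$ has a positive derivative — as in the Lotka\textendash Volterra case $\mathbf{c}\left(\mathbf{v}\right)=\mathbf{C}\mathbf{v}$, where $\partial_{\alpha}f_{i}=-n_{i}\left(\mathbf{C}\mathbf{n}\right)_{i}<0$. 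Then $\mathbf{n}\mapsto\alpha_{i,\mathbf{n}}$ is of class $\mathscr{C}^{1}$ on that domain, so $\mathbf{z}_{i}$ is $\mathscr{C}^{1}$ there with image $\text{int}\,\mathsf{Z}_{i}$, which is therefore an $\left(N-1\right)$-dimensional $\mathscr{C}^{1}$-hypersurface (in polar coordinates it is the graph of the radial $\mathscr{C}^{1}$ function $\alpha_{i,\bullet}$), and $\mathbf{z}_{i}$ is a diffeomorphism onto it with inverse $\mathbf{w}\mapsto\mathbf{w}/\left|\mathbf{w}\right|$. Connectedness of $\mathsf{Z}_{i}$ comes from that of its parameter domain: $\mathsf{S}^{++}\left(\mathbf{0},1\right)$ is contained in $\left\{ n_{i}>0\right\}$, hence $\mathsf{S}^{++}\left(\mathbf{0},1\right)\backslash\mathbf{e}_{i}^{\perp}=\mathsf{S}^{++}\left(\mathbf{0},1\right)$ is connected, its intersection with the half-space $\text{int}\,\mathsf{H}_{i}$ is connected, and the passage to the closure in the relevant relative topology is controlled by the continuity of $\mathbf{z}_{i}$.

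The step I expect to be the real obstacle — all the rest being routine bookkeeping — is this last regularity claim: one must make precise the sense in which $\alpha\mapsto c_{i}\left(\alpha\mathbf{n}\right)$ is assumed increasing so that $\partial_{\alpha}f_{i}$ cannot vanish on the zero level set (otherwise the implicit function theorem fails and $\mathbf{z}_{i}$ is only known to be continuous and monotone, not a $\mathscr{C}^{1}$ diffeomorphism), and one must handle the degeneration $\alpha_{i,\mathbf{n}}\to0$ as $\mathbf{n}$ tends to $\partial\mathsf{H}_{i}$ — which is exactly why the hypersurface and diffeomorphism statements are confined to the interior and why the boundary behavior requires a separate, less tidy discussion. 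This is presumably the reason the details are left out.
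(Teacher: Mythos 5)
The paper provides no proof of this lemma; it only asserts that ``the proof is quite straightforward and is not detailed here.'' So there is nothing to compare against directly. Your polar-coordinate reduction $\left(\mathbf{L}\mathbf{v}-\mathbf{c}\left(\mathbf{v}\right)\circ\mathbf{v}\right)_{i}=\alpha f_{i}\left(\alpha,\mathbf{n}\right)$ followed by the one-variable monotonicity argument along rays is the natural route, it matches the machinery already set up in \lemref{Saturation_constant}, and properties (1) and (3) do fall out cleanly from the strict monotonicity of $g_{\mathbf{n}}$ together with $\left(H_{2}\right)$, $\left(H_{3}\right)$, $\left(H_{4}\right)$.

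The obstacle you flag at the end is the right one to flag, and it is genuine. For the $\mathscr{C}^{1}$ claim in property (2) you need the implicit function theorem, which requires $\partial_{\alpha}f_{i}\left(\alpha_{i,\mathbf{n}},\mathbf{n}\right)=-n_{i}\,\frac{\text{d}}{\text{d}\alpha}c_{i}\left(\alpha\mathbf{n}\right)\big|_{\alpha=\alpha_{i,\mathbf{n}}}\neq0$. The lemma's hypothesis only says $\alpha\mapsto c_{i}\left(\alpha\mathbf{n}\right)$ is \emph{increasing}; a strictly increasing $\mathscr{C}^{1}$ function can have vanishing derivative at isolated points, and if that happens on the zero level set, IFT fails and $\mathbf{z}_{i}$ is continuous and monotone but not obviously $\mathscr{C}^{1}$. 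As stated, the hypothesis is therefore slightly too weak to deliver the full conclusion; the intended reading is almost certainly ``strictly positive radial derivative,'' as in the Lotka--Volterra and Gross--Pitaevskii cases, and you identify this correctly. A second, smaller wrinkle that you gesture at but could make sharper: for $\mathbf{n}\in\partial\mathsf{H}_{i}$ you get $\alpha_{i,\mathbf{n}}=0$, so $\mathbf{z}_{i}\left(\mathbf{n}\right)=\mathbf{0}$, which lies outside the stated codomain $\left(\mathsf{K}^{+}\cap\mathsf{H}_{i}\right)\backslash\mathbf{e}_{i}^{\perp}$ and makes $\mathsf{Z}_{i}\cap\mathbb{R}\mathbf{n}$ empty rather than a singleton for boundary directions; the lemma as printed is thus imprecise there, and your proof correctly captures the clean statement on the interior, which is what property (2) is actually confined to and what the rest of the paper uses.
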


\subsection{Absorbing set and upper estimates}

Define for all $i\in\left[N\right]$
\[
\begin{matrix}g_{i}: & [0,+\infty) & \to & \left(0,+\infty\right)\\
 & \mu & \mapsto & \max\left(\mu,k_{i}\right).
\end{matrix}
\]

The function $g_{i}$ is non-decreasing and piecewise affine (whence
Lipschitz-continuous). 

The following local in space $\mathscr{L}^{\infty}$ estimate for
the parabolic problem is due to Barles\textendash Evans\textendash Souganidis
\cite{Barles_Evans_S}. We repeat its proof for the sake of completeness.
\begin{lem}
\label{lem:Global_bounds_for_the_parabolic_problem} Let $\mathsf{Q}\subset\mathbb{R}^{2}$
be a parabolic cylinder bounded in space and bounded from below in
time.

Let $\mathbf{u}$ be a nonnegative classical solution of $\left(E_{KPP}\right)$
set in $\mathsf{Q}$ such that
\[
\mathbf{u}_{|\partial_{P}\mathsf{Q}}\in\mathscr{L}^{\infty}\left(\partial_{P}\mathsf{Q},\mathbb{R}^{N}\right).
\]

Then we have
\[
\left(\sup_{\mathsf{Q}}u_{i}\right)_{i\in\left[N\right]}\leq\left(g_{i}\left(\sup_{\partial_{P}\mathsf{Q}}u_{i}\right)\right)_{i\in\left[N\right]}.
\]
\end{lem}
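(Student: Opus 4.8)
The plan is to run a one-component maximum principle on bounded truncations of $\mathsf{Q}$, using the saturation constant $\mathbf{k}\in\mathsf{K}^{++}$ of \lemref{Saturation_constant} as the only barrier. Write $\mathsf{Q}=(t_{0},t_{f})\times(a,b)$ with $t_{0},a,b$ finite and $t_{f}\in(t_{0},+\infty]$, set $s_{i}=\sup_{\partial_{P}\mathsf{Q}}u_{i}$ (finite by the $\mathscr{L}^{\infty}$ hypothesis, nonnegative since $\mathbf{u}\geq\mathbf{0}$), and put $M_{i}:=g_{i}(s_{i})=\max(s_{i},k_{i})$. For a fixed $T\in(t_{0},t_{f})$ consider the \emph{bounded} parabolic cylinder $\mathsf{Q}_{T}=(t_{0},T)\times(a,b)$. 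Its parabolic boundary $\partial_{P}\mathsf{Q}_{T}$ is contained in $\partial_{P}\mathsf{Q}$, so $u_{i}\leq s_{i}\leq M_{i}$ on $\partial_{P}\mathsf{Q}_{T}$. The goal is to prove $\max_{\overline{\mathsf{Q}_{T}}}u_{i}\leq M_{i}$ for every $i$ and every such $T$, and then to let $T\uparrow t_{f}$.

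Suppose, for contradiction, that $m_{i}:=\max_{\overline{\mathsf{Q}_{T}}}u_{i}>M_{i}$ for some $i$ (the maximum is attained because $\mathbf{u}$ is continuous on the compact set $\overline{\mathsf{Q}_{T}}$). Pick $(t^{\star},x^{\star})\in\overline{\mathsf{Q}_{T}}$ with $u_{i}(t^{\star},x^{\star})=m_{i}$. Since $m_{i}>M_{i}\geq s_{i}$, this point is not on $\partial_{P}\mathsf{Q}_{T}$; hence $x^{\star}\in(a,b)$ and $t^{\star}\in(t_{0},T]$, so in particular $(t^{\star},x^{\star})\in\mathsf{Q}$ and $(E_{KPP})$ holds there (this is why $T$ is taken strictly below $t_{f}$). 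As $x\mapsto u_{i}(t^{\star},x)$ has an interior maximum at $x^{\star}$, $\partial_{xx}u_{i}(t^{\star},x^{\star})\leq 0$; as $t\mapsto u_{i}(t,x^{\star})$ is maximal at $t^{\star}$ over $[t_{0},T]$, its left derivative is $\geq0$, so $\partial_{t}u_{i}(t^{\star},x^{\star})\geq0$. Plugging this into the $i$-th equation of $(E_{KPP})$ and using $d_{i}>0$ gives, with $\mathbf{w}:=\mathbf{u}(t^{\star},x^{\star})\in\mathsf{K}$,
\[
0\leq\partial_{t}u_{i}(t^{\star},x^{\star})-d_{i}\partial_{xx}u_{i}(t^{\star},x^{\star})=\bigl(\mathbf{L}\mathbf{w}-\mathbf{c}(\mathbf{w})\circ\mathbf{w}\bigr)_{i}.
\]

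To close the contradiction I would apply \lemref{Saturation_constant} at $\mathbf{w}$. Because $w_{i}=m_{i}>M_{i}\geq k_{i}$, the vector $\mathbf{v}:=\mathbf{w}-k_{i}\mathbf{e}_{i}$ has $v_{i}=w_{i}-k_{i}>0$ and $v_{j}=w_{j}\geq0$ for $j\neq i$, hence $\mathbf{v}\in\mathsf{K}\setminus\mathbf{e}_{i}^{\perp}$ and $\mathbf{w}=\mathbf{v}+k_{i}\mathbf{e}_{i}$. Then \lemref{Saturation_constant} yields $(\mathbf{L}\mathbf{w}-\mathbf{c}(\mathbf{w})\circ\mathbf{w})_{i}<0$, contradicting the display above. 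Therefore $\max_{\overline{\mathsf{Q}_{T}}}u_{i}\leq M_{i}$ for all $i$ and all $T\in(t_{0},t_{f})$, and letting $T\uparrow t_{f}$ gives $\sup_{\mathsf{Q}}u_{i}\leq M_{i}=g_{i}(\sup_{\partial_{P}\mathsf{Q}}u_{i})$, which is the assertion.

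The proof is short precisely because all the analytic work is already packaged in \lemref{Saturation_constant}; there is no genuine obstacle, only two points of care. First, one must truncate in time so that the maximum is attained and $(E_{KPP})$ still holds at the maximizing point, which forces the restriction $T<t_{f}$ and the final passage to the limit. Second, the contradiction hypothesis is stated as the strict inequality $m_{i}>M_{i}\geq k_{i}$: it is exactly this strictness that places $\mathbf{w}-k_{i}\mathbf{e}_{i}$ in the admissible set $\mathsf{K}\setminus\mathbf{e}_{i}^{\perp}$ of \lemref{Saturation_constant} (a mere bound $w_{i}\geq k_{i}$ would not). Note that non-cooperativity of $(E_{KPP})$ causes no difficulty here: no comparison between distinct components is invoked, only the scalar maximum principle for the single component $u_{i}$, the coupling entering solely through the sign information of \lemref{Saturation_constant}.
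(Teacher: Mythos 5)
Your proof is correct, and it takes a genuinely different route from the paper's. The paper's argument is the integral one attributed to Barles--Evans--Souganidis: introduce a smooth convex penalty $\eta$ vanishing on $(-\infty, g_i(\sup_{\partial_P \mathsf{Q}} u_i)]$, multiply the $i$-th equation by $\eta'(u_i)$ and integrate over $(a,b)$, integrate by parts (the boundary terms vanish because $u_i\leq g_i(\cdot)$ on the lateral boundary), and deduce that $t\mapsto\int_a^b\eta(u_i(t,x))\,dx$ is non-increasing and vanishes at $t_0$, hence identically zero. You instead truncate in time and run a direct pointwise maximum principle on the single component $u_i$. Both arguments ultimately rest on \lemref{Saturation_constant}, which supplies the sign of $(\mathbf{L}\mathbf{w}-\mathbf{c}(\mathbf{w})\circ\mathbf{w})_i$ when $w_i>k_i$ and $\mathbf{w}\geq\mathbf{0}$: the paper uses it to control the integrand on the set $\Xi_i(t)$ where $u_i$ exceeds the threshold, you use it once at a single maximizing point. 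Your version is more elementary and shorter (no choice of $\eta$, no integration by parts), and the delicate points are handled with care: truncating to $T<t_f$ so that the maximum over $\overline{\mathsf{Q}_T}$ is attained and the equation still holds at the maximizer, and the strict inequality $m_i>M_i\geq k_i$ which is exactly what places $\mathbf{w}-k_i\mathbf{e}_i$ in $\mathsf{K}\setminus\mathbf{e}_i^{\perp}$. The integral version adapts more readily to weak solutions or to unbounded spatial domains; for classical solutions on a strip the two are essentially interchangeable.
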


\begin{proof}
Let $t_{0}\in\mathbb{R}$, $T\in(0,+\infty]$ and $\left(a,b\right)\in\mathbb{R}^{2}$
such that $\mathsf{Q}=\left(t_{0},t_{0}+T\right)\times\left(a,b\right)$.
Let $i\in\left[N\right]$. 

Define a smooth convex function $\eta:\mathbb{R}\to\mathbb{R}$ which
satisfies
\[
\left\{ \begin{matrix}\eta\left(u\right)=0 & \text{if }u\in(-\infty,g_{i}\left(\sup\limits _{\partial_{P}\mathsf{Q}}u_{i}\right)]\\
\eta\left(u\right)>0 & \text{otherwise}.
\end{matrix}\right.
\]

For all $t\in\left(t_{0},t_{0}+T\right)$, let
\[
\Xi_{i}\left(t\right)=\left\{ x\in\left(a,b\right)\ |\ u_{i}\left(t,x\right)>g_{i}\left(\sup\limits _{\partial_{P}\mathsf{Q}}u_{i}\right)\right\} .
\]

This set is measurable and, by integration by parts, for all $t\in\left(t_{0},t_{0}+T\right)$,
\begin{align*}
\partial_{t}\left(\int_{a}^{b}\eta\left(u_{i}\left(t,x\right)\right)\text{d}x\right) & =\int_{a}^{b}\eta'\left(u_{i}\left(t,x\right)\right)\partial_{t}u_{i}\left(t,x\right)\text{d}x\\
 & =-d_{i}\int_{a}^{b}\eta''\left(u_{i}\left(t,x\right)\right)\left(\partial_{x}u_{i}\left(t,x\right)\right)^{2}\text{d}x\\
 & +\int_{a}^{b}\eta'\left(u_{i}\left(t,x\right)\right)\left(\sum_{j=1}^{N}l_{i,j}u_{j}\left(t,x\right)-c_{i}\left(\mathbf{u}\left(t,x\right)\right)u_{i}\left(t,x\right)\right)\text{d}x\\
 & =-d_{i}\int_{\Xi_{i}\left(t\right)}\eta''\left(u_{i}\left(t,x\right)\right)\left(\partial_{x}u_{i}\left(t,x\right)\right)^{2}\text{d}x\\
 & +\int_{\Xi_{i}\left(t\right)}\eta'\left(u_{i}\left(t,x\right)\right)\left(\sum_{j=1}^{N}l_{i,j}u_{j}\left(t,x\right)-c_{i}\left(\mathbf{u}\left(t,x\right)\right)u_{i}\left(t,x\right)\right)\text{d}x\\
 & \leq0
\end{align*}

Since $\int_{a}^{b}\eta\left(u_{i}\left(t_{0},x\right)\right)\text{d}x=0$,
we deduce
\[
u_{i}\leq g_{i}\left(\sup_{\partial_{P}\mathsf{Q}}u_{i}\right)\text{ in }\mathsf{Q},
\]
whence
\[
\sup_{\mathsf{Q}}u_{i}\leq g_{i}\left(\sup_{\partial_{P}\mathsf{Q}}u_{i}\right).
\]
\end{proof}
As a corollary of this local estimate, we get \thmref{Absorbing_set}.
\begin{prop}
\label{prop:Absorbing_set} Let $\mathbf{u}_{0}\in\mathscr{C}_{b}\left(\mathbb{R},\mathsf{K}\right)$.
Then the unique classical solution $\mathbf{u}$ of $\left(E_{KPP}\right)$
set in $\left(0,+\infty\right)\times\mathbb{R}$ with initial data
$\mathbf{u}_{0}$ satisfies
\[
\left(\sup_{\left(0,+\infty\right)\times\mathbb{R}}u_{i}\right)_{i\in\left[N\right]}\leq\left(g_{i}\left(\sup_{\mathbb{R}}u_{0,i}\right)\right)_{i\in\left[N\right]}
\]
 and furthermore
\[
\left(\limsup_{t\to+\infty}\sup_{x\in\mathbb{R}}u_{i}\left(t,x\right)\right)_{i\in\left[N\right]}\leq\mathbf{g}\left(0\right).
\]

Consequently, all bounded nonnegative classical solutions of $\left(S_{KPP}\right)$
are valued in
\[
\prod_{i=1}^{N}\left[0,g_{i}\left(0\right)\right].
\]
\end{prop}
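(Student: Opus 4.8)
The idea is to collapse the vectorial estimate, one component at a time, to a scalar Fisher\textendash KPP comparison whose forcing encodes the saturation constants $k_{i}$ of \lemref{Saturation_constant}. First note $\mathbf{u}\geq\mathbf{0}$: the cone $\mathsf{K}$ is invariant since, by $\left(H_{1}\right)$\textendash$\left(H_{3}\right)$, on $\{u_{i}=0\}$ one has $\partial_{t}u_{i}-d_{i}\partial_{xx}u_{i}=\sum_{j}l_{i,j}u_{j}\geq0$. Next, $\mathbf{u}$ is bounded on every slab $[0,T]\times\mathbb{R}$ (e.g.\ because it is a subsolution of the cooperative linear system $\partial_{t}\mathbf{v}-\mathbf{D}\partial_{xx}\mathbf{v}=\mathbf{L}\mathbf{v}$, whose solutions are bounded on finite time intervals); call such a bound $\mathbf{B}_{T}\in\mathsf{K}^{++}$, depending only on $T$ and $\sup_{\mathbb{R}}\mathbf{u}_{0}$. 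Well-posedness of $\left(E_{KPP}\right)$ with continuous bounded data is taken as known.

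\emph{The uniform bound.} Fix $i\in\left[N\right]$ and $T>0$. On $(0,T)\times\mathbb{R}$ we have $\mathbf{0}\leq\mathbf{u}\leq\mathbf{B}_{T}$, so $u_{i}$ is a subsolution of the \emph{scalar} equation $\partial_{t}v-d_{i}\partial_{xx}v=h_{i,T}(v)$, where $h_{i,T}(s)=\max\{(\mathbf{L}\mathbf{w}-\mathbf{c}(\mathbf{w})\circ\mathbf{w})_{i}\ :\ \mathbf{0}\leq\mathbf{w}\leq\mathbf{B}_{T},\ w_{i}=s\}$ for $s\in[0,B_{T,i}]$: indeed $(\mathbf{L}\mathbf{u}-\mathbf{c}\left[\mathbf{u}\right]\circ\mathbf{u})_{i}\leq h_{i,T}(u_{i})$ pointwise on $(0,T)\times\mathbb{R}$. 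The point of \lemref{Saturation_constant}, applied with $\mathbf{v}=\mathbf{w}-k_{i}\mathbf{e}_{i}\in\mathsf{K}\backslash\mathbf{e}_{i}^{\perp}$, is that $h_{i,T}(s)<0$ whenever $s>k_{i}$, hence $h_{i,T}\leq0$ on $[k_{i},B_{T,i}]$ by continuity --- and \emph{$k_{i}$ does not depend on $T$}. Choosing a Lipschitz $\hat{h}_{i}\geq h_{i,T}$ which is $\leq0$ on $[k_{i},B_{T,i}]$, the spatially homogeneous $\psi$ solving $\psi'=\hat{h}_{i}(\psi)$, $\psi(0)=\sup_{\mathbb{R}}u_{0,i}$, is a supersolution, and the scalar parabolic comparison principle for bounded sub- and supersolutions on $\mathbb{R}$ gives $u_{i}\leq\psi$ on $(0,T)\times\mathbb{R}$. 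Since $\hat{h}_{i}\leq0$ above $k_{i}$, the orbit $\psi$ never exceeds $\max(\psi(0),k_{i})=g_{i}(\sup_{\mathbb{R}}u_{0,i})$, a quantity independent of $T$; letting $T\to+\infty$ yields the first estimate.

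\emph{The $\limsup$ and the elliptic corollary.} Now that the global bound $\mathbf{B}=(g_{i}(\sup_{\mathbb{R}}u_{0,i}))_{i\in\left[N\right]}$ is available, repeat the construction with $\mathbf{B}$ in place of $\mathbf{B}_{T}$: this produces a single envelope $u_{i}\leq\psi$ on all of $(0,+\infty)\times\mathbb{R}$, with $\psi'=\hat{h}_{i}(\psi)$ and $\psi(0)=\sup_{\mathbb{R}}u_{0,i}$. For each $\delta>0$, compactness gives $\gamma_{\delta}>0$ with the corresponding forcing $\leq-2\gamma_{\delta}$ on $[k_{i}+\delta,B_{i}]$, and one may take $\hat{h}_{i}\leq-\gamma_{\delta}$ there; then $\psi$ cannot linger above $k_{i}+\delta$, so $\limsup_{t\to+\infty}\psi(t)\leq k_{i}+\delta$ and, sending $\delta\to0$, $\limsup_{t\to+\infty}\sup_{x}u_{i}(t,x)\leq k_{i}=g_{i}(0)$, which is the second estimate. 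Finally, a bounded nonnegative classical solution $\mathbf{p}$ of $\left(S_{KPP}\right)$ makes $(t,x)\mapsto\mathbf{p}(x)$ a bounded nonnegative classical solution of $\left(E_{KPP}\right)$ on $(0,+\infty)\times\mathbb{R}$; the $\limsup$ bound then forces $p_{i}\leq g_{i}(0)$, and $\mathbf{p}\geq\mathbf{0}$, so $\mathbf{p}$ is valued in $\prod_{i}[0,g_{i}(0)]$.

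\emph{Main obstacle.} The delicate point is the passage from the local-in-space \lemref{Global_bounds_for_the_parabolic_problem} to a bound global in space: because $g_{i}(\mu)=\max(\mu,k_{i})$ is not a contraction, exhausting $\mathbb{R}$ by cylinders $(0,T)\times(-R,R)$ never suppresses the lateral-boundary contribution and a naive iteration does not close. One therefore has to use the genuinely dissipative mechanism, carried here by the scalar envelope $\psi$, for which $k_{i}$ --- produced by \lemref{Saturation_constant} --- acts as a true barrier. The remaining points (building a Lipschitz majorant of the merely continuous $h_{i,T}$ with the required sign and, for the $\limsup$ step, with a quantitative negative bound on $[k_{i}+\delta,B_{i}]$; invoking scalar comparison on the whole line for bounded data, no initial layer arising since $\mathbf{u}$ is continuous up to $t=0$) are routine.
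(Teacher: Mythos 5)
Your proof is correct, and it takes a genuinely different route from the paper's. The paper handles the global‐in‐space bound by approximating $\mathbf{u}$ with solutions $\mathbf{u}_R$ of Dirichlet problems on $(0,+\infty)\times(-R,R)$ with boundary data frozen at $\mathbf{u}_0(\pm R)$, applying \lemref{Global_bounds_for_the_parabolic_problem} to each $\mathbf{u}_R$ (for which the parabolic boundary sup is controlled by $\sup_{\mathbb{R}}\mathbf{u}_0$), and then passing to the limit $R\to+\infty$ by interior parabolic estimates; the absorbing‐set statement is then proved by a separate contradiction argument using the interior maximum principle, monotonicity of $t\mapsto\|u_i(t,\cdot)\|_{\mathscr{L}^\infty}$, and a compactness extraction of translates. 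You sidestep \lemref{Global_bounds_for_the_parabolic_problem} entirely and instead build, for each component, the scalar envelope $h_{i,T}(s)=\max\{(\mathbf{L}\mathbf{w}-\mathbf{c}(\mathbf{w})\circ\mathbf{w})_i : 0\leq\mathbf{w}\leq\mathbf{B}_T,\ w_i=s\}$ and compare $u_i$ with the spatially homogeneous ODE solution. The sign structure of $h_i$ above $k_i$ is exactly \lemref{Saturation_constant}, so $k_i$ becomes an honest scalar barrier, and both the uniform bound and the $\limsup$ drop out of the same ODE comparison (with the quantified negativity on $[k_i+\delta,B_i]$ giving the absorbing set). Your method is more unified and arguably more elementary, at the price of (a) a preliminary slab bound $\mathbf{B}_T$ to make the envelope well-defined (you supply this by linear cooperative comparison), and (b) reliance on the scalar comparison principle on all of $\mathbb{R}$ for bounded data with a Lipschitz nonlinearity, which the paper avoids by the Dirichlet exhaustion. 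Two small technical remarks: your "main obstacle" paragraph slightly overstates the difficulty the paper would face, since freezing the lateral boundary data at $\mathbf{u}_0(\pm R)$ (rather than restricting $\mathbf{u}$ itself) already kills the lateral contribution in the limit; and the separate Lipschitz majorant $\hat h_i$ is actually unnecessary, since $h_i$ is itself Lipschitz on $[0,B_i]$, being the pointwise max of the $\mathscr{C}^1$ map $\mathbf{w}'\mapsto(\mathbf{L}\mathbf{w}-\mathbf{c}(\mathbf{w})\circ\mathbf{w})_i$ over the product box with the $s$-slice parameterized affinely.
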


\begin{proof}
To get the global in space $\mathscr{L}^{\infty}$ estimate, apply
the local one to the family $\left(\mathbf{u}_{R}\right)_{R>0}$,
where $\mathbf{u}_{R}$ is the solution of $\left(E_{KPP}\right)$
set in $\left(0,+\infty\right)\times\left(-R,R\right)$ with
\[
\left\{ \begin{matrix}\mathbf{u}_{R}\left(0,x\right)=\mathbf{u}_{0}\left(x\right) & \text{for all }x\in\left[-R,R\right],\\
\mathbf{u}_{R}\left(t,\pm R\right)=\mathbf{u}_{0}\left(\pm R\right) & \text{for all }t\geq0,
\end{matrix}\right.
\]
and recall that, by classical parabolic estimates (Lieberman \cite{Lieberman_2005})
and a diagonal extraction process, $\left(\mathbf{u}_{R}\right)_{R>0}$
converges up to extraction in $\mathscr{C}_{loc}^{1}\left(\left(0,+\infty\right),\mathscr{C}_{loc}^{2}\left(\mathbb{R},\mathbb{R}^{N}\right)\right)$
to the solution of $\left(E_{KPP}\right)$ set in $\left(0,+\infty\right)\times\mathbb{R}$
with initial data $\mathbf{u}_{0}$.

Next, let us prove that the invariant set
\[
\prod_{i=1}^{N}\left[0,g_{i}\left(0\right)\right]=\prod_{i=1}^{N}\left[0,k_{i}\right]
\]
 is in fact an absorbing set. 

Assume by contradiction that there exists a bounded nonnegative classical
solution $\mathbf{u}$ of $\left(E_{KPP}\right)$ set in $\left(0,+\infty\right)\times\mathbb{R}$
such that there exists $i\in\left[N\right]$ such that
\[
\limsup_{t\to+\infty}\sup_{x\in\mathbb{R}}u_{i}\left(t,x\right)>g_{i}\left(0\right).
\]

Since $\left[0,g_{i}\left(0\right)\right]$ is invariant, it implies
directly
\[
\sup_{x\in\mathbb{R}}u_{i}\left(t,x\right)>g_{i}\left(0\right)\text{ for all }t\geq0.
\]

Using the classical second order condition at any local maximum, it
is easily seen that at any local maximum in space of $u_{i}$, the
time derivative is negative. At any $t>0$ such that there is no local
maximum in space, by $\mathscr{C}^{1}$ regularity of $u_{i}$, $x\mapsto u_{i}\left(t,x\right)$
is either strictly monotonic or piecewise strictly monotonic with
one unique local minimum and consequently it converges to some constant
as $x\to\pm\infty$. At least one of these constants is $\sup\limits _{x\in\mathbb{R}}u_{i}\left(t,x\right)$.
For instance, assume it is the limit at $+\infty$. By classical parabolic
estimates and a diagonal extraction process, there exists $\left(x_{n}\right)_{n\in\mathbb{N}}\in\mathbb{R}^{\mathbb{N}}$
such that $x_{n}\to+\infty$ and such that the following sequence
converges in $\mathscr{C}_{loc}^{1}\left(\left(0,+\infty\right),\mathscr{C}_{loc}^{2}\left(\mathbb{R}\right)\right)$:
\[
\left(\left(t',x\right)\mapsto u_{i}\left(t+t',x+x_{n}\right)\right)_{n\in\mathbb{N}}.
\]
Let $v$ be its limit; by construction, 
\[
v\left(0,x\right)=\sup\limits _{x\in\mathbb{R}}u_{i}\left(t,x\right)\text{ for all }x\in\mathbb{R},
\]
so that
\[
\partial_{xx}v\left(0,x\right)=0\text{ for all }x\in\mathbb{R}.
\]
Using the equation satisfied by $u_{i}$, we obtain
\[
\partial_{t}v\left(0,x\right)<0\text{ for all }x\in\mathbb{R}.
\]
Since this argument does not depend on the choice of the sequence
$\left(x_{n}\right)_{n\in\mathbb{N}}$, we deduce
\[
\limsup_{x\to+\infty}\partial_{t}u_{i}\left(t,x\right)<0.
\]

In all cases, 
\[
t\mapsto\|x\mapsto u_{i}\left(t,x\right)\|_{\mathscr{L}^{\infty}\left(\mathbb{R}\right)}
\]
is a decreasing function, and using the global $\mathscr{L}^{\infty}$
estimate derived earlier, we deduce that
\[
t\mapsto\|u_{i}\|_{\mathscr{L}^{\infty}\left(\left(t,+\infty\right)\times\mathbb{R}\right)}
\]
is a decreasing function as well. Therefore
\[
\limsup_{t\to+\infty}\sup_{x\in\mathbb{R}}u_{i}\left(t,x\right)=\liminf_{t\to+\infty}\sup_{x\in\mathbb{R}}u_{i}\left(t,x\right)=\lim_{t\to+\infty}\sup_{x\in\mathbb{R}}u_{i}\left(t,x\right)>g_{i}\left(0\right).
\]

Now, the sequence
\[
\left(\left(t,x\right)\mapsto u_{i}\left(t+n,x\right)\right)_{n\in\mathbb{N}}
\]
being uniformly bounded in $\mathscr{L}^{\infty}\left(\left(0,+\infty\right)\times\mathbb{R}\right)$,
by classical parabolic estimates and a diagonal extraction process,
it converges up to extraction in $\mathscr{C}_{loc}^{1}\left(\left(0,+\infty\right),\mathscr{C}_{loc}^{2}\left(\mathbb{R}\right)\right)$
to some limit $u_{\infty,i}\in\mathscr{C}^{1}\left(\left(0,+\infty\right),\mathscr{C}^{2}\left(\mathbb{R}\right)\right)$. 

On one hand, by construction, the function
\[
t\mapsto\|x\mapsto u_{\infty,i}\left(t,x\right)\|_{\mathscr{L}^{\infty}\left(\mathbb{R}\right)}
\]
 is constant and larger than $g_{i}\left(0\right)$. But on the other
hand, passing also to the limit the other components of $\left(t,x\right)\mapsto\mathbf{u}\left(t+n,x\right)$
and then repeating the argument used earlier to prove the strict monotonicity
of
\[
t\mapsto\|x\mapsto u_{i}\left(t,x\right)\|_{\mathscr{L}^{\infty}\left(\mathbb{R}\right)},
\]
we deduce the strict monotonicity of
\[
t\mapsto\|x\mapsto u_{\infty,i}\left(t,x\right)\|_{\mathscr{L}^{\infty}\left(\mathbb{R}\right)},
\]
which is an obvious contradiction.

\end{proof}
Quite similarly, we can establish an $\mathscr{L}^{\infty}$ estimate
for $\left(S_{KPP}\right)$, set in a strip, and with an additional
drift. 
\begin{prop}
\label{prop:Global_bounds_for_the_elliptic_problem_with_drift} Let
$\left(a,b,c\right)\in\mathbb{R}^{3}$ such that $a<b$ and $\mathbf{u}$
be a nonnegative classical solution of
\[
-\mathbf{D}\mathbf{u}''-c\mathbf{u}'=\mathbf{L}\mathbf{u}-\mathbf{c}\left[\mathbf{u}\right]\circ\mathbf{u}\text{ in }\left(a,b\right).
\]

Then
\[
\left(\max_{\left[a,b\right]}u_{i}\right)_{i\in\left[N\right]}\leq\left(g_{i}\left(\max_{\left\{ a,b\right\} }u_{i}\right)\right)_{i\in\left[N\right]}.
\]
\end{prop}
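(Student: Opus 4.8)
The plan is to run a component-wise maximum principle argument, with the saturation \lemref{Saturation_constant} playing the role of the scalar saturation condition (just as in the parabolic case, but in the elliptic strip setting the entropy trick of \lemref{Global_bounds_for_the_parabolic_problem} can be replaced by a direct pointwise argument). Fix $i\in\left[N\right]$ and suppose, for contradiction, that $\max_{[a,b]}u_i>g_i\!\left(\max_{\{a,b\}}u_i\right)$. Since $u_i$ is continuous on the compact interval $[a,b]$ and this value strictly exceeds $\max_{\{a,b\}}u_i$, the maximum of $u_i$ over $[a,b]$ is attained at some interior point $x_0\in(a,b)$.

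At $x_0$ the classical first- and second-order optimality conditions give $u_i'(x_0)=0$ and $u_i''(x_0)\le 0$. Evaluating the $i$-th line of the elliptic system at $x_0$, the drift term $-cu_i'(x_0)$ vanishes, and using $d_i>0$ we obtain
\[
\left(\mathbf{L}\mathbf{u}(x_0)\right)_i-c_i\!\left(\mathbf{u}(x_0)\right)u_i(x_0)=-d_i u_i''(x_0)\ge 0 .
\]
Next I would feed this into \lemref{Saturation_constant}. Set $\mathbf{V}=\mathbf{u}(x_0)\in\mathsf{K}$ (by nonnegativity of $\mathbf{u}$) and $\mathbf{v}=\mathbf{V}-k_i\mathbf{e}_i$. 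Because $V_i=u_i(x_0)=\max_{[a,b]}u_i>g_i\!\left(\max_{\{a,b\}}u_i\right)\ge k_i$, one has $v_i=V_i-k_i>0$ while $v_j=V_j\ge 0$ for $j\neq i$; hence $\mathbf{v}\in\mathsf{K}\setminus\mathbf{e}_i^{\perp}$ and $\mathbf{V}=\mathbf{v}+k_i\mathbf{e}_i$. \lemref{Saturation_constant} then yields $\left(\mathbf{L}\mathbf{V}-\mathbf{c}(\mathbf{V})\circ\mathbf{V}\right)_i<0$, i.e. $\left(\mathbf{L}\mathbf{u}(x_0)\right)_i-c_i\!\left(\mathbf{u}(x_0)\right)u_i(x_0)<0$, contradicting the previous inequality. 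This contradiction forces $\max_{[a,b]}u_i\le g_i\!\left(\max_{\{a,b\}}u_i\right)$, and since $i\in\left[N\right]$ was arbitrary the proposition follows.

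I do not expect a serious obstacle: the analytic content is entirely packaged in \lemref{Saturation_constant}, and the only points requiring care are (i) checking that the maximum is interior before invoking the pointwise derivative conditions (otherwise $\max_{[a,b]}u_i=\max_{\{a,b\}}u_i$ and the bound is trivial), and (ii) verifying that the representation $\mathbf{u}(x_0)=\mathbf{v}+k_i\mathbf{e}_i$ meets the hypotheses of the saturation lemma, in particular $\mathbf{v}\notin\mathbf{e}_i^{\perp}$, which is exactly where the strict inequality $u_i(x_0)>g_i(\cdot)\ge k_i$ is used. The drift causes no difficulty precisely because it is diagonal and first-order and therefore drops out at the critical point, mirroring the remark already made after \propref{Positivity_for_the_parabolic_problem}.
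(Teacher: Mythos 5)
Your proof is correct and takes essentially the same approach as the paper: derive from \lemref{Saturation_constant} that $\left(\mathbf{L}\mathbf{u}-\mathbf{c}\left[\mathbf{u}\right]\circ\mathbf{u}\right)_{i}$ is strictly negative wherever $u_{i}>k_{i}$, and then contradict the existence of an interior maximum. The paper packages this as a strict differential inequality $-d_{i}u_{i}''-cu_{i}'\ll0$ on a subinterval around $x_{0}$ rather than evaluating the second-order condition pointwise at $x_{0}$, but the underlying idea is identical.
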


\begin{proof}
Assume by contradiction that there exists $i\in\left[N\right]$ such
that
\[
\max_{\left[a,b\right]}u_{i}>g_{i}\left(\max_{\left\{ a,b\right\} }u_{i}\right).
\]

Then there exists $x_{0}\in\left(a,b\right)$ such that
\[
\max_{\left[a,b\right]}u_{i}=u_{i}\left(x_{0}\right)>k_{i}.
\]

There exists $\left(x_{1},x_{2}\right)\in\left(a,b\right)^{2}$ such
that $x_{1}<x_{0}<x_{2}$ and
\[
\left\{ \begin{matrix}u_{i}\left(x\right)>k_{i} & \text{for all }x\in\left(x_{1},x_{2}\right)\\
u_{i}\left(x\right)=\frac{1}{2}\left(k_{i}+u_{i}\left(x_{0}\right)\right) & \text{for all }x\in\left\{ x_{1},x_{2}\right\} .
\end{matrix}\right.
\]

But then we find the inequality
\[
-d_{i}u_{i}''-cu_{i}'\ll0\text{ in }\left(x_{1},x_{2}\right)
\]
 which contradicts the existence of an interior maximum at $x_{0}\in\left(x_{1},x_{2}\right)$.
\end{proof}

\section{Extinction and persistence}

This section is devoted to the proof of \thmref{Extinction_or_persistence}.
The extinction case is mainly straightforward but, because of the
lack of comparison principle, the persistence case is more involved.

\subsection{Extinction}
\begin{prop}
Assume $\lambda_{PF}\left(\mathbf{L}\right)<0$. 

Then all bounded nonnegative classical solutions of $\left(E_{KPP}\right)$
set in $\left(0,+\infty\right)\times\mathbb{R}$ converge asymptotically
in time, exponentially fast, and uniformly in space to $\mathbf{0}$.
\end{prop}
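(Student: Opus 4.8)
The plan is to discard the competitive term, which for nonnegative data only pushes the solution down, and to compare $\mathbf{u}$ with an exponentially decaying supersolution of the \emph{linear cooperative} system $\partial_{t}\mathbf{w}-\mathbf{D}\partial_{xx}\mathbf{w}=\mathbf{L}\mathbf{w}$. This detour through the linearization is essential: the full system $\left(E_{KPP}\right)$ is not cooperative and admits no comparison principle, whereas its linearization at $\mathbf{0}$ does, precisely because $\mathbf{L}$ is essentially nonnegative and irreducible $\left(H_{1}\right)$.

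First I would observe that $\mathbf{u}\geq\mathbf{0}$ forces $\mathbf{c}\left[\mathbf{u}\right]\in\mathsf{K}$ by $\left(H_{2}\right)$, hence $\mathbf{c}\left[\mathbf{u}\right]\circ\mathbf{u}\geq\mathbf{0}$, so that $\mathbf{u}$ is a subsolution of the cooperative linear system:
\[
\partial_{t}\mathbf{u}-\mathbf{D}\partial_{xx}\mathbf{u}-\mathbf{L}\mathbf{u}\leq\mathbf{0}\text{ in }\left(0,+\infty\right)\times\mathbb{R}.
\]
Writing $\lambda=\lambda_{PF}\left(\mathbf{L}\right)<0$ and $\mathbf{n}=\mathbf{n}_{PF}\left(\mathbf{L}\right)\gg\mathbf{0}$, the spatially homogeneous function $\overline{\mathbf{w}}:\left(t,x\right)\mapsto Me^{\lambda t}\mathbf{n}$ is an exact solution of $\partial_{t}\mathbf{w}-\mathbf{D}\partial_{xx}\mathbf{w}=\mathbf{L}\mathbf{w}$, bounded on $\left[0,+\infty\right)\times\mathbb{R}$; and since $\mathbf{u}$ is bounded while $\mathbf{n}\gg\mathbf{0}$, one may pick $M>0$ large enough that $\mathbf{u}\left(0,\cdot\right)\leq M\mathbf{n}$ on $\mathbb{R}$ (the initial trace exists by the continuity built into the notion of classical solution). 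Applying the comparison principle for the cooperative system to the bounded functions $\mathbf{u}$ and $\overline{\mathbf{w}}$ on the whole line then yields
\[
\mathbf{0}\leq\mathbf{u}\left(t,x\right)\leq Me^{\lambda t}\mathbf{n}\quad\text{for all }\left(t,x\right)\in\left[0,+\infty\right)\times\mathbb{R},
\]
and since $\lambda<0$ this upper bound tends to $\mathbf{0}$ as $t\to+\infty$ at the exponential rate $\left|\lambda\right|$, uniformly in $x$; hence so does $\mathbf{u}$.

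The proof is short once one realizes that the competition only helps, so I do not expect a genuine obstacle here. The two mildly technical points are: the comparison principle must be invoked on the unbounded domain $\mathbb{R}$, which for bounded sub- and supersolutions is classical (a Phragm\'en\textendash Lindel\"of type maximum principle, or, to stay within tools already used in the paper, an exhaustion by the auxiliary Dirichlet problems on $\left(-R,R\right)$ together with a diagonal extraction exactly as in the proof of \propref{Absorbing_set}); and the initial comparison $\mathbf{u}\left(0,\cdot\right)\leq M\mathbf{n}$, which is immediate from the boundedness of $\mathbf{u}$ and the strict positivity of $\mathbf{n}_{PF}\left(\mathbf{L}\right)$.
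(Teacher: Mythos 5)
Your proof is correct and follows essentially the same route as the paper: both drop the nonnegative competitive term $\mathbf{c}\left[\mathbf{u}\right]\circ\mathbf{u}$ to view $\mathbf{u}$ as a subsolution of the cooperative linearized system, compare against $Me^{\lambda_{PF}\left(\mathbf{L}\right)t}\mathbf{n}_{PF}\left(\mathbf{L}\right)$ via the Protter--Weinberger comparison principle for weakly and fully coupled systems, and conclude from $\lambda_{PF}\left(\mathbf{L}\right)<0$. The extra remarks you make about the unbounded domain and the initial ordering are accurate but are exactly the points the paper treats implicitly.
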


\begin{proof}
It suffices to notice that if $\mathbf{u}$ is a nonnegative bounded
solution of $\left(E_{KPP}\right)$, then $\mathbf{v}:\left(t,x\right)\mapsto\text{e}^{\lambda_{PF}\left(\mathbf{L}\right)t}\mathbf{n}_{PF}\left(\mathbf{L}\right)$
satisfies by virtue of the nonnegativity of $\mathbf{c}$ on $\mathsf{K}$
$\left(H_{2}\right)$
\[
\partial_{t}\left(\mathbf{v}-\mathbf{u}\right)-\mathbf{D}\partial_{xx}\left(\mathbf{v}-\mathbf{u}\right)-\mathbf{L}\left(\mathbf{v}-\mathbf{u}\right)=\mathbf{c}\left[\mathbf{u}\right]\circ\mathbf{u}\geq\mathbf{0}.
\]

Hence, up to a multiplication of $\mathbf{v}$ by a large constant,
the comparison principle (Protter\textendash Weinberger \cite[Chapter 3, Theorem 13]{Protter_Weinberger})
applied to the linear weakly and fully coupled operator $\partial_{t}-\mathbf{D}\partial_{xx}-\mathbf{L}$
in $\left(0,+\infty\right)\times\mathbb{R}$ implies that $\mathbf{0}\leq\mathbf{u}\leq\mathbf{v}$.
The limit easily follows.
\end{proof}

\subsubsection{Regarding the critical case}

The proof for the case $\lambda_{PF}\left(\mathbf{L}\right)<0$ clearly
cannot be adapted if $\lambda_{PF}\left(\mathbf{L}\right)=0$. In
this subsubsection, we briefly explain why the present paper only
conjectures the result in this case.

Let us recall that for the scalar equation $\partial_{t}u-\partial_{xx}u=-u^{2}$,
the comparison principle ensures extinction (by comparison with a
solution of $u'\left(t\right)=-u\left(t\right)^{2}$ with large enough
initial data). Since the comparison principle is not satisfied by
$\left(E_{KPP}\right)$, we cannot hope to generalize this proof and
need to find another method. Still, in view of this scalar result,
it is natural to aim for a proof of extinction.

As a preliminary observation, if some Perron\textendash Frobenius
eigenvectors of $\mathbf{L}$ are zeros of $\mathbf{c}$, then extinction
will not occur in general. Therefore, in order to solve the critical
case, it is necessary to rule out this somehow degenerated case. This
is of course consistent with the critical case for \thmref{Existence_of_steady_states}.

For the non-degenerated non-diffusive system $\mathbf{u}'=\mathbf{L}\mathbf{u}-\mathbf{c}\left[\mathbf{u}\right]\circ\mathbf{u}$,
we know how to handle two particular cases:
\begin{itemize}
\item if $\mathbf{L}$ is symmetric, then the classical Lyapunov function
$V:\mathbf{u}\mapsto\frac{1}{2}\left|\mathbf{u}\right|^{2}$ ensures
extinction;
\item if there exists $\mathbf{a}\in\mathsf{K}^{++}$ such that $\mathbf{c}\left(\mathbf{v}\right)=\left(\mathbf{a}^{T}\mathbf{v}\right)\mathbf{1}_{N,1}$,
the change of unknown
\[
\mathbf{z}:t\mapsto\exp\left(\int_{0}^{t}\left(\mathbf{a}^{T}\mathbf{u}\left(\tau\right)\right)\text{d}\tau\right)\mathbf{u}\left(t\right)
\]
 (exploited for instance by Leman\textendash Méléard\textendash Mirrahimi
\cite[Theorem 1.4]{Leman_Meleard_}) ensures extinction.
\end{itemize}
But even in these special cases, the diffusive system cannot be handled
(as far as we know). 

The first idea of proof (which would be in the parabolic setting an
entropy proof) would involve an integration by parts of $\mathbf{u}^{T}\mathbf{D}\partial_{xx}\mathbf{u}$
and therefore would have to deal with the unboundedness of the space
domain $\mathbb{R}$. In such a situation, the classical trick (multiplication
of $\left(E_{KPP}\right)$ by $\text{e}^{-\varepsilon\left|x\right|}\mathbf{u}\left(t,x\right)^{T}$
instead of $\mathbf{u}\left(t,x\right)^{T}$ so that sufficient integrability
is recovered) brings forth a new problematic term (see for instance
Zelik \cite{Zelik_2003} where this computation is carried on and
only leads to the existence of an absorbing set). Hence, apart from
some particular cases (space-periodic solutions or solutions vanishing
as $x\to\pm\infty$) where we do not have to resort to this trick,
the entropy method does not establish the extinction.

As for the second idea of proof, it is completely ruined by the space
variable: the exponential term now depends also on $x$ and, again,
new problematic terms arise in the equation satisfied by $\mathbf{z}$.

In view of these facts, extinction in the critical case is both a
very natural conjecture and a surprisingly challenging problem (which
would be way beyond the scope of this article). 

\subsection{Persistence}

The first step toward the persistence result is giving some rigorous
meaning to the statement \textquotedblleft if $\lambda_{PF}\left(\mathbf{L}\right)>0$,
then $\mathbf{0}$ is unstable\textquotedblright .

\subsubsection{Slight digression: generalized principal eigenvalues and eigenfunctions
for weakly and fully coupled elliptic systems}
\begin{thm}
\label{thm:Generalized_principal_eigenvalue} Let $\left(n,n'\right)\in\mathbb{N}\cap[1,+\infty)\times\mathbb{N}\cap[2,+\infty)$
and $\mathscr{L}:\mathscr{C}^{2}\left(\mathbb{R}^{n},\mathbb{R}^{n'}\right)\to\mathscr{C}\left(\mathbb{R}^{n},\mathbb{R}^{n'}\right)$
be a second-order elliptic operator, weakly and fully coupled, with
continuous and bounded coefficients. 

Let
\[
\lambda_{1}\left(-\mathscr{L}\right)=\sup\left\{ \lambda\in\mathbb{R}\ |\ \exists\mathbf{v}\in\mathscr{C}^{2}\left(\mathbb{R}^{n},\mathsf{K}_{n'}^{++}\right)\quad-\mathscr{L}\mathbf{v}\geq\lambda\mathbf{v}\right\} \in\overline{\mathbb{R}}.
\]

Then
\[
\lim_{R\to+\infty}\lambda_{1,Dir}\left(-\mathscr{L},\mathsf{B}_{n}\left(\mathbf{0},R\right)\right)=\lambda_{1}\left(-\mathscr{L}\right).
\]

Furthermore, $\lambda_{1}\left(-\mathscr{L}\right)$ is in fact a
finite maximum and there exists a generalized principal eigenfunction,
that is a positive solution of
\[
-\mathscr{L}\mathbf{v}=\lambda_{1}\left(-\mathscr{L}\right)\mathbf{v}.
\]
\end{thm}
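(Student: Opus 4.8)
The plan is to identify $\lambda_1(-\mathscr{L})$ with $\lambda_\infty:=\lim_{R\to+\infty}\lambda_{1,Dir}(-\mathscr{L},\mathsf{B}_{n}(\mathbf{0},R))$ and to exhibit the eigenfunction along the way; the existence of this limit is the first thing to check. Since $\mathscr{L}$ is weakly and fully coupled it is order-preserving, so the Krein\textendash Rutman theorem provides, in every ball $\mathsf{B}_{n}(\mathbf{0},R)$, the Dirichlet principal eigenvalue $\lambda_{R}:=\lambda_{1,Dir}(-\mathscr{L},\mathsf{B}_{n}(\mathbf{0},R))$ and an associated eigenfunction $\phi_{R}$ positive in $\mathsf{B}_{n}(\mathbf{0},R)$ and vanishing on $\mathsf{S}_{n}(\mathbf{0},R)$. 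I will use twice the classical fact for cooperative operators that, if $\Omega$ is smooth and bounded and $(-\mathscr{L}-\lambda)\mathbf{w}\geq\mathbf{0}$ in $\Omega$ for some $\mathbf{w}\in\mathscr{C}^{2}(\Omega,\mathsf{K}_{n'}^{++})$, then $\lambda_{1,Dir}(-\mathscr{L},\Omega)\geq\lambda$. Applied with $\mathbf{w}=\mathbf{1}_{n',1}$, for which $(-\mathscr{L}-\lambda_{0})\mathbf{1}_{n',1}\geq\mathbf{0}$ on $\mathbb{R}^{n}$ holds for $\lambda_{0}$ small enough (possible because $\mathscr{L}\mathbf{1}_{n',1}$ reduces to the action of the bounded zeroth-order coefficient of $\mathscr{L}$ on $\mathbf{1}_{n',1}$), it gives the uniform lower bound $\lambda_{R}\geq\lambda_{0}$ for all $R>0$; applied with $\mathbf{w}$ the Dirichlet principal eigenfunction of a strictly larger ball, restricted to the given one, it shows that $R\mapsto\lambda_{R}$ is non-increasing. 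Hence $\lambda_\infty\in[\lambda_{0},+\infty)$ is well-defined.

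The inequality $\lambda_1(-\mathscr{L})\leq\lambda_\infty$ is then immediate: if $\lambda$ belongs to the set defining $\lambda_1(-\mathscr{L})$, there is $\mathbf{v}\in\mathscr{C}^{2}(\mathbb{R}^{n},\mathsf{K}_{n'}^{++})$ with $-\mathscr{L}\mathbf{v}\geq\lambda\mathbf{v}$ on $\mathbb{R}^{n}$, hence $(-\mathscr{L}-\lambda)\mathbf{v}\geq\mathbf{0}$ in each $\mathsf{B}_{n}(\mathbf{0},R)$, so the fact above yields $\lambda_{R}\geq\lambda$ for all $R$ and therefore $\lambda_\infty\geq\lambda$. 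Taking the supremum over such $\lambda$ gives $\lambda_1(-\mathscr{L})\leq\lambda_\infty$; in particular $\lambda_1(-\mathscr{L})<+\infty$.

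For the converse inequality and for the existence of a generalized principal eigenfunction, I would pass to the limit in the $\phi_{R}$, normalized by $|\phi_{R}(\mathbf{0})|_{n'}=1$. Fix $R_{0}>0$; for $R>R_{0}+1$ the ball $\mathsf{B}_{n}(\mathbf{0},R_{0}+1)$ is compactly contained in $\mathsf{B}_{n}(\mathbf{0},R)$, on which $\phi_{R}$ is a positive solution of $-\mathscr{L}\phi_{R}=\lambda_{R}\phi_{R}$ with $\lambda_{R}$ bounded (between $\lambda_{0}$ and $\lambda_{1,Dir}(-\mathscr{L},\mathsf{B}_{n}(\mathbf{0},1))$ once $R\geq1$). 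The Harnack inequality for weakly and fully coupled elliptic systems (Arapostathis\textendash Gosh\textendash Marcus \cite{Araposthathis_}) then furnishes $C(R_{0})$, independent of $R$, with $\max_{\overline{\mathsf{B}_{n}(\mathbf{0},R_{0})}}\phi_{R,i}\leq C(R_{0})\min_{\overline{\mathsf{B}_{n}(\mathbf{0},R_{0})}}\phi_{R,i}\leq C(R_{0})\phi_{R,i}(\mathbf{0})\leq C(R_{0})$ for every $i\in[n']$. Together with interior elliptic estimates and the boundedness of $(\lambda_{R})$, the family $(\phi_{R})$ is then bounded in $\mathscr{C}^{2}_{loc}(\mathbb{R}^{n},\mathbb{R}^{n'})$, so a diagonal extraction yields a subsequence converging in $\mathscr{C}^{2}_{loc}$ to some $\phi\geq\mathbf{0}$ with $|\phi(\mathbf{0})|_{n'}=1$, along which $\lambda_{R}\to\lambda_\infty$; passing to the limit in the equation, $-\mathscr{L}\phi=\lambda_\infty\phi$ on $\mathbb{R}^{n}$. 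Since $\phi\neq\mathbf{0}$, the strong positivity of cooperative systems (as in \propref{Positivity_for_the_parabolic_problem}) gives $\phi\in\mathscr{C}^{2}(\mathbb{R}^{n},\mathsf{K}_{n'}^{++})$, so $\phi$ is admissible in the definition of $\lambda_1(-\mathscr{L})$ and $\lambda_1(-\mathscr{L})\geq\lambda_\infty$. Combined with the previous paragraph, $\lambda_1(-\mathscr{L})=\lambda_\infty\in\mathbb{R}$, this common value is a maximum (attained at $\phi$, which satisfies $-\mathscr{L}\phi=\lambda_1(-\mathscr{L})\phi$), and $\phi$ is a generalized principal eigenfunction.

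The delicate point is the compactness used in the third paragraph: everything rests on an elliptic Harnack inequality for the cooperative system $-\mathscr{L}$ whose constant on a given compact set is independent of $R$. This is exactly what is available here, because for $R$ large that compact set stays at positive distance from $\mathsf{S}_{n}(\mathbf{0},R)$ while the ellipticity constants and coefficient bounds of $\mathscr{L}$ are global; the remaining ingredients (domain monotonicity of $\lambda_{1,Dir}$, the supersolution comparison, the strong maximum principle, interior estimates) are all classical for weakly and fully coupled operators.
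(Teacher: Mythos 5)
Your proof is correct and follows essentially the same route as the paper's: the inequality $\lambda_1(-\mathscr{L})\leq\lim_R\lambda_{1,Dir}$ via supersolution comparison on balls, and the reverse inequality plus existence of the generalized principal eigenfunction via Harnack inequality for fully coupled systems (Arapostathis--Gosh--Marcus), locally uniform bounds, diagonal extraction, and strong positivity of the limit. The only cosmetic differences are the normalization of $\phi_R$ (you use $|\phi_R(\mathbf{0})|_{n'}=1$ where the paper uses $\min_i v_{i,R}(\mathbf{0})=1$) and that you spell out the uniform lower bound $\lambda_R\geq\lambda_0$ and monotonicity of $R\mapsto\lambda_R$ a bit more explicitly; neither changes the substance.
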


\begin{rem*}
The convergence of the Dirichlet principal eigenvalue to the aforementioned
generalized principal eigenvalue as $R\to+\infty$ as well as the
existence of a generalized principal eigenfunction are well-known
for scalar elliptic equations (see Berestycki\textendash Rossi \cite{Berestycki_Ros_1}),
but as far as we know these results do not explicitly appear in the
literature regarding elliptic systems. Still, the proof of Berestycki\textendash Rossi
\cite{Berestycki_Ros_1} uses arguments developed in the celebrated
article by Berestycki\textendash Nirenberg\textendash Varadhan \cite{Berestycki_Nir}
and which have been generalized to weakly and fully coupled elliptic
systems already in order to prove the existence of a Dirichlet principal
eigenvalue in non-necessarily smooth but bounded domains by Birindelli\textendash Mitidieri\textendash Sweers
\cite{Birindelli_Mitidieiri_Sweers}. Hence we only briefly outline
here the proof so that it can be checked that the generalization to
unbounded domains is straightforward. 

It begins with the standard verification of the equality between the
generalized principal eigenvalue as defined above and the Dirichlet
principal eigenvalue for bounded smooth domains (whose existence was
proved for instance by Sweers \cite{Sweers_1992}). Then, since the
generalized principal eigenvalue is, by definition, non-increasing
with respect to the inclusion of the domains, we get that the limit
of the Dirichlet principal eigenvalues as $R\to+\infty$ exists and
is larger than or equal to the generalized principal eigenvalue. It
remains to prove that it is also smaller than or equal to it. This
is done thanks to the family of Dirichlet eigenfunctions $\left(\mathbf{v}_{R}\right)_{R>0}$
associated with the family of Dirichlet principal eigenvalues normalized
by
\[
\min_{i\in\left[n'\right]}v_{i,R}\left(\mathbf{0}\right)=1.
\]
Thanks to Arapostathis\textendash Gosh\textendash Marcus\textquoteright s
Harnack inequality \cite{Araposthathis_} applied to the operator
$\mathscr{L}$, we obtain a locally uniform $\mathscr{L}^{\infty}$
estimate, whence, by virtue of classical elliptic estimates (Gilbarg\textendash Trudinger
\cite{Gilbarg_Trudin}) and a diagonal extraction process, the existence
of a limit, up to extraction, for the family $\left(\mathbf{v}_{R}\right)_{R>0}$
as $R\to+\infty$. This limit $\mathbf{v}_{\infty}$ is nonnegative
nonzero and satisfies
\[
-\mathscr{L}\mathbf{v}_{\infty}=\left[\lim_{R\to+\infty}\lambda_{1,Dir}\left(-\mathscr{L},\mathsf{B}_{n}\left(\mathbf{0},R\right)\right)\right]\mathbf{v}_{\infty}.
\]
Thanks again to Arapostathis\textendash Gosh\textendash Marcus\textquoteright s
Harnack inequality, $\mathbf{v}_{\infty}$ is in fact positive in
$\mathbb{R}^{n}$. Thus, by definition of the generalized principal
eigenvalue, the limit as $R\to+\infty$ is indeed smaller than or
equal to it, and in the end the equality is proved as well as the
existence of a generalized principal eigenfunction $\mathbf{v}_{\infty}$. 
\end{rem*}

\subsubsection{Local instability and persistence}

Let $\gamma\in\left[0,1\right]$. On one hand, as a direct result
of Dancer \cite{Dancer_2009} or Lam\textendash Lou \cite{Lam_Lou_2015},
\[
\lim_{\varepsilon\to0}\lambda_{1,Dir}\left(-\varepsilon^{2}\mathbf{D}\frac{\text{d}^{2}}{\text{d}x^{2}}-\left(\mathbf{L}-\gamma\lambda_{PF}\left(\mathbf{L}\right)\mathbf{I}\right),\mathsf{B}\left(\mathbf{0},1\right)\right)=-\left(1-\gamma\right)\lambda_{PF}\left(\mathbf{L}\right).
\]
On the other hand, by a standard change of variable, 
\[
\lim_{\varepsilon\to0}\lambda_{1,Dir}\left(-\varepsilon^{2}\mathbf{D}\frac{\text{d}^{2}}{\text{d}x^{2}}-\left(\mathbf{L}-\gamma\lambda_{PF}\left(\mathbf{L}\right)\mathbf{I}\right),\mathsf{B}\left(\mathbf{0},1\right)\right)=\lim_{R\to+\infty}\lambda_{1,Dir}\left(-\mathbf{D}\frac{\text{d}^{2}}{\text{d}x^{2}}-\left(\mathbf{L}-\gamma\lambda_{PF}\left(\mathbf{L}\right)\mathbf{I}\right),\mathsf{B}\left(\mathbf{0},R\right)\right).
\]
Therefore, in view of \thmref{Generalized_principal_eigenvalue},
\[
\lambda_{1}\left(-\mathbf{D}\frac{\text{d}^{2}}{\text{d}x^{2}}-\left(\mathbf{L}-\gamma\lambda_{PF}\left(\mathbf{L}\right)\mathbf{I}\right)\right)=-\left(1-\gamma\right)\lambda_{PF}\left(\mathbf{L}\right).
\]

This equality deserves some attention: the generalized principal eigenvalue
of $\mathbf{D}\frac{\text{d}^{2}}{\text{d}x^{2}}+\left(\mathbf{L}-\gamma\lambda_{PF}\left(\mathbf{L}\right)\mathbf{I}\right)$
does not depend on $\mathbf{D}$. Of course, this is reminiscent of
the scalar case, where the equality 
\[
\lambda_{1}\left(-d\frac{\text{d}^{2}}{\text{d}x^{2}}-r\right)=-r
\]
is well-known (and follows for instance from a direct computation
of $\lambda_{1,Dir}\left(-d\frac{\text{d}^{2}}{\text{d}x^{2}}-r,\left(-R,R\right)\right)$
or from the equality with the periodic principal eigenvalue $\lambda_{1,per}\left(-d\frac{\text{d}^{2}}{\text{d}x^{2}}-r\right)$). 

As a corollary, we get the following lemma.
\begin{lem}
\label{lem:Negativity_lambda_Dir_for_large_R} Assume $\lambda_{PF}\left(\mathbf{L}\right)>0$.
Then there exists $\left(R_{0},R_{\nicefrac{1}{2}}\right)\in\left(0,+\infty\right)^{2}$
such that
\[
\lambda_{1,Dir}\left(-\mathbf{D}\frac{\text{d}^{2}}{\text{d}x^{2}}-\mathbf{L},\left(-R_{0},R_{0}\right)\right)<0,
\]
\[
\lambda_{1,Dir}\left(-\mathbf{D}\frac{\text{d}^{2}}{\text{d}x^{2}}-\left(\mathbf{L}-\frac{\lambda_{PF}\left(\mathbf{L}\right)}{2}\mathbf{I}\right),\left(-R_{\nicefrac{1}{2}},R_{\nicefrac{1}{2}}\right)\right)<0.
\]
\end{lem}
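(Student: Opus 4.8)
The plan is to obtain both inequalities as immediate specializations of the eigenvalue identity just established, namely
\[
\lim_{R\to+\infty}\lambda_{1,Dir}\left(-\mathbf{D}\frac{\text{d}^{2}}{\text{d}x^{2}}-\left(\mathbf{L}-\gamma\lambda_{PF}\left(\mathbf{L}\right)\mathbf{I}\right),\left(-R,R\right)\right)=-\left(1-\gamma\right)\lambda_{PF}\left(\mathbf{L}\right),
\]
valid for every $\gamma\in\left[0,1\right]$ (recall that $\mathsf{B}\left(\mathbf{0},R\right)=\left(-R,R\right)$ since we work in dimension one), which itself rests on \thmref{Generalized_principal_eigenvalue} together with the vanishing-viscosity limit of Dancer and of Lam--Lou.

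First I would take $\gamma=0$. Since $\lambda_{PF}\left(\mathbf{L}\right)>0$ by assumption, the right-hand side equals $-\lambda_{PF}\left(\mathbf{L}\right)<0$; hence, by the very definition of the limit, there exists $R_{0}>0$ with
\[
\lambda_{1,Dir}\left(-\mathbf{D}\frac{\text{d}^{2}}{\text{d}x^{2}}-\mathbf{L},\left(-R_{0},R_{0}\right)\right)<0.
\]
Then I would repeat the argument with $\gamma=\frac{1}{2}$: the right-hand side now equals $-\frac{1}{2}\lambda_{PF}\left(\mathbf{L}\right)<0$, so there exists $R_{\nicefrac{1}{2}}>0$ with
\[
\lambda_{1,Dir}\left(-\mathbf{D}\frac{\text{d}^{2}}{\text{d}x^{2}}-\left(\mathbf{L}-\frac{\lambda_{PF}\left(\mathbf{L}\right)}{2}\mathbf{I}\right),\left(-R_{\nicefrac{1}{2}},R_{\nicefrac{1}{2}}\right)\right)<0,
\]
which finishes the proof.

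There is essentially no obstacle here: the entire content of the lemma lies in the preceding eigenvalue identity, and what remains is only the elementary remark that a real quantity converging to a strictly negative limit is itself strictly negative from some point on. If one also wanted the two inequalities to hold for \emph{every} sufficiently large $R$ (not needed for the present statement, but convenient later on), one would additionally invoke the monotonicity of $R\mapsto\lambda_{1,Dir}\left(-\mathscr{L},\left(-R,R\right)\right)$ under domain inclusion.
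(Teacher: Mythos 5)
Your proof is correct and takes exactly the same route as the paper: the lemma is stated there immediately after the identity $\lambda_{1}\left(-\mathbf{D}\frac{\text{d}^{2}}{\text{d}x^{2}}-\left(\mathbf{L}-\gamma\lambda_{PF}\left(\mathbf{L}\right)\mathbf{I}\right)\right)=-\left(1-\gamma\right)\lambda_{PF}\left(\mathbf{L}\right)$ with the words ``as a corollary,'' and you simply make the corollary explicit by specializing to $\gamma=0$ and $\gamma=\frac{1}{2}$ and invoking the definition of the limit $R\to+\infty$.
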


\begin{rem*}
In fact, much more precisely, it can be shown that, for all $\gamma\in\left[0,1\right]$,
\[
R\mapsto\lambda_{1,Dir}\left(-\mathbf{D}\frac{\text{d}^{2}}{\text{d}x^{2}}-\left(\mathbf{L}-\gamma\lambda_{PF}\left(\mathbf{L}\right)\mathbf{I}\right),\left(-R,R\right)\right)
\]
is a decreasing homeomorphism from $\left(0,+\infty\right)$ onto
$\left(-\left(1-\gamma\right)\lambda_{PF}\left(\mathbf{L}\right),+\infty\right)$. 
\end{rem*}
By continuity of $\mathbf{c}$ and the fact that it vanishes at $\mathbf{0}$
$\left(H_{3}\right)$, as soon as $\lambda_{PF}\left(\mathbf{L}\right)>0$,
the quantity
\[
\alpha_{\nicefrac{1}{2}}=\max\left\{ \alpha>0\ |\ \forall\mathbf{v}\in\left[0,\alpha\right]^{N}\quad\mathbf{c}\left(\mathbf{v}\right)\leq\frac{\lambda_{PF}\left(\mathbf{L}\right)}{2}\mathbf{1}_{N,1}\right\} 
\]
 is well-defined in $\mathbb{R}$ and is positive. The pair $\left(R_{\nicefrac{1}{2}},\alpha_{\nicefrac{1}{2}}\right)$
will be used repeatedly up to the end of this section.
\begin{lem}
\label{lem:Instability_of_0} Assume $\lambda_{PF}\left(\mathbf{L}\right)>0$.
For all $\mu\in\left(0,\alpha_{\nicefrac{1}{2}}\right)$, let
\[
T_{\mu}=\frac{\ln\alpha_{\nicefrac{1}{2}}-\ln\mu}{-\lambda_{1,Dir}\left(-\mathbf{D}\frac{\text{d}^{2}}{\text{d}x^{2}}-\left(\mathbf{L}-\frac{\lambda_{PF}\left(\mathbf{L}\right)}{2}\mathbf{I}\right),\left(-R_{\nicefrac{1}{2}},R_{\nicefrac{1}{2}}\right)\right)}>0.
\]

For all $\left(t_{0},T,a,b\right)\in\mathbb{R}\times\left(0,+\infty\right)\times\mathbb{R}^{2}$
such that $\frac{b-a}{2}=R_{\nicefrac{1}{2}}$ and for all nonnegative
classical solutions $\mathbf{u}$ of $\left(E_{KPP}\right)$ set in
the bounded parabolic cylinder $\left(t_{0},t_{0}+T\right)\times\left(a,b\right)$,
if
\[
\min_{i\in\left[N\right]}\min_{x\in\left[a,b\right]}u_{i}\left(t_{0},x\right)=\mu,
\]
\[
\max_{i\in\left[N\right]}\max_{\left[t_{0},t_{0}+T\right]\times\left[a,b\right]}u_{i}\leq\alpha_{\nicefrac{1}{2}},
\]
 then $T<T_{\mu}$.
\end{lem}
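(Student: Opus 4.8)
The plan is to linearise $\left(E_{KPP}\right)$ from below on the cylinder, where the a priori bound $\mathbf{u}\leq\alpha_{\nicefrac{1}{2}}\mathbf{1}_{N,1}$ is in force, and to compare $\mathbf{u}$ with an exponentially growing subsolution built from the Dirichlet principal eigenfunction of \lemref{Negativity_lambda_Dir_for_large_R}. Concretely, set $\lambda_{\nicefrac{1}{2}}=\lambda_{1,Dir}\bigl(-\mathbf{D}\frac{\text{d}^{2}}{\text{d}x^{2}}-(\mathbf{L}-\frac{\lambda_{PF}\left(\mathbf{L}\right)}{2}\mathbf{I}),\left(-R_{\nicefrac{1}{2}},R_{\nicefrac{1}{2}}\right)\bigr)<0$ and let $\boldsymbol{\varphi}$ be an associated principal eigenfunction, positive in $\left(-R_{\nicefrac{1}{2}},R_{\nicefrac{1}{2}}\right)$, vanishing at $\pm R_{\nicefrac{1}{2}}$, normalised by $\max_{i\in\left[N\right]}\max_{\left[-R_{\nicefrac{1}{2}},R_{\nicefrac{1}{2}}\right]}\varphi_{i}=1$; existence and these properties follow from the cooperative theory (Sweers \cite{Sweers_1992}, Birindelli--Mitidieri--Sweers \cite{Birindelli_Mitidieiri_Sweers}), since $\mathbf{L}-\frac{\lambda_{PF}\left(\mathbf{L}\right)}{2}\mathbf{I}$ is still essentially nonnegative and irreducible by $\left(H_{1}\right)$. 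Because $\frac{b-a}{2}=R_{\nicefrac{1}{2}}$, put $\boldsymbol{\psi}(x)=\boldsymbol{\varphi}(x-\frac{a+b}{2})$ on $\left[a,b\right]$ and define $\underline{\mathbf{u}}(t,x)=\mu\,\text{e}^{-\lambda_{\nicefrac{1}{2}}(t-t_{0})}\boldsymbol{\psi}(x)$; a direct differentiation using the eigenfunction equation $-\mathbf{D}\boldsymbol{\psi}''-(\mathbf{L}-\frac{\lambda_{PF}\left(\mathbf{L}\right)}{2}\mathbf{I})\boldsymbol{\psi}=\lambda_{\nicefrac{1}{2}}\boldsymbol{\psi}$ shows that $\underline{\mathbf{u}}$ solves exactly the linear cooperative system $\partial_{t}\mathbf{w}-\mathbf{D}\partial_{xx}\mathbf{w}=(\mathbf{L}-\frac{\lambda_{PF}\left(\mathbf{L}\right)}{2}\mathbf{I})\mathbf{w}$.

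Next I would check that $\mathbf{u}$ is a supersolution of this same linear system. Indeed, $\mathbf{0}\leq\mathbf{u}\leq\alpha_{\nicefrac{1}{2}}\mathbf{1}_{N,1}$ on the cylinder, so by the definition of $\alpha_{\nicefrac{1}{2}}$ one has $\mathbf{c}\left[\mathbf{u}\right]\leq\frac{\lambda_{PF}\left(\mathbf{L}\right)}{2}\mathbf{1}_{N,1}$, whence
\[
\partial_{t}\mathbf{u}-\mathbf{D}\partial_{xx}\mathbf{u}-\left(\mathbf{L}-\tfrac{\lambda_{PF}\left(\mathbf{L}\right)}{2}\mathbf{I}\right)\mathbf{u}=\left(\tfrac{\lambda_{PF}\left(\mathbf{L}\right)}{2}\mathbf{1}_{N,1}-\mathbf{c}\left[\mathbf{u}\right]\right)\circ\mathbf{u}\geq\mathbf{0}.
\]
On the parabolic boundary, $\underline{\mathbf{u}}\leq\mathbf{u}$: we have $\underline{\mathbf{u}}=\mathbf{0}$ on the lateral sides, and $\underline{\mathbf{u}}(t_{0},\cdot)=\mu\boldsymbol{\psi}\leq\mu\mathbf{1}_{N,1}\leq\mathbf{u}(t_{0},\cdot)$ by the hypothesis $\min_{i\in\left[N\right]}\min_{x\in\left[a,b\right]}u_{i}(t_{0},x)=\mu$. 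Protter--Weinberger's comparison principle \cite[Chapter 3, Theorem 13]{Protter_Weinberger} applied to the cooperative operator $\partial_{t}-\mathbf{D}\partial_{xx}-(\mathbf{L}-\frac{\lambda_{PF}\left(\mathbf{L}\right)}{2}\mathbf{I})$ then gives $\underline{\mathbf{u}}\leq\mathbf{u}$ on the whole cylinder. Choosing $(i^{\star},x^{\star})$ with $\psi_{i^{\star}}(x^{\star})=1$, which forces $x^{\star}\in\left(a,b\right)$, we obtain $\mu\,\text{e}^{-\lambda_{\nicefrac{1}{2}}(t-t_{0})}\leq u_{i^{\star}}(t,x^{\star})\leq\alpha_{\nicefrac{1}{2}}$ for every $t\in\left[t_{0},t_{0}+T\right]$; taking $t=t_{0}+T$ and isolating $T$ (using $-\lambda_{\nicefrac{1}{2}}>0$) yields $T\leq T_{\mu}$.

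It remains to upgrade this to a strict inequality, which I would do by contradiction. If $T=T_{\mu}$, the previous chain forces $u_{i^{\star}}(t_{0}+T,x^{\star})=\alpha_{\nicefrac{1}{2}}=\underline{u}_{i^{\star}}(t_{0}+T,x^{\star})$, so $\mathbf{w}=\mathbf{u}-\underline{\mathbf{u}}$ is a nonnegative supersolution of the cooperative, fully coupled parabolic operator that vanishes in its $i^{\star}$-th component at $(t_{0}+T,x^{\star})$, with $x^{\star}\in\left(a,b\right)$ and $t_{0}+T>t_{0}$. By the strong maximum principle this forces $\mathbf{w}\equiv\mathbf{0}$ on $\left[t_{0},t_{0}+T\right]\times\left[a,b\right]$, hence $\mathbf{u}(t_{0},a)=\underline{\mathbf{u}}(t_{0},a)=\mu\boldsymbol{\psi}(a)=\mathbf{0}$, contradicting $\mathbf{u}(t_{0},a)\geq\mu\mathbf{1}_{N,1}\gg\mathbf{0}$; therefore $T<T_{\mu}$. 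I expect the main subtlety to lie precisely in this last step — making the parabolic strong maximum principle bite at the final time slice $t=t_{0}+T$ and at an interior spatial point, rather than only at interior times — together with the routine but necessary verification that replacing $\mathbf{L}$ by $\mathbf{L}-\frac{\lambda_{PF}\left(\mathbf{L}\right)}{2}\mathbf{I}$ preserves the essential nonnegativity and irreducibility on which both the comparison and the strong maximum principles rely.
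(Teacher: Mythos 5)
Your proof is correct and takes essentially the same approach as the paper: comparison of $\mathbf{u}$ against the exponentially growing sub-solution $\mu\,\text{e}^{-\lambda_{\nicefrac{1}{2}}(t-t_{0})}\boldsymbol{\psi}$ built from the Dirichlet principal eigenfunction of \lemref{Negativity_lambda_Dir_for_large_R}, followed by the strong maximum principle for the cooperative linearization. The only organizational difference is that you establish $T\leq T_{\mu}$ directly and then extract strictness from a touching at the final time slice $t_{0}+T$, whereas the paper argues by contradiction from $T\geq T_{\mu}$ and locates a first interior touching time $t^{\star}$; your ordering neatly sidesteps the need to check that such a $t^{\star}$ is strictly positive.
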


\begin{proof}
Let
\[
\Lambda=\lambda_{1,Dir}\left(-\mathbf{D}\frac{\text{d}^{2}}{\text{d}x^{2}}-\left(\mathbf{L}-\frac{\lambda_{PF}\left(\mathbf{L}\right)}{2}\mathbf{I}\right),\left(-R_{\nicefrac{1}{2}},R_{\nicefrac{1}{2}}\right)\right)<0.
\]

Let $\mathbf{n}$ be the principal eigenfunction associated with the
preceding Dirichlet principal eigenvalue normalized so that
\[
\max_{i\in\left[N\right]}\max\limits _{\left[-R_{\nicefrac{1}{2}},R_{\nicefrac{1}{2}}\right]}n_{i}=1.
\]
By definition, we have in $\left(-R_{\nicefrac{1}{2}},R_{\nicefrac{1}{2}}\right)$
\[
-\left(-\mathbf{D}\mathbf{n}''-\left(\mathbf{L}-\frac{\lambda_{PF}\left(\mathbf{L}\right)}{2}\mathbf{I}\right)\mathbf{n}\right)=-\Lambda\mathbf{n}\gg\mathbf{0}.
\]

By definition of $\alpha_{\nicefrac{1}{2}}$ and by the nonnegativity
of $\mathbf{c}$ on $\mathsf{K}$ $\left(H_{2}\right)$, for all $\mathbf{v}\in\left[0,\alpha_{\nicefrac{1}{2}}\right]^{N}$,
\[
\mathbf{c}\left(\mathbf{v}\right)\circ\mathbf{v}\leq\frac{\lambda_{PF}\left(\mathbf{L}\right)}{2}\mathbf{v},
\]
 whence
\[
-\left(\mathbf{L}\mathbf{v}-\mathbf{c}\left(\mathbf{v}\right)\circ\mathbf{v}\right)\leq-\left(\mathbf{L}-\frac{\lambda_{PF}\left(\mathbf{L}\right)}{2}\mathbf{I}\right)\mathbf{v}.
\]

Now, fix $\left(t_{0},T,a,b\right)\in\mathbb{R}\times\left(0,+\infty\right)\times\mathbb{R}^{2}$
such that $\frac{b-a}{2}=R_{\nicefrac{1}{2}}$ and $T\geq T_{\mu}$.
Assume by contradiction that there exists a nonnegative solution $\mathbf{u}:\left(t_{0},t_{0}+T\right)\times\left(a,b\right)\to\mathsf{K}$
of $\left(E_{KPP}\right)$ such that the following properties hold
\[
\mu=\min_{i\in\left[N\right]}\min_{x\in\left[a,b\right]}u_{i}\left(t_{0},x\right)>0,
\]
\[
\max_{i\in\left[N\right]}\max_{\left[t_{0},t_{0}+T\right]\times\left[a,b\right]}u_{i}\leq\alpha_{\nicefrac{1}{2}}.
\]

In particular, since $\mu>0$, $\mathbf{u}$ is nonnegative nonzero.

To simplify the notations, hereafter we assume that $t_{0}=0$ and
$\frac{a+b}{2}=0$. The general case is only a matter of straightforward
translations.

Define the function
\[
\mathbf{v}:\left(t,x\right)\mapsto\mu\text{e}^{-\Lambda t}\mathbf{n}\left(x\right).
\]

Clearly
\[
\mathbf{v}\left(0,x\right)\leq\mathbf{u}\left(0,x\right)\text{ for all }x\in\left[a,b\right].
\]
It is easily verified as well that $\mathbf{v}$ satisfies in $\left(0,T_{\mu}\right)\times\left(-R_{\nicefrac{1}{2}},R_{\nicefrac{1}{2}}\right)$
\[
-\left(\partial_{t}\mathbf{v}-\mathbf{D}\partial_{xx}\mathbf{v}-\left(\mathbf{L}-\frac{\lambda_{PF}\left(\mathbf{L}\right)}{2}\mathbf{I}\right)\mathbf{v}\right)\geq\mathbf{0},
\]
whence, by construction of $\alpha_{\nicefrac{1}{2}}$, $\mathbf{w}=\mathbf{u}-\mathbf{v}$
satisfies
\begin{align*}
\partial_{t}\mathbf{w}-\mathbf{D}\partial_{xx}\mathbf{w}-\left(\mathbf{L}-\frac{\lambda_{PF}\left(\mathbf{L}\right)}{2}\mathbf{I}\right)\mathbf{w} & \geq\partial_{t}\mathbf{u}-\mathbf{D}\partial_{xx}\mathbf{u}-\mathbf{L}\mathbf{u}+\mathbf{c}\left[\mathbf{u}\right]\circ\mathbf{u}=\mathbf{0}.
\end{align*}

Most importantly, since by construction
\[
T_{\mu}=\max\left\{ t>0\ |\ \max_{i\in\left[N\right]}\max_{x\in\left[-R_{\nicefrac{1}{2}},R_{\nicefrac{1}{2}}\right]}v_{i}\left(t,x\right)\leq\alpha_{\nicefrac{1}{2}}\right\} ,
\]
there exists $t^{\star}\leq T_{\mu}\leq T$ and $x^{\star}\in\left(-R_{\nicefrac{1}{2}},R_{\nicefrac{1}{2}}\right)$
such that $\mathbf{w}\gg\mathbf{0}$ in $[0,t^{\star})\times\left(-R_{\nicefrac{1}{2}},R_{\nicefrac{1}{2}}\right)$
and $\mathbf{w}\left(t^{\star},x^{\star}\right)\in\partial\mathsf{K}$. 

The strong maximum principle applied to the weakly and fully coupled
linear operator $\partial_{t}-\mathbf{D}\partial_{xx}-\left(\mathbf{L}-\frac{\lambda_{PF}\left(\mathbf{L}\right)}{2}\mathbf{I}\right)$
proves then that $\mathbf{w}=\mathbf{0}$ in $[0,t^{\star})\times\left(-R_{\nicefrac{1}{2}},R_{\nicefrac{1}{2}}\right)$,
which contradicts $\mathbf{w}\left(0,\pm R_{\nicefrac{1}{2}}\right)\gg\mathbf{0}$.
\end{proof}
The persistence result follows.
\begin{prop}
\label{prop:Persistence} Assume $\lambda_{PF}\left(\mathbf{L}\right)>0$. 

There exists $\nu>0$ such that all bounded nonnegative nonzero classical
solutions $\mathbf{u}$ of $\left(E_{KPP}\right)$ set in $\left(0,+\infty\right)\times\mathbb{R}$
satisfy, for all bounded intervals $I\subset\mathbb{R}$,
\[
\left(\liminf_{t\to+\infty}\inf_{x\in I}u_{i}\left(t,x\right)\right)_{i\in\left[N\right]}\geq\nu\mathbf{1}_{N,1}.
\]

Consequently, all bounded nonnegative classical solutions of $\left(S_{KPP}\right)$
are valued in
\[
\prod_{i=1}^{N}\left[\nu,g_{i}\left(0\right)\right].
\]
\end{prop}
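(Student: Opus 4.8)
The plan is to feed the instability mechanism of \lemref{Instability_of_0} into an iteration, exploiting crucially that once a solution has entered the absorbing set of \propref{Absorbing_set} it solves a \emph{linear}, weakly and fully coupled system with uniformly bounded coefficients, for which comparison principles and parabolic Harnack inequalities (F\"oldes\textendash Pol\'a\v{c}ik) are available even though $\left(E_{KPP}\right)$ is not cooperative. First I would reduce. By \thmref{Strong_positivity} a bounded nonnegative nonzero solution is positive for $t>0$; by \propref{Absorbing_set} it is eventually $\leq2\mathbf{g}\left(0\right)$ uniformly in space; and by translation invariance in $x$ (together with the elementary fact that $\liminf\min_{j}f_{j}\geq\min_{j}\liminf f_{j}$ for finitely many $f_{j}$) it is enough to find a universal $\nu>0$ with
\[
\liminf_{t\to+\infty}\;\min_{i\in\left[N\right]}\min_{x\in\left[-R_{\nicefrac{1}{2}},R_{\nicefrac{1}{2}}\right]}u_{i}\left(t,x\right)\geq\nu
\]
for every such solution, an arbitrary bounded interval being covered by finitely many translates of $\left[-R_{\nicefrac{1}{2}},R_{\nicefrac{1}{2}}\right]$. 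After a time shift I may then assume $\mathbf{u}\gg\mathbf{0}$ and $\mathbf{u}\leq2\mathbf{g}\left(0\right)$ on $\left[0,+\infty\right)\times\mathbb{R}$, so that $\mathbf{u}$ solves $\partial_{t}\mathbf{u}-\mathbf{D}\partial_{xx}\mathbf{u}=\mathbf{A}\mathbf{u}$ with $\mathbf{A}=\mathbf{L}-\text{diag}\,\mathbf{c}\left[\mathbf{u}\right]$ essentially nonnegative, irreducible and uniformly bounded.

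The core is a \textbf{one-step recovery estimate}: there are universal constants $\nu_{1}\in\left(0,\alpha_{\nicefrac{1}{2}}\right)$ and $\tau_{1}>0$, and for each $\sigma\in\left(0,\alpha_{\nicefrac{1}{2}}\right]$ a time $\Theta_{\sigma}$ of order $\ln\left(\alpha_{\nicefrac{1}{2}}/\sigma\right)$, such that if $\mathbf{u}\left(s,\cdot\right)\geq\sigma\mathbf{1}_{N,1}$ on a fixed window $W$ strictly containing $\left[-R_{\nicefrac{1}{2}},R_{\nicefrac{1}{2}}\right]$ (so that the Dirichlet principal eigenvalue of $-\mathbf{D}\frac{\text{d}^{2}}{\text{d}x^{2}}-\left(\mathbf{L}-\frac{\lambda_{PF}\left(\mathbf{L}\right)}{2}\mathbf{I}\right)$ on $W$ is still negative, by \lemref{Negativity_lambda_Dir_for_large_R} and the remark after it), then $\mathbf{u}\left(s',\cdot\right)\geq\nu_{1}\mathbf{1}_{N,1}$ on $W$ for some $s'\in\left[s,s+\Theta_{\sigma}+\tau_{1}\right]$. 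I would prove it by dichotomy. If $\mathbf{u}$ stays $\leq\alpha_{\nicefrac{1}{2}}$ on $W$ throughout $\left[s,s+\Theta_{\sigma}\right]$, then, exactly as in the proof of \lemref{Instability_of_0}, $\mathbf{u}$ dominates on $W$ the cooperative subsolution $\sigma\text{e}^{-\Lambda\left(t-s\right)}\mathbf{n}$ built from the negative Dirichlet principal eigenvalue $\Lambda$ on $W$ and its normalized eigenfunction $\mathbf{n}$; since $\mathbf{n}$ is bounded below on $\left[-R_{\nicefrac{1}{2}},R_{\nicefrac{1}{2}}\right]\Subset W$, the choice of $\Theta_{\sigma}$ forces all components of $\mathbf{u}$ above a universal level on $\left[-R_{\nicefrac{1}{2}},R_{\nicefrac{1}{2}}\right]$ at time $s+\Theta_{\sigma}$. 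Otherwise some $u_{i}$ reaches $\alpha_{\nicefrac{1}{2}}$ somewhere in the closed cylinder $\left[s,s+\Theta_{\sigma}\right]\times\overline{W}$, and a parabolic Harnack inequality for the cooperative operator $\partial_{t}-\mathbf{D}\partial_{xx}-\mathbf{A}$ turns this single-point bound into a universal lower bound for every component over a small ball a fixed time later. In both cases I then spread the resulting bound out to all of $W$ within the extra time $\tau_{1}$ by comparing $\mathbf{u}$ from below with the solution of $\partial_{t}\mathbf{v}-\mathbf{D}\partial_{xx}\mathbf{v}=\mathbf{A}\mathbf{v}$ issued from that small compactly supported datum, which is legitimate because $\mathbf{A}$ is cooperative and bounded.

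Granting the recovery estimate, the proof finishes by iteration. From $\mathbf{u}\left(1,\cdot\right)\gg\mathbf{0}$ and $\sigma_{0}:=\min_{i}\min_{W}u_{i}\left(1,\cdot\right)>0$, one application gives $s_{1}\leq1+\Theta_{\sigma_{0}}+\tau_{1}$ with $\mathbf{u}\left(s_{1},\cdot\right)\geq\nu_{1}\mathbf{1}_{N,1}$ on $W$; as $\nu_{1}$ is universal, applying the estimate repeatedly with $\sigma=\nu_{1}$ yields times $s_{1}<s_{2}<\cdots$ carrying that bound on $W$ with consecutive gaps $\leq\Theta^{\star}:=\Theta_{\nu_{1}}+\tau_{1}$, a universal constant. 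Finally, on each $\left[s_{k},s_{k+1}\right]$, comparing $\mathbf{u}$ from below with the linear cooperative flow issued at $s_{k}$ from $\nu_{1}\mathbf{1}_{N,1}\mathbf{1}_{\left[-R_{\nicefrac{1}{2}},R_{\nicefrac{1}{2}}\right]}$—replaced here by $\nu_{1}\mathbf{1}_{N,1}\mathbf{1}_{W}$—produces a universal lower bound $\nu$ on $\left[-R_{\nicefrac{1}{2}},R_{\nicefrac{1}{2}}\right]\Subset W$ for all $t\geq s_{1}$, which is exactly the reduced claim. The consequence for $\left(S_{KPP}\right)$ follows: a bounded nonnegative nonzero steady state is positive by \thmref{Strong_positivity}, hence a bounded positive stationary solution of $\left(E_{KPP}\right)$, so letting the interval exhaust $\mathbb{R}$ gives $\inf_{\mathbb{R}}u_{i}\geq\nu$, while \propref{Absorbing_set} gives $u_{i}\leq g_{i}\left(0\right)$.

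The main obstacle, and the reason for this detour through windows and iterations, is the absence of a comparison principle for the genuinely non-cooperative $\left(E_{KPP}\right)$: $\mathbf{u}$ cannot simply be trapped between ordered sub- and supersolutions, and the naive route—passing to a locally uniform limit of space-time translates and invoking ``$\mathbf{0}$ is unstable''—fails, because the basin width $T_{\mu}$ of \lemref{Instability_of_0} blows up as $\mu\to0$, so a sequence collapsing onto $\mathbf{0}$ produces no contradiction. What rescues the argument is that comparison \emph{is} available in the only two regimes it is used: below $\alpha_{\nicefrac{1}{2}}$, where the reaction dominates the cooperative linear part from below so the scalar-type subsolution of \lemref{Instability_of_0} applies, and inside the absorbing set, where $\mathbf{u}$ genuinely solves a linear cooperative equation with bounded coefficients. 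The delicate points are then essentially bookkeeping: tracking the nested window sizes across the dichotomy, and checking that every constant produced ($\nu_{1}$, $\tau_{1}$, $\Theta^{\star}$, $\nu$) is independent of the solution.
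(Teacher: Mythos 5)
Your proposal is correct and reproduces the paper's strategy in substance: the same three ingredients (the local instability lemma \lemref{Instability_of_0}, the F\"oldes--Pol\'a\v{c}ik Harnack inequality for the weakly, fully coupled linearized operator with coefficients made uniformly bounded by the absorbing set \propref{Absorbing_set}, and an iteration to propagate a universal lower bound in time) are combined in the same order. The only genuine differences are organizational: you package the inductive step as a one-step recovery estimate on a nested window $W\supsetneq[-R_{\nicefrac{1}{2}},R_{\nicefrac{1}{2}}]$ and then spread the bound back out by a linear comparison, whereas the paper tracks the excursion set $\{t\mid\max_{i}\max_{I_{x}}u_{i}(t,\cdot)<\alpha_{\nicefrac{1}{2}}\}$, shows its connected components have universally bounded length via two applications of \lemref{Instability_of_0} interleaved with the Harnack inequality, and concludes with a second Harnack step rather than a spreading comparison. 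Both routes close; your explanation of why the naive compactness argument (space-time translates plus ``$\mathbf{0}$ is unstable'') fails, because $T_{\mu}\to+\infty$ as $\mu\to0$, correctly identifies the obstacle the iteration is designed to overcome.
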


\begin{proof}
Let $\mathbf{u}$ be a bounded nonnegative nonzero classical solution
of $\left(E_{KPP}\right)$ set in $\left(0,+\infty\right)\times\mathbb{R}$.
In view of \propref{Absorbing_set}, for all $\varepsilon>0$ there
exists $t_{\varepsilon}\in\left(0,+\infty\right)$ such that
\[
\mathbf{u}\leq\left(\max_{i\in\left[N\right]}\left(g_{i}\left(0\right)\right)+\varepsilon\right)\mathbf{1}_{N,1}\text{ in }\left(t_{\varepsilon},+\infty\right)\times\mathbb{R}.
\]
Let $I\subset\mathbb{R}$ be a bounded interval. Fix temporarily $\varepsilon>0$
and $x\in I$ and define $I_{x}=\left(x-R_{\nicefrac{1}{2}},x+R_{\nicefrac{1}{2}}\right)$. 

A first application of \lemref{Instability_of_0} establishes that
there exists $\hat{t}_{x}\in[t_{\varepsilon},+\infty)$ such that
\[
\max_{i\in\left[N\right]}\max_{y\in\overline{I_{x}}}u_{i}\left(\hat{t}_{x},y\right)=\alpha_{\nicefrac{1}{2}}
\]
 and that there exists $\tau>0$ such that
\[
\max_{i\in\left[N\right]}\max_{y\in\overline{I_{x}}}u_{i}\left(t,y\right)>\alpha_{\nicefrac{1}{2}}\text{ for all }t\in\left(\hat{t}_{x},\hat{t}_{x}+\tau\right).
\]

Hence the following quantity is well-defined in $\left[\hat{t}_{x}+\tau,+\infty\right]$:
\[
t_{1}=\inf\left\{ t\geq\hat{t}_{x}+\tau\ |\ \max_{i\in\left[N\right]}\max_{y\in\overline{I_{x}}}u_{i}\left(t,y\right)<\alpha_{\nicefrac{1}{2}}\right\} .
\]

Assume first $t_{1}<+\infty$. Then by continuity, 
\[
\max_{i\in\left[N\right]}\max_{y\in\overline{I_{x}}}u_{i}\left(t_{1},y\right)=\alpha_{\nicefrac{1}{2}}.
\]

Let 
\[
A_{\mathbf{L},\mathbf{c},\varepsilon}=\max\limits _{\left(i,j\right)\in\left[N\right]^{2}}\left|l_{i,j}\right|+\max\limits _{i\in\left[N\right]}\max\limits _{\mathbf{w}\in\left[0,\max\limits _{i\in\left[N\right]}\left(g_{i}\left(0\right)\right)+\varepsilon\right]^{N}}c_{i}\left(\mathbf{w}\right).
\]
 By virtue of Földes\textendash Polá\v{c}ik\textquoteright s Harnack
inequality \cite{Foldes_Polacik}, there exists $\overline{\kappa}>0$,
dependent only on $N$, $R_{\nicefrac{1}{2}}$, $\min\limits _{i\in\left[N\right]}d_{i}$,
$\max\limits _{i\in\left[N\right]}d_{i}$ and $A_{\mathbf{L},\mathbf{c},\varepsilon}$
such that, for all 
\[
\mathbf{w}\in\mathscr{C}_{b}\left(\left(0,+\infty\right)\times\mathbb{R},\left[0,\max\limits _{i\in\left[N\right]}\left(g_{i}\left(0\right)\right)+\varepsilon\right]^{N}\right),
\]
all nonnegative classical solutions $\mathbf{v}$ of the linear weakly
and fully coupled system with bounded coefficients 
\[
\partial_{t}\mathbf{v}-\mathbf{D}\partial_{xx}\mathbf{v}-\left(\mathbf{L}-\text{diag}\left(\mathbf{c}\left[\mathbf{w}\right]\right)\right)\mathbf{v}=\mathbf{0}\text{ in }I_{x}
\]
 satisfy
\[
\min_{i\in\left[N\right]}\min_{y\in\overline{I_{x}}}v_{i}\left(t_{1}+1,y\right)\geq\overline{\kappa}\max_{i\in\left[N\right]}\max_{y\in\overline{I_{x}}}v_{i}\left(t_{1},y\right).
\]

We stress that $\overline{\kappa}$ does not depend on $\mathbf{w}$.
In particular, taking $\mathbf{w}=\mathbf{v}=\mathbf{u}$, we deduce
\[
\min_{i\in\left[N\right]}\min_{y\in\overline{I_{x}}}u_{i}\left(t_{1}+1,y\right)\geq\overline{\kappa}\alpha_{\nicefrac{1}{2}}.
\]

Of course, up to a shrink of $\overline{\kappa}$, we can assume without
loss of generality $\overline{\kappa}\in\left(0,1\right)$. Then let
\[
T=\frac{-\ln\overline{\kappa}}{-\lambda_{1,Dir}\left(-\mathbf{D}\frac{\text{d}^{2}}{\text{d}x^{2}}-\left(\mathbf{L}-\frac{\lambda_{PF}\left(\mathbf{L}\right)}{2}\mathbf{I}\right),I_{x}\right)}>0.
\]

$T$ does not depend on the choice of $\mathbf{u}$.

A second application of \lemref{Instability_of_0} establishes
\[
\max_{i\in\left[N\right]}\max_{y\in\overline{I_{x}}}u_{i}\left(t_{1}+1+T,y\right)>\alpha_{\nicefrac{1}{2}}.
\]
Hence, defining the sequence $\left(t_{n}\right)_{n\in\mathbb{N}}$
by the recurrence relation
\[
t_{n+1}=\inf\left\{ t\geq t_{n}+1+T\ |\ \max_{i\in\left[N\right]}\max_{y\in\overline{I_{x}}}u_{i}\left(t,y\right)<\alpha_{\nicefrac{1}{2}}\right\} 
\]
and repeating by induction the process, we deduce that any connected
component of
\[
\left\{ t\in\left(\hat{t}_{x},+\infty\right)\ |\ \max_{i\in\left[N\right]}\max_{y\in\overline{I_{x}}}u_{i}\left(t,y\right)<\alpha_{\nicefrac{1}{2}}\right\} 
\]
 is an interval of length smaller than $1+T$. 

A second application of Földes\textendash Polá\v{c}ik\textquoteright s
Harnack inequality shows that there exists $\overline{\sigma_{\varepsilon}}>0$,
dependent only on $N$, $R_{\nicefrac{1}{2}}$, $T$, $\min\limits _{i\in\left[N\right]}d_{i}$,
$\max\limits _{i\in\left[N\right]}d_{i}$ and $A_{\mathbf{L},\mathbf{c},\varepsilon}$
such that, for all $t\in\left(\hat{t}_{x},+\infty\right)$,
\[
\min_{i\in\left[N\right]}\min_{y\in\overline{I_{x}}}u_{i}\left(t+T+2,y\right)\geq\overline{\sigma_{\varepsilon}}\max_{i\in\left[N\right]}\max_{\left(t',y\right)\in\left[t,t+T+1\right]\times\overline{I_{x}}}u_{i}\left(t',y\right),
\]
 whence
\[
\min_{i\in\left[N\right]}\min_{y\in\overline{I_{x}}}u_{i}\left(t,y\right)\geq\overline{\sigma_{\varepsilon}}\alpha_{\nicefrac{1}{2}}\text{ for all }t\in\left(\hat{t}_{_{x}}+T+2,+\infty\right).
\]

Assume next $t_{1}=+\infty$. Then
\[
\max_{i\in\left[N\right]}\max_{y\in\overline{I_{x}}}u_{i}\left(t,y\right)\geq\alpha_{\nicefrac{1}{2}}\text{ for all }t\in\left(\hat{t}_{_{x}},+\infty\right),
\]
and consequently
\[
\min_{i\in\left[N\right]}\min_{y\in\overline{I_{x}}}u_{i}\left(t,y\right)\geq\overline{\sigma_{\varepsilon}}\alpha_{\nicefrac{1}{2}}\text{ for all }t\in\left(\hat{t}_{_{x}}+T+2,+\infty\right).
\]

Since $I$ is bounded and $x\mapsto\hat{t}_{x}$ can be assumed continuous
in $\mathbb{R}$ without loss of generality, it follows
\[
\min_{i\in\left[N\right]}\inf_{y\in I}u_{i}\left(t,y\right)\geq\overline{\sigma_{\varepsilon}}\alpha_{\nicefrac{1}{2}}\text{ for all }t\in\left(\max_{x\in\overline{I}}\left(\hat{t}_{_{x}}\right)+T+2,+\infty\right),
\]
whence
\[
\liminf_{t\to+\infty}\min_{i\in\left[N\right]}\inf_{y\in I}u_{i}\left(t,y\right)\geq\overline{\sigma_{\varepsilon}}\alpha_{\nicefrac{1}{2}}
\]
with $\overline{\sigma_{\varepsilon}}\alpha_{\nicefrac{1}{2}}$ dependent
only on $\varepsilon$. The conclusion follows of course by setting
\[
\nu=\sup_{\varepsilon>0}\left(\overline{\sigma_{\varepsilon}}\right)\alpha_{\nicefrac{1}{2}}.
\]
\end{proof}
\begin{rem*}
We point out that $\max\limits _{x\in\overline{I}}\hat{t}_{_{x}}$
is finite because $I$ is bounded. Of course, in $I=\mathbb{R}$,
this problem becomes a spreading problem (see \propref{Spreading_speed_1}). 
\end{rem*}

\section{Existence of positive steady states}

This section is devoted to the proof of \thmref{Existence_of_steady_states}
. 
\begin{prop}
Assume $\lambda_{PF}\left(\mathbf{L}\right)<0$. Then there exists
no positive classical solution of $\left(S_{KPP}\right)$. 
\end{prop}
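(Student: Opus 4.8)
The plan is to reduce to the extinction statement just proved, and to handle possibly unbounded solutions by a convexity/blow-up argument built on the left Perron\textendash Frobenius eigenvector.

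Suppose, for contradiction, that $\mathbf{u}$ is a positive classical solution of $\left(S_{KPP}\right)$. Then $\left(t,x\right)\mapsto\mathbf{u}\left(x\right)$ is a time-independent positive classical solution of $\left(E_{KPP}\right)$ in $\left(0,+\infty\right)\times\mathbb{R}$. If $\mathbf{u}$ is bounded, the previous proposition (which applies since $\lambda_{PF}\left(\mathbf{L}\right)<0$) forces it to converge uniformly to $\mathbf{0}$ as $t\to+\infty$; being independent of $t$, it must equal $\mathbf{0}$, contradicting positivity.

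To rule out unbounded positive solutions I would argue as follows. Since $\mathbf{D}$ is a positive diagonal matrix, $\mathbf{D}^{-1}\mathbf{L}$ is again essentially nonnegative and irreducible, and it maps the positive vector $\mathbf{n}_{PF}\left(\mathbf{L}\right)$ to $\lambda_{PF}\left(\mathbf{L}\right)\mathbf{D}^{-1}\mathbf{n}_{PF}\left(\mathbf{L}\right)\ll\mathbf{0}$; testing against its left Perron\textendash Frobenius eigenvector shows $\lambda:=\lambda_{PF}\left(\mathbf{D}^{-1}\mathbf{L}\right)<0$. Let $\mathbf{m}:=\mathbf{n}_{PF}\left(\mathbf{L}^{T}\mathbf{D}^{-1}\right)\gg\mathbf{0}$, so that $\mathbf{m}^{T}\mathbf{D}^{-1}\mathbf{L}=\lambda\mathbf{m}^{T}$, and set $z:=\mathbf{m}^{T}\mathbf{u}>0$. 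Dividing $\left(S_{KPP}\right)$ by $\mathbf{D}$ and pairing with $\mathbf{m}$, the nonnegativity of $\mathbf{c}$ on $\mathsf{K}$ $\left(H_{2}\right)$ gives
\[
z''=\left|\lambda\right|z+\mathbf{m}^{T}\mathbf{D}^{-1}\!\left(\mathbf{c}\left[\mathbf{u}\right]\circ\mathbf{u}\right)\geq\left|\lambda\right|z>0\quad\text{on }\mathbb{R}.
\]
Hence $z$ is strictly convex; in particular \emph{no bounded} positive solution may exist (a convex function on $\mathbb{R}$ bounded from above is constant, which contradicts $z''>0$), and $z\left(x\right)\to+\infty$ in at least one direction, along which $\left|\mathbf{u}\left(x\right)\right|\to+\infty$ too. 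On that direction the quantitative saturation $\left(H_{4}\right)$ forces $\mathbf{m}^{T}\mathbf{D}^{-1}\!\left(\mathbf{c}\left[\mathbf{u}\right]\circ\mathbf{u}\right)\gtrsim z^{1+\delta}$; reinserting this into the identity above yields $z''\geq\kappa z^{1+\delta}$ for $z$ large, a superlinear differential inequality whose positive solutions blow up at a finite abscissa \textemdash{} impossible, since $z$ is finite on all of $\mathbb{R}$.

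The main obstacle is exactly this last step: converting \textquotedblleft strictly convex and unbounded\textquotedblright{} into genuine superlinear growth of the competition term requires controlling the normalized profile $\mathbf{u}/\left|\mathbf{u}\right|$ along the growth direction, so that $\left(H_{4}\right)$ (together with the a priori information of \lemref{Saturation_constant}) is effective there; for the Lotka\textendash Volterra and Gross\textendash Pitaevskii competitions this is immediate, since $\mathbf{c}$ is then bounded from below on $\mathsf{S}^{+}\!\left(\mathbf{0},1\right)$ in every direction. Equivalently, once one knows that a positive classical solution of $\left(S_{KPP}\right)$ is automatically bounded, the extinction argument of the second paragraph finishes the proof at once.
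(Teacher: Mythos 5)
Your first paragraph (the bounded case) is correct and is a clean shortcut that genuinely differs from the paper's own argument: you observe that a bounded positive steady state is a time-independent bounded solution of $\left(E_{KPP}\right)$, which the extinction half of \thmref{Extinction_or_persistence} forces to be $\mathbf{0}$. The paper instead compares generalized principal eigenvalues: a positive steady state $\mathbf{v}$ would yield $\lambda_{1}\left(-\mathbf{D}\frac{\text{d}^{2}}{\text{d}x^{2}}-\left(\mathbf{L}-\text{diag}\mathbf{c}\left[\mathbf{v}\right]\right)\right)=0$, while monotonicity of the Dirichlet principal eigenvalue in the zeroth-order term, together with $\left(H_{2}\right)$, forces this generalized eigenvalue to be $\geq-\lambda_{PF}\left(\mathbf{L}\right)>0$. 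Both routes are valid for bounded $\mathbf{v}$; yours is shorter and more dynamical.

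The second paragraph is where the genuine gap lies, and you diagnose it correctly yourself. The inequality $z''\geq\left|\lambda\right|z>0$ is satisfied on all of $\mathbb{R}$ by $z=\cosh\left(\sqrt{\left|\lambda\right|}\,x\right)$, so strict convexity alone yields no contradiction, and the hoped-for bootstrap $z''\gtrsim z^{1+\delta}$ via $\left(H_{4}\right)$ cannot be made to work: $\left(H_{4}\right)$ only lower-bounds $c_{i}\left(\alpha\mathbf{n}\right)$ when $\left(\mathbf{L}\mathbf{n}\right)_{i}\geq0$, and when $\lambda_{PF}\left(\mathbf{L}\right)<0$ the direction $\mathbf{n}=\mathbf{n}_{PF}\left(\mathbf{D}^{-1}\mathbf{L}\right)$ satisfies $\mathbf{L}\mathbf{n}=\lambda\mathbf{D}\mathbf{n}\ll\mathbf{0}$, so $\left(H_{4}\right)$ is silent along precisely the direction that $z=\mathbf{m}^{T}\mathbf{u}$ singles out. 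Worse, this gap is not a missing lemma but an actual failure of the statement for unbounded solutions: with $N=2$, $\mathbf{D}=\mathbf{I}$, $\mathbf{L}=\left(\begin{matrix}-2&1\\1&-2\end{matrix}\right)$ and $\mathbf{c}\left(u_{1},u_{2}\right)=\left(\left(u_{1}-u_{2}\right)^{2},\left(u_{1}-u_{2}\right)^{2}\right)$, one checks $\left(H_{1}\right)$\textendash $\left(H_{4}\right)$ (with $\delta=2$, $\underline{\alpha}=1$, $\underline{\mathbf{c}}=\left(\nicefrac{1}{5},\nicefrac{1}{5}\right)$) and $\lambda_{PF}\left(\mathbf{L}\right)=-1<0$, yet $\mathbf{u}:x\mapsto\cosh\left(x\right)\left(1,1\right)^{T}$ is a positive, unbounded classical solution of $\left(S_{KPP}\right)$. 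So the proposition is really about bounded solutions only (and the paper's own proof quietly assumes $\mathbf{v}\in\mathscr{C}_{b}$ when it invokes the generalized principal eigenvalue of \thmref{Generalized_principal_eigenvalue}, which requires bounded coefficients). Your first paragraph already proves everything that is true; drop the second paragraph and insert the word \emph{bounded} into the statement.
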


\begin{proof}
Recall that the Dirichlet principal eigenvalue is non-increasing with
respect to the zeroth order coefficient.

On one hand, by virtue of the nonnegativity of $\mathbf{c}$ on $\mathsf{K}$
$\left(H_{2}\right)$, we have for all $R>0$ and all $\mathbf{v}\in\mathscr{C}_{b}\left(\mathbb{R},\mathsf{K}^{++}\right)$,
\[
\lambda_{1,Dir}\left(-\mathbf{D}\frac{\text{d}^{2}}{\text{d}x^{2}}-\left(\mathbf{L}-\text{diag}\mathbf{c}\left[\mathbf{v}\right]\right),\left(-R,R\right)\right)\geq\lambda_{1,Dir}\left(-\mathbf{D}\frac{\text{d}^{2}}{\text{d}x^{2}}-\mathbf{L},\left(-R,R\right)\right),
\]
 whence, as $R\to+\infty$,
\[
\lambda_{1}\left(-\mathbf{D}\frac{\text{d}^{2}}{\text{d}x^{2}}-\left(\mathbf{L}-\text{diag}\mathbf{c}\left[\mathbf{v}\right]\right)\right)\geq-\lambda_{PF}\left(\mathbf{L}\right)>0.
\]

On the other hand, any positive steady state $\mathbf{v}$ is also
a generalized principal eigenfunction for the generalized principal
eigenvalue
\[
\lambda_{1}\left(-\mathbf{D}\frac{\text{d}^{2}}{\text{d}x^{2}}-\left(\mathbf{L}-\text{diag}\mathbf{c}\left[\mathbf{v}\right]\right)\right)=0.
\]
\end{proof}
\begin{prop}
Assume $\lambda_{PF}\left(\mathbf{L}\right)=0$ and
\[
\text{span}\left(\mathbf{n}_{PF}\left(\mathbf{L}\right)\right)\cap\mathsf{K}\cap\mathbf{c}^{-1}\left(\left\{ \mathbf{0}\right\} \right)=\left\{ \mathbf{0}\right\} .
\]

Then there exists no bounded positive classical solution of $\left(S_{KPP}\right)$. 
\end{prop}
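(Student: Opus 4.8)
The plan is to assume, for contradiction, that $\mathbf{v}$ is a bounded positive classical solution of $\left(S_{KPP}\right)$ and to reach a contradiction in two steps: first show $\mathbf{c}\left[\mathbf{v}\right]\equiv\mathbf{0}$ by testing the equation against the \emph{left} Perron--Frobenius eigenvector of $\mathbf{L}$, and then slide the constant state $\mathbf{n}_{PF}\left(\mathbf{L}\right)$ down onto $\mathbf{v}$. Since $\mathbf{L}$ is essentially nonnegative and irreducible $\left(H_{1}\right)$, so is $\mathbf{L}^{T}$, and $\lambda_{PF}\left(\mathbf{L}^{T}\right)=\lambda_{PF}\left(\mathbf{L}\right)=0$; let $\mathbf{m}=\mathbf{n}_{PF}\left(\mathbf{L}^{T}\right)\gg\mathbf{0}$, so that $\mathbf{m}^{T}\mathbf{L}=\mathbf{0}$. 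Multiplying $\left(S_{KPP}\right)$ on the left by $\mathbf{m}^{T}$ and setting $w=\mathbf{m}^{T}\mathbf{D}\mathbf{v}$, a bounded positive function on $\mathbb{R}$, one gets $w''=\mathbf{m}^{T}\left(\mathbf{c}\left[\mathbf{v}\right]\circ\mathbf{v}\right)$, and the right-hand side is nonnegative by $\left(H_{2}\right)$ and the positivity of $\mathbf{m}$ and $\mathbf{v}$. Hence $w$ is convex and bounded on $\mathbb{R}$, therefore constant, so that $\mathbf{m}^{T}\left(\mathbf{c}\left[\mathbf{v}\right]\circ\mathbf{v}\right)=w''\equiv0$; as $\mathbf{m}\gg\mathbf{0}$, $\mathbf{v}\gg\mathbf{0}$ and $\mathbf{c}\left[\mathbf{v}\right]\geq\mathbf{0}$ (by $\left(H_{2}\right)$), this forces $\mathbf{c}\left[\mathbf{v}\right]\circ\mathbf{v}\equiv\mathbf{0}$, hence $\mathbf{c}\left[\mathbf{v}\right]\equiv\mathbf{0}$. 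Consequently $\mathbf{v}$ is a bounded positive solution of the linear, weakly and fully coupled, cooperative system $-\mathbf{D}\mathbf{v}''=\mathbf{L}\mathbf{v}$.

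For the second step I would use the constant positive solution $\mathbf{q}=\mathbf{n}_{PF}\left(\mathbf{L}\right)$ of that linear system (it is a solution because $\mathbf{L}\mathbf{q}=\lambda_{PF}\left(\mathbf{L}\right)\mathbf{q}=\mathbf{0}$). Since $\mathbf{v}$ is bounded and positive, the quantity $\lambda^{\star}=\inf\left\{ \lambda>0\ |\ \lambda\mathbf{q}\geq\mathbf{v}\text{ on }\mathbb{R}\right\}$ is well-defined, finite and strictly positive, and the definition of the infimum forces $\inf_{i\in\left[N\right]}\inf_{x\in\mathbb{R}}\left(\lambda^{\star}q_{i}-v_{i}\left(x\right)\right)=0$ (were this infimum positive, $\left(\lambda^{\star}-\varepsilon\right)\mathbf{q}\geq\mathbf{v}$ would still hold for small $\varepsilon>0$, contradicting the definition of $\lambda^{\star}$). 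Choose a sequence $\left(x_{n}\right)_{n\in\mathbb{N}}$ and an index $i_{0}\in\left[N\right]$ with $\left(\lambda^{\star}\mathbf{q}-\mathbf{v}\right)_{i_{0}}\left(x_{n}\right)\to0$. By classical elliptic estimates and a diagonal extraction, $\mathbf{v}\left(\cdot+x_{n}\right)$ converges, up to a subsequence, in $\mathscr{C}_{loc}^{2}\left(\mathbb{R},\mathbb{R}^{N}\right)$ to some $\widetilde{\mathbf{v}}\geq\mathbf{0}$ which still solves $-\mathbf{D}\widetilde{\mathbf{v}}''=\mathbf{L}\widetilde{\mathbf{v}}$; then $\mathbf{w}=\lambda^{\star}\mathbf{q}-\widetilde{\mathbf{v}}\geq\mathbf{0}$ solves $-\mathbf{D}\mathbf{w}''-\mathbf{L}\mathbf{w}=\mathbf{0}$ and satisfies $w_{i_{0}}\left(0\right)=0$. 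Viewing $\mathbf{w}$ as a time-independent solution of the weakly and fully coupled cooperative parabolic system $\partial_{t}\mathbf{w}-\mathbf{D}\partial_{xx}\mathbf{w}-\mathbf{L}\mathbf{w}=\mathbf{0}$ and applying the strong maximum principle \cite[Chapter 3, Theorem 13]{Protter_Weinberger} (compare the proof of \propref{Positivity_for_the_parabolic_problem}), we obtain $\mathbf{w}\equiv\mathbf{0}$, i.e. $\widetilde{\mathbf{v}}\equiv\lambda^{\star}\mathbf{q}$.

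To conclude, recall that $\mathbf{c}\left[\mathbf{v}\right]\equiv\mathbf{0}$ on all of $\mathbb{R}$; since $\mathbf{c}$ is continuous, passing to the limit in $\mathbf{c}\left(\mathbf{v}\left(x+x_{n}\right)\right)=\mathbf{0}$ gives $\mathbf{c}\left(\lambda^{\star}\mathbf{q}\right)=\mathbf{0}$, so that $\lambda^{\star}\mathbf{q}$ is a nonzero element of $\text{span}\left(\mathbf{n}_{PF}\left(\mathbf{L}\right)\right)\cap\mathsf{K}\cap\mathbf{c}^{-1}\left(\left\{ \mathbf{0}\right\} \right)$, contradicting the hypothesis. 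Conceptually the first step is the crux: once one thinks of pairing with the left Perron--Frobenius eigenvector, it is essentially a one-line computation. The technical care is concentrated in the sliding step, and the main obstacle there is to make sure the deficit $\lambda^{\star}\mathbf{q}-\mathbf{v}$ genuinely attains the value $0$ in a limit profile rather than merely approaching $0$ along a sequence escaping to spatial infinity — which is exactly what the translation-compactness extraction along $\left(x_{n}\right)$ provides.
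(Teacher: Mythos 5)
Your proof is correct, and the first half takes a genuinely different route from the paper. The paper runs the sliding argument directly on the nonlinear equation: with $\kappa^{\star}=\inf\{\kappa>0:\kappa\mathbf{n}_{PF}(\mathbf{L})\geq\mathbf{v}\}$, after extraction it shows that $\kappa^{\star}\mathbf{n}_{PF}(\mathbf{L})-\mathbf{v}^{\star}$ is a nonnegative supersolution of $\mathbf{D}\frac{\mathrm{d}^{2}}{\mathrm{d}x^{2}}+\mathbf{L}$ with a touchdown, invokes the Arapostathis--Ghosh--Marcus Harnack inequality for the cooperative operator to conclude $\mathbf{v}^{\star}\equiv\kappa^{\star}\mathbf{n}_{PF}(\mathbf{L})$, and only \emph{then} reads off $\mathbf{c}(\kappa^{\star}\mathbf{n}_{PF}(\mathbf{L}))=\mathbf{0}$ by plugging back into $\left(S_{KPP}\right)$. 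You instead kill the nonlinearity up front by pairing the equation with the left Perron--Frobenius eigenvector: $w=\mathbf{m}^{T}\mathbf{D}\mathbf{v}$ is bounded and satisfies $w''=\mathbf{m}^{T}\left(\mathbf{c}\left[\mathbf{v}\right]\circ\mathbf{v}\right)\geq 0$, hence is constant, hence $\mathbf{c}\left[\mathbf{v}\right]\equiv\mathbf{0}$ globally; the sliding argument is then performed on the \emph{linear} cooperative system, where the touchdown is handled by the bare strong maximum principle rather than Harnack, and the contradiction is read off by passing to the limit in $\mathbf{c}(\mathbf{v}(x_{n}))=\mathbf{0}$ rather than by differentiating the limit. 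Your first step is an elementary convexity observation that buys a stronger intermediate fact ($\mathbf{c}\left[\mathbf{v}\right]\equiv\mathbf{0}$ everywhere, not just at the contact point) and a simpler maximum-principle step; the paper's version is more compact and works without ever separating the two ingredients. Both are valid; just be careful, when you invoke the parabolic strong maximum principle on the time-independent $\mathbf{w}$, to note that $\mathbf{L}$ bounded, essentially nonnegative and irreducible is exactly what is needed for the Protter--Weinberger theorem to apply (the operator need not have any sign on its spectrum), and that the left PF eigenvector argument genuinely uses $\lambda_{PF}\left(\mathbf{L}^{T}\right)=\lambda_{PF}\left(\mathbf{L}\right)=0$ to cancel the linear term.
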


\begin{rem*}
The forthcoming argument is quite standard in the scalar setting.
We detail it for the sake of completeness. 
\end{rem*}
\begin{proof}
Assume by contradiction that there exists a bounded positive classical
solution $\mathbf{v}$ of $\left(S_{KPP}\right)$. 

By boundedness of $\mathbf{v}$, there exists $\kappa\in\left(0,+\infty\right)$
such that $\kappa\mathbf{n}_{PF}\left(\mathbf{L}\right)-\mathbf{v}\geq\mathbf{0}$
in $\mathbb{R}$. Let
\[
\kappa^{\star}=\inf\left\{ \kappa\in\left(0,+\infty\right)\ |\ \kappa\mathbf{n}_{PF}\left(\mathbf{L}\right)-\mathbf{v}\geq\mathbf{0}\text{ in }\mathbb{R}\right\} .
\]
By positivity of $\mathbf{v}$, $\kappa^{\star}>0$. Let $\left(\kappa_{n}\right)_{n\in\mathbb{N}}\in\left(0,\kappa^{\star}\right)^{\mathbb{N}}$
which converges from below to $\kappa^{\star}$. For all $n\in\mathbb{N}$,
there exists $x_{n}\in\mathbb{R}$ such that
\[
\kappa_{n}\mathbf{n}_{PF}\left(\mathbf{L}\right)-\mathbf{v}\left(x_{n}\right)<\mathbf{0}.
\]
Let 
\[
\mathbf{v}_{n}:x\mapsto\mathbf{v}\left(x+x_{n}\right)\text{ for all }n\in\mathbb{N}.
\]
 By virtue of the global boundedness of $\mathbf{v}$, Arapostathis\textendash Gosh\textendash Marcus\textquoteright s
Harnack inequality \cite{Araposthathis_} applied to the linear weakly
and fully coupled operator with bounded coefficients 
\[
\mathbf{D}\frac{\text{d}^{2}}{\text{d}\xi^{2}}+c\frac{\text{d}}{\text{d}\xi}+\left(\mathbf{L}-\text{diag}\left(\mathbf{c}\left[\mathbf{v}_{n}\right]\right)\right)
\]
and classical elliptic estimates (Gilbarg\textendash Trudinger \cite{Gilbarg_Trudin}),
$\left(\mathbf{v}_{n}\right)_{n\in\mathbb{N}}$ converges up to a
diagonal extraction in $\mathscr{C}_{loc}^{2}$ as $n\to+\infty$
to a nonnegative solution $\mathbf{v}^{\star}$ of $\left(S_{KPP}\right)$.
Moreover, $\mathbf{v}^{\star}$ satisfies
\[
\mathbf{v}^{\star}\leq\kappa^{\star}\mathbf{n}_{PF}\left(\mathbf{L}\right)\text{ in }\mathbb{R},
\]
\[
\kappa^{\star}\mathbf{n}_{PF}\left(\mathbf{L}\right)-\mathbf{v}^{\star}\left(0\right)\in\partial\mathsf{K},
\]
\[
-\left(\mathbf{D}\frac{\text{d}^{2}}{\text{d}x^{2}}+\mathbf{L}\right)\left(\kappa^{\star}\mathbf{n}_{PF}\left(\mathbf{L}\right)-\mathbf{v}^{\star}\right)=\mathbf{c}\left[\mathbf{v}^{\star}\right]\circ\mathbf{v}^{\star}\geq\mathbf{0}\text{ in }\mathbb{R}.
\]
 Applying Arapostathis\textendash Gosh\textendash Marcus\textquoteright s
Harnack inequality \cite{Araposthathis_} to $\mathbf{D}\frac{\text{d}^{2}}{\text{d}x^{2}}+\mathbf{L}$,
we deduce 
\[
\kappa^{\star}\mathbf{n}_{PF}\left(\mathbf{L}\right)=\mathbf{v}^{\star}\text{ in }\mathbb{R}
\]
and subsequently 
\[
\mathbf{c}\left(\kappa^{\star}\mathbf{n}_{PF}\left(\mathbf{L}\right)\right)\circ\kappa^{\star}\mathbf{n}_{PF}\left(\mathbf{L}\right)=-\left(\mathbf{D}\frac{\text{d}^{2}}{\text{d}x^{2}}+\mathbf{L}\right)\mathbf{0}=\mathbf{0},
\]
whence $\mathbf{c}\left(\kappa^{\star}\mathbf{n}_{PF}\left(\mathbf{L}\right)\right)=\mathbf{0}$,
which contradicts directly $\kappa^{\star}>0$. 
\end{proof}
Finally, recall that if $\lambda_{PF}\left(\mathbf{L}\right)>0$,
then the following quantity is well-defined and positive:
\[
\alpha_{\nicefrac{1}{2}}=\max\left\{ \alpha>0\ |\ \forall\mathbf{v}\in\left[0,\alpha\right]^{N}\quad\mathbf{c}\left(\mathbf{v}\right)\leq\frac{\lambda_{PF}\left(\mathbf{L}\right)}{2}\mathbf{1}_{N,1}\right\} .
\]
\begin{prop}
Assume $\lambda_{PF}\left(\mathbf{L}\right)>0$. Then there exists
a solution $\mathbf{v}\in\mathsf{K}^{++}$ of
\[
\mathbf{L}\mathbf{v}=\mathbf{c}\left(\mathbf{v}\right)\circ\mathbf{v}.
\]
\end{prop}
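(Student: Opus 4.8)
The plan is to realize the sought $\mathbf{v}$ as a nonzero fixed point of a suitable compact, cone-preserving operator built on the box provided by \lemref{Saturation_constant}, detected through two fixed-point-index computations in the cone $\mathsf{K}$. Write $\mathsf{Q}=\prod_{i=1}^{N}[0,k_{i}]$. Since $\mathbf{c}$ is bounded on the compact set $\mathsf{Q}$, I would first fix $m>0$ so large that $m>\lambda_{PF}(\mathbf{L}-\text{diag}\mathbf{c}(\mathbf{v}))$ for every $\mathbf{v}\in\mathsf{Q}$; then $m\mathbf{I}-\mathbf{L}+\text{diag}\mathbf{c}(\mathbf{v})$ is a nonsingular M-matrix with positive inverse (positivity from the irreducibility of the off-diagonal pattern of $\mathbf{L}$, i.e.\ from $(H_{1})$), and the operator
\[
\mathscr{T}:\mathsf{Q}\to\mathsf{K},\qquad\mathscr{T}\mathbf{v}=m\left(m\mathbf{I}-\mathbf{L}+\text{diag}\mathbf{c}(\mathbf{v})\right)^{-1}\mathbf{v}
\]
is continuous (hence compact), maps $\mathsf{K}$ into $\mathsf{K}$, and satisfies $\mathscr{T}\mathbf{v}=\mathbf{v}$ if and only if $\mathbf{L}\mathbf{v}=\mathbf{c}(\mathbf{v})\circ\mathbf{v}$. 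Any such $\mathbf{v}\neq\mathbf{0}$ is then a nonnegative nonzero eigenvector of the essentially nonnegative irreducible matrix $\mathbf{L}-\text{diag}\mathbf{c}(\mathbf{v})$, hence automatically lies in $\mathsf{K}^{++}$; so it suffices to exhibit a nonzero fixed point of $\mathscr{T}$.

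First I would compute the index of $\mathscr{T}$ near the vertex. By $(H_{2})$ and $(H_{3})$, through the constant $\alpha_{\nicefrac{1}{2}}$, one has $\mathbf{c}(\mathbf{v})\leq\tfrac{\lambda_{PF}(\mathbf{L})}{2}\mathbf{1}_{N,1}$ for $|\mathbf{v}|$ small, so $m\mathbf{I}-\mathbf{L}+\text{diag}\mathbf{c}(\mathbf{v})\leq m\mathbf{I}-\mathbf{L}+\tfrac{\lambda_{PF}(\mathbf{L})}{2}\mathbf{I}$ entrywise; since the inverse is order-reversing on this class of M-matrices, $\mathscr{T}\mathbf{v}\geq\mathbf{P}\mathbf{v}$ with $\mathbf{P}=m\left(m\mathbf{I}-\mathbf{L}+\tfrac{\lambda_{PF}(\mathbf{L})}{2}\mathbf{I}\right)^{-1}$. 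Here $\mathbf{P}$ is a positive matrix with $\rho(\mathbf{P})=\tfrac{m}{m-\lambda_{PF}(\mathbf{L})/2}>1$, right Perron\textendash Frobenius eigenvector $\mathbf{n}_{PF}(\mathbf{L})$ and left one $\mathbf{n}_{PF}(\mathbf{L}^{T})$. If $\mathbf{v}=\mathscr{T}\mathbf{v}+t\,\mathbf{n}_{PF}(\mathbf{L})$ for some nonzero $\mathbf{v}\in\mathsf{K}$ with $|\mathbf{v}|$ small and some $t\geq0$, then $\mathbf{v}\gg\mathbf{0}$ and $\mathbf{v}\geq\mathbf{P}\mathbf{v}$; pairing with $\mathbf{n}_{PF}(\mathbf{L}^{T})\gg\mathbf{0}$ forces $1\geq\rho(\mathbf{P})$, hence $\lambda_{PF}(\mathbf{L})\leq0$, a contradiction. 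By the standard criterion the index of $\mathscr{T}$ over $\mathsf{K}\cap\mathsf{B}(\mathbf{0},r)$ is $0$ for $r$ small; the case $t=0$ of the same computation shows in addition that $\mathscr{T}$ is fixed-point free on $\mathsf{K}\cap\mathsf{S}(\mathbf{0},r)$, so this index is well defined.

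Next I would compute the index on $\mathsf{U}=\{\mathbf{v}\in\mathsf{K}\ |\ v_{i}<k_{i}\text{ for all }i\}$, a bounded relatively open subset of $\mathsf{K}$ with closure $\mathsf{Q}$ and relative boundary $\partial_{\mathsf{K}}\mathsf{U}=\{\mathbf{v}\in\mathsf{Q}\ |\ v_{j}=k_{j}\text{ for some }j\}$. If $\mathscr{T}$ has a fixed point on $\partial_{\mathsf{K}}\mathsf{U}$, that point is already the desired positive solution and we are done. Otherwise the homotopy $(\theta,\mathbf{v})\mapsto\theta\mathscr{T}\mathbf{v}$, $\theta\in[0,1]$, is admissible on $\mathsf{U}$: a fixed point with $\theta\in(0,1)$ and $v_{j}=k_{j}$ would make $\mathbf{v}$ a positive eigenvector of $\mathbf{L}-\text{diag}\mathbf{c}(\mathbf{v})$ for the eigenvalue $m(1-\theta)>0$, whence $(\mathbf{L}\mathbf{v}-\mathbf{c}(\mathbf{v})\circ\mathbf{v})_{j}=m(1-\theta)k_{j}>0$, contradicting \lemref{Saturation_constant} (which, by continuity from $v_{j}>k_{j}$, gives $(\mathbf{L}\mathbf{v}-\mathbf{c}(\mathbf{v})\circ\mathbf{v})_{j}\leq0$ on the face $v_{j}=k_{j}$); moreover $\theta=0$ gives $\mathbf{v}=\mathbf{0}\notin\partial_{\mathsf{K}}\mathsf{U}$ while $\theta=1$ is the case just handled. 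Hence $\text{ind}(\mathscr{T},\mathsf{U})$ equals the index of the constant map $\mathbf{v}\mapsto\mathbf{0}$ over $\mathsf{U}$, namely $1$, its only fixed point being the vertex $\mathbf{0}\in\mathsf{U}$. By additivity of the index, $\text{ind}(\mathscr{T},\mathsf{U}\setminus\overline{\mathsf{B}(\mathbf{0},r)})=1-0\neq0$, so $\mathscr{T}$ has a fixed point in $\mathsf{U}\setminus\overline{\mathsf{B}(\mathbf{0},r)}$, which is a nonzero — and therefore positive — solution of $\mathbf{L}\mathbf{v}=\mathbf{c}(\mathbf{v})\circ\mathbf{v}$.

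I expect the real work here to be bookkeeping rather than a single hard estimate: choosing $m$ uniformly over $\mathsf{Q}$, applying the order-reversing property of the inverse only to the relevant nonsingular M-matrices, and verifying that each degenerate configuration on $\partial_{\mathsf{K}}\mathsf{U}$ is either eliminated by \lemref{Saturation_constant} or is itself the solution. Let me stress that persistence (\propref{Persistence}) is not needed for the existence itself; it serves only, afterwards, to locate $\mathbf{v}$ inside $\prod_{i=1}^{N}[\nu,g_{i}(0)]$.
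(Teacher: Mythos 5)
Your proof is correct, but it takes a genuinely different route from the paper's.

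The paper argues on the convex compact set $\mathsf{C}=\{\mathbf{v}\in\mathsf{K}\ :\ \mathbf{n}_{PF}(\mathbf{L}^{T})^{T}\mathbf{v}\geq\eta,\ \mathbf{v}\leq\mathbf{k}+\mathbf{1}_{N,1}\}$, showing via the dual eigenvector that the field $\mathbf{v}\mapsto\mathbf{L}\mathbf{v}-\mathbf{c}(\mathbf{v})\circ\mathbf{v}$ points strictly inward on $\partial\mathsf{C}$; absence of a zero in $\mathrm{int}\,\mathsf{C}$ would then yield, by pushing along the field to the boundary, a continuous fixed-point-free self-map of $\mathsf{C}$, contradicting Brouwer. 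You instead recast the problem as a fixed-point problem for the compact cone operator $\mathscr{T}\mathbf{v}=m\left(m\mathbf{I}-\mathbf{L}+\text{diag}\,\mathbf{c}(\mathbf{v})\right)^{-1}\mathbf{v}$ and invoke Krasnoselskii's fixed-point index in $\mathsf{K}$: index $0$ near the vertex (cone-expansion criterion, using $\rho(\mathbf{P})>1$ which encodes $\lambda_{PF}(\mathbf{L})>0$), index $1$ on the box $\mathsf{U}=\prod_{i}[0,k_{i})$ (homotopy $\theta\mathscr{T}$, admissible thanks to \lemref{Saturation_constant} extended to the faces $v_{j}=k_{j}$ by letting $\varepsilon\downarrow0$ in $\mathbf{v}+\varepsilon\mathbf{e}_{j}$, which gives the needed $\leq0$ there), then additivity. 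Both proofs feed on the same two ingredients — the sign of $\lambda_{PF}(\mathbf{L})$ together with the smallness of $\mathbf{c}$ near $\mathbf{0}$ to detect instability of the origin, and \lemref{Saturation_constant} for the outer bound — and both pass through $\mathbf{n}_{PF}(\mathbf{L}^{T})$ as the dual test vector. The paper's argument is shorter and needs only Brouwer on a convex body; yours requires the heavier cone-index machinery (and the M-matrix comparison lemma for $(\,\cdot\,)^{-1}$), but it localizes the fixed point in $\mathsf{U}\setminus\overline{\mathsf{B}(\mathbf{0},r)}$ directly, makes the role of $\mathbf{k}$ transparent as the homotopy barrier, and would adapt more readily to situations where the convex-body construction is less explicit. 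Your final remark that positivity of the fixed point is automatic (it is a nonnegative nonzero eigenvector of the irreducible essentially nonnegative matrix $\mathbf{L}-\text{diag}\,\mathbf{c}(\mathbf{v})$) and that \propref{Persistence} is not needed for existence are both consistent with the paper.
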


\begin{proof}
By virtue of the Perron\textendash Frobenius theorem, $\mathbf{n}_{PF}\left(\mathbf{L}^{T}\right)\in\mathsf{K}^{++}$. 

There exists $\eta>0$ such that, for all $\mathbf{v}\in\mathsf{K}$,
if $\mathbf{n}_{PF}\left(\mathbf{L}^{T}\right)^{T}\mathbf{v}=\eta$,
then $\mathbf{v}\in\left[0,\alpha_{\nicefrac{1}{2}}\right]^{N}$.
Defining
\[
\mathsf{A}=\left\{ \mathbf{v}\in\mathsf{K}\ |\ \mathbf{n}_{PF}\left(\mathbf{L}^{T}\right)^{T}\mathbf{v}=\eta\right\} ,
\]
it follows that for all $\mathbf{v}\in\mathsf{A}$, 
\[
\mathbf{n}_{PF}\left(\mathbf{L}^{T}\right)^{T}\left(\mathbf{c}\left(\mathbf{v}\right)\circ\mathbf{v}\right)\leq\frac{\lambda_{PF}\left(\mathbf{L}\right)}{2}\eta,
\]
whence
\begin{align*}
\mathbf{n}_{PF}\left(\mathbf{L}^{T}\right)^{T}\left(\mathbf{L}\mathbf{v}-\mathbf{c}\left(\mathbf{v}\right)\circ\mathbf{v}\right) & =\lambda_{PF}\left(\mathbf{L}^{T}\right)\eta-\mathbf{n}_{PF}\left(\mathbf{L}^{T}\right)^{T}\left(\mathbf{c}\left(\mathbf{v}\right)\circ\mathbf{v}\right)\\
 & \geq\frac{\lambda_{PF}\left(\mathbf{L}\right)}{2}\eta,
\end{align*}
which is positive if $\lambda_{PF}\left(\mathbf{L}\right)>0$ is assumed
indeed.

Then, defining the convex compact set
\[
\mathsf{C}=\left\{ \mathbf{v}\in\mathsf{K}\ |\ \mathbf{n}_{PF}\left(\mathbf{L}^{T}\right)^{T}\mathbf{v}\geq\eta\text{ and }\mathbf{v}\leq\mathbf{k}+\mathbf{1}_{N,1}\right\} ,
\]
 it can easily be verified that, for all $\mathbf{v}\in\partial\mathsf{C}$,
\[
\mathbf{n}_{\mathbf{v}}^{T}\left(\mathbf{L}\mathbf{v}-\mathbf{c}\left(\mathbf{v}\right)\circ\mathbf{v}\right)<0
\]
where $\mathbf{n}_{\mathbf{v}}$ is the outward pointing normal. In
particular, there is no solution of $\mathbf{L}\mathbf{v}=\mathbf{c}\left(\mathbf{v}\right)\circ\mathbf{v}$
in $\partial\mathsf{C}$. Also, by convexity, for all $\mathbf{v}\in\partial\mathsf{C}$,
there exists a unique $\delta_{\mathbf{v}}>0$ such that
\[
\mathbf{v}+\delta_{\mathbf{v}}\left(\mathbf{L}\mathbf{v}-\mathbf{c}\left(\mathbf{v}\right)\circ\mathbf{v}\right)\in\partial\mathsf{C}.
\]

Assume by contradiction that there is no solution of $\mathbf{L}\mathbf{v}=\mathbf{c}\left(\mathbf{v}\right)\circ\mathbf{v}$
in $\text{int}\mathsf{C}$. Consequently and by convexity again, for
all $\mathbf{v}\in\text{int}\mathsf{C}$, there exists a unique $\delta_{\mathbf{v}}>0$
such that
\[
\mathbf{v}+\delta_{\mathbf{v}}\left(\mathbf{L}\mathbf{v}-\mathbf{c}\left(\mathbf{v}\right)\circ\mathbf{v}\right)\in\partial\mathsf{C}.
\]
The function
\[
\begin{matrix}\mathsf{C} & \to & \left(0,+\infty\right)\\
\mathbf{v} & \mapsto & \delta_{\mathbf{v}}
\end{matrix}
\]
 is continuous and so is the function
\[
\begin{matrix}\mathsf{C} & \to & \partial\mathsf{C}\\
\mathbf{v} & \mapsto & \mathbf{v}+\delta_{\mathbf{v}}\left(\mathbf{L}\mathbf{v}-\mathbf{c}\left(\mathbf{v}\right)\circ\mathbf{v}\right).
\end{matrix}
\]
According to the Brouwer fixed point theorem, this function has a
fixed point, which of course contradicts the assumption.

Hence there exists indeed a solution in $\text{int}\mathsf{C}\subset\mathsf{K}^{++}$
of
\[
\mathbf{L}\mathbf{v}=\mathbf{c}\left(\mathbf{v}\right)\circ\mathbf{v}.
\]
\end{proof}

\section{Traveling waves}

In this section, we assume $\lambda_{PF}\left(\mathbf{L}\right)>0$
and prove \thmref{Traveling_waves}.

Notice as a preliminary that, for any $\left(\mathbf{p},c\right)\in\mathscr{C}^{2}\left(\mathbb{R},\mathbb{R}^{N}\right)\times[0,+\infty)$,
\[
\mathbf{u}:\left(t,x\right)\mapsto\mathbf{p}\left(x-ct\right)
\]
is a classical solution of $\left(E_{KPP}\right)$ if and only if
$\mathbf{p}$ is a classical solution of
\[
-\mathbf{D}\mathbf{p}''-c\mathbf{p}'=\mathbf{L}\mathbf{p}-\mathbf{c}\left[\mathbf{p}\right]\circ\mathbf{p}\text{ in }\mathbb{R}.\quad\left(TW\left[c\right]\right)
\]

\subsection{The linearized equation}

As usual in KPP-type problems, the linearized equation near $\mathbf{0}$:
\[
-\mathbf{D}\mathbf{p}''-c\mathbf{p}'=\mathbf{L}\mathbf{p}\text{ in }\mathbb{R}\quad\left(TW_{0}\left[c\right]\right)
\]
will bring forth the main informations we need in order to construct
and study the traveling wave solutions. Hence we devote this first
subsection to its detailed study. 
\begin{lem}
\label{lem:Existence_exponential_eigenfunctions} Let $\left(c,\lambda\right)\in\mathbb{R}^{2}$. 

If there exists a classical positive solution $\mathbf{p}$ of 
\[
-\mathbf{D}\mathbf{p}''-c\mathbf{p}'-\left(\mathbf{L}+\lambda\mathbf{I}\right)\mathbf{p}=\mathbf{0}\text{ in }\mathbb{R},\quad\left(TW_{0}\left[c,\lambda\right]\right)
\]
 then there exists $\left(\mu,\mathbf{n}\right)\in\mathbb{R}\times\mathsf{K}^{++}$
such that $\mathbf{q}:\xi\mapsto\text{e}^{-\mu\xi}\mathbf{n}$ is
a classical solution of $\left(TW_{0}\left[c,\lambda\right]\right)$.
\end{lem}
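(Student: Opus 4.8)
I would prove this by reducing the existence of an exponential solution to a dispersion relation for the constant--coefficient operator
\[
\mathscr{L}_{c,\lambda}:=\mathbf{D}\tfrac{\mathrm d^{2}}{\mathrm d\xi^{2}}+c\tfrac{\mathrm d}{\mathrm d\xi}+\bigl(\mathbf{L}+\lambda\mathbf{I}\bigr),
\]
so that $\bigl(TW_{0}[c,\lambda]\bigr)$ reads $-\mathscr{L}_{c,\lambda}\mathbf{p}=\mathbf{0}$. By $\left(H_{1}\right)$ the zeroth--order matrix $\mathbf{L}+\lambda\mathbf{I}$ is essentially nonnegative and irreducible, so $\mathscr{L}_{c,\lambda}$ is weakly and fully coupled with constant (hence bounded) coefficients, and \thmref{Generalized_principal_eigenvalue} applies with $n=1$, $n'=N$: the generalized principal eigenvalue $\lambda_{1}\!\left(-\mathscr{L}_{c,\lambda}\right)$ is a finite real number, equal to $\lim_{R\to+\infty}\lambda_{1,Dir}\!\left(-\mathscr{L}_{c,\lambda},(-R,R)\right)$. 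The first observation is that the mere existence of the positive solution $\mathbf{p}$ forces $\lambda_{1}\!\left(-\mathscr{L}_{c,\lambda}\right)\geq0$: taking $\mathbf{v}=\mathbf{p}\in\mathscr{C}^{2}\!\left(\mathbb{R},\mathsf{K}^{++}\right)$ and the candidate value $0$ in the supremum defining $\lambda_{1}$, one has $-\mathscr{L}_{c,\lambda}\mathbf{p}=\mathbf{0}\geq 0\cdot\mathbf{p}$.

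\textbf{The dispersion function.} Next I introduce $\Psi:\mathbb{R}\to\mathbb{R}$, $\Psi(\mu)=\lambda_{PF}\!\left(\mu^{2}\mathbf{D}+\mathbf{L}\right)-c\mu+\lambda$; this is well defined and continuous because $\mu^{2}\mathbf{D}+\mathbf{L}$ remains essentially nonnegative irreducible and $\lambda_{PF}$ is a simple eigenvalue, and it is coercive because the Perron--Frobenius eigenvalue of an essentially nonnegative irreducible matrix dominates every diagonal entry, whence $\Psi(\mu)\geq\mu^{2}\min_{i}d_{i}+\min_{i}l_{i,i}-c\mu+\lambda\to+\infty$; let $m=\min_{\mu}\Psi=\Psi(\mu_{\star})$. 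A direct computation shows that for every $\mu$,
\[
-\mathscr{L}_{c,\lambda}\bigl[\xi\mapsto\mathrm e^{-\mu\xi}\mathbf{n}_{PF}\!\left(\mu^{2}\mathbf{D}+\mathbf{L}\right)\bigr]=-\Psi(\mu)\,\mathrm e^{-\mu\xi}\mathbf{n}_{PF}\!\left(\mu^{2}\mathbf{D}+\mathbf{L}\right),
\]
so each $-\Psi(\mu)$ is admissible in the supremum defining $\lambda_{1}$, giving the easy half $\lambda_{1}\!\left(-\mathscr{L}_{c,\lambda}\right)\geq-m$. The substantive point is the reverse inequality $\lambda_{1}\!\left(-\mathscr{L}_{c,\lambda}\right)\leq-m$. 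For this I use that $\lambda_{1}$ is the limit of the Dirichlet eigenvalues on $(-R,R)$ and exhibit, for each large $R$, a nonnegative nonzero subsolution $\boldsymbol{\phi}_{R}$ of $-\mathscr{L}_{c,\lambda}-(m+\varepsilon_{R})$ on $(-R,R)$ vanishing at $\pm R$ with $\varepsilon_{R}\to0$, which forces $\lambda_{1,Dir}\!\left(-\mathscr{L}_{c,\lambda},(-R,R)\right)\leq m+\varepsilon_{R}$. The natural candidate is $\boldsymbol{\phi}_{R}=\psi_{R}\,\mathrm e^{-\mu_{\star}\xi}\mathbf{n}_{\star}$ with $\mathbf{n}_{\star}=\mathbf{n}_{PF}\!\left(\mu_{\star}^{2}\mathbf{D}+\mathbf{L}\right)$ and $\psi_{R}$ a scalar profile vanishing at $\pm R$; conjugating by $\mathrm e^{-\mu_{\star}\xi}$ reduces the required sign condition to a differential inequality for $\psi_{R}$ whose only dangerous term is a residual first--order term carrying the matrix $\mathbf{E}_{\star}:=c\mathbf{I}-2\mu_{\star}\mathbf{D}$. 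Here the optimality of $\mu_{\star}$ enters: $\Psi'(\mu_{\star})=0$ together with the eigenvalue--perturbation formula gives $\mathbf{n}_{PF}\!\left(\mu_{\star}^{2}\mathbf{D}+\mathbf{L}^{T}\right)^{T}\mathbf{E}_{\star}\,\mathbf{n}_{\star}=0$, i.e. the residual drift is orthogonal to the Perron data, which lets one absorb $\mathbf{E}_{\star}$ by a drift--adapted choice of $\psi_{R}$ and pass to the limit $R\to+\infty$.

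\textbf{Main obstacle.} I expect this last step --- controlling the matrix first--order term $\mathbf{E}_{\star}$ uniformly up to the endpoints of $(-R,R)$ --- to be the crux, precisely because $\mathbf{D}$ is not scalar and no single scalar weight can remove the whole drift (in the scalar case the weight $\mathrm e^{-c\xi/2d}$ does the job); the identity $\mathbf{n}_{PF}\!\left(\mu_{\star}^{2}\mathbf{D}+\mathbf{L}^{T}\right)^{T}\mathbf{E}_{\star}\mathbf{n}_{\star}=0$ coming from $\Psi'(\mu_{\star})=0$ is exactly what rescues it. Alternatively, one may simply invoke the identity $\lambda_{1}\!\left(-\mathscr{L}_{c,\lambda}\right)=-\min_{\mu}\Psi(\mu)$ for constant--coefficient weakly and fully coupled operators as a known fact (cf. the scalar statement in Berestycki--Rossi), or establish it via the finite--dimensional ODE structure of $\bigl(TW_{0}[c,\lambda]\bigr)$ together with the fact that positivity of $\mathbf{p}$ rules out oscillatory or polynomial dominant modes.

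\textbf{Conclusion.} Combining $\lambda_{1}\!\left(-\mathscr{L}_{c,\lambda}\right)\geq0$ with $\lambda_{1}\!\left(-\mathscr{L}_{c,\lambda}\right)=-m$ yields $m=\min_{\mu}\Psi(\mu)\leq0$; since $\Psi$ is continuous and coercive, the intermediate value theorem furnishes $\mu_{0}\in\mathbb{R}$ with $\Psi(\mu_{0})=0$. Setting $\mathbf{n}:=\mathbf{n}_{PF}\!\left(\mu_{0}^{2}\mathbf{D}+\mathbf{L}\right)\in\mathsf{K}^{++}$, the relation $\bigl(\mu_{0}^{2}\mathbf{D}+\mathbf{L}\bigr)\mathbf{n}=\lambda_{PF}\!\left(\mu_{0}^{2}\mathbf{D}+\mathbf{L}\right)\mathbf{n}=(c\mu_{0}-\lambda)\mathbf{n}$ gives at once, for $\mathbf{q}:\xi\mapsto\mathrm e^{-\mu_{0}\xi}\mathbf{n}$,
\[
-\mathbf{D}\mathbf{q}''-c\mathbf{q}'-\bigl(\mathbf{L}+\lambda\mathbf{I}\bigr)\mathbf{q}=\mathrm e^{-\mu_{0}\xi}\bigl[-(\mu_{0}^{2}\mathbf{D}+\mathbf{L})\mathbf{n}+c\mu_{0}\mathbf{n}-\lambda\mathbf{n}\bigr]=\mathbf{0},
\]
so $\left(\mu,\mathbf{n}\right)=\left(\mu_{0},\mathbf{n}\right)$ is the desired pair.
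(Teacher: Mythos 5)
Your proof inverts the logical order of the paper and, as written, contains a genuine gap. The paper proves \lemref{Existence_exponential_eigenfunctions} directly by a Berestycki\textendash Hamel\textendash Roques-type renormalization: it sets $\mathbf{v}=(p_i'/p_i)_{i\in[N]}$, shows $\mathbf{v}$ is globally bounded (Harnack, elliptic estimates, translation invariance), takes $\overline{\Lambda}=\max_i\limsup_{\xi\to+\infty}v_i(\xi)$, normalizes translates of $\mathbf{p}$ by the component realizing the limsup, extracts a $\mathscr{C}^2_{loc}$ limit $\hat{\mathbf{p}}_\infty$, and shows that $\mathbf{w}_\infty=\overline{\Lambda}\hat{\mathbf{p}}_\infty-\hat{\mathbf{p}}_\infty'$ is a nonnegative solution of $(TW_0[c,\lambda])$ vanishing at one point, hence $\equiv\mathbf{0}$ by Harnack; this forces $\hat{\mathbf{p}}_\infty$ to be an exponential. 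The principal-eigenvalue formula $\lambda_1\bigl(-\mathbf{D}\frac{\mathrm d^2}{\mathrm d\xi^2}-c\frac{\mathrm d}{\mathrm d\xi}-\mathbf{L}\bigr)=\sup_\mu(\kappa_\mu+\mu c)$ is established only afterwards, in \lemref{Value_of_the_generalized_principal_eigenvalue_with_drift}, and its proof \emph{uses} the present lemma. So invoking $\lambda_1(-\mathscr{L}_{c,\lambda})=-\min_\mu\Psi(\mu)$ as a "known fact" is circular here; you must supply an independent proof.

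That independent proof is precisely what your sketch of the hard direction $\lambda_1\leq -m$ does not supply. After factoring out $\mathrm e^{-\mu_\star\xi}\mathbf{n}_\star$ and using $(\mu_\star^2\mathbf{D}+\mathbf{L})\mathbf{n}_\star=\lambda_{PF}(\mu_\star^2\mathbf{D}+\mathbf{L})\mathbf{n}_\star$, the requirement $(-\mathscr{L}_{c,\lambda}-(m+\varepsilon_R))\boldsymbol{\phi}_R\leq\mathbf{0}$ for $\boldsymbol{\phi}_R=\psi_R\,\mathrm e^{-\mu_\star\xi}\mathbf{n}_\star$ becomes, componentwise,
\[
-d_i\psi_R''+(2\mu_\star d_i-c)\psi_R'\leq\varepsilon_R\psi_R\quad\text{for each }i\in\left[N\right].
\]
By the eigenvalue-perturbation formula, $\Psi'(\mu_\star)=0$ gives $c=2\mu_\star\langle d\rangle_\star$ with $\langle d\rangle_\star=\mathbf{m}_\star^T\mathbf{D}\mathbf{n}_\star/\mathbf{m}_\star^T\mathbf{n}_\star$ a weighted average of the $d_i$, so $2\mu_\star d_i-c=2\mu_\star(d_i-\langle d\rangle_\star)$ takes \emph{both} signs across $i$ as soon as the $d_i$ are not all equal. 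Any scalar profile $\psi_R>0$ on $(-R,R)$ vanishing at $\pm R$ has $\psi_R'$ of both signs as well, and near the endpoint where $\psi_R'$ has the "wrong" sign for a given $i$ the first-order term is of order $1/R$ while the right-hand side tends to zero, so the componentwise inequality fails for that $i$. The identity $\mathbf{m}_\star^T\mathbf{E}_\star\mathbf{n}_\star=0$ is an averaged statement and does not repair a pointwise, componentwise inequality; your second alternative (phase-space analysis of the constant-coefficient ODE) could in principle be made to work, but you do not develop it.

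Your easy steps are fine: $\lambda_1\geq 0$ by testing with $\mathbf{p}$ itself, coercivity of $\Psi$, and the final passage from $\min_\mu\Psi\leq 0$ to a real root $\mu_0$ and to $\mathbf{q}=\mathrm e^{-\mu_0\xi}\mathbf{n}_{PF}(\mu_0^2\mathbf{D}+\mathbf{L})$. But the crucial input $\min_\mu\Psi\leq 0$, which here is equivalent to $\lambda_1\leq -m$, is left unproved. The cleanest fix is to abandon the eigenvalue-formula detour and argue directly on the given positive solution $\mathbf{p}$, as the paper does.
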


\begin{rem*}
This is of course to be related with the notions of generalized principal
eigenvalue and generalized principal eigenfunction (see \thmref{Generalized_principal_eigenvalue}).
The mere existence of $\mathbf{p}$ enforces
\[
\lambda_{1}\left(-\mathbf{D}\frac{\text{d}^{2}}{\text{d}\xi^{2}}-c\frac{\text{d}}{\text{d}\xi}-\left(\mathbf{L}+\lambda\mathbf{I}\right)\right)\geq0.
\]

The following proof is inspired by Berestycki\textendash Hamel\textendash Roques
\cite[Lemma 3.1]{Berestycki_Ham_2}. 
\end{rem*}
\begin{proof}
Let $\mathbf{p}$ be a classical positive solution of $\left(TW_{0}\left[c,\lambda\right]\right)$. 

Let $\mathbf{v}=\left(\frac{p_{i}'}{p_{i}}\right)_{i\in\left[N\right]}$.
By virtue of Arapostathis\textendash Gosh\textendash Marcus\textquoteright s
Harnack inequality \cite{Araposthathis_} applied to the operator
$\mathbf{D}\frac{\text{d}^{2}}{\text{d}\xi^{2}}+c\frac{\text{d}}{\text{d}\xi}+\left(\mathbf{L}+\lambda\mathbf{I}\right)$,
classical elliptic estimates (Gilbarg\textendash Trudinger \cite{Gilbarg_Trudin})
and invariance by translation of $\left(TW_{0}\left[c,\lambda\right]\right)$,
$\mathbf{v}$ is globally bounded. Let
\[
\Lambda_{i}=\limsup_{\xi\to+\infty}v_{i}\left(\xi\right)\text{ for all }i\in\left[N\right],
\]
\[
\overline{\Lambda}=\max\limits _{i\in\left[N\right]}\Lambda_{i},
\]
 so that
\[
\left(\limsup_{\xi\to+\infty}v_{i}\left(\xi\right)\right)_{i\in\left[N\right]}\leq\overline{\Lambda}\mathbf{1}_{N,1}.
\]

Let $\left(\xi_{n}\right)_{n\in\mathbb{N}}\in\mathbb{R}^{\mathbb{N}}$
such that $\xi_{n}\to+\infty$ and such that there exists $\overline{i}\in\left[N\right]$
such that
\[
v_{\overline{i}}\left(\xi_{n}\right)\to\overline{\Lambda}.
\]

On one hand, let
\[
\hat{\mathbf{p}}_{n}:\xi\mapsto\frac{\mathbf{p}\left(\xi+\xi_{n}\right)}{p_{\overline{i}}\left(\xi_{n}\right)}\text{ for all }n\in\mathbb{N}.
\]
 Once more by virtue of Arapostathis\textendash Gosh\textendash Marcus\textquoteright s
Harnack inequality, the sequence $\left(\hat{\mathbf{p}}_{n}\right)_{n\in\mathbb{N}}$
is locally uniformly bounded. Since all $\hat{\mathbf{p}}_{n}$ solve
$\left(TW_{0}\left[c,\lambda\right]\right)$, by classical elliptic
estimates, $\left(\hat{\mathbf{p}}_{n}\right)_{n\in\mathbb{N}}$ converges
up to a diagonal extraction as $n\to+\infty$ in $\mathscr{C}_{loc}^{2}$.
Let $\hat{\mathbf{p}}_{\infty}$ be its limit. Notice by linearity
of $\left(TW_{0}\left[c,\lambda\right]\right)$ that $\hat{\mathbf{p}}_{\infty}$
is in fact smooth and all its derivatives satisfy $\left(TW_{0}\left[c,\lambda\right]\right)$
as well.

On the other hand, let
\[
\mathbf{w}_{n}=\overline{\Lambda}\hat{\mathbf{p}}_{n}-\hat{\mathbf{p}}_{n}'\text{ for all }n\in\mathbb{N}\cup\left\{ +\infty\right\} .
\]

Notice the following equality:
\[
\mathbf{w}_{n}\left(\xi\right)=\hat{\mathbf{p}}_{n}\left(\xi\right)\circ\left(\overline{\Lambda}\mathbf{1}_{N,1}-\mathbf{v}\left(\xi+\xi_{n}\right)\right)\text{ for all }n\in\mathbb{N}\text{ and }\xi\in\mathbb{R}.
\]

Fix $\xi\in\mathbb{R}$. Recalling
\[
\left(\limsup_{n\to+\infty}v_{i}\left(\xi+\xi_{n}\right)\right)_{i\in\left[N\right]}\leq\left(\limsup_{\zeta\to+\infty}v_{i}\left(\zeta\right)\right)_{i\in\left[N\right]}\leq\overline{\Lambda}\mathbf{1}_{N,1},
\]
 it follows that for all $\varepsilon>0$ there exists $n_{\xi,\varepsilon}\in\mathbb{N}$
such that for all $n\geq n_{\xi,\varepsilon}$, 
\[
\left(\overline{\Lambda}+\varepsilon\right)\mathbf{1}_{N,1}\geq\mathbf{v}\left(\xi+\xi_{n}\right),
\]
whence, for all $n\geq n_{\xi,\varepsilon}$,
\begin{align*}
\mathbf{w}_{n}\left(\xi\right) & \geq-\varepsilon\left(\sup_{m\geq n_{\xi,\varepsilon}}\hat{p}_{m,i}\left(\xi\right)\right)_{i\in\left[N\right]}\\
 & \geq-\varepsilon\left(\sup_{m\in\mathbb{N}}\hat{p}_{m,i}\left(\xi\right)\right)_{i\in\left[N\right]},
\end{align*}
and consequently, passing to the limit $n\to+\infty$ and then $\varepsilon\to0$,
we obtain the non-negativity of $\mathbf{w}_{\infty}\left(\xi\right)$. 

Hence $\mathbf{w}_{\infty}$ is a nonnegative solution of $\left(TW_{0}\left[c,\lambda\right]\right)$
satisfying in addition
\[
w_{\infty,\overline{i}}\left(0\right)=\hat{p}_{\infty,\overline{i}}\left(0\right)\left(\overline{\Lambda}-\lim_{n\to+\infty}v_{\overline{i}}\left(\xi_{n}\right)\right)=0,
\]
whence, again by Arapostathis\textendash Gosh\textendash Marcus\textquoteright s
Harnack inequality, $\mathbf{w}_{\infty}$ is in fact the null function. 

Consequently, $\overline{\Lambda}\hat{\mathbf{p}}_{\infty}=\hat{\mathbf{p}}_{\infty}'$,
that is $\hat{\mathbf{p}}_{\infty}$ has exactly the form
\[
\xi\mapsto\text{e}^{\overline{\Lambda}\xi}\mathbf{n}\text{ with }\mathbf{n}\in\mathbb{R}^{N}.
\]

Since $\hat{\mathbf{p}}_{\infty}$ is nonnegative with $\hat{p}_{\infty,\overline{i}}\left(0\right)=1$
by construction, $\mathbf{n}\in\mathsf{K}^{+}$, and since any nonnegative
nonzero solution of $\left(TW_{0}\left[c,\lambda\right]\right)$ is
positive (\corref{Positivity_for_the_elliptic_problem_with_drift}),
$\mathbf{n}\in\mathsf{K}^{++}$. The proof is ended with $\mu=-\overline{\Lambda}$.
\end{proof}
For all $\mu\in\mathbb{R}$, the matrix $\mu^{2}\mathbf{D}+\mathbf{L}$
is essentially nonnegative irreducible. Define $\kappa_{\mu}=-\lambda_{PF}\left(\mu^{2}\mathbf{D}+\mathbf{L}\right)$
and $\mathbf{n}_{\mu}=\mathbf{n}_{PF}\left(\mu^{2}\mathbf{D}+\mathbf{L}\right)$.

Of course, the interest of the pair $\left(\kappa_{\mu},\mathbf{n}_{\mu}\right)$
lies in the preceding lemma: for all $\left(\mu,\mathbf{n}\right)\in\mathbb{R}\times\mathsf{K}^{++}$,
$\xi\mapsto\text{e}^{-\mu\xi}\mathbf{n}$ is a solution of $\left(TW_{0}\left[c\right]\right)$
if and only if
\[
-\mu^{2}\mathbf{D}\mathbf{n}+\mu c\mathbf{n}-\mathbf{L}\mathbf{n}=\mathbf{0},
\]
 that is, thanks to the Perron\textendash Frobenius theorem, if and
only if $\mu c=-\kappa_{\mu}$ and $\frac{\mathbf{n}}{\left|\mathbf{n}\right|}=\mathbf{n}_{\mu}$.
This most important observation leads naturally to the following study
of the equation $c=-\frac{\kappa_{\mu}}{\mu}$.
\begin{lem}
\label{lem:Definition_c_star} The quantity
\[
c^{\star}=\min_{\mu>0}\left(-\frac{\kappa_{\mu}}{\mu}\right)
\]
 is well-defined and positive. 

Let $c\in[0,+\infty)$. In $\left(-\infty,0\right)$, the equation
$-\frac{\kappa_{\mu}}{\mu}=c$ admits no solution. In $\left(0,+\infty\right)$,
it admits exactly:
\begin{enumerate}
\item no solution if $c<c^{\star}$;
\item one solution $\mu_{c^{\star}}>0$ if $c=c^{\star}$;
\item two solutions $\left(\mu_{1,c},\mu_{2,c}\right)$ if $c>c^{\star}$,
which satisfy moreover
\[
0<\mu_{1,c}<\mu_{c^{\star}}<\mu_{2,c}.
\]
\end{enumerate}
\end{lem}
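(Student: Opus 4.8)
The plan is to reduce the whole statement to the analysis of one scalar function of one real variable. Write $\phi(s)=\lambda_{PF}(\mathbf{L}+s\mathbf{D})$ for $s\in[0,+\infty)$; this is well-defined since $\mathbf{L}+s\mathbf{D}$ is essentially nonnegative and irreducible, and by the very definition $\kappa_\mu=-\phi(\mu^{2})$, so that the equation $-\kappa_\mu/\mu=c$ reads $\psi(\mu):=\phi(\mu^{2})/\mu=c$. First I would record the elementary properties of $\phi$: it is real-analytic (the Perron--Frobenius eigenvalue is a simple eigenvalue of the analytic matrix family $s\mapsto\mathbf{L}+s\mathbf{D}$), it is strictly increasing (monotonicity of the Perron--Frobenius eigenvalue under a nonzero nonnegative perturbation, here $(s_2-s_1)\mathbf{D}$ with $\mathbf{D}\gg\mathbf{0}$), one has $\phi(0)=\lambda_{PF}(\mathbf{L})>0$, and --- the one nontrivial input --- $\phi$ is convex. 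The convexity is the classical theorem of Cohen and Kato on the convexity of the dominant eigenvalue of an essentially nonnegative matrix as a function of its diagonal part, specialized to the ray $s\mapsto s\mathbf{d}$. Combining convexity with strict monotonicity gives $\phi(s)\to+\infty$, hence $\psi(\mu)\to+\infty$ both as $\mu\to0^{+}$ and as $\mu\to+\infty$; since also $\psi>0$ on $(0,+\infty)$, the quantity $c^{\star}:=\min_{\mu>0}\psi(\mu)$ is a well-defined positive real number, attained at some $\mu_{c^\star}>0$.

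The key structural step is to determine the sign of $\psi'$. A direct computation gives $\mu^{2}\psi'(\mu)=2\mu^{2}\phi'(\mu^{2})-\phi(\mu^{2})=R(\mu^{2})$, where $R(s):=2s\phi'(s)-\phi(s)$. Now $R(0)=-\lambda_{PF}(\mathbf{L})<0$; also $R(s)\to+\infty$ (from convexity, $\phi(s)\le\phi(s_{0})+\phi'(s)(s-s_{0})$, so $R(s)\ge(s+s_{0})\phi'(s)-\phi(s_{0})$ with $\phi'(s)\ge\phi'(s_{0})>0$ for $s$ large); and crucially $R'(s)=\phi'(s)+2s\phi''(s)\ge0$, so $R$ is non-decreasing. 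A continuous non-decreasing function that starts strictly negative and tends to $+\infty$ has a single sign change: there is $s^{\star}>0$ with $R<0$ on $[0,s^{\star})$, $R(s^{\star})=0$, and $R>0$ on $(s^{\star},+\infty)$. Here one uses that $R$ cannot vanish on a nondegenerate interval, since $R\equiv0$ there would force $R'\equiv0$, hence $\phi'\equiv0$ there (both summands of $R'$ being nonnegative), contradicting the strict monotonicity of $\phi$. Setting $\mu_{c^{\star}}=\sqrt{s^{\star}}$, this shows that $\psi$ is strictly decreasing on $(0,\mu_{c^{\star}}]$ and strictly increasing on $[\mu_{c^{\star}},+\infty)$; in particular its minimum $c^{\star}$ is attained only at $\mu_{c^{\star}}$.

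The case analysis is then immediate. For $\mu<0$ the left-hand side $\phi(\mu^{2})/\mu$ is negative while $c\ge0$, so there is no solution of $-\kappa_\mu/\mu=c$ in $(-\infty,0)$. For $\mu>0$: if $c<c^{\star}$ then $\psi\ge c^{\star}>c$ everywhere and there is no solution; if $c=c^{\star}$ then strict monotonicity on each side of $\mu_{c^{\star}}$ gives $\psi(\mu)>c^{\star}$ for $\mu\ne\mu_{c^{\star}}$, so $\mu_{c^{\star}}$ is the unique solution; if $c>c^{\star}$ then applying the intermediate value theorem to the strictly decreasing continuous restriction of $\psi$ to $(0,\mu_{c^{\star}}]$ (running from $+\infty$ down to $c^{\star}<c$) and to the strictly increasing restriction to $[\mu_{c^{\star}},+\infty)$ (running from $c^{\star}<c$ up to $+\infty$) yields exactly one solution on each branch, and these satisfy $0<\mu_{1,c}<\mu_{c^{\star}}<\mu_{2,c}$.

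The main obstacle is the convexity of $\phi$: everything downstream is elementary one-variable calculus, but convexity of $s\mapsto\lambda_{PF}(\mathbf{L}+s\mathbf{D})$ is a genuine theorem --- it fails for general one-parameter families of matrices and is special to diagonal perturbations --- which I would invoke rather than reprove. A secondary, purely technical point is the differentiability of $\phi$ used above; this is covered by analyticity of the simple Perron--Frobenius eigenvalue, and in any case could be bypassed by working with the one-sided derivatives of the convex function $\phi$ instead of assuming $\mathscr{C}^{1}$ regularity.
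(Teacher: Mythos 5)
Your proof is correct, and it follows a genuinely different route from the paper's. The paper invokes Nussbaum's theorem twice, on $\mu^{2}\mathbf{D}+\mathbf{L}$ and on $\mu\mathbf{D}+\frac{1}{\mu}\mathbf{L}$ (the latter exploiting that $\mu\mapsto\frac{1}{\mu}l_{i,j}$ is log-convex), to obtain directly the \emph{strict} convexity of both $\mu\mapsto-\kappa_{\mu}$ and $\mu\mapsto-\kappa_{\mu}/\mu$ on $(0,+\infty)$; strict convexity of the latter immediately yields the unimodal profile and the one/two-solution picture, while the divergence at $+\infty$ is handled by a compactness argument on normalized Perron--Frobenius eigenvectors of $\mu\mathbf{D}+\frac{1}{\mu}\mathbf{L}$, which leads to the contradiction $\mathbf{D}\mathbf{m}=\mathbf{0}$ with $\mathbf{D}$ invertible. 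You instead invoke the single, more elementary input that $s\mapsto\lambda_{PF}(\mathbf{L}+s\mathbf{D})$ is convex (Cohen's theorem on the Perron--Frobenius eigenvalue as a function of the diagonal, which is a special case of Nussbaum), and then carry out the one-variable calculus via $R(s)=2s\phi'(s)-\phi(s)$ to prove strict unimodality of $\psi$ directly, without ever establishing convexity (let alone strict convexity) of $\psi$ itself; your divergence argument at $+\infty$ is also more direct, coming from the lower bound $\phi(s)\geq\phi(s_{0})+\phi'(s_{0})(s-s_{0})$ with $\phi'(s_{0})>0$. What your route buys is economy of convexity machinery and a cleaner blow-up argument; what it costs is the smoothness of $\phi$ needed to manipulate $R$ and $R'$, which you correctly secure via analyticity of a simple eigenvalue (and correctly note can be replaced by one-sided convex derivatives). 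One tiny phrasing slip: you write that $\phi(s)\to+\infty$ is what gives $\psi(\mu)\to+\infty$ as $\mu\to0^{+}$, but that limit actually follows from $\phi(0)=\lambda_{PF}(\mathbf{L})>0$; since you establish this too, the mathematics is sound, only the attribution of cause is loose.
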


\begin{rem*}
$c^{\star}$ does not depend on $\mathbf{c}$ and is entirely determined
by $\mathbf{D}$ and $\mathbf{L}$. It will be the minimal speed of
traveling waves and this kind of dependency is strongly reminiscent
of the scalar Fisher\textendash KPP case, where $c^{\star}=2\sqrt{rd}$.
In fact the following proof is mostly a generalization of scalar arguments.
\end{rem*}
\begin{proof}
Of course, $\mu\mapsto-\frac{\kappa_{\mu}}{\mu}$ is odd in $\mathbb{R}\backslash\left\{ 0\right\} $.
It is also positive in $\left(0,+\infty\right)$:
\[
-\frac{\kappa_{\mu}}{\mu}=\frac{1}{\mu}\lambda_{PF}\left(\mu^{2}\mathbf{D}+\mathbf{L}\right)>\frac{1}{\mu}\lambda_{PF}\left(\mathbf{L}\right)>0.
\]
 Therefore it is negative in $\left(-\infty,0\right)$ and in particular
there is no solution of $-\frac{\kappa_{\mu}}{\mu}=c\geq0$ in $\left(-\infty,0\right)$. 

We recall Nussbaum\textquoteright s theorem \cite{Nussbaum_1986}
which proves the convexity of the function $\mu\mapsto\rho\left(\mathbf{A}_{\mu}\right)$
provided:
\begin{itemize}
\item the matrix $\mathbf{A}_{\mu}$ is irreducible,
\item its diagonal entries are convex functions of $\mu$,
\item its off-diagonal entries are nonnegative log-convex functions of $\mu$.
\end{itemize}
These conditions are easily verified for $\mu^{2}\mathbf{D}+\mathbf{L}$
and $\mu\mathbf{D}+\frac{1}{\mu}\mathbf{L}$ (actually, for all $\mu^{-\gamma}\left(\mu^{2}\mathbf{D}+\mathbf{L}\right)$
provided $\gamma\in\left[0,2\right]$). Their spectral radii being
respectively $-\kappa_{\mu}$ and $-\frac{\kappa_{\mu}}{\mu}$, these
are therefore convex functions of $\mu$. Moreover, Nussbaum\textquoteright s
result also proves that these convexities are actually strict. Therefore
$\mu\mapsto-\kappa_{\mu}$ and $\mu\mapsto-\frac{\kappa_{\mu}}{\mu}$
are strictly convex functions in $\left(0,+\infty\right)$.

Now, we investigate the behavior of $-\frac{\kappa_{\mu}}{\mu}$ as
$\mu\to0$ and $\mu\to+\infty$.

By continuity, 
\[
\kappa_{\mu}\to\kappa_{0}\text{ as }\mu\to0,
\]
whence $-\frac{\kappa_{\mu}}{\mu}\to+\infty$ as $\mu\to0$. 

Since $\mu\mapsto-\frac{\kappa_{\mu}}{\mu}$ is convex and positive,
either it is bounded in a neighborhood of $+\infty$ and then it converges
to some nonnegative constant, either it is unbounded in a neighborhood
of $+\infty$ and then it converges to $+\infty$. Assume that it
converges to a finite constant. Notice
\[
\lim_{\mu\to+\infty}\frac{1}{\mu^{2}}\left(\mu^{2}\mathbf{D}+\mathbf{L}\right)=\mathbf{D}.
\]

There exists a family of Perron\textendash Frobenius eigenvectors
of $\mu\mathbf{D}+\frac{1}{\mu}\mathbf{L}$, $\left(\mathbf{m}_{\mu}\right)_{\mu>0}$,
normalized so that $\max\limits _{i\in\left[N\right]}m_{\mu,i}=1$
for all $\mu>0$. Thanks to classical compactness arguments in $\mathbb{R}$
and $\mathbb{R}^{N}$, we can extract a sequence $\left(\mu_{n}\right)_{n\in\mathbb{N}}$
such that $\mu_{n}\to+\infty$, $-\frac{\kappa_{\mu_{n}}}{\mu_{n}^{2}}$
converges to $0$ and $\mathbf{m}_{\mu_{n}}$ converges to some $\mathbf{m}\in\mathsf{K}^{+}$.
We point out that we do not know if $\mathbf{m}\in\mathsf{K}^{++}$,
but from the normalizations, we do know that $\mathbf{m}\in\mathsf{K}^{+}$.
Since $\mathbf{m}$ satisfies $\mathbf{D}\mathbf{m}=\mathbf{0}$ and
since $\mathbf{D}$ is invertible, we get a contradiction. Thus
\[
\lim_{\mu\to+\infty}-\frac{\kappa_{\mu}}{\mu}=+\infty.
\]

Hence $\mu\mapsto-\frac{\kappa_{\mu}}{\mu}$ is a strictly convex
positive function which goes to $+\infty$ as $\mu\to0$ or $\mu\to+\infty$:
it admits necessarily a unique global minimum in $\left(0,+\infty\right)$.
The quantity $c^{\star}$ is well-defined.

Define $\mu_{c^{\star}}>0$ such that
\[
c^{\star}=-\frac{\kappa_{\mu_{c^{\star}}}}{\mu_{c^{\star}}}.
\]
The quantity $\mu_{c^{\star}}$ is uniquely defined by strict convexity.
The function $\mu\mapsto-\frac{\kappa_{\mu}}{\mu}$ is bijective from
$\left(0,\mu_{c^{\star}}\right)$ to $\left(c^{\star},+\infty\right)$
and from $\left(\mu_{c^{\star}},+\infty\right)$ to $\left(c^{\star},+\infty\right)$
as well. This ends the proof.
\end{proof}
Putting together \lemref{Existence_exponential_eigenfunctions} and
\lemref{Definition_c_star}, we get the following important result.
\begin{cor}
\label{cor:Existence_of_solution_of_the_linearized_eq_for_the_waves}
For all $c\in[0,+\infty)$, the set of nonnegative nonzero classical
solutions of $\left(TW_{0}\left[c\right]\right)$ is empty if and
only if $c\in[0,c^{\star})$.
\end{cor}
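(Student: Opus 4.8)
The plan is to read the corollary off three earlier facts: the dictionary, recorded just before the statement, between positive exponential solutions $\xi\mapsto\text{e}^{-\mu\xi}\mathbf{n}$ of $\left(TW_{0}\left[c\right]\right)$ and the scalar equation $-\frac{\kappa_{\mu}}{\mu}=c$; the reduction of \emph{any} positive solution to an exponential one (\lemref{Existence_exponential_eigenfunctions}); and the analysis of that scalar equation in \lemref{Definition_c_star}. Since the statement is an equivalence, I would prove the two implications separately, both via this dictionary.

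For the existence half ($c\ge c^{\star}\Rightarrow$ the set is nonempty): by \lemref{Definition_c_star} the equation $-\frac{\kappa_{\mu}}{\mu}=c$ has at least one solution $\mu>0$ (namely $\mu_{c^{\star}}$ when $c=c^{\star}$, and $\mu_{1,c}$ or $\mu_{2,c}$ when $c>c^{\star}$), i.e.\ $\mu c=-\kappa_{\mu}$. The Perron--Frobenius identity $-\mu^{2}\mathbf{D}\mathbf{n}_{\mu}+\mu c\mathbf{n}_{\mu}-\mathbf{L}\mathbf{n}_{\mu}=\mathbf{0}$ then shows that $\mathbf{q}:\xi\mapsto\text{e}^{-\mu\xi}\mathbf{n}_{\mu}$ is a classical solution of $\left(TW_{0}\left[c\right]\right)$, and since $\mathbf{n}_{\mu}\in\mathsf{K}^{++}$ it is positive, hence in particular nonnegative nonzero. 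So a nonnegative nonzero solution exists whenever $c\ge c^{\star}$, i.e.\ whenever $c\notin[0,c^{\star})$.

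For the nonexistence half ($c<c^{\star}\Rightarrow$ the set is empty): argue by contradiction. Let $\mathbf{p}$ be a nonnegative nonzero classical solution of $\left(TW_{0}\left[c\right]\right)$. Since $\left(TW_{0}\left[c\right]\right)$ is a linear, weakly and fully coupled, cooperative system (essential nonnegativity and irreducibility of $\mathbf{L}$), the strong maximum principle — exactly as invoked for \corref{Positivity_for_the_elliptic_problem_with_drift} — forces $\mathbf{p}$ to be positive on $\mathbb{R}$. Then \lemref{Existence_exponential_eigenfunctions} with $\lambda=0$ yields $\left(\mu,\mathbf{n}\right)\in\mathbb{R}\times\mathsf{K}^{++}$ such that $\xi\mapsto\text{e}^{-\mu\xi}\mathbf{n}$ solves $\left(TW_{0}\left[c\right]\right)$, whence $\mu c=-\kappa_{\mu}$. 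Next I would rule out $\mu\le0$: if $\mu=0$ this forces $0=-\kappa_{0}=\lambda_{PF}\left(\mathbf{L}\right)>0$, absurd; if $\mu<0$ then $c=-\frac{\kappa_{\mu}}{\mu}<0$ because $\mu\mapsto-\frac{\kappa_{\mu}}{\mu}$ is odd and positive on $\left(0,+\infty\right)$ (\lemref{Definition_c_star}), contradicting $c\ge0$. Hence $\mu>0$ and $c=-\frac{\kappa_{\mu}}{\mu}\ge\min_{\nu>0}\left(-\frac{\kappa_{\nu}}{\nu}\right)=c^{\star}$, contradicting $c<c^{\star}$.

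I expect the only genuinely delicate point to be the elimination of $\mu\le0$ in the nonexistence direction, which is precisely where the standing assumption $\lambda_{PF}\left(\mathbf{L}\right)>0$ and the oddness/positivity of $\mu\mapsto-\frac{\kappa_{\mu}}{\mu}$ enter; everything else is a direct transcription of \lemref{Existence_exponential_eigenfunctions} and \lemref{Definition_c_star}. In other words, the hard content — the Harnack/compactness argument turning an arbitrary positive profile into an exponential one — has already been packaged into \lemref{Existence_exponential_eigenfunctions}, so this corollary is essentially bookkeeping.
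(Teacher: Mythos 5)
Your proposal is correct and is essentially a careful, explicit write-up of the argument the paper leaves implicit in the remark ``Putting together \lemref{Existence_exponential_eigenfunctions} and \lemref{Definition_c_star}, we get\dots'': the existence half is the dictionary between positive exponential solutions and roots of $-\kappa_\mu/\mu=c$, and the nonexistence half upgrades a nonnegative nonzero solution to a positive one by the strong maximum principle, feeds it to \lemref{Existence_exponential_eigenfunctions} with $\lambda=0$, and rules out $\mu\le 0$ using oddness and positivity of $\mu\mapsto-\kappa_\mu/\mu$ together with $\lambda_{PF}(\mathbf{L})>0$. The elimination of $\mu\le 0$, which you flag as the only delicate point, is indeed the one step the paper does not spell out, and your handling of it is correct.
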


We can also get the exact values of $c$ for which $\mathbf{0}$ is
an unstable steady state of $\left(TW_{0}\left[c\right]\right)$,
in the sense of \lemref{Negativity_lambda_Dir_for_large_R}. 
\begin{lem}
\label{lem:Value_of_the_generalized_principal_eigenvalue_with_drift}
Let $c\in[0,+\infty)$. Then
\[
\lambda_{1}\left(-\mathbf{D}\frac{\text{d}^{2}}{\text{d}x^{2}}-c\frac{\text{d}}{\text{d}x}-\mathbf{L}\right)=\sup\limits _{\mu\in\mathbb{R}}\left(\kappa_{\mu}+\mu c\right).
\]

Furthermore:
\begin{enumerate}
\item $\sup\limits _{\mu\in\mathbb{R}}\left(\kappa_{\mu}+\mu c\right)=\max\limits _{\mu\geq0}\left(\kappa_{\mu}+\mu c\right)$;
\item $\max\limits _{\mu\geq0}\left(\kappa_{\mu}+\mu c\right)<0$ if and
only if $c<c^{\star}$. 
\end{enumerate}
\end{lem}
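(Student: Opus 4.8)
The plan is to prove the displayed equality through two matching inequalities and then to extract (1) and (2) from elementary properties of $\mu\mapsto\kappa_{\mu}$. Throughout, write $\mathscr{A}=\mathbf{D}\frac{\text{d}^{2}}{\text{d}x^{2}}+c\frac{\text{d}}{\text{d}x}+\mathbf{L}$, a weakly and fully coupled elliptic operator with constant (hence continuous and bounded) coefficients, to which \thmref{Generalized_principal_eigenvalue} applies.

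\emph{Lower bound.} For each $\mu\in\mathbb{R}$ I would test with the exponential $\mathbf{v}_{\mu}:x\mapsto\text{e}^{-\mu x}\mathbf{n}_{\mu}\in\mathscr{C}^{2}\left(\mathbb{R},\mathsf{K}^{++}\right)$. Using $\left(\mu^{2}\mathbf{D}+\mathbf{L}\right)\mathbf{n}_{\mu}=-\kappa_{\mu}\mathbf{n}_{\mu}$, a one-line computation gives
\[
-\mathscr{A}\mathbf{v}_{\mu}=\left(\kappa_{\mu}+\mu c\right)\mathbf{v}_{\mu}\text{ in }\mathbb{R},
\]
so, by the very definition of the generalized principal eigenvalue, $\lambda_{1}\left(-\mathscr{A}\right)\geq\kappa_{\mu}+\mu c$ for every $\mu\in\mathbb{R}$, hence $\lambda_{1}\left(-\mathscr{A}\right)\geq\sup_{\mu\in\mathbb{R}}\left(\kappa_{\mu}+\mu c\right)$.

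\emph{Upper bound.} By \thmref{Generalized_principal_eigenvalue}, $\lambda_{1}:=\lambda_{1}\left(-\mathscr{A}\right)$ is a finite maximum and is attained at a positive generalized principal eigenfunction $\mathbf{v}$; equivalently, $\mathbf{v}$ is a positive classical solution of $\left(TW_{0}\left[c,\lambda_{1}\right]\right)$. Then \lemref{Existence_exponential_eigenfunctions}, applied with $\lambda=\lambda_{1}$, provides $\left(\mu,\mathbf{n}\right)\in\mathbb{R}\times\mathsf{K}^{++}$ such that $\xi\mapsto\text{e}^{-\mu\xi}\mathbf{n}$ also solves $\left(TW_{0}\left[c,\lambda_{1}\right]\right)$. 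Plugging this in yields $\left(\mu^{2}\mathbf{D}+\mathbf{L}\right)\mathbf{n}=\left(\mu c-\lambda_{1}\right)\mathbf{n}$, and since $\mathbf{n}\gg\mathbf{0}$ while $\mu^{2}\mathbf{D}+\mathbf{L}$ is essentially nonnegative and irreducible, the Perron--Frobenius theorem forces $\mu c-\lambda_{1}=\lambda_{PF}\left(\mu^{2}\mathbf{D}+\mathbf{L}\right)=-\kappa_{\mu}$, whence $\lambda_{1}=\kappa_{\mu}+\mu c\leq\sup_{\mu'\in\mathbb{R}}\left(\kappa_{\mu'}+\mu'c\right)$. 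Combined with the lower bound, this proves the equality. This second inequality is the only step requiring care: the point is not a computation but the observation that the abstract eigenfunction furnished by \thmref{Generalized_principal_eigenvalue} is a positive solution of $\left(TW_{0}\left[c,\lambda_{1}\right]\right)$, so that \lemref{Existence_exponential_eigenfunctions} may be invoked to turn it into an honest exponential.

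\emph{Items (1) and (2).} The map $\mu\mapsto\kappa_{\mu}=-\lambda_{PF}\left(\mu^{2}\mathbf{D}+\mathbf{L}\right)$ is even and continuous, and $-\kappa_{\mu}/\mu\to+\infty$ as $\mu\to+\infty$ (established within the proof of \lemref{Definition_c_star}); hence, for fixed $c\geq0$, $\kappa_{\mu}+\mu c=-\mu\left(\frac{-\kappa_{\mu}}{\mu}-c\right)\to-\infty$ as $\mu\to+\infty$, so by continuity the supremum of $\mu\mapsto\kappa_{\mu}+\mu c$ over $[0,+\infty)$ is a maximum. Moreover, for $\mu<0$, evenness of $\kappa$ and $c\geq0$ give $\kappa_{\mu}+\mu c\leq\kappa_{-\mu}+\left(-\mu\right)c$ with $-\mu>0$, so the supremum over $\mathbb{R}$ is already attained on $[0,+\infty)$; this is (1). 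For (2), the value at $\mu=0$ is $\kappa_{0}=-\lambda_{PF}\left(\mathbf{L}\right)<0$ (recall $\lambda_{PF}\left(\mathbf{L}\right)>0$ is assumed throughout this section), so by (1) the inequality $\max_{\mu\geq0}\left(\kappa_{\mu}+\mu c\right)<0$ is equivalent to $\kappa_{\mu}+\mu c<0$ for all $\mu>0$, that is to $c<-\kappa_{\mu}/\mu$ for all $\mu>0$. By \lemref{Definition_c_star} the function $\mu\mapsto-\kappa_{\mu}/\mu$ attains its minimum $c^{\star}$ on $\left(0,+\infty\right)$, at $\mu_{c^{\star}}$; therefore this condition holds precisely when $c<c^{\star}$, which is (2).
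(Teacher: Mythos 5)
Your proof is correct and follows essentially the same route as the paper's: the lower bound is obtained by testing with the exponentials $\mathbf{v}_{\mu}$, the upper bound by invoking \thmref{Generalized_principal_eigenvalue} to produce a generalized principal eigenfunction, then \lemref{Existence_exponential_eigenfunctions} to replace it by an exponential one and apply Perron--Frobenius, and items (1) and (2) follow from evenness of $\kappa_{\mu}$, $\kappa_{0}<0$, and the asymptotics of $-\kappa_{\mu}/\mu$ together with \lemref{Definition_c_star}. The only difference is cosmetic: the paper establishes (1) and (2) first and then the equality, whereas you reverse the order.
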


\begin{rem*}
Just as in the case $c=0$, it can be shown that, for all $c\in[0,+\infty)$,
\[
R\mapsto\lambda_{1,Dir}\left(-\mathbf{D}\frac{\text{d}^{2}}{\text{d}\xi^{2}}-c\frac{\text{d}}{\text{d}\xi}-\mathbf{L},\left(-R,R\right)\right)
\]
is a decreasing homeomorphism from $\left(0,+\infty\right)$ onto
$\left(\lambda_{1}\left(-\mathbf{D}\frac{\text{d}^{2}}{\text{d}x^{2}}-c\frac{\text{d}}{\text{d}x}-\mathbf{L}\right),+\infty\right)$. 
\end{rem*}
\begin{proof}
The fact that $\sup\limits _{\mu\in\mathbb{R}}\left(\kappa_{\mu}+\mu c\right)$
is finite and actually a maximum attained in $[0,+\infty)$ is a direct
consequence of:
\begin{itemize}
\item the evenness of $\mu\mapsto\kappa_{\mu}$ (whence, for all $\mu>0$,
$\kappa_{-\mu}+\left(-\mu\right)c<\kappa_{\mu}+\mu c$);
\item $\kappa_{0}<0$;
\item $\frac{\kappa_{\mu}}{\mu}+c\to-\infty$ as $\mu\to+\infty$ (see the
proof of \lemref{Definition_c_star}).
\end{itemize}
In addition, the sign of this maximum depending on the sign $c-c^{\star}$
is given by \lemref{Definition_c_star}.

Hence it only remains to prove
\[
\lambda_{1}\left(-\mathbf{D}\frac{\text{d}^{2}}{\text{d}x^{2}}-c\frac{\text{d}}{\text{d}x}-\mathbf{L}\right)=\max\limits _{\mu\geq0}\left(\kappa_{\mu}+\mu c\right).
\]
To do so, we use and adapt a well-known strategy of proof (see for
instance Nadin \cite{Nadin_2007}).

We recall from \thmref{Generalized_principal_eigenvalue} the definition
of the generalized principal eigenvalue:
\[
\lambda_{1}\left(-\mathbf{D}\frac{\text{d}^{2}}{\text{d}x^{2}}-c\frac{\text{d}}{\text{d}x}-\mathbf{L}\right)=\sup\left\{ \lambda\in\mathbb{R}\ |\ \exists\mathbf{n}\in\mathscr{C}^{2}\left(\mathbb{R},\mathsf{K}^{++}\right)\quad-\mathbf{D}\mathbf{n}''-c\mathbf{n}'-\mathbf{L}\mathbf{n}\geq\lambda\mathbf{n}\right\} .
\]
Also, there exists a generalized principal eigenfunction. We recall
from \lemref{Existence_exponential_eigenfunctions} that if there
exists a generalized principal eigenfunction, then there exists a
generalized principal eigenfunction of the form $\xi\mapsto\text{e}^{-\mu^{\star}\xi}\mathbf{m}$
with some constant $\mu^{\star}\geq0$ and $\mathbf{m}\in\mathsf{K}^{++}$.

Now, $\left(\mu^{\star},\mathbf{m}\right)\in[0,+\infty)\times\mathsf{K}^{++}$
satisfies
\[
-\left(\mu^{\star}\right)^{2}\mathbf{D}\mathbf{m}+c\mu^{\star}\mathbf{m}-\mathbf{L}\mathbf{m}=\lambda_{1}\left(-\mathbf{D}\frac{\text{d}^{2}}{\text{d}\xi^{2}}-c\frac{\text{d}}{\text{d}\xi}-\mathbf{L}\right)\mathbf{m},
\]
 that is
\[
-\left(\left(\mu^{\star}\right)^{2}\mathbf{D}+\mathbf{L}\right)\mathbf{m}=\left(\lambda_{1}\left(-\mathbf{D}\frac{\text{d}^{2}}{\text{d}\xi^{2}}-c\frac{\text{d}}{\text{d}\xi}-\mathbf{L}\right)-c\mu^{\star}\right)\mathbf{m},
\]
 or in other words
\[
\lambda_{1}\left(-\mathbf{D}\frac{\text{d}^{2}}{\text{d}\xi^{2}}-c\frac{\text{d}}{\text{d}\xi}-\mathbf{L}\right)=\kappa_{\mu^{\star}}+c\mu^{\star}\text{ and }\frac{\mathbf{m}}{\left|\mathbf{m}\right|}=\mathbf{n}_{\mu^{\star}}.
\]

Finally, the suitable test function to verify
\[
\lambda_{1}\left(-\mathbf{D}\frac{\text{d}^{2}}{\text{d}\xi^{2}}-c\frac{\text{d}}{\text{d}\xi}-\mathbf{L}\right)\geq\kappa_{\mu}+\mu c\text{ for all }\mu\geq0
\]
 is of course $\mathbf{v}_{\mu}:\xi\mapsto\text{e}^{-\mu\xi}\mathbf{n}_{\mu}$
itself, which satisfies precisely
\[
-\mathbf{D}\mathbf{v}_{\mu}''-c\mathbf{v}_{\mu}'-\mathbf{L}\mathbf{v}_{\mu}=\left(\kappa_{\mu}+\mu c\right)\mathbf{v}_{\mu}.
\]
\end{proof}
\begin{cor}
The quantity $c^{\star}$ is characterized by
\begin{align*}
c^{\star} & =\sup\left\{ c\geq0\ |\ \lambda_{1}\left(-\mathbf{D}\frac{\text{d}^{2}}{\text{d}\xi^{2}}-c\frac{\text{d}}{\text{d}\xi}-\mathbf{L}\right)<0\right\} \\
 & =\inf\left\{ c\geq0\ |\ \lambda_{1}\left(-\mathbf{D}\frac{\text{d}^{2}}{\text{d}\xi^{2}}-c\frac{\text{d}}{\text{d}\xi}-\mathbf{L}\right)>0\right\} .
\end{align*}
\end{cor}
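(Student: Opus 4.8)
The plan is to read the statement off directly from \lemref{Value_of_the_generalized_principal_eigenvalue_with_drift} together with \lemref{Definition_c_star}. Recall that \lemref{Value_of_the_generalized_principal_eigenvalue_with_drift} identifies, for every $c\geq0$,
\[
\lambda_{1}\left(-\mathbf{D}\frac{\text{d}^{2}}{\text{d}\xi^{2}}-c\frac{\text{d}}{\text{d}\xi}-\mathbf{L}\right)=\max_{\mu\geq0}\left(\kappa_{\mu}+\mu c\right),
\]
and that this maximum is negative if and only if $c<c^{\star}$. The only gap to fill is to pin down the sign of this maximum when $c=c^{\star}$ and when $c>c^{\star}$.

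First I would use that $c^{\star}=\min_{\mu>0}\left(-\kappa_{\mu}/\mu\right)$ with the minimum attained at $\mu_{c^{\star}}>0$ (\lemref{Definition_c_star}). This gives $\kappa_{\mu}+\mu c^{\star}\leq0$ for all $\mu>0$, with equality at $\mu=\mu_{c^{\star}}$; combined with $\kappa_{0}<0$ (which holds since $\lambda_{PF}\left(\mathbf{L}\right)>0$), it yields $\max_{\mu\geq0}\left(\kappa_{\mu}+\mu c^{\star}\right)=0$, hence $\lambda_{1}=0$ when $c=c^{\star}$. For $c>c^{\star}$, evaluating the affine map $\mu\mapsto\kappa_{\mu}+\mu c$ at $\mu=\mu_{c^{\star}}$ gives $\kappa_{\mu_{c^{\star}}}+\mu_{c^{\star}}c=\mu_{c^{\star}}\left(c-c^{\star}\right)>0$, so $\lambda_{1}>0$ there.

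Putting these together, $\lambda_{1}<0$ exactly for $c\in[0,c^{\star})$, $\lambda_{1}=0$ exactly for $c=c^{\star}$, and $\lambda_{1}>0$ exactly for $c\in\left(c^{\star},+\infty\right)$. Consequently $\left\{ c\geq0\ |\ \lambda_{1}<0\right\} =[0,c^{\star})$, whose supremum is $c^{\star}$ (finite and positive by \lemref{Definition_c_star}), while $\left\{ c\geq0\ |\ \lambda_{1}>0\right\} =\left(c^{\star},+\infty\right)$, whose infimum is $c^{\star}$; this is the announced double characterization.

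I do not expect any real obstacle here: the corollary is essentially a bookkeeping consequence of the two preceding lemmas. The only point requiring a little care is the borderline value $c=c^{\star}$, and it is handled precisely because the minimum in the definition of $c^{\star}$ is attained, so that the supporting affine functional $\mu\mapsto\kappa_{\mu}+\mu c^{\star}$ actually touches zero rather than staying strictly negative.
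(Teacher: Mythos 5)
Your proof is correct and takes essentially the same route the paper intends: the corollary is meant as a direct consequence of Lemma~\lemref{Value_of_the_generalized_principal_eigenvalue_with_drift} combined with Lemma~\lemref{Definition_c_star}, and you correctly supply the only nontrivial step — pinning down the sign of $\max_{\mu\geq0}(\kappa_{\mu}+\mu c)$ at and above $c^{\star}$, which the lemma leaves implicit. One small slip in wording: the map $\mu\mapsto\kappa_{\mu}+\mu c$ is not affine in $\mu$ (it is strictly convex, as established in the proof of Lemma~\lemref{Definition_c_star}); what you actually use is that evaluating it at the fixed $\mu=\mu_{c^{\star}}$ yields a lower bound for the maximum, and that $c\mapsto\kappa_{\mu_{c^{\star}}}+\mu_{c^{\star}}c$ is affine in $c$. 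The substance is unaffected.
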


\subsection{Qualitative properties of the traveling solutions}

Thanks to \lemref{Existence_exponential_eigenfunctions} and \corref{Existence_of_solution_of_the_linearized_eq_for_the_waves},
we are now in position to establish a few interesting properties that
have direct consequences but will also be used at the end of the construction
of the traveling waves.
\begin{lem}
\label{lem:Linearized_equation_at_the_edge_of_the_fronts} Let $c\in[0,+\infty)$
and $\mathbf{p}$ be a bounded nonnegative nonzero classical solution
of $\left(TW\left[c\right]\right)$. 

If $\left(\liminf\limits _{\xi\to+\infty}p_{i}\left(\xi\right)\right)_{i\in\left[N\right]}\in\partial\mathsf{K}$,
then $c\geq c^{\star}$.
\end{lem}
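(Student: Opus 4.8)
The plan is to extract from $\mathbf{p}$, by translating towards $+\infty$ and renormalizing, a nonnegative nonzero classical solution of the linearized equation $\left(TW_{0}\left[c\right]\right)$; then \corref{Existence_of_solution_of_the_linearized_eq_for_the_waves} forces $c\geq c^{\star}$ and the proof is complete.

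First I would observe that $\mathbf{p}$ is in fact positive on $\mathbb{R}$: applying \corref{Positivity_for_the_elliptic_problem_with_drift} on every bounded interval, the set $\left\{ \xi\in\mathbb{R}\ |\ \mathbf{p}\left(\xi\right)\in\partial\mathsf{K}\right\}$ is open and closed, hence empty since $\mathbf{p}$ is nonzero. Since $\left(\liminf_{\xi\to+\infty}p_{i}\left(\xi\right)\right)_{i\in\left[N\right]}\in\partial\mathsf{K}$, there are an index $j\in\left[N\right]$ and a sequence $\xi_{n}\to+\infty$ with $p_{j}\left(\xi_{n}\right)\to0$. The translates $\mathbf{p}\left(\cdot+\xi_{n}\right)$ are bounded and solve $\left(TW\left[c\right]\right)$ with coefficients bounded uniformly in $n$ (because $\mathbf{c}$ is bounded on $[0,\|\mathbf{p}\|_{\infty}]^{N}$), so by classical elliptic estimates \cite{Gilbarg_Trudin} and a diagonal extraction they converge in $\mathscr{C}_{loc}^{2}$ to a bounded nonnegative solution $\mathbf{p}_{\infty}$ of $\left(TW\left[c\right]\right)$ with $p_{\infty,j}\left(0\right)=0$; by \corref{Positivity_for_the_elliptic_problem_with_drift} again, $\mathbf{p}_{\infty}\equiv\mathbf{0}$. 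After relabelling the subsequence I may thus assume $\mathbf{p}\left(\cdot+\xi_{n}\right)\to\mathbf{0}$ in $\mathscr{C}_{loc}^{2}$; in particular $m_{n}:=\max_{i\in\left[N\right]}p_{i}\left(\xi_{n}\right)$ is positive (positivity of $\mathbf{p}$) and $m_{n}\to0$, and $\mathbf{c}\left[\mathbf{p}\left(\cdot+\xi_{n}\right)\right]\to\mathbf{c}\left(\mathbf{0}\right)=\mathbf{0}$ locally uniformly.

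Then I would renormalize by setting $\tilde{\mathbf{p}}_{n}:\xi\mapsto m_{n}^{-1}\mathbf{p}\left(\xi+\xi_{n}\right)$, which satisfies
\[
-\mathbf{D}\tilde{\mathbf{p}}_{n}''-c\tilde{\mathbf{p}}_{n}'=\left(\mathbf{L}-\text{diag}\left(\mathbf{c}\left[\mathbf{p}\left(\cdot+\xi_{n}\right)\right]\right)\right)\tilde{\mathbf{p}}_{n}\quad\text{in }\mathbb{R},
\]
a linear weakly and fully coupled system whose coefficients are bounded uniformly in $n$ and whose zeroth-order matrix is essentially nonnegative and irreducible. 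Because $\max_{i\in\left[N\right]}\tilde{p}_{n,i}\left(0\right)=1$, Arapostathis\textendash Gosh\textendash Marcus's Harnack inequality \cite{Araposthathis_} provides a locally uniform $\mathscr{L}^{\infty}$ bound for $\left(\tilde{\mathbf{p}}_{n}\right)_{n}$, whence, again by elliptic estimates and diagonal extraction, convergence in $\mathscr{C}_{loc}^{2}$ to some $\tilde{\mathbf{p}}_{\infty}$. Passing to the limit in the equation and using $\mathbf{c}\left[\mathbf{p}\left(\cdot+\xi_{n}\right)\right]\circ\tilde{\mathbf{p}}_{n}\to\mathbf{0}$ locally uniformly, $\tilde{\mathbf{p}}_{\infty}$ solves $\left(TW_{0}\left[c\right]\right)$; it is nonnegative, and it is nonzero because $\max_{i\in\left[N\right]}\tilde{p}_{\infty,i}\left(0\right)=\lim_{n}\max_{i\in\left[N\right]}\tilde{p}_{n,i}\left(0\right)=1$ (the maximum over the finite set $\left[N\right]$ passes to the limit). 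By \corref{Existence_of_solution_of_the_linearized_eq_for_the_waves}, the existence of such a solution forces $c\geq c^{\star}$.

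The step I expect to require the most care is the compactness of the renormalized family: one must check that the Harnack constant is uniform in $n$ (it is, since the ellipticity is fixed and the coefficients are bounded independently of $n$) and that the limit does not collapse to the trivial solution, which is exactly what the normalization $\max_{i}\tilde{p}_{n,i}\left(0\right)=1$ prevents. The remaining ingredients\textemdash the strong maximum principle through \corref{Positivity_for_the_elliptic_problem_with_drift}, Harnack, elliptic regularity, and \corref{Existence_of_solution_of_the_linearized_eq_for_the_waves}\textemdash are applied in a routine way.
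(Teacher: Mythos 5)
Your proof is correct and follows essentially the same route as the paper's: translate towards $+\infty$ along a sequence where some component of $\mathbf{p}$ vanishes, show the translates converge to $\mathbf{0}$ (using the positivity corollary/Harnack to upgrade one vanishing component to full vanishing), renormalize, extract a nonnegative nonzero solution of $\left(TW_{0}\left[c\right]\right)$, and invoke \corref{Existence_of_solution_of_the_linearized_eq_for_the_waves}. The only cosmetic differences are that you normalize by $\max_{i}p_{i}\left(\xi_{n}\right)$ rather than the euclidean norm $\left|\mathbf{p}\left(\xi_{n}\right)\right|$, and that you make the positivity of $\mathbf{p}$ explicit up front via an open--closed argument on $\left\{ \xi\ |\ \mathbf{p}\left(\xi\right)\in\partial\mathsf{K}\right\}$ where the paper uses it implicitly; neither change affects the substance.
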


\begin{rem*}
The following proof is analogous to that of Berestycki\textendash Nadin\textendash Perthame\textendash Ryzhik
\cite[Lemma 3.8]{Berestycki_Nadin_Perthame_Ryzhik} for the non-local
KPP equation.
\end{rem*}
\begin{proof}
Let $\left(\zeta_{n}\right)_{n\in\mathbb{N}}\in\mathbb{R}^{\mathbb{N}}$
such that, as $n\to+\infty$, $\zeta_{n}\to+\infty$ and at least
one component of $\left(\mathbf{p}\left(\zeta_{n}\right)\right)_{n\in\mathbb{N}}$
converges to $0$. Define
\[
\mathbf{p}_{n}:\xi\mapsto\mathbf{p}\left(\xi+\zeta_{n}\right)
\]
 and observe that $\mathbf{p}_{n}$ satisfies $\left(TW\left[c\right]\right)$
as well. By virtue of Arapostathis\textendash Gosh\textendash Marcus\textquoteright s
Harnack inequality \cite{Araposthathis_} applied to the linear operator
\[
\mathbf{D}\frac{\text{d}^{2}}{\text{d}\xi^{2}}+c\frac{\text{d}}{\text{d}\xi}+\left(\mathbf{L}-\text{diag}\left(\mathbf{c}\left[\mathbf{p}_{n}\right]\right)\right),
\]
 classical elliptic estimates (Gilbarg\textendash Trudinger \cite{Gilbarg_Trudin}),
$\left(\mathbf{p}_{n}\right)_{n\in\mathbb{N}}$ converges up to a
diagonal extraction in $\mathscr{C}_{loc}^{2}$ to $\mathbf{0}$.
This proves that there is no limit point of $\mathbf{p}$ at $+\infty$
in $\partial\mathsf{K}\backslash\left\{ \mathbf{0}\right\} $.

Next, define
\[
\tilde{\mathbf{p}}_{n}:\xi\mapsto\frac{\mathbf{p}\left(\xi+\zeta_{n}\right)}{\left|\mathbf{p}\left(\zeta_{n}\right)\right|}
\]
 and notice, again by Arapostathis\textendash Gosh\textendash Marcus\textquoteright s
Harnack inequality, that $\left(\tilde{\mathbf{p}}_{n}\right)_{n\in\mathbb{N}}$
is locally uniformly bounded. Since, for all $n\in\mathbb{N}$, $\tilde{\mathbf{p}}_{n}$
solves
\[
-\mathbf{D}\tilde{\mathbf{p}}_{n}''-c\tilde{\mathbf{p}}_{n}'=\mathbf{L}\tilde{\mathbf{p}}_{n}-\mathbf{c}\left[\mathbf{p}_{n}\right]\circ\tilde{\mathbf{p}}_{n},
\]
 with, thanks to the fact that $\mathbf{c}$ vanishes at $\mathbf{0}$
$\left(H_{3}\right)$, $\mathbf{c}\left[\mathbf{p}_{n}\right]\to\mathbf{0}$
locally uniformly, up to extraction $\left(\tilde{\mathbf{p}}_{n}\right)_{n\in\mathbb{N}}$
converges in $\mathscr{C}_{loc}^{2}$ to a nonnegative solution $\tilde{\mathbf{p}}$
of $\left(TW_{0}\left[c\right]\right)$. Since $\tilde{\mathbf{p}}_{n}\left(0\right)\in\mathsf{S}^{++}\left(\mathbf{0},1\right)$
for all $n\in\mathbb{N}$, $\tilde{\mathbf{p}}$ is nonnegative nonzero,
whence positive (\corref{Positivity_for_the_elliptic_problem_with_drift}). 

Now, from \corref{Existence_of_solution_of_the_linearized_eq_for_the_waves},
we deduce indeed that $c\geq c^{\star}$.
\end{proof}
This result implies the nonexistence half of \thmref{Traveling_waves}
\ref{enu:Existence_minimal_wave_speed}.
\begin{cor}
\label{cor:Nonexistence_for_c_smaller_than_c_star_and_positivity_at_the_back}
For all $c\in[0,c^{\star})$, there is no traveling wave solution
of $\left(E_{KPP}\right)$ with speed $c$. 
\end{cor}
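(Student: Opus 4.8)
The plan is to deduce this immediately from \lemref{Linearized_equation_at_the_edge_of_the_fronts}, so essentially all the work has already been done. First I would unwind the definition of a traveling wave solution: if $(\mathbf{p},c)$ is such a solution, then $\mathbf{u}:(t,x)\mapsto\mathbf{p}(x-ct)$ being a bounded positive classical solution of $(E_{KPP})$ is equivalent, by the preliminary observation opening this section, to $\mathbf{p}$ being a bounded positive classical solution of $(TW[c])$; moreover the definition requires $\lim_{\xi\to+\infty}\mathbf{p}(\xi)=\mathbf{0}$. In particular $\mathbf{p}$ is in particular nonnegative nonzero, and $\bigl(\liminf_{\xi\to+\infty}p_i(\xi)\bigr)_{i\in[N]}=\mathbf{0}$, which of course belongs to $\partial\mathsf{K}$.

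The second and final step is then to apply \lemref{Linearized_equation_at_the_edge_of_the_fronts} verbatim: its hypotheses ($\mathbf{p}$ a bounded nonnegative nonzero classical solution of $(TW[c])$ with $\liminf$ vector in $\partial\mathsf{K}$) are met, hence $c\geq c^\star$. Contraposing, for $c\in[0,c^\star)$ no traveling wave solution of $(E_{KPP})$ with speed $c$ can exist. There is no genuine obstacle in this corollary; the substantive content — rescaling $\mathbf{p}$ near $+\infty$, passing to the limit using that $\mathbf{c}$ vanishes at $\mathbf{0}$ $(H_3)$ together with the Harnack inequality and elliptic estimates, and invoking \corref{Existence_of_solution_of_the_linearized_eq_for_the_waves} to force $c\geq c^\star$ — is entirely contained in the proof of \lemref{Linearized_equation_at_the_edge_of_the_fronts}, and the corollary is a one-line deduction from it.
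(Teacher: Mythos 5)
Your proposal is correct and is exactly the argument the paper intends: the corollary is stated immediately after \lemref{Linearized_equation_at_the_edge_of_the_fronts} precisely as a one-line consequence of it, obtained by noting that a profile satisfies $\lim_{\xi\to+\infty}\mathbf{p}(\xi)=\mathbf{0}$, hence $\bigl(\liminf_{\xi\to+\infty}p_i(\xi)\bigr)_{i\in[N]}=\mathbf{0}\in\partial\mathsf{K}$, and contraposing. Nothing to add.
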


Now, with \propref{Absorbing_set}, $c\geq c^{\star}>0$ and the fact
that $\left(t,x\right)\mapsto\mathbf{p}\left(x-ct\right)$ solves
$\left(E_{KPP}\right)$, we can straightforwardly derive the uniform
upper bound \thmref{Traveling_waves} \enuref{Upper_bound_for_the_profiles},
which is interestingly independent of $c$.
\begin{cor}
\label{cor:Uniform_upper_bound_for_the_traveling_waves} All profiles
$\mathbf{p}$ satisfy
\[
\mathbf{p}\leq\mathbf{g}\left(0\right)\text{ in }\mathbb{R}.
\]
\end{cor}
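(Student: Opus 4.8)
The plan is to apply \propref{Absorbing_set} directly to the global-in-time solution of $\left(E_{KPP}\right)$ carried by the traveling wave itself, and then to use the translation structure of the profile.

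First I would unwind the definition of a traveling wave solution: the profile $\mathbf{p}$ lies in $\mathscr{C}^2\left(\mathbb{R},\mathbb{R}^N\right)$ and $\mathbf{u}\colon\left(t,x\right)\mapsto\mathbf{p}\left(x-ct\right)$ is a bounded positive classical solution of $\left(E_{KPP}\right)$; in particular its restriction to $\left(0,+\infty\right)\times\mathbb{R}$ is a bounded nonnegative classical solution whose initial trace $x\mapsto\mathbf{u}\left(0,x\right)=\mathbf{p}\left(x\right)$ belongs to $\mathscr{C}_{b}\left(\mathbb{R},\mathsf{K}\right)$ (indeed to $\mathscr{C}_{b}\left(\mathbb{R},\mathsf{K}^{++}\right)$). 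Hence \propref{Absorbing_set} applies and gives
\[
\left(\limsup_{t\to+\infty}\sup_{x\in\mathbb{R}}u_{i}\left(t,x\right)\right)_{i\in\left[N\right]}\leq\mathbf{g}\left(0\right).
\]

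Next I would note that for each fixed $t>0$ the map $x\mapsto x-ct$ is a bijection of $\mathbb{R}$, so $\sup_{x\in\mathbb{R}}u_{i}\left(t,x\right)=\sup_{\xi\in\mathbb{R}}p_{i}\left(\xi\right)$ is independent of $t$; therefore the preceding $\limsup$ equals exactly $\sup_{\xi\in\mathbb{R}}p_{i}\left(\xi\right)$, and we conclude $\sup_{\xi\in\mathbb{R}}p_{i}\left(\xi\right)\leq g_{i}\left(0\right)$ for all $i\in\left[N\right]$, i.e.\ $\mathbf{p}\leq\mathbf{g}\left(0\right)$ in $\mathbb{R}$.

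Since each step is an immediate invocation of \propref{Absorbing_set} together with the translation invariance of the profile, there is no real difficulty; the only point worth a line is checking that a traveling wave, a priori defined on all of $\mathbb{R}^{2}$, restricts to an admissible solution on $\left(0,+\infty\right)\times\mathbb{R}$ with globally bounded nonnegative initial data, which is immediate. (The fact $c\geq c^{\star}>0$ recalled before the statement is not actually needed for the bound itself; via \corref{Nonexistence_for_c_smaller_than_c_star_and_positivity_at_the_back} it merely records that profiles exist only for such speeds and stresses that $\mathbf{g}\left(0\right)$ is a bound uniform in $c$.)
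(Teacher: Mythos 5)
Your proof is correct and matches the paper's intended derivation: invoke \propref{Absorbing_set} for the solution $\left(t,x\right)\mapsto\mathbf{p}\left(x-ct\right)$ with initial data $\mathbf{p}$, and use that $\sup_{x}p_{i}\left(x-ct\right)=\sup_{\xi}p_{i}\left(\xi\right)$ is $t$-independent to pass the asymptotic bound $\mathbf{g}\left(0\right)$ back to the profile. Your parenthetical remark is also accurate: the conclusion uses only the translation structure, not the positivity of $c$, which the paper records merely because profiles only exist for $c\geq c^{\star}>0$.
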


Subsequently, using \propref{Persistence} and again $c\geq c^{\star}>0$
and the fact that $\left(t,x\right)\mapsto\mathbf{p}\left(x-ct\right)$
solves $\left(E_{KPP}\right)$, we get \thmref{Traveling_waves} \enuref{Persistence_at_the_back_of_the_front},
independent of $c$ as well.
\begin{cor}
\label{cor:Uniform_lower_bound_at_the_back_of_the_front} All profiles
$\mathbf{p}$ satisfy
\[
\left(\liminf_{\xi\to-\infty}p_{i}\left(\xi\right)\right)_{i\in\left[N\right]}\geq\nu\mathbf{1}_{N,1}.
\]
\end{cor}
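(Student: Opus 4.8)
The plan is to deduce the statement directly from Proposition~\ref{prop:Persistence} by reading the traveling profile in the moving frame as a solution of the evolution problem. Let $(\mathbf{p},c)$ be a traveling wave solution of $(E_{KPP})$. By Corollary~\ref{cor:Nonexistence_for_c_smaller_than_c_star_and_positivity_at_the_back} we necessarily have $c\geq c^{\star}>0$. By the very definition of a traveling wave solution, the function $\mathbf{u}:(t,x)\mapsto\mathbf{p}(x-ct)$ is a bounded positive classical solution of $(E_{KPP})$ set in $(0,+\infty)\times\mathbb{R}$; in particular it is nonnegative nonzero, so Proposition~\ref{prop:Persistence} applies to it.

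Applying that proposition with the bounded interval $I=\{0\}$ yields, for every $i\in[N]$,
\[
\nu\leq\liminf_{t\to+\infty}\inf_{x\in\{0\}}u_i(t,x)=\liminf_{t\to+\infty}p_i(-ct).
\]
Since $c>0$, the map $t\mapsto-ct$ is a decreasing bijection from $(0,+\infty)$ onto $(-\infty,0)$, so the behaviour of $p_i$ along the sequence of arguments $-ct$ as $t\to+\infty$ captures its behaviour at $-\infty$; concretely $\liminf_{t\to+\infty}p_i(-ct)=\liminf_{\xi\to-\infty}p_i(\xi)$. Combining the two displays gives $\left(\liminf_{\xi\to-\infty}p_i(\xi)\right)_{i\in[N]}\geq\nu\mathbf{1}_{N,1}$, which is the claim.

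There is essentially no obstacle here; the single point that deserves a word is the translation between the time-$\liminf$ of a spatial infimum appearing in Proposition~\ref{prop:Persistence} and the spatial $\liminf$ of the profile appearing in the conclusion, which is exactly the observation that in the moving frame ``$t\to+\infty$'' corresponds to ``$\xi\to-\infty$'' (and one may as well evaluate at $x=0$; using any fixed bounded $I$ works equally well, since an infimum over a sliding window of bounded width is dominated by the value at its center). It is worth stressing that the constant $\nu$ is the universal one produced by Proposition~\ref{prop:Persistence}, which is why the lower bound holds uniformly over all profiles and is independent of the speed $c$.
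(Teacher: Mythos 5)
Your proof is correct and is essentially the same as the paper's: the paper also deduces the corollary by applying \propref{Persistence} to the bounded positive solution $(t,x)\mapsto\mathbf{p}(x-ct)$ and using $c\geq c^{\star}>0$ (from \corref{Nonexistence_for_c_smaller_than_c_star_and_positivity_at_the_back}) so that the moving-frame limit $t\to+\infty$ becomes the profile limit $\xi\to-\infty$.
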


Now, we establish \thmref{Traveling_waves} \ref{enu:Monotonicity_at_the_edge_of_the_front}.
Its proof is actually mostly a repetition of that of \lemref{Existence_exponential_eigenfunctions}.
\begin{prop}
\label{prop:Monotonicity_at_the_edge_of_the_fronts} Let $\left(\mathbf{p},c\right)$
be a traveling wave solution of $\left(E_{KPP}\right)$.

Then there exists $\overline{\xi}\in\mathbb{R}$ such that $\mathbf{p}$
is component-wise decreasing in $[\overline{\xi},+\infty)$.
\end{prop}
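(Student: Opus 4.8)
The plan is to reduce the statement to showing $\limsup_{\xi\to+\infty}\frac{p_{i}'\left(\xi\right)}{p_{i}\left(\xi\right)}<0$ for every $i\in\left[N\right]$: since the profile is positive, $p_{i}>0$ everywhere, so this gives $p_{i}'<0$ on some half-line $[\overline{\xi}_{i},+\infty)$, and one concludes with $\overline{\xi}=\max_{i\in\left[N\right]}\overline{\xi}_{i}$. First I would record that $\mathbf{p}$ is bounded (e.g. by \corref{Uniform_upper_bound_for_the_traveling_waves}), hence a positive solution of the linear, weakly and fully coupled system $-\mathbf{D}\mathbf{p}''-c\mathbf{p}'-\left(\mathbf{L}-\text{diag}\left(\mathbf{c}\left[\mathbf{p}\right]\right)\right)\mathbf{p}=\mathbf{0}$ with coefficients bounded uniformly on $\mathbb{R}$; exactly as in the first lines of the proof of \lemref{Existence_exponential_eigenfunctions}, Arapostathis\textendash Gosh\textendash Marcus's Harnack inequality combined with interior elliptic estimates bounds $\frac{\left|p_{i}'\right|}{p_{i}}$ uniformly on $\mathbb{R}$. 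Therefore $\mathbf{v}=\left(\frac{p_{i}'}{p_{i}}\right)_{i\in\left[N\right]}$ is globally bounded and $\overline{\Lambda}:=\max_{i\in\left[N\right]}\limsup_{\xi\to+\infty}v_{i}\left(\xi\right)$ is a well-defined real number; it remains to prove $\overline{\Lambda}<0$.

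The core of the argument is a blow-up at $+\infty$ reproducing almost verbatim the proof of \lemref{Existence_exponential_eigenfunctions}. Pick $\overline{i}\in\left[N\right]$ and $\xi_{n}\to+\infty$ with $v_{\overline{i}}\left(\xi_{n}\right)\to\overline{\Lambda}$, and set $\tilde{\mathbf{p}}_{n}:\xi\mapsto\frac{\mathbf{p}\left(\xi+\xi_{n}\right)}{p_{\overline{i}}\left(\xi_{n}\right)}$. Since $\mathbf{p}\to\mathbf{0}$ at $+\infty$ and $\mathbf{c}\left(\mathbf{0}\right)=\mathbf{0}$ $\left(H_{3}\right)$, the zeroth-order coefficient $\text{diag}\left(\mathbf{c}\left[\mathbf{p}\left(\cdot+\xi_{n}\right)\right]\right)$ tends to $\mathbf{0}$ locally uniformly; the Harnack inequality (providing a locally uniform bound, $\tilde{p}_{n,\overline{i}}\left(0\right)=1$ being fixed), elliptic estimates and a diagonal extraction then yield a $\mathscr{C}_{loc}^{2}$-limit $\tilde{\mathbf{p}}_{\infty}$ which is a nonnegative nonzero (hence positive, by \corref{Positivity_for_the_elliptic_problem_with_drift}) solution of $\left(TW_{0}\left[c\right]\right)$. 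Then, just as in \lemref{Existence_exponential_eigenfunctions}, the function $\mathbf{w}_{n}=\overline{\Lambda}\tilde{\mathbf{p}}_{n}-\tilde{\mathbf{p}}_{n}'=\tilde{\mathbf{p}}_{n}\circ\left(\overline{\Lambda}\mathbf{1}_{N,1}-\mathbf{v}\left(\cdot+\xi_{n}\right)\right)$ has, thanks to $\limsup_{\xi\to+\infty}v_{j}\left(\xi\right)\leq\overline{\Lambda}$ for each $j$, a nonnegative limit $\mathbf{w}_{\infty}$ which solves $\left(TW_{0}\left[c\right]\right)$ (derivatives of solutions of this constant-coefficient linear system being solutions) and satisfies $w_{\infty,\overline{i}}\left(0\right)=0$; the strong maximum principle forces $\mathbf{w}_{\infty}\equiv\mathbf{0}$, hence $\tilde{\mathbf{p}}_{\infty}$ is the map $\xi\mapsto\text{e}^{\overline{\Lambda}\xi}\mathbf{n}$ for some $\mathbf{n}\in\mathsf{K}^{++}$.

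The only step with no counterpart in \lemref{Existence_exponential_eigenfunctions} is the concluding sign chase, and this is where the standing hypothesis enters. Substituting $\xi\mapsto\text{e}^{\overline{\Lambda}\xi}\mathbf{n}$ into $\left(TW_{0}\left[c\right]\right)$ yields $\left(\overline{\Lambda}^{2}\mathbf{D}+\mathbf{L}\right)\mathbf{n}=-c\overline{\Lambda}\mathbf{n}$ with $\mathbf{n}\gg\mathbf{0}$, so by the Perron\textendash Frobenius theorem $-c\overline{\Lambda}=\lambda_{PF}\left(\overline{\Lambda}^{2}\mathbf{D}+\mathbf{L}\right)\geq\lambda_{PF}\left(\mathbf{L}\right)>0$, the middle inequality being the monotonicity of $\lambda_{PF}$ under addition of the nonnegative diagonal matrix $\overline{\Lambda}^{2}\mathbf{D}$ and the last one the assumption $\lambda_{PF}\left(\mathbf{L}\right)>0$. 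Hence $c\overline{\Lambda}<0$, and since $c\geq0$ this forces $\overline{\Lambda}<0$, which is exactly what was needed. I do not expect a serious obstacle: the blow-up limit is the one already carried out in this section, and the only delicate points are the uniformity of the Harnack constants along the translations $\xi_{n}\to+\infty$ (which holds because the linearized coefficients are bounded on all of $\mathbb{R}$ independently of the translation) and the identification of $\tilde{\mathbf{p}}_{\infty}$ with an exponential via \corref{Positivity_for_the_elliptic_problem_with_drift}.
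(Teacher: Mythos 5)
Your proposal is correct and follows the same overall strategy as the paper: bound $\mathbf{v}=\left(p_{i}'/p_{i}\right)_{i}$ globally, blow up along a sequence $\xi_{n}\to+\infty$ realizing $\overline{\Lambda}=\max_{i}\limsup_{\xi\to+\infty}v_{i}\left(\xi\right)$, and identify the rescaled limit with an exact exponential solution of $\left(TW_{0}\left[c\right]\right)$. The one genuine difference is the sign chase at the end. The paper at that point invokes \lemref{Definition_c_star}: the exponential rate $-\overline{\Lambda}$ must be one of the positive roots $\mu_{1,c}$, $\mu_{2,c}$, or $\mu_{c^{\star}}$ of the dispersion relation $-\kappa_{\mu}/\mu=c$, and the positivity of those roots (together with the implicit fact that $c\geq c^{\star}$, guaranteed by \corref{Nonexistence_for_c_smaller_than_c_star_and_positivity_at_the_back}) yields $\overline{\Lambda}<0$. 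You instead substitute $\xi\mapsto\text{e}^{\overline{\Lambda}\xi}\mathbf{n}$ directly into $\left(TW_{0}\left[c\right]\right)$, read off $-c\overline{\Lambda}=\lambda_{PF}\left(\overline{\Lambda}^{2}\mathbf{D}+\mathbf{L}\right)\geq\lambda_{PF}\left(\mathbf{L}\right)>0$ by Perron\textendash Frobenius monotonicity, and conclude. This is slightly more self-contained: it does not require the full structure of the dispersion relation nor the prior knowledge that $c\geq c^{\star}$, and it even recovers $c>0$ as a by-product (the case $c=0$ being directly inconsistent with $-c\overline{\Lambda}>0$). Both versions deliver the same conclusion; yours shortens the chain of dependencies at this particular step.
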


\begin{proof}
Let $\mathbf{v}=\left(\frac{p_{i}'}{p_{i}}\right)_{i\in\left[N\right]}$.
By virtue of Arapostathis\textendash Gosh\textendash Marcus\textquoteright s
Harnack inequality \cite{Araposthathis_}, classical elliptic estimates
(Gilbarg\textendash Trudinger \cite{Gilbarg_Trudin}) and invariance
by translation of $\left(TW\left[c\right]\right)$, $\mathbf{v}$
is globally bounded. Define for all $i\in\left[N\right]$
\[
\Lambda_{i}=\limsup_{\xi\to+\infty}v_{i}\left(\xi\right).
\]

Let $\overline{\Lambda}=\max\limits _{i\in\left[N\right]}\Lambda_{i}$,
so that
\[
\left(\limsup_{\xi\to+\infty}v_{i}\left(\xi\right)\right)_{i\in\left[N\right]}\leq\overline{\Lambda}\mathbf{1}_{N,1}.
\]

Let $\left(\xi_{n}\right)_{n\in\mathbb{N}}\in\mathbb{R}^{\mathbb{N}}$
such that $\xi_{n}\to+\infty$ and such that there exists $\overline{i}\in\left[N\right]$
such that
\[
v_{\overline{i}}\left(\xi_{n}\right)\to\overline{\Lambda}\text{ as }n\to+\infty.
\]

Let
\[
\hat{\mathbf{p}}_{n}:\xi\mapsto\frac{\mathbf{p}\left(\xi+\xi_{n}\right)}{p_{\overline{i}}\left(\xi_{n}\right)}\text{ for all }n\in\mathbb{N}.
\]
 and notice, again by Arapostathis\textendash Gosh\textendash Marcus\textquoteright s
Harnack inequality, that $\left(\hat{\mathbf{p}}_{n}\right)_{n\in\mathbb{N}}$
is locally uniformly bounded. Since, for all $n\in\mathbb{N}$, $\hat{\mathbf{p}}_{n}$
solves
\[
-\mathbf{D}\hat{\mathbf{p}}_{n}''-c\hat{\mathbf{p}}_{n}'=\mathbf{L}\hat{\mathbf{p}}_{n}-\mathbf{c}\left[p_{\overline{i}}\left(\xi_{n}\right)\hat{\mathbf{p}}_{n}\right]\circ\hat{\mathbf{p}}_{n},
\]
 and, thanks to the fact that $\mathbf{c}$ vanishes at $\mathbf{0}$
$\left(H_{3}\right)$ and the asymptotic behavior of $\mathbf{p}$
at $+\infty$, $\mathbf{c}\left[p_{\overline{i}}\left(\xi_{n}\right)\hat{\mathbf{p}}_{n}\right]$
converges locally uniformly to $\mathbf{0}$ as $n\to+\infty$, up
to a diagonal extraction process, $\left(\hat{\mathbf{p}}_{n}\right)_{n\in\mathbb{N}}$
converges in $\mathscr{C}_{loc}^{2}$ to a nonnegative solution $\hat{\mathbf{p}}_{\infty}$
of $\left(TW_{0}\left[c\right]\right)$. 

Now we repeat the second part of the proof of \lemref{Existence_exponential_eigenfunctions}
and we deduce in the end from \lemref{Definition_c_star} that $\hat{\mathbf{p}}_{\infty}$
has exactly the form
\[
\xi\mapsto A\text{e}^{-\mu_{c}\xi}\mathbf{n}_{\mu_{c}},
\]
with $\mu_{c}\in\left\{ \mu_{1,c},\mu_{2,c}\right\} $ if $c>c^{\star}$,
$\mu_{c}=\mu_{c^{\star}}$ if $c=c^{\star}$, $A>0$ and, most importantly,
with $\mu_{c}=-\overline{\Lambda}$. 

Thus $\overline{\Lambda}<0$. This implies that there exists $\overline{\xi}\in\mathbb{R}$
such that, for all $\xi\geq\overline{\xi}$, 
\[
\mathbf{v}\left(\xi\right)\leq-\frac{\left|\overline{\Lambda}\right|}{2}\mathbf{1}_{N,1},
\]
whence, by positivity of $\mathbf{p}$,
\[
\mathbf{p}'\left(\xi\right)\leq-\frac{\left|\overline{\Lambda}\right|}{2}\mathbf{p}\left(\xi\right).
\]
The right-hand side being negative, $\mathbf{p}$ is component-wise
decreasing indeed.
\end{proof}
\begin{lem}
\label{lem:Null_liminf_enforces_null_lim} Let $c\in[0,+\infty)$
and $\mathbf{p}$ be a bounded nonnegative nonzero classical solution
of $\left(TW\left[c\right]\right)$. 

If $\left(\liminf\limits _{\xi\to+\infty}p_{i}\left(\xi\right)\right)_{i\in\left[N\right]}\in\partial\mathsf{K}$,
then $\lim\limits _{\xi\to+\infty}\mathbf{p}\left(\xi\right)=\mathbf{0}$.
\end{lem}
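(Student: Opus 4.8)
The plan is to prove the apparently stronger statement
\[
\bar{\Lambda}:=\max_{i\in[N]}\limsup_{\xi\to+\infty}\frac{p_i'(\xi)}{p_i(\xi)}<0 ,
\]
which is enough: $\mathbf{p}$ is positive by \corref{Positivity_for_the_elliptic_problem_with_drift}, and, exactly as at the beginning of the proof of \propref{Monotonicity_at_the_edge_of_the_fronts} (Arapostathis\textendash Gosh\textendash Marcus's Harnack inequality, classical elliptic estimates and translation invariance of $\left(TW\left[c\right]\right)$), the vector $\left(p_i'/p_i\right)_{i\in[N]}$ is globally bounded. Hence, once $\bar{\Lambda}<0$, fixing $\varepsilon\in\left(0,-\bar{\Lambda}/2\right)$ yields some $\Xi\in\mathbb{R}$ with $p_i'\le\tfrac{\bar{\Lambda}}{2}p_i$ on $[\Xi,+\infty)$ for every $i$, whence $p_i(\xi)\le p_i(\Xi)\,\mathrm{e}^{\frac{\bar{\Lambda}}{2}(\xi-\Xi)}\to0$, that is $\mathbf{p}(\xi)\to\mathbf{0}$.

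So I would argue by contradiction, assuming $\bar{\Lambda}\ge0$. By \lemref{Linearized_equation_at_the_edge} (and the first step of its proof), $c\ge c^\star$ and $\mathbf{p}$ has no limit point at $+\infty$ in $\partial\mathsf{K}\backslash\{\mathbf{0}\}$, while the hypothesis together with the Harnack inequality forces $\mathbf{0}$ to be a limit point of $\mathbf{p}$ at $+\infty$. Pick $\left(\xi_n\right)_n$ with $\xi_n\to+\infty$ and $\bar{i}\in[N]$ with $p_{\bar{i}}'(\xi_n)/p_{\bar{i}}(\xi_n)\to\bar{\Lambda}$. The favourable case is when, up to extraction, $p_{\bar{i}}(\xi_n)\to0$. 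Then $\mathbf{p}(\cdot+\xi_n)\to\mathbf{0}$ in $\mathscr{C}_{loc}^{2}$ just as in \lemref{Linearized_equation_at_the_edge}; the rescalings $\hat{\mathbf{p}}_n=\mathbf{p}(\cdot+\xi_n)/p_{\bar{i}}(\xi_n)$, locally uniformly bounded by Harnack and solving $-\mathbf{D}\hat{\mathbf{p}}_n''-c\hat{\mathbf{p}}_n'=\mathbf{L}\hat{\mathbf{p}}_n-\mathbf{c}[\mathbf{p}(\cdot+\xi_n)]\circ\hat{\mathbf{p}}_n$ with $\mathbf{c}[\mathbf{p}(\cdot+\xi_n)]\to\mathbf{0}$ locally uniformly by $(H_3)$, converge up to extraction in $\mathscr{C}_{loc}^{2}$ to a nonnegative nonzero, hence positive (\corref{Positivity_for_the_elliptic_problem_with_drift}), solution $\hat{\mathbf{p}}_\infty$ of $\left(TW_0\left[c\right]\right)$. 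Since $\tfrac{\hat{p}_{\infty,i}'}{\hat{p}_{\infty,i}}(\xi)=\lim_n\tfrac{p_i'(\xi+\xi_n)}{p_i(\xi+\xi_n)}\le\bar{\Lambda}$ for all $\xi$ and $i$, with equality at $(\bar{i},0)$, the function $\bar{\Lambda}\hat{\mathbf{p}}_\infty-\hat{\mathbf{p}}_\infty'$ is a nonnegative solution of $\left(TW_0\left[c\right]\right)$ whose $\bar{i}$-th component vanishes at $0$, hence vanishes identically by Harnack; consequently $\hat{\mathbf{p}}_\infty(\xi)=\mathrm{e}^{\bar{\Lambda}\xi}\hat{\mathbf{p}}_\infty(0)$ with $\hat{\mathbf{p}}_\infty(0)\gg\mathbf{0}$ is a positive exponential solution of $\left(TW_0\left[c\right]\right)$ with exponent $-\bar{\Lambda}\le0$, which contradicts \lemref{Definition_c_star}, all admissible exponents being positive. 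So $\bar{\Lambda}<0$ in this case.

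It remains to handle the case where \emph{every} extremal sequence $\left(\xi_n\right)$ satisfies $\liminf_n\left|\mathbf{p}(\xi_n)\right|>0$, and this is where I expect the main difficulty to lie. Here the translation limit $\mathbf{P}_\star=\lim_n\mathbf{p}(\cdot+\xi_n)$ (extracted using $\mathbf{p}\le\mathbf{g}(0)$, Harnack and elliptic estimates) is a \emph{bounded positive} classical solution of $\left(TW\left[c\right]\right)$ — of the full, non-cooperative equation, not of the linearised one — which still satisfies $P_{\star,i}'/P_{\star,i}\le\bar{\Lambda}$ everywhere with equality at $(\bar{i},0)$, and all of whose values lie in the $\omega$-limit set of $\mathbf{p}$ at $+\infty$ (a connected set containing $\mathbf{0}$ and disjoint from $\partial\mathsf{K}\backslash\{\mathbf{0}\}$). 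The plan is to run the same dichotomy on $\mathbf{P}_\star$, and then to iterate, obtaining a non-increasing sequence of thresholds $\bar{\Lambda}\ge\bar{\Lambda}^{(1)}\ge\cdots$: either some $\bar{\Lambda}^{(k)}$ becomes negative, in which case the corresponding profile decays at $+\infty$ and is therefore a traveling wave carrying a logarithmic derivative with a nonnegative interior value, which has to be excluded using $\liminf_{-\infty}\ge\nu\mathbf{1}_{N,1}$ (\corref{Uniform_lower_bound_at_the_back_of_the_front}); or the iteration stabilises and a further translation limit has all logarithmic derivatives equal to a constant $\bar{\Lambda}^{(\infty)}$, hence is of the form $\mathrm{e}^{\bar{\Lambda}^{(\infty)}\xi}\mathbf{n}$ with $\mathbf{n}\gg\mathbf{0}$, forcing $\bar{\Lambda}^{(\infty)}\le0$ by boundedness, with the borderline value $\bar{\Lambda}^{(\infty)}=0$ to be ruled out separately. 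The delicate points — which I expect to be the real work — are verifying that this iteration terminates and, above all, that the lack of any sign control on $D\mathbf{c}$ does not obstruct the Harnack and maximum-principle arguments applied to the limiting profiles.
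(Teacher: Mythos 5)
Your favourable case is handled correctly and is close in spirit to the paper's argument, but the unfavourable case is not a technicality you can push through with the sketched iteration: it is the crux, and as written your proof has a genuine gap there. The global quantity
\[
\overline{\Lambda}=\max_{i\in\left[N\right]}\limsup_{\xi\to+\infty}\frac{p_{i}'\left(\xi\right)}{p_{i}\left(\xi\right)}
\]
can a priori be realized along a sequence on which $\mathbf{p}$ stays bounded away from $\mathbf{0}$. The translation limit $\mathbf{P}_{\star}$ you obtain is then a bounded positive solution of the full nonlinear equation $\left(TW\left[c\right]\right)$, not of $\left(TW_{0}\left[c\right]\right)$; the algebraic rigidity (nonnegative solutions of the linear cooperative system with a vanishing component are identically zero, hence the limit is a pure exponential) is no longer available, and there is no reason for $\mathbf{P}_{\star}$ to inherit the hypothesis $\left(\liminf_{+\infty}P_{\star,i}\right)_{i}\in\partial\mathsf{K}$, so the lemma does not propagate to it. Concretely, $\mathbf{P}_{\star}$ could be a positive constant steady state, in which case its log-derivatives are identically $0$ and nothing improves from one iterate to the next; your sequence $\overline{\Lambda}\geq\overline{\Lambda}^{\left(1\right)}\geq\cdots$ has no mechanism to decrease strictly, let alone to become negative. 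In fact the reformulation $\overline{\Lambda}<0$ is essentially equivalent to the conclusion one is trying to reach (it is only proved in \propref{Monotonicity_at_the_edge_of_the_fronts} \emph{after} one already knows $\lim_{+\infty}\mathbf{p}=\mathbf{0}$), so attacking it head-on is close to circular.

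The paper sidesteps the dichotomy entirely by never taking a global limsup. Starting from a sequence $\left(\zeta_{n}\right)$ with $\zeta_{n}\to+\infty$ along which some component of $\mathbf{p}$ tends to $0$ (such a sequence exists by hypothesis), Arapostathis--Gosh--Marcus's Harnack inequality forces $\mathbf{p}\left(\cdot+\zeta_{n}\right)\to\mathbf{0}$ in $\mathscr{C}_{loc}^{2}$; the quantity
\[
\Lambda_{i}=\limsup_{n\to+\infty}\max_{\left[-1,1\right]}\frac{p_{i}'\left(\cdot+\zeta_{n}\right)}{p_{i}\left(\cdot+\zeta_{n}\right)}
\]
is then taken \emph{only over these translates}, so the extremizing sequence automatically sits in your ``favourable'' regime, the rescaled limits solve $\left(TW_{0}\left[c\right]\right)$, and your exponential-solution argument applies to give $\overline{\Lambda}=-\mu_{c}<0$. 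This yields not global but \emph{local} monotonicity: for $n$ large, $\mathbf{p}\left(\cdot+\zeta_{n}\right)$ is component-wise decreasing on $\left[-1,1\right]$. The contradiction is then reached differently from what you attempt: if some component had a positive $\limsup$ at $+\infty$ while its $\liminf$ is $0$, then by $\mathscr{C}^{1}$ regularity there would exist local minima of that component tending to $0$ along a sequence $\zeta_{n}'\to+\infty$; but those $\zeta_{n}'$ form a sequence along which $\mathbf{p}$ decays, so the local-monotonicity conclusion applies to them and rules out the existence of a local minimum at the center. In short: the idea you need is to run the log-derivative analysis only along decaying sequences (where it is clean) and to conclude by contradicting local monotonicity at putative small local minima, rather than trying to control the global limsup of the logarithmic derivative from the outset.
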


\begin{proof}
Let $\left(\zeta_{n}\right)_{n\in\mathbb{N}}\in\mathbb{R}^{\mathbb{N}}$
such that, as $n\to+\infty$, $\zeta_{n}\to+\infty$ and at least
one component of $\left(\mathbf{p}\left(\zeta_{n}\right)\right)_{n\in\mathbb{N}}$
converges to $0$. The proof of \lemref{Linearized_equation_at_the_edge_of_the_fronts}
shows that $\left(\mathbf{p}_{n}\right)_{n\in\mathbb{N}}$, defined
by $\mathbf{p}_{n}:\xi\mapsto\mathbf{p}\left(\xi+\zeta_{n}\right)$,
converges up to extraction in $\mathscr{C}_{loc}^{2}$ to $\mathbf{0}$.

Now, defining
\[
\mathbf{v}_{n}:\xi\mapsto\left(\frac{p_{n,i}'\left(\xi\right)}{p_{n,i}\left(\xi\right)}\right)_{i\in\left[N\right]},
\]
\[
\Lambda_{i}=\limsup_{n\to+\infty}\max_{\left[-1,1\right]}v_{n,i},
\]
\[
\overline{\Lambda}=\max_{i\in\left[N\right]}\Lambda_{i},
\]
\[
\overline{i}\in\left[N\right]\text{ such that }\Lambda_{\overline{i}}=\overline{\Lambda},
\]
and $\left(n_{m}\right)_{m\in\mathbb{N}}\in\mathbb{N}^{\mathbb{N}}$
an increasing sequence such that $v_{n_{m},\overline{i}}\left(0\right)\to\overline{\Lambda}$
as $m\to+\infty$, we can repeat once more the argument of the proof
of \lemref{Existence_exponential_eigenfunctions} and obtain
\[
\overline{\Lambda}\hat{\mathbf{p}}_{\infty}=\hat{\mathbf{p}}_{\infty}'\text{ in }\left(-1,1\right)
\]
 (notice that, contrarily to the proof of \lemref{Existence_exponential_eigenfunctions}
where this equality was proved in $\mathbb{R}$, here it only holds
locally). This brings forth $\overline{\Lambda}=-\mu_{c}<0$, as in
the proof of \propref{Monotonicity_at_the_edge_of_the_fronts}, whence
$\mathbf{p}_{n}$ is component-wise decreasing in $\left[-1,1\right]$
provided $n$ is large enough.

Now, assuming by contradiction
\[
\left(\limsup_{\xi\to+\infty}p_{i}\left(\xi\right)\right)_{i\in\left[N\right]}\in\mathsf{K}^{+},
\]
that is
\[
\left(\limsup_{\xi\to+\infty}p_{i}\left(\xi\right)\right)_{i\in\left[N\right]}\in\mathsf{K}^{++},
\]
we deduce from the $\mathscr{C}^{1}$ regularity of $\mathbf{p}$
that, for any $i\in\left[N\right]$, there exists a sequence $\left(\zeta_{n}'\right)_{n\in\mathbb{N}}\in\mathbb{R}^{\mathbb{N}}$
such that:
\begin{itemize}
\item $\zeta_{n}'\to+\infty$ as $n\to+\infty$, 
\item $p_{i}\left(\zeta_{n}'\right)$ is a local minimum of $p_{i}$,
\item $p_{i}\left(\zeta_{n}'\right)\to0$ as $n\to+\infty$. 
\end{itemize}
Since this directly contradicts the preceding argument, we get indeed
\[
\left(\limsup_{\xi\to+\infty}p_{i}\left(\xi\right)\right)_{i\in\left[N\right]}=\mathbf{0}=\left(\liminf\limits _{\xi\to+\infty}p_{i}\left(\xi\right)\right)_{i\in\left[N\right]}.
\]
\end{proof}
\begin{lem}
\label{lem:Uniform_lower_bound_for_positive_infima} Let $c\in[0,+\infty)$.
There exists $\eta_{c}>0$ such that, for all bounded nonnegative
classical solutions $\mathbf{p}$ of $\left(TW\left[c\right]\right)$,
exactly one of the following properties holds:
\begin{enumerate}
\item $\lim\limits _{\xi\to+\infty}\mathbf{p}\left(\xi\right)=\mathbf{0}$;
\item $\left(\inf\limits _{\left(0,+\infty\right)}p_{i}\right)_{i\in\left[N\right]}\geq\eta_{c}\mathbf{1}_{N,1}$.
\end{enumerate}
\end{lem}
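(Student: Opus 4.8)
The plan is to fix $c\in[0,+\infty)$, record the easy reductions, settle $c=0$ at once, and attack $c>0$ by a compactness argument whose heart is a bound, \emph{uniform over all solutions}, on the length of the interval on which a solution can be small. First, since $\lim_{\xi\to+\infty}\mathbf{p}(\xi)=\mathbf{0}$ forces $\inf_{(0,+\infty)}p_i=0$, the two alternatives are mutually exclusive as soon as $\eta_c>0$, so it suffices to produce $\eta_c>0$ such that every bounded nonnegative solution $\mathbf{p}$ of $(TW[c])$ not in case~(1) lies in case~(2). For such $\mathbf{p}$, the contrapositive of \lemref{Null_liminf_enforces_null_lim} gives $(\liminf_{\xi\to+\infty}p_i)_i\in\mathsf{K}^{++}$; hence $\mathbf{p}$ is nonnegative nonzero, so positive on $\mathbb{R}$ by \corref{Positivity_for_the_elliptic_problem_with_drift}, and applying \thmref{Absorbing_set} to $(t,x)\mapsto\mathbf{p}(x-ct)$ (whose spatial supremum does not depend on $t$) gives $\mathbf{p}\le\mathbf{g}(0)$. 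If $c=0$, then $(TW[0])$ is $(S_{KPP})$ and \propref{Persistence} forces $\mathbf{p}\ge\nu\mathbf{1}_{N,1}$, so $\eta_0=\nu$ works; from now on $c>0$.

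Assume, for contradiction, that no $\eta_c>0$ works: for each $k$ there are a bounded nonnegative solution $\mathbf{p}^{(k)}$ of $(TW[c])$ not in case~(1) and $\xi_k\in(0,+\infty)$ with $\min_i p^{(k)}_i(\xi_k)\to0$; after extraction the minimising index is a fixed $i_0$. Set $\mathbf{q}^{(k)}:=\mathbf{p}^{(k)}(\cdot+\xi_k)$: it solves $(TW[c])$, is positive, satisfies $\mathbf{q}^{(k)}\le\mathbf{g}(0)$, is not in case~(1), and $q^{(k)}_{i_0}(0)\to0$; applying \propref{Persistence} to $(t,x)\mapsto\mathbf{q}^{(k)}(x-ct)$ and letting $t\to+\infty$ (so $x-ct\to-\infty$) gives $\liminf_{\xi\to-\infty}q^{(k)}_i\ge\nu$. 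Put $\alpha:=\min(\alpha_{1/2},\nu/2)\in(0,\alpha_{1/2}]$. By Arapostathis--Gosh--Marcus's Harnack inequality \cite{Araposthathis_} applied to the weakly and fully coupled operator $\mathbf{D}\frac{\text{d}^{2}}{\text{d}\xi^{2}}+c\frac{\text{d}}{\text{d}\xi}+(\mathbf{L}-\text{diag}\,\mathbf{c}[\mathbf{q}^{(k)}])$ (coefficients bounded uniformly in $k$ thanks to $\mathbf{q}^{(k)}\le\mathbf{g}(0)$), for every $R>0$ one has $\max_{[-R,R]}q^{(k)}_i\le C_R\,q^{(k)}_{i_0}(0)\to0$; hence $\mathbf{q}^{(k)}\to\mathbf{0}$ uniformly on every compact set.

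Now the crux: bounding, uniformly in $k$, the width of the interval where $\mathbf{q}^{(k)}$ is small. Let $U_k:=\{\xi\ |\ \max_i q^{(k)}_i(\xi)<\alpha\}$ and let $(A_k,B_k)$ be the connected component of $U_k$ containing $0$ (which belongs to $U_k$ for $k$ large), so $\max_i q^{(k)}_i=\alpha$ at $A_k$ and $B_k$. Since $\liminf_{\xi\to-\infty}q^{(k)}_i\ge\nu\ge2\alpha$ we get $A_k>-\infty$; and $B_k<+\infty$, for otherwise a right-translated limit of $\mathbf{q}^{(k)}$ (along a sequence realising the positive number $\liminf_{\xi\to+\infty}\min_i q^{(k)}_i$, obtained as in \lemref{Linearized_equation_at_the_edge_of_the_fronts} by Harnack and classical elliptic estimates \cite{Gilbarg_Trudin}) would be a bounded positive solution of $(TW[c])$ with all components $\le\alpha<\nu$, contradicting \propref{Persistence}. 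If $B_k-A_k>2R_{1/2}$, apply \lemref{Instability_of_0} to $(t,x)\mapsto\mathbf{q}^{(k)}(x-ct)$ on the cylinder $(0,T_k)\times(B_k-2R_{1/2},B_k)$ with $T_k:=(B_k-2R_{1/2}-A_k)/c$: the time-$t$ slice sees $\mathbf{q}^{(k)}$ on $(A_k,B_k)\subset\overline{U_k}$, so the maximum over the closed cylinder equals $\alpha\le\alpha_{1/2}$, while its initial minimum $\mu_k:=\min_i\min_{[B_k-2R_{1/2},B_k]}q^{(k)}_i$ obeys $\mu_k\ge\alpha/C$ by a further Harnack inequality (same operator, on a ball of radius $\approx R_{1/2}$ reaching $B_k$, where $\max_i q^{(k)}_i=\alpha$), $C$ uniform in $k$. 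Hence \lemref{Instability_of_0} gives $T_k<T_{\mu_k}$, and since $\mu\mapsto T_\mu$ is decreasing this yields $T_k\le T_{\alpha/C}=:T^{\star}$, uniform; therefore $B_k-A_k<2R_{1/2}+cT^{\star}=:L_c$, uniformly in $k$. But $\mathbf{q}^{(k)}\to\mathbf{0}$ uniformly on $[-L_c,L_c]$, so $[-L_c,L_c]\subset U_k$ for $k$ large; being connected and containing $0$, this interval lies in $(A_k,B_k)$, whence $B_k-A_k\ge2L_c>L_c$, a contradiction. This produces the required $\eta_c$.

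The delicate step is the uniform width bound of the last paragraph: it is what prevents the hypothetical counterexamples from developing an arbitrarily wide and deep trough, and it works by feeding the moving-frame form of \lemref{Instability_of_0} with a Harnack lower bound that survives right up to the edge of the trough (where one component equals $\alpha$). The choice $\alpha=\min(\alpha_{1/2},\nu/2)$ is essential: it guarantees, via \propref{Persistence}, that the solution strictly exceeds $\alpha$ outside a bounded set, so the small region is genuinely a bounded interval with $\max_i q^{(k)}_i=\alpha$ at both endpoints; with the bare threshold $\alpha_{1/2}$ the region could in principle be unbounded and the argument would stall.
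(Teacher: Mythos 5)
Your argument is correct, but it takes a genuinely different route from the paper's. After the same preliminary reductions (positivity and $\mathbf{p}\leq\mathbf{g}\left(0\right)$), the paper picks $\xi_{n}>0$ nearly realizing $\beta_{n}:=\min_{i}\inf_{\left(0,+\infty\right)}p_{n,i}\to0$ and then \emph{rescales} by $\beta_{n}$: since $\beta_{n}\to0$ while $\mathbf{v}_{n}:=\beta_{n}^{-1}\mathbf{p}_{n}\left(\cdot+\xi_{n}\right)$ stays locally bounded by Harnack, the unscaled translates tend to $\mathbf{0}$ locally and the limit $\mathbf{v}_{\infty}$ solves the \emph{linearized} equation $\left(TW_{0}\left[c\right]\right)$; moreover $\mathbf{v}_{\infty}\geq\mathbf{1}_{N,1}$ on $\left(0,+\infty\right)$ precisely because $\xi_{n}$ nearly realizes the infimum over $\left(0,+\infty\right)$. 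The exponential asymptotics from the proof of \lemref{Existence_exponential_eigenfunctions} then force $\mathbf{v}_{\infty}$ to be eventually component-wise decreasing, so it converges at $+\infty$ to some $\mathbf{m}\gg\mathbf{0}$ with $\mathbf{L}\mathbf{m}=\mathbf{0}$, contradicting $\lambda_{PF}\left(\mathbf{L}\right)>0$. You instead translate without rescaling and work with widths: you show that the trough $\left\{ \xi\ |\ \max_{i}q_{i}^{\left(k\right)}\left(\xi\right)<\alpha\right\} $ around the near-minimum point is a bounded interval whose length is controlled by a constant $L_{c}$ uniform in $k$ --- obtained by feeding the moving-frame form of \lemref{Instability_of_0} with a Harnack lower bound at the edge of the trough --- and this clashes with the local uniform convergence of $\mathbf{q}^{\left(k\right)}$ to $\mathbf{0}$. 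The paper's route is shorter, treats $c=0$ and $c>0$ simultaneously (your time scale $T_{k}=\left(B_{k}-2R_{\nicefrac{1}{2}}-A_{k}\right)/c$ degenerates as $c\to0$, forcing a separate $c=0$ case), and rests only on the elliptic linearized analysis and Perron--Frobenius, at the cost of the rescaled-limit step. Yours is longer and leans on the parabolic persistence machinery (\lemref{Instability_of_0}, \propref{Persistence} and the constant $\nu$), but it gives a more quantitative picture of why a deep trough cannot also be wide, and it avoids any analysis of the linearized equation at $+\infty$. Both are valid.
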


\begin{rem*}
The following proof is again analogous to that of Berestycki\textendash Nadin\textendash Perthame\textendash Ryzhik
\cite[Lemma 3.4]{Berestycki_Nadin_Perthame_Ryzhik} for the non-local
KPP equation.
\end{rem*}
\begin{proof}
Recall from \corref{Positivity_for_the_elliptic_problem_with_drift}
and \lemref{Null_liminf_enforces_null_lim} that $\left(\inf\limits _{\left(0,+\infty\right)}p_{i}\right)_{i\in\left[N\right]}\in\partial\mathsf{K}$
if and only if $\lim\limits _{\xi\to+\infty}\mathbf{p}\left(\xi\right)=\mathbf{0}$.
Hence, defining $\Sigma$ as the set of all bounded nonnegative classical
solutions $\mathbf{p}$ of $\left(TW\left[c\right]\right)$ such that
\[
\min_{i\in\left[N\right]}\inf_{\left(0,+\infty\right)}p_{i}>0,
\]
this set containing at least one positive constant vector by virtue
of \thmref{Existence_of_steady_states}, it only remains to show the
positivity of
\[
\eta_{c}=\inf\left\{ \min_{i\in\left[N\right]}\inf_{\left(0,+\infty\right)}p_{i}\ |\ \mathbf{p}\in\Sigma\right\} .
\]

We assume by contradiction the existence of a sequence $\left(\mathbf{p}_{n}\right)_{n\in\mathbb{N}}\in\Sigma^{\mathbb{N}}$
such that
\[
\lim_{n\to+\infty}\min_{i\in\left[N\right]}\inf_{\left(0,+\infty\right)}p_{n,i}=0.
\]

For all $n\in\mathbb{N}$, define
\[
\beta_{n}=\min_{i\in\left[N\right]}\inf_{\left(0,+\infty\right)}p_{n,i}>0,
\]
fix $\xi_{n}\in\left(0,+\infty\right)$ such that
\[
\min_{i\in\left[N\right]}p_{n,i}\left(\xi_{n}\right)\in\left[\beta_{n},\beta_{n}+\frac{1}{n}\right],
\]
 and define finally
\[
\mathbf{v}_{n}:\xi\mapsto\frac{1}{\beta_{n}}\mathbf{p}_{n}\left(\xi+\xi_{n}\right).
\]
By virtue of Arapostathis\textendash Gosh\textendash Marcus\textquoteright s
Harnack inequality \cite{Araposthathis_}, classical elliptic estimates
(Gilbarg\textendash Trudinger \cite{Gilbarg_Trudin}) and invariance
by translation of $\left(TW\left[c\right]\right)$, $\left(\mathbf{v}_{n}\right)_{n\in\mathbb{N}}$
is locally uniformly bounded and, up to a diagonal extraction process,
converges in $\mathscr{C}_{loc}^{2}$ to some bounded limit $\mathbf{v}_{\infty}$.
As in the proof of \lemref{Existence_exponential_eigenfunctions},
it is easily verified that $\mathbf{v}_{\infty}$ is a bounded positive
classical solution of $\left(TW_{0}\left[c\right]\right)$. Furthermore,
by definition of $\left(\mathbf{v}_{n}\right)_{n\in\mathbb{N}}$,
\[
\mathbf{v}_{\infty}\geq\mathbf{1}_{N,1}\text{ in }\left(0,+\infty\right).
\]

Repeating once more the argument of the proof of \lemref{Existence_exponential_eigenfunctions},
we deduce that $\mathbf{v}_{\infty}$ is component-wise decreasing
in a neighborhood of $+\infty$. Thus its limit at $+\infty$, say
$\mathbf{m}\geq\mathbf{1}_{N,1}$, is well-defined. By classical elliptic
estimates, $\mathbf{m}$ satisfies $\mathbf{L}\mathbf{m}=\mathbf{0}$,
which obviously contradicts $\lambda_{PF}\left(\mathbf{L}\right)>0$.
\end{proof}

\subsection{Existence of traveling waves}

This whole subsection is devoted to the adaptation of a proof of existence
due to Berestycki, Nadin, Perthame and Ryzhik \cite{Berestycki_Nadin_Perthame_Ryzhik}
and originally applied to the non-local KPP equation.
\begin{rem*}
There is a couple of slight mistakes in the aforementioned proof. 
\begin{enumerate}
\item Using the notations of \cite{Berestycki_Nadin_Perthame_Ryzhik}, the
sub-solution is defined as $\overline{r}_{c}=\max\left(0,r_{c}\right)$,
with $r_{c}$ chosen so that
\[
-cr_{c}'\leq r_{c}''+\mu r_{c}-\mu\overline{q}_{c}\left(\phi\star\overline{q}_{c}\right)
\]
 and it is claimed that $\overline{r}_{c}$ satisfies as well this
inequality, in the distributional sense. This is false: in an interval
where $\overline{r}_{c}=0$, we have
\[
-c\overline{r}_{c}'-\overline{r}_{c}''-\mu\overline{r}_{c}=0>-\mu\overline{q}_{c}\left(\phi\star\overline{q}_{c}\right).
\]
 As we will show, the correct sub-solution is $\overline{r}_{c}=\max\left(0,r_{c}\right)$
with $r_{c}$ chosen so that
\[
-cr_{c}'\leq r_{c}''+\mu r_{c}-\mu r_{c}\left(\phi\star\overline{q}_{c}\right).
\]
 Fortunately, the function $r_{c}$ constructed by the authors satisfies
this inequality as well.
\item Later on, $\Phi_{a}$ is defined as the mapping which maps $u_{0}$
to the solution of
\[
-cu'=u''+\mu u_{0}\left(1-\phi\star u_{0}\right).
\]
This mapping does not leave invariant the set of functions $R_{a}$
defined with the correct sub-solution. It is necessary to change $\Phi_{a}$
and to define it as the mapping which maps $u_{0}$ to the solution
of
\[
-cu'=u''+\mu u\left(1-\phi\star u_{0}\right).
\]
 Consequently, in order to establish that the set of functions $R_{a}$
is invariant by $\Phi_{a}$, the elliptic maximum principle is applied
not to $u\mapsto-cu'-u''$ but to
\[
u\mapsto-u''-cu'-\mu u
\]
 on one hand and to
\[
u\mapsto-u''-cu'-\mu\left(1-\phi\star\overline{q}_{c}\right)u
\]
 on the other hand.
\end{enumerate}
\end{rem*}
During the first three subsubsections, we fix $c>c^{\star}$.

\subsubsection{Super-solution}

We will use $\overline{\mathbf{p}}:\xi\mapsto\text{e}^{-\mu_{1,c}\xi}\mathbf{n}_{\mu_{1,c}}$
as a super-solution (recall from \lemref{Definition_c_star} that
it is a solution of $\left(TW_{0}\left[c\right]\right)$).

\subsubsection{Sub-solution}
\begin{prop}
\label{prop:Sub-solution} There exist $\overline{\varepsilon}>0$
such that, for any $\varepsilon\in\left(0,\overline{\varepsilon}\right)$,
there exists $A_{\varepsilon}\in\left(0,+\infty\right)$ such that
the function
\[
\underline{\mathbf{p}}:\xi\mapsto\left(\max\left(\text{e}^{-\mu_{1,c}\xi}n_{\mu_{1,c},i}-A_{\varepsilon}\text{e}^{-\left(\mu_{1,c}+\varepsilon\right)\xi}n_{\mu_{1,c}+\varepsilon,i},0\right)\right)_{i\in\left[N\right]},
\]
satisfies
\[
-\mathbf{D}\underline{\mathbf{p}}''-c\underline{\mathbf{p}}'-\mathbf{L}\underline{\mathbf{p}}\leq-\mathbf{c}\left[\overline{\mathbf{p}}\right]\circ\underline{\mathbf{p}}\text{ in }\mathscr{H}^{-1}\left(\mathbb{R},\mathbb{R}^{N}\right).
\]
\end{prop}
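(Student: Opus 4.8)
The plan is to verify the differential inequality pointwise on the open set where $\underline{\mathbf{p}}$ is strictly positive, and then argue that the passage to $\max(\cdot,0)$ only helps (i.e. the inequality survives in $\mathscr{H}^{-1}$). On the set $\{\xi : \underline{\mathbf{p}}(\xi) \gg \mathbf{0}\}$ we have, componentwise, $\underline{\mathbf{p}} = \overline{\mathbf{p}} - A_\varepsilon \mathbf{q}_\varepsilon$ where $\overline{\mathbf{p}}(\xi) = \mathrm{e}^{-\mu_{1,c}\xi}\mathbf{n}_{\mu_{1,c}}$ and $\mathbf{q}_\varepsilon(\xi) = \mathrm{e}^{-(\mu_{1,c}+\varepsilon)\xi}\mathbf{n}_{\mu_{1,c}+\varepsilon}$. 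Both $\overline{\mathbf{p}}$ and $\mathbf{q}_\varepsilon$ are exponential solutions of linear constant-coefficient systems: $\overline{\mathbf{p}}$ solves $(TW_0[c])$ exactly (by \lemref{Definition_c_star}, since $\mu_{1,c}c = -\kappa_{\mu_{1,c}}$), while $\mathbf{q}_\varepsilon$ satisfies $-\mathbf{D}\mathbf{q}_\varepsilon'' - c\mathbf{q}_\varepsilon' - \mathbf{L}\mathbf{q}_\varepsilon = (\kappa_{\mu_{1,c}+\varepsilon} + (\mu_{1,c}+\varepsilon)c)\mathbf{q}_\varepsilon$, and the scalar factor $\kappa_{\mu_{1,c}+\varepsilon} + (\mu_{1,c}+\varepsilon)c$ is \emph{strictly negative} for $\varepsilon > 0$ small, because $\mu \mapsto \kappa_\mu + \mu c$ is strictly concave (Nussbaum convexity of $-\kappa_\mu$) and vanishes at $\mu = \mu_{1,c}$ with positive derivative there. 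Call this negative number $-\sigma_\varepsilon$ with $\sigma_\varepsilon > 0$. Then on the positivity set
\[
-\mathbf{D}\underline{\mathbf{p}}'' - c\underline{\mathbf{p}}' - \mathbf{L}\underline{\mathbf{p}} = A_\varepsilon \sigma_\varepsilon \mathbf{q}_\varepsilon,
\]
so the desired inequality reduces to $A_\varepsilon \sigma_\varepsilon \mathbf{q}_\varepsilon \leq -\mathbf{c}[\overline{\mathbf{p}}]\circ\underline{\mathbf{p}}$, i.e. it would \emph{fail} at face value since the left side is positive. This tells me I have the roles reversed: one wants the reaction defect $-\mathbf{c}[\overline{\mathbf{p}}]\circ\underline{\mathbf{p}}$ to be \emph{small in absolute value} compared to the gain $A_\varepsilon\sigma_\varepsilon\mathbf{q}_\varepsilon$. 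Since $\mathbf{c}$ vanishes at $\mathbf{0}$ and is $\mathscr{C}^1$ near $\mathbf{0}$ $(H_3)$, we have $|\mathbf{c}[\overline{\mathbf{p}}](\xi)| \leq C |\overline{\mathbf{p}}(\xi)| \leq C' \mathrm{e}^{-\mu_{1,c}\xi}$ for $\xi$ bounded below, hence the obstruction is only near $\xi \to -\infty$ where $\overline{\mathbf{p}}$ blows up — but there the $\max(\cdot,0)$ truncation sets $\underline{\mathbf{p}} = \mathbf{0}$ precisely when $A_\varepsilon \mathrm{e}^{-\varepsilon\xi} n_{\mu_{1,c}+\varepsilon,i} \geq n_{\mu_{1,c},i}$, i.e. for $\xi$ below some threshold $\xi_\varepsilon$. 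So the genuine inequality only needs to hold on a half-line $\xi > \xi_\varepsilon$ (up to the truncation), where $\overline{\mathbf{p}}$ is bounded and $\mathbf{c}[\overline{\mathbf{p}}]\circ\underline{\mathbf{p}} = O(\mathrm{e}^{-2\mu_{1,c}\xi})$ while $A_\varepsilon\sigma_\varepsilon\mathbf{q}_\varepsilon$ is of order $A_\varepsilon \mathrm{e}^{-(\mu_{1,c}+\varepsilon)\xi}$; for $\varepsilon < \mu_{1,c}$ the latter dominates as $\xi \to +\infty$, and one chooses $A_\varepsilon$ large enough (equivalently $\xi_\varepsilon$ far enough to the left) to make the domination uniform on $(\xi_\varepsilon, +\infty)$.

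Concretely, the steps are: (i) fix $\overline{\varepsilon} = \tfrac{1}{2}\mu_{1,c}$ (say) and for $\varepsilon \in (0,\overline{\varepsilon})$ record $\sigma_\varepsilon = -(\kappa_{\mu_{1,c}+\varepsilon} + (\mu_{1,c}+\varepsilon)c) > 0$; (ii) bound $|\mathbf{c}(\mathbf{v})| \leq C_K |\mathbf{v}|$ for $\mathbf{v} \in \mathsf{K}$, $|\mathbf{v}| \leq K$, with $K$ controlling $\overline{\mathbf{p}}$ on the relevant half-line, so that componentwise $(\mathbf{c}[\overline{\mathbf{p}}]\circ\underline{\mathbf{p}})_i \leq C_K |\overline{\mathbf{p}}| \overline{p}_i \leq C_K'\, \mathrm{e}^{-2\mu_{1,c}\xi}$; (iii) choose $A_\varepsilon$ so large that on $\{\underline{\mathbf{p}} \gg \mathbf{0}\}$ (which forces $\xi$ large) one has $C_K' \mathrm{e}^{-2\mu_{1,c}\xi} \leq A_\varepsilon \sigma_\varepsilon\, \mathrm{e}^{-(\mu_{1,c}+\varepsilon)\xi} \min_i n_{\mu_{1,c}+\varepsilon,i}$, which is possible because on that set $\mathrm{e}^{-\mu_{1,c}\xi} \leq A_\varepsilon \mathrm{e}^{-(\mu_{1,c}+\varepsilon)\xi}\,(\min_i n_{\mu_{1,c}+\varepsilon,i})/(\max_i n_{\mu_{1,c},i})$ just by definition of the truncation, and we can absorb the remaining factor of $\mathrm{e}^{-\mu_{1,c}\xi}$ since it is bounded there; (iv) conclude the pointwise inequality
\[
-\mathbf{D}\underline{\mathbf{p}}'' - c\underline{\mathbf{p}}' - \mathbf{L}\underline{\mathbf{p}} \leq -\mathbf{c}[\overline{\mathbf{p}}]\circ\underline{\mathbf{p}}
\]
on the open positivity set; (v) on the complementary set $\underline{\mathbf{p}} = \mathbf{0}$ and the right side is $\mathbf{0}$, so the inequality reads $\mathbf{0} \leq \mathbf{0}$ on the interior of that set.

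The one genuinely delicate point — and the reason the statement is phrased in $\mathscr{H}^{-1}$ rather than classically — is the behavior across the free boundary $\{\xi_\varepsilon^{(i)}\}$ where component $\underline{p}_i$ transitions from $0$ to positive. There $\underline{p}_i$ is only Lipschitz, with a downward jump in the first derivative (it is the positive part of a smooth function crossing zero transversally from above as $\xi$ increases), so $\underline{p}_i''$ acquires a \emph{negative} Dirac mass at that point. Since $-\mathbf{D}$ contributes $-d_i \underline{p}_i''$, the distributional second-derivative term \emph{adds} a positive Dirac mass to the left-hand side — the wrong sign for an inequality $\leq$. I expect this is exactly the subtlety the remark before the proposition is flagging in the scalar case, and the resolution is the standard one: the positive part of a $\mathscr{C}^1$ function has a \emph{nonpositive} distributional second derivative contribution at a transversal crossing from above only if the slope is negative there — and here, because $\overline{\mathbf{p}} - A_\varepsilon\mathbf{q}_\varepsilon$ is \emph{increasing} through zero at $\xi_\varepsilon^{(i)}$ (as $\mathbf{q}_\varepsilon$ decays faster, the difference goes from negative to positive), the kink is \emph{convex}, producing a \emph{nonnegative} Dirac — hence the correct sign, and the inequality holds in $\mathscr{H}^{-1}$. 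I would verify this sign carefully by testing against nonnegative $\mathscr{C}_c^\infty$ functions and integrating by parts once, reducing everything to the pointwise inequality on the positivity set plus the convexity of the kink. This sign bookkeeping at the free boundary is the main obstacle; everything else is the routine exponential comparison sketched above.
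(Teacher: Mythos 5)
Your overall plan --- verify the inequality classically where $\underline{\mathbf{p}}$ is smooth, then check that the Dirac contributions at the kinks have the benign sign --- is the same strategy as the paper's, and your final conclusion about the convexity of the kinks is correct. However, there is a genuine gap, and it sits exactly where the non-cooperative structure makes the argument subtle. In step (v) you write that on ``the complementary set'' (i.e.\ $\mathbb{R}\setminus\{\underline{\mathbf{p}}\gg\mathbf{0}\}$) one has $\underline{\mathbf{p}}=\mathbf{0}$. This is false: the zero crossings $\xi_{0,i}$ of the smooth functions $v_i(\xi)=\mathrm{e}^{-\mu_{1,c}\xi}n_{\mu_{1,c},i}-A_\varepsilon\mathrm{e}^{-(\mu_{1,c}+\varepsilon)\xi}n_{\mu_{1,c}+\varepsilon,i}$ differ among components, so there is a whole transition region where some $\underline{p}_i$ vanish while others are strictly positive. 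It is precisely there that the coupling through $\mathbf{L}$ interferes with the truncation: for an index $i$ with $\underline{p}_i=0$ locally, the $i$-th component of the left-hand side equals $-\sum_{j\neq i}l_{i,j}\underline{p}_j$, whose nonpositivity relies on the essential nonnegativity of $\mathbf{L}$, i.e.\ $\left(H_{1}\right)$; for an index $j$ with $\underline{p}_j>0$ one must replace $\underline{p}_k$ by $v_k$ in the coupling term, again using $l_{j,k}\geq 0$ for $k\neq j$ together with $\underline{p}_k\geq v_k$. Your sketch never invokes $\left(H_{1}\right)$ for this purpose and never examines this mixed region at all; the kink-Dirac analysis alone does not close the argument. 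The paper handles this explicitly: it verifies the smooth inequality for $\mathbf{v}$ on the whole half-line $(\min_i\xi_{0,i},+\infty)$, not only on $\{\underline{\mathbf{p}}\gg\mathbf{0}\}$, and then, for a test function meeting the transition region, performs a component-wise integration by parts in which each off-diagonal contribution
\[
l_{i,j}\left(\int_{\xi_{0,i}}^{+\infty}v_j\varphi_i-\int_{\xi_{0,j}}^{+\infty}v_j\varphi_i\right)
\]
is shown nonpositive by reading off the sign of $v_j$ between $\xi_{0,i}$ and $\xi_{0,j}$.

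Secondarily, the exponential bookkeeping contains sign errors. The scalar $\kappa_{\mu_{1,c}+\varepsilon}+(\mu_{1,c}+\varepsilon)c$ is \emph{positive}, not negative, for $\varepsilon\in(0,\mu_{2,c}-\mu_{1,c})$: $\mu\mapsto\kappa_\mu+\mu c$ is concave by Nussbaum and vanishes at $\mu_{1,c}$ and $\mu_{2,c}$, hence is positive strictly between the roots --- a conclusion already implicit in your own remark that its derivative is positive at $\mu_{1,c}$. Likewise the inequality in your step (iii), asserted ``by definition of the truncation'', runs the wrong way: on the positivity set one has $\mathrm{e}^{\varepsilon\xi}>A_\varepsilon\max_i(n_{\mu_{1,c}+\varepsilon,i}/n_{\mu_{1,c},i})$, the reverse of what you wrote. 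The clean way to close the estimate (the paper's) is to take $A_\varepsilon$ so large that the relevant half-line lies in $\{\xi\geq 0\}$, set $\alpha=\mathrm{e}^{-\mu_{1,c}\xi}\in(0,1]$ there, and use $\varepsilon\leq\mu_{1,c}$ to bound $\mathrm{e}^{\varepsilon\xi}c_i(\alpha\mathbf{n}_{\mu_{1,c}})=c_i(\alpha\mathbf{n}_{\mu_{1,c}})\,\alpha^{-\varepsilon/\mu_{1,c}}\leq\sup_{\alpha\in(0,1)}c_i(\alpha\mathbf{n}_{\mu_{1,c}})/\alpha$, which is finite by $\left(H_{3}\right)$; this forces $\overline{\varepsilon}=\min(\mu_{1,c},\mu_{2,c}-\mu_{1,c})$.
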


\begin{rem*}
Notice that, in the right-hand side of the inequality above, we find
$\mathbf{c}\left[\overline{\mathbf{p}}\right]$ and not $\mathbf{c}\left[\underline{\mathbf{p}}\right]$.
This is of course related to the lack of comparison principle for
$\left(E_{KPP}\right)$. 

During the forthcoming quite technical proof, in order to ease the
reading, we denote $\left\langle \bullet,\bullet\right\rangle _{1}$
and $\left\langle \bullet,\bullet\right\rangle _{N}$ the duality
pairings of $\mathscr{H}^{1}\left(\mathbb{R},\mathbb{R}\right)$ and
$\mathscr{H}^{1}\left(\mathbb{R},\mathbb{R}^{N}\right)$ respectively,
the latter being of course defined by:
\[
\left\langle \mathbf{f},\mathbf{g}\right\rangle _{\mathscr{H}^{-1}\left(\mathbb{R},\mathbb{R}^{N}\right)\times\mathscr{H}^{1}\left(\mathbb{R},\mathbb{R}^{N}\right)}=\sum_{i=1}^{N}\left\langle f_{i},g_{i}\right\rangle _{\mathscr{H}^{-1}\left(\mathbb{R}\right)\times\mathscr{H}^{1}\left(\mathbb{R}\right)}.
\]
 The speed $c$ being fixed, we also omit the subscript $c$ in the
notations $\mu_{1,c}$ and $\mu_{2,c}$.
\end{rem*}
\begin{proof}
For the moment, let $A,\varepsilon>0$ (they will be made precise
during the course of the proof) and define
\[
\mathbf{v}:\xi\mapsto\text{e}^{-\mu_{1}\xi}\mathbf{n}_{\mu_{1}}-A\text{e}^{-\left(\mu_{1}+\varepsilon\right)\xi}\mathbf{n}_{\mu_{1}+\varepsilon},
\]
\[
\underline{\mathbf{p}}:\xi\mapsto\left(\max\left(\text{e}^{-\mu_{1}\xi}n_{\mu_{1},i}-A_{\varepsilon}\text{e}^{-\left(\mu_{1}+\varepsilon\right)\xi}n_{\mu_{1}+\varepsilon,i},0\right)\right)_{i\in\left[N\right]},
\]
\[
\Xi_{+}=\underline{\mathbf{p}}^{-1}\left(\mathsf{K}^{++}\right),
\]
\[
\Xi_{0}=\underline{\mathbf{p}}^{-1}\left(\mathbf{0}\right),
\]
\[
\Xi_{\#}=\mathbb{R}\backslash\left(\Xi_{+}\cup\Xi_{0}\right).
\]

Notice that $\Xi_{\#}$ is a connected compact set. 

Fix a positive test function $\varphi\in\mathscr{H}^{1}\left(\mathbb{R},\mathsf{K}^{++}\right)$.
We have to verify that
\[
\left\langle -\mathbf{D}\underline{\mathbf{p}}''-c\underline{\mathbf{p}}'-\mathbf{L}\underline{\mathbf{p}},\varphi\right\rangle _{N}\leq\left\langle -\mathbf{c}\left[\overline{\mathbf{p}}\right]\circ\underline{\mathbf{p}},\varphi\right\rangle _{N}.
\]

To this end, we distinguish three cases: $\text{supp}\varphi\subset\Xi_{+}$,
$\text{supp}\varphi\subset\Xi_{0}$ and $\text{supp}\varphi\cap\Xi_{\#}\neq\emptyset$.
The case $\text{supp}\varphi\subset\Xi_{0}$ is trivial, with the
inequality above satisfied in the classical sense. 

Regarding the case $\text{supp}\varphi\subset\Xi_{+}$, we only have
to verify the inequality in the classical sense in $\Xi_{+}$ for
the regular function $\mathbf{v}$. 

Fix temporarily $\xi\in\Xi_{+}$. We have
\[
-\mathbf{D}\mathbf{v}''\left(\xi\right)-c\mathbf{v}'\left(\xi\right)-\mathbf{L}\mathbf{v}\left(\xi\right)=A\text{e}^{-\left(\mu_{1}+\varepsilon\right)\xi}\left(\left(\mu_{1}+\varepsilon\right)^{2}\mathbf{D}-c\left(\mu_{1}+\varepsilon\right)\mathbf{I}+\mathbf{L}\right)\mathbf{n}_{\mu_{1}+\varepsilon},
\]
\[
\left(-\mathbf{c}\left[\overline{\mathbf{p}}\right]\circ\mathbf{v}\right)\left(\xi\right)=-\text{e}^{-\mu_{1}\xi}\mathbf{c}\left(\text{e}^{-\mu_{1}\xi}\mathbf{n}_{\mu_{1}}\right)\circ\left(\mathbf{n}_{\mu_{1}}-A\text{e}^{-\varepsilon\xi}\mathbf{n}_{\mu_{1}+\varepsilon}\right).
\]

From
\[
\left(\left(\mu_{1}+\varepsilon\right)^{2}\mathbf{D}+\mathbf{L}\right)\mathbf{n}_{\mu_{1}+\varepsilon}=-\kappa_{\mu_{1}+\varepsilon}\mathbf{n}_{\mu_{1}+\varepsilon},
\]
\[
-c\left(\mu_{1}+\varepsilon\right)\mathbf{n}_{\mu_{1}+\varepsilon}=\frac{\kappa_{\mu_{1}}}{\mu_{1}}\left(\mu_{1}+\varepsilon\right)\mathbf{n}_{\mu_{1}+\varepsilon},
\]
and the following direct consequence of the nonnegativity of $\mathbf{c}$
on $\mathsf{K}$ $\left(H_{2}\right)$,
\[
-\mathbf{c}\left(\text{e}^{-\mu_{1}\xi}\mathbf{n}_{\mu_{1}}\right)\circ\left(\mathbf{n}_{\mu_{1}}-A\text{e}^{-\varepsilon\xi}\mathbf{n}_{\mu_{1}+\varepsilon}\right)\geq-\mathbf{c}\left(\text{e}^{-\mu_{1}\xi}\mathbf{n}_{\mu_{1}}\right)\circ\mathbf{n}_{\mu_{1}},
\]
it follows that it suffices to find $A$ and $\varepsilon$ such that
\[
A\text{e}^{-\varepsilon\xi}\left(\mu_{1}+\varepsilon\right)\left(-\frac{\kappa_{\mu_{1}+\varepsilon}}{\mu_{1}+\varepsilon}+\frac{\kappa_{\mu_{1}}}{\mu_{1}}\right)\mathbf{n}_{\mu_{1}+\varepsilon}\leq-\mathbf{c}\left(\text{e}^{-\mu_{1}\xi}\mathbf{n}_{\mu_{1}}\right)\circ\mathbf{n}_{\mu_{1}}.
\]

The right-hand side above being nonnegative ($\mu\mapsto\frac{\kappa_{\mu}}{\mu}$
is positive and convex in $\left(0,+\infty\right)$, as detailed in
the proof of \lemref{Definition_c_star}), it follows clearly that
such an inequality is never satisfied if $\mu_{1}+\varepsilon>\mu_{2}$,
whence a first necessary condition on $\varepsilon$ is $\varepsilon\leq\mu_{2}-\mu_{1}$
(notice that if $\varepsilon=\mu_{2}-\mu_{1}$, then the inequality
above holds if and only if $\mathbf{c}\left(\text{e}^{-\mu_{1}\xi}\mathbf{n}_{\mu_{1}}\right)=\mathbf{0}$,
which is in general not true). Thus from now on we assume $\varepsilon<\mu_{2}-\mu_{1}$.
This ensures that $\frac{\kappa_{\mu_{1}+\varepsilon}}{\mu_{1}+\varepsilon}-\frac{\kappa_{\mu_{1}}}{\mu_{1}}>0$,
whence we now search for $A$ and $\varepsilon$ such that
\[
A\mathbf{n}_{\mu_{1}+\varepsilon}>\frac{\text{e}^{\varepsilon\xi}}{\left(\mu_{1}+\varepsilon\right)\left(\frac{\kappa_{\mu_{1}+\varepsilon}}{\mu_{1}+\varepsilon}-\frac{\kappa_{\mu_{1}}}{\mu_{1}}\right)}\mathbf{c}\left(\text{e}^{-\mu_{1}\xi}\mathbf{n}_{\mu_{1}}\right)\circ\mathbf{n}_{\mu_{1}}.
\]

Define $\overline{\xi}=\min\Xi_{\#}$, so that any $\xi\in\Xi_{+}$
satisfies necessarily $\xi>\overline{\xi}$. Remark that there exists
$\overline{i}\in\left[N\right]$ such that
\[
\overline{\xi}=\frac{1}{\varepsilon}\left(\ln A+\ln\left(\frac{n_{\mu_{1}+\varepsilon,\overline{i}}}{n_{\mu_{1},\overline{i}}}\right)\right).
\]

Now, defining $\alpha:\xi\mapsto\text{e}^{-\mu_{1}\xi}$, if
\[
A\geq\max_{i\in\left[N\right]}\left(\frac{n_{\mu_{1}+\varepsilon,i}}{n_{\mu_{1},i}}\right),
\]
 then $\overline{\xi}\geq0$ and $\alpha\left(\xi\right)\leq1$ in
$\left(\overline{\xi},+\infty\right)$. Moreover, we have
\[
\text{e}^{\varepsilon\xi}=\left(\alpha\left(\xi\right)\right)^{-\frac{\varepsilon}{\mu_{1}}},
\]
whence, for all $i\in\left[N\right]$,
\[
\text{e}^{\varepsilon\xi}c_{i}\left(\text{e}^{-\mu_{1}\xi}\mathbf{n}_{\mu_{1}}\right)=\frac{c_{i}\left(\alpha\left(\xi\right)\mathbf{n}_{\mu_{1}}\right)}{\left(\alpha\left(\xi\right)\right)^{\frac{\varepsilon}{\mu_{1}}}},
\]
and from the $\mathscr{C}^{1}$ regularity of $\mathbf{c}$ as well
as the fact that it vanishes at $\mathbf{0}$ $\left(H_{3}\right)$,
the above function of $\xi$ is globally bounded in $\left(\overline{\xi},+\infty\right)$,
provided $\frac{\varepsilon}{\mu_{1}}\leq1$, by the positive constant
\begin{align*}
M_{i} & =\sup_{\xi\in\left(\overline{\xi},+\infty\right)}\frac{c_{i}\left(\alpha\left(\xi\right)\mathbf{n}_{\mu_{1}}\right)}{\alpha\left(\xi\right)}\\
 & =\sup_{\alpha\in\left(0,1\right)}\frac{c_{i}\left(\alpha\mathbf{n}_{\mu_{1}}\right)}{\alpha}.
\end{align*}

Subsequently, if $A$ and $\varepsilon$ satisfy also
\[
\varepsilon\leq\mu_{1},
\]
\[
A\geq\max_{i\in\left[N\right]}\left(\frac{M_{i}n_{\mu_{1},i}}{\left(\mu_{1}+\varepsilon\right)\left(\frac{\kappa_{\mu_{1}+\varepsilon}}{\mu_{1}+\varepsilon}-\frac{\kappa_{\mu_{1}}}{\mu_{1}}\right)n_{\mu_{1}+\varepsilon,i}}\right),
\]
 then the inequality is established indeed in $\Xi_{+}$. Hence we
define
\[
\overline{\varepsilon}=\min\left(\mu_{2}-\mu_{1},\mu_{1}\right)
\]
 and, for any $\varepsilon\in\left(0,\overline{\varepsilon}\right)$,
\[
A_{\varepsilon}=\max_{i\in\left[N\right]}\max\left(\frac{n_{\mu_{1}+\varepsilon,i}}{n_{\mu_{1},i}},\frac{M_{i}n_{\mu_{1},i}}{\left(\mu_{1}+\varepsilon\right)\left(\frac{\kappa_{\mu_{1}+\varepsilon}}{\mu_{1}+\varepsilon}-\frac{\kappa_{\mu_{1}}}{\mu_{1}}\right)n_{\mu_{1}+\varepsilon,i}}\right)
\]
and we assume from now on $\varepsilon\in\left(0,\overline{\varepsilon}\right)$
and $A=A_{\varepsilon}$.

Let us point out here a fact which is crucial for the next step: choosing
$\overline{\xi}=\min\Xi_{\#}$ instead of $\overline{\xi}=\max\Xi_{\#}$
(which might seem more natural at first view) implies that the differential
inequality
\[
-\mathbf{D}\mathbf{v}''-c\mathbf{v}'-\mathbf{L}\mathbf{v}\leq-\mathbf{c}\left[\overline{\mathbf{p}}\right]\circ\mathbf{v}
\]
 holds classically in $\Xi_{\#}\cup\Xi_{+}$. 

To conclude, let us verify the case $\text{supp}\varphi\cap\Xi_{\#}\neq\emptyset$.
In order to ease the following computations, we actually assume $\varphi\in\mathscr{D}\left(\mathbb{R},\mathbb{R}^{N}\right)$
(the result with $\varphi\in\mathscr{H}^{1}\left(\mathbb{R},\mathbb{R}^{N}\right)$
can be recovered as usual by density). By definition,
\[
\left\langle -\mathbf{D}\underline{\mathbf{p}}''-c\underline{\mathbf{p}}'-\mathbf{L}\underline{\mathbf{p}}+\mathbf{c}\left[\overline{\mathbf{p}}\right]\circ\underline{\mathbf{p}},\varphi\right\rangle _{N}=\sum_{i=1}^{N}\left\langle -d_{i}\underline{p}_{i}''-c\underline{p}_{i}'-\sum_{j=1}^{N}l_{i,j}\underline{p}_{j}+c_{i}\left[\overline{\mathbf{p}}\right]\underline{p}_{i},\varphi_{i}\right\rangle _{1}.
\]

Fix $i\in\left[N\right]$ and define $\xi_{0,i}$ as the unique element
of $v_{i}^{-1}\left(\left\{ 0\right\} \right)$ and
\[
\Psi_{i}=\left\langle -d_{i}\underline{p}_{i}''-c\underline{p}_{i}'-\sum_{j=1}^{N}l_{i,j}\underline{p}_{j}+c_{i}\left[\overline{\mathbf{p}}\right]\underline{p}_{i},\varphi_{i}\right\rangle _{1}.
\]

Classical integrations by parts yield
\[
\int_{\mathbb{R}}\underline{p}_{i}''\varphi_{i}=\int_{\xi_{0,i}}^{+\infty}v_{i}''\varphi_{i}+v_{i}'\left(\xi_{0,i}\right)\varphi_{i}\left(\xi_{0,i}\right)\geq\int_{\xi_{0,i}}^{+\infty}v_{i}''\varphi_{i},
\]
\[
\int_{\mathbb{R}}\underline{p}_{i}'\varphi_{i}=\int_{\xi_{0,i}}^{+\infty}v_{i}'\varphi_{i},
\]
 whence
\[
\Psi_{i}\leq\int_{\xi_{0,i}}^{+\infty}\left(-d_{i}v_{i}''-cv_{i}'+c_{i}\left[\overline{\mathbf{p}}\right]v_{i}\right)\varphi_{i}-\sum_{j=1}^{N}l_{i,j}\int_{\xi_{0,j}}^{+\infty}v_{j}\varphi_{i}.
\]

As was pointed out previously, from the construction of $\varepsilon$
and $A$, we know that
\[
-\mathbf{D}\mathbf{v}''-c\mathbf{v}'+\mathbf{c}\left[\overline{\mathbf{p}}\right]\circ\mathbf{v}\leq\mathbf{L}\mathbf{v}\text{ in }\Xi_{\#},
\]
whence, with $J_{i}=\left\{ j\in\left[N\right]\ |\ \xi_{0,j}<\xi_{0,i}\right\} $,
\[
\Psi_{i}\leq-\sum_{j\in J_{i}}\int_{\xi_{0,j}}^{\xi_{0,i}}l_{i,j}v_{j}\varphi_{i}+\sum_{j\in\left[N\right]\backslash J_{i}}\int_{\xi_{0,i}}^{\xi_{0,j}}l_{i,j}v_{j}\varphi_{i}.
\]

Finally, recalling that $v_{j}\left(\xi\right)>0$ if $\xi>\xi_{0,j}$
and $v_{j}\left(\xi\right)<0$ if $\xi<\xi_{0,j}$, the inequality
above yields $\Psi_{i}\leq0$, which ends the proof.
\end{proof}

\subsubsection{The finite domain problem}

Let $R>0$ and define the following truncated problem:
\[
\left\{ \begin{matrix}-\mathbf{D}\mathbf{p}''-c\mathbf{p}'=\mathbf{L}\mathbf{p}-\mathbf{c}\left[\mathbf{p}\right]\circ\mathbf{p} & \text{ in }\left(-R,R\right),\\
\mathbf{p}\left(\pm R\right)=\underline{\mathbf{p}}\left(\pm R\right).
\end{matrix}\right.\quad\left(TW\left[R,c\right]\right)
\]
\begin{lem}
\label{lem:Finite_domain_problem} Assume
\[
D\mathbf{c}\left(\mathbf{v}\right)\geq\mathbf{0}\text{ for all }\mathbf{v}\in\mathsf{K}.
\]

Then there exists a nonnegative nonzero classical solution $\mathbf{p}_{R}$
of $\left(TW\left[R,c\right]\right)$. 
\end{lem}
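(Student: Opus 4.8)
The plan is to obtain $\mathbf{p}_{R}$ by Schauder's fixed-point theorem applied to a \emph{frozen-competition} iteration, following Berestycki--Nadin--Perthame--Ryzhik \cite{Berestycki_Nadin_Perthame_Ryzhik} with the corrections pointed out in the remark above. Keep $c>c^{\star}$ fixed, and keep the super-solution $\overline{\mathbf{p}}$ and the sub-solution $\underline{\mathbf{p}}$ of the two previous subsubsections; up to translating both by a common amount (which preserves \propref{Sub-solution} and the fact that $\overline{\mathbf{p}}$ solves $\left(TW_{0}\left[c\right]\right)$, by translation invariance), we may assume $\underline{\mathbf{p}}\gg\mathbf{0}$ on $\left[-R,R\right]$. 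Introduce
\[
\mathsf{E}_{R}=\left\{ \mathbf{q}\in\mathscr{C}\left(\left[-R,R\right],\mathbb{R}^{N}\right)\ |\ \underline{\mathbf{p}}\leq\mathbf{q}\leq\overline{\mathbf{p}}\text{ on }\left[-R,R\right]\right\} ,
\]
a nonempty closed bounded convex subset of $\mathscr{C}\left(\left[-R,R\right],\mathbb{R}^{N}\right)$.

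For $\mathbf{q}\in\mathsf{E}_{R}$, consider $\mathscr{L}_{\mathbf{q}}=-\mathbf{D}\frac{\text{d}^{2}}{\text{d}x^{2}}-c\frac{\text{d}}{\text{d}x}-\left(\mathbf{L}-\text{diag}\,\mathbf{c}\left[\mathbf{q}\right]\right)$. By $\left(H_{1}\right)$ and $\left(H_{2}\right)$ the matrix $\mathbf{L}-\text{diag}\,\mathbf{c}\left[\mathbf{q}\right]$ is essentially nonnegative and irreducible, so $\mathscr{L}_{\mathbf{q}}$ is cooperative (weakly and fully coupled) with bounded continuous coefficients. Since $\mathbf{c}\left[\mathbf{q}\right]\geq\mathbf{0}$, the Dirichlet principal eigenvalue is non-decreasing under adding the potential $\text{diag}\,\mathbf{c}\left[\mathbf{q}\right]$, and combining this with $\lambda_{1,Dir}\left(\cdot,\left(-R,R\right)\right)\geq\lambda_{1}\left(\cdot\right)$ (\thmref{Generalized_principal_eigenvalue}) and $\lambda_{1}\left(-\mathbf{D}\frac{\text{d}^{2}}{\text{d}x^{2}}-c\frac{\text{d}}{\text{d}x}-\mathbf{L}\right)>0$ for $c>c^{\star}$ (\lemref{Value_of_the_generalized_principal_eigenvalue_with_drift} and the characterisation of $c^{\star}$ following it) we get
\[
\lambda_{1,Dir}\left(\mathscr{L}_{\mathbf{q}},\left(-R,R\right)\right)>0.
\]
Hence the linear Dirichlet problem $\mathscr{L}_{\mathbf{q}}\mathbf{p}=\mathbf{0}$ in $\left(-R,R\right)$, $\mathbf{p}\left(\pm R\right)=\underline{\mathbf{p}}\left(\pm R\right)$, has a unique solution, and the weak maximum principle holds for $\mathscr{L}_{\mathbf{q}}$ on $\left(-R,R\right)$ (Protter--Weinberger \cite{Protter_Weinberger}, Figueiredo--Mitidieri \cite{Figueiredo_Mit}, Sweers \cite{Sweers_1992}); let $\Phi:\mathbf{q}\mapsto\mathbf{p}$ be the induced map on $\mathsf{E}_{R}$.

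Next I would check $\Phi\left(\mathsf{E}_{R}\right)\subset\mathsf{E}_{R}$. For the upper bound, $\overline{\mathbf{p}}$ solves $\left(TW_{0}\left[c\right]\right)$, so $\mathscr{L}_{\mathbf{q}}\overline{\mathbf{p}}=\mathbf{c}\left[\mathbf{q}\right]\circ\overline{\mathbf{p}}\geq\mathbf{0}$ by $\left(H_{2}\right)$, and since $\overline{\mathbf{p}}\left(\pm R\right)\geq\underline{\mathbf{p}}\left(\pm R\right)=\Phi\left(\mathbf{q}\right)\left(\pm R\right)$, the maximum principle gives $\Phi\left(\mathbf{q}\right)\leq\overline{\mathbf{p}}$. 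For the lower bound I use the hypothesis $D\mathbf{c}\left(\mathbf{v}\right)\geq\mathbf{0}$ on $\mathsf{K}$: as $\mathbf{q}\leq\overline{\mathbf{p}}$ with both valued in $\mathsf{K}$, $\mathbf{c}$ is nondecreasing on $\mathsf{K}$, so $\text{diag}\,\mathbf{c}\left[\mathbf{q}\right]\leq\text{diag}\,\mathbf{c}\left[\overline{\mathbf{p}}\right]$, whence by \propref{Sub-solution} and $\underline{\mathbf{p}}\geq\mathbf{0}$,
\[
\mathscr{L}_{\mathbf{q}}\underline{\mathbf{p}}=\left(-\mathbf{D}\underline{\mathbf{p}}''-c\underline{\mathbf{p}}'-\left(\mathbf{L}-\text{diag}\,\mathbf{c}\left[\overline{\mathbf{p}}\right]\right)\underline{\mathbf{p}}\right)+\left(\text{diag}\,\mathbf{c}\left[\mathbf{q}\right]-\text{diag}\,\mathbf{c}\left[\overline{\mathbf{p}}\right]\right)\underline{\mathbf{p}}\leq\mathbf{0}\quad\text{in }\mathscr{H}^{-1}\left(-R,R\right),
\]
so $\underline{\mathbf{p}}$ is a (weak) sub-solution of $\mathscr{L}_{\mathbf{q}}$ with the same boundary values as $\Phi\left(\mathbf{q}\right)$, and the weak maximum principle yields $\Phi\left(\mathbf{q}\right)\geq\underline{\mathbf{p}}$. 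Finally, $\Phi$ is continuous on $\mathsf{E}_{R}$ (if $\mathbf{q}_{n}\to\mathbf{q}$ uniformly then $\mathbf{c}\left[\mathbf{q}_{n}\right]\to\mathbf{c}\left[\mathbf{q}\right]$ uniformly and one passes to the limit by elliptic estimates and uniqueness), and $\Phi\left(\mathsf{E}_{R}\right)$ is precompact in $\mathscr{C}\left(\left[-R,R\right],\mathbb{R}^{N}\right)$ because, $\mathsf{E}_{R}$ being $\mathscr{L}^{\infty}$-bounded and the coefficients of the $\mathscr{L}_{\mathbf{q}}$ uniformly bounded, classical elliptic estimates (Gilbarg--Trudinger \cite{Gilbarg_Trudin}) bound $\Phi\left(\mathsf{E}_{R}\right)$ in $\mathscr{C}^{1,\alpha}$. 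Schauder's theorem then provides $\mathbf{p}_{R}\in\mathsf{E}_{R}$ with $\Phi\left(\mathbf{p}_{R}\right)=\mathbf{p}_{R}$, i.e. $\mathbf{p}_{R}$ solves $\left(TW\left[R,c\right]\right)$; it is a classical solution by elliptic regularity (the coefficient $\mathbf{c}\left[\mathbf{p}_{R}\right]$ being continuous), nonnegative since $\mathbf{p}_{R}\geq\underline{\mathbf{p}}\geq\mathbf{0}$, and nonzero since $\mathbf{p}_{R}\left(\pm R\right)=\underline{\mathbf{p}}\left(\pm R\right)\gg\mathbf{0}$.

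The main obstacle is the lower-bound step: \propref{Sub-solution} only says that $\underline{\mathbf{p}}$ is a sub-solution of the operator carrying the frozen coefficient $\mathbf{c}\left[\overline{\mathbf{p}}\right]$, and there is no reason for it to remain a sub-solution of $\mathscr{L}_{\mathbf{q}}$ for a general $\mathbf{q}\in\mathsf{E}_{R}$ unless $\mathbf{c}$ is monotone on $\mathsf{K}$ — which is exactly what $D\mathbf{c}\left(\mathbf{v}\right)\geq\mathbf{0}$ provides; this is why the existence half of \thmref{Traveling_waves} \ref{enu:Existence_minimal_wave_speed} is stated under that extra assumption. A secondary, purely technical point is that $\underline{\mathbf{p}}$ is only an $\mathscr{H}^{-1}$ sub-solution (it has a corner along its zero set), so the comparison $\Phi\left(\mathbf{q}\right)\geq\underline{\mathbf{p}}$ must be carried out with the weak maximum principle for cooperative systems rather than the classical one.
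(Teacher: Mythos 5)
Your scheme — freeze the competition coefficient, apply a weak maximum principle for cooperative systems to show the order interval $\left[\underline{\mathbf{p}},\overline{\mathbf{p}}\right]$ is invariant, and close with Schauder — is exactly the paper's, and your justification of the maximum principle via $\lambda_{1,Dir}\left(\mathscr{L}_{\mathbf{q}},\left(-R,R\right)\right)>0$ (monotonicity in the zeroth-order term plus $\lambda_{1}\left(-\mathbf{D}\frac{\text{d}^{2}}{\text{d}x^{2}}-c\frac{\text{d}}{\text{d}x}-\mathbf{L}\right)>0$ for $c>c^{\star}$) is an equivalent and arguably more transparent substitute for the paper's appeal to Figueiredo--Mitidieri through the strict positive super-solution $\overline{\mathbf{p}}$. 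Your upper-bound step, applying the maximum principle directly to $\mathscr{L}_{\mathbf{q}}$, also streamlines the paper's, which first proves $\mathbf{f}\left[\mathbf{v}\right]\geq\underline{\mathbf{p}}\geq\mathbf{0}$ and then compares with $\mathbf{D}\frac{\text{d}^{2}}{\text{d}\xi^{2}}+c\frac{\text{d}}{\text{d}\xi}+\mathbf{L}$.

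The opening translation step, however, is a genuine problem and not the harmless normalization you present it as. From the proof of \propref{Sub-solution}, $\underline{\mathbf{p}}$ vanishes identically on a half-line $(-\infty,\overline{\xi}]$ with $\overline{\xi}=\min\Xi_{\#}\geq0$, so forcing $\underline{\mathbf{p}}\gg\mathbf{0}$ on all of $\left[-R,R\right]$ requires a translation whose size $L_{R}$ grows with $R$. This has two consequences. First, it silently alters the boundary data of $\left(TW\left[R,c\right]\right)$: you solve the Dirichlet problem with data $\underline{\mathbf{p}}\left(\pm R+L_{R}\right)$ rather than $\underline{\mathbf{p}}\left(\pm R\right)$, so the $\mathbf{p}_{R}$ you produce is not literally a solution of $\left(TW\left[R,c\right]\right)$ as defined. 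Second, and more damaging for the way the lemma is used, it destroys the $R$-independent enclosure $\underline{\mathbf{p}}_{c}\leq\mathbf{p}_{R,c}\leq\overline{\mathbf{p}}_{c}$ that \propref{Existence_for_c_larger_than_or_equal_to_c_star} relies on when passing to the limit $R\to+\infty$: your sub-solution now depends on $R$, and pointwise $\underline{\mathbf{p}}\left(\xi+L_{R}\right)\to\mathbf{0}$ as $R\to+\infty$, so the lower bound evaporates in the limit and one can no longer conclude the limiting profile is nonzero. The translation is also unnecessary: without it the Schauder fixed point still satisfies $\mathbf{p}_{R}\geq\underline{\mathbf{p}}$ on $\left[-R,R\right]$, and $\underline{\mathbf{p}}$ is already nonzero somewhere in $\left[-R,R\right]$ as soon as $R>\overline{\xi}$, which suffices for $\mathbf{p}_{R}$ to be nonnegative nonzero — positivity of the left boundary value is not needed.
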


\begin{rem*}
The new assumption made here ensures that the vector field $\mathbf{c}$
is non-decreasing in $\mathsf{K}$, in the following natural sense:
if $\mathbf{0}\leq\mathbf{v}\leq\mathbf{w}$, then $\mathbf{0}\leq\mathbf{c}\left(\mathbf{v}\right)\leq\mathbf{c}\left(\mathbf{w}\right)$.
\end{rem*}
\begin{proof}
Fix arbitrarily $\varepsilon\in\left(0,\overline{\varepsilon}\right)$,
define consequently $\underline{\mathbf{p}}$ and then define the
following convex set of functions:
\[
\mathscr{F}=\left\{ \mathbf{v}\in\mathscr{C}\left(\left[-R,R\right],\mathbb{R}^{N}\right)\ |\ \underline{\mathbf{p}}\leq\mathbf{v}\leq\overline{\mathbf{p}}\right\} .
\]

Recall that Figueiredo\textendash Mitidieri \cite{Figueiredo_Mit}
establishes that the elliptic weak maximum principle holds for a weakly
and fully coupled elliptic operator with null Dirichlet boundary conditions
if this operator admits a positive strict super-solution. Since, for
all $\mathbf{v}\in\mathscr{C}\left(\left[-R,R\right],\mathbb{R}^{N}\right)$
such that $\mathbf{0}\leq\mathbf{v}\leq\overline{\mathbf{p}}$, we
have by the nonnegativity of $\mathbf{c}$ on $\mathsf{K}$ $\left(H_{2}\right)$
\[
-\mathbf{D}\overline{\mathbf{p}}''-c\overline{\mathbf{p}}'-\mathbf{L}\overline{\mathbf{p}}+\mathbf{c}\left[\mathbf{v}\right]\circ\overline{\mathbf{p}}\geq-\mathbf{D}\overline{\mathbf{p}}''-c\overline{\mathbf{p}}'-\mathbf{L}\overline{\mathbf{p}}\geq\mathbf{0},
\]
\[
\overline{\mathbf{p}}\left(\pm R\right)\gg\mathbf{0},
\]
 it follows that every operator of the family
\[
\left(\mathbf{D}\frac{\text{d}^{2}}{\text{d}\xi^{2}}+c\frac{\text{d}}{\text{d}\xi}+\left(\mathbf{L}-\text{diag}\mathbf{c}\left[\mathbf{v}\right]\right)\right)_{\mathbf{0}\leq\mathbf{v}\leq\overline{\mathbf{p}}}
\]
supplemented with null Dirichlet boundary conditions at $\pm R$ satisfies
the weak maximum principle in $\left(-R,R\right)$.

Define the map $\mathbf{f}$ which associates with some $\mathbf{v}\in\mathscr{F}$
the unique classical solution $\mathbf{f}\left[\mathbf{v}\right]$
of:
\[
\left\{ \begin{matrix}-\mathbf{D}\mathbf{p}''-c\mathbf{p}'=\mathbf{L}\mathbf{p}-\mathbf{c}\left[\mathbf{v}\right]\circ\mathbf{p} & \text{ in }\left(-R,R\right)\\
\mathbf{p}\left(\pm R\right)=\underline{\mathbf{p}}\left(\pm R\right).
\end{matrix}\right.
\]

The map $\mathbf{f}$ is compact by classical elliptic estimates (Gilbarg\textendash Trudinger
\cite{Gilbarg_Trudin}). 

Let $\mathbf{v}\in\mathscr{F}$. By monotonicity of $\mathbf{c}$,
the function $\mathbf{w}=\mathbf{f}\left[\mathbf{v}\right]-\underline{\mathbf{p}}$
satisfies
\begin{align*}
-\mathbf{D}\mathbf{w}''-c\mathbf{w}'-\mathbf{L}\mathbf{w} & \geq-\mathbf{c}\left[\mathbf{v}\right]\circ\mathbf{f}\left[\mathbf{v}\right]+\mathbf{c}\left[\overline{\mathbf{p}}\right]\circ\underline{\mathbf{p}}\\
 & \geq-\mathbf{c}\left[\mathbf{v}\right]\circ\mathbf{f}\left[\mathbf{v}\right]+\mathbf{c}\left[\mathbf{v}\right]\circ\underline{\mathbf{p}}\\
 & \geq-\mathbf{c}\left[\mathbf{v}\right]\circ\mathbf{w}
\end{align*}
 with null Dirichlet boundary conditions at $\pm R$. Therefore, by
virtue of the weak maximum principle applied to $\mathbf{D}\frac{\text{d}^{2}}{\text{d}\xi^{2}}+c\frac{\text{d}}{\text{d}\xi}+\left(\mathbf{L}-\text{diag}\mathbf{c}\left[\mathbf{v}\right]\right)$,
$\mathbf{f}\left[\mathbf{v}\right]\geq\underline{\mathbf{p}}$ in
$\left(-R,R\right)$. Next, since it is now established that $\mathbf{f}\left[\mathbf{v}\right]\geq\mathbf{0}$,
we also have by $\left(H_{2}\right)$
\begin{align*}
-\mathbf{D}\overline{\mathbf{p}}''-c\overline{\mathbf{p}}'-\mathbf{L}\overline{\mathbf{p}} & =\mathbf{0}\\
 & \geq-\mathbf{c}\left[\mathbf{v}\right]\circ\mathbf{f}\left[\mathbf{v}\right]\\
 & =-\mathbf{D}\mathbf{f}\left[\mathbf{v}\right]''-c\mathbf{f}\left[\mathbf{v}\right]'-\mathbf{L}\mathbf{f}\left[\mathbf{v}\right],
\end{align*}
\[
\overline{\mathbf{p}}\left(\pm R\right)\geq\underline{\mathbf{p}}\left(\pm R\right)=\mathbf{f}\left[\mathbf{v}\right]\left(\pm R\right),
\]
whence $\overline{\mathbf{p}}\geq\mathbf{f}\left[\mathbf{v}\right]$
follows from the weak maximum principle applied this time to $\mathbf{D}\frac{\text{d}^{2}}{\text{d}\xi^{2}}+c\frac{\text{d}}{\text{d}\xi}+\mathbf{L}$.

Thus $\underline{\mathbf{p}}\leq\mathbf{f}\left[\mathbf{v}\right]\leq\overline{\mathbf{p}}$
and consequently $\mathbf{f}\left(\mathscr{F}\right)\subset\mathscr{F}$.

Finally, by virtue of the Schauder fixed point theorem, $\mathbf{f}$
admits a fixed point $\mathbf{p}_{R}\in\mathscr{F}$, which is indeed
a classical solution of $\left(TW\left[R,c\right]\right)$ by elliptic
regularity.
\end{proof}

\subsubsection{The infinite domain limit and the minimal wave speed}

The speed $c$ is not fixed anymore. 

The following uniform upper estimate is a direct consequence of \propref{Global_bounds_for_the_elliptic_problem_with_drift}.
\begin{cor}
\label{cor:Uniform_upper_bound_for_the_finite_domain_problem} There
exists $R^{\star}>0$ such that, for any $c>c^{\star}$, any $R\geq R^{\star}$
and any nonnegative classical solution $\mathbf{p}$ of $\left(TW\left[R,c\right]\right)$,
\[
\left(\max_{\left[-R,R\right]}p_{i}\right)_{i\in\left[N\right]}\leq\mathbf{g}\left(0\right).
\]
\end{cor}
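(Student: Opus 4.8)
The whole content of the statement is to read off an upper bound from \propref{Global_bounds_for_the_elliptic_problem_with_drift}. Any nonnegative classical solution $\mathbf{p}$ of $\left(TW\left[R,c\right]\right)$ is in particular a nonnegative classical solution of $-\mathbf{D}\mathbf{p}''-c\mathbf{p}'=\mathbf{L}\mathbf{p}-\mathbf{c}\left[\mathbf{p}\right]\circ\mathbf{p}$ in $\left(-R,R\right)$, so \propref{Global_bounds_for_the_elliptic_problem_with_drift} applied with $\left(a,b\right)=\left(-R,R\right)$ gives, for every $i\in\left[N\right]$,
\[
\max_{\left[-R,R\right]}p_{i}\leq g_{i}\left(\max\left\{ p_{i}\left(-R\right),p_{i}\left(R\right)\right\} \right)=g_{i}\left(\max\left\{ \underline{p}_{i}\left(-R\right),\underline{p}_{i}\left(R\right)\right\} \right),
\]
where the equality is the boundary condition $\mathbf{p}\left(\pm R\right)=\underline{\mathbf{p}}\left(\pm R\right)$. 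Since $g_{i}\left(\mu\right)=\max\left(\mu,k_{i}\right)$ is non-decreasing and coincides with $g_{i}\left(0\right)=k_{i}$ on $\left(-\infty,k_{i}\right]$, it suffices to produce $R^{\star}>0$ such that $\underline{p}_{i}\left(\pm R\right)\leq k_{i}$ for all $i\in\left[N\right]$, all $c>c^{\star}$ and all $R\geq R^{\star}$; the bound $\mathbf{p}\leq\mathbf{g}\left(0\right)$ then follows at once.

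To bound the boundary values I would use the inclusion $\mathbf{0}\leq\underline{\mathbf{p}}\leq\overline{\mathbf{p}}$ built into the construction of \propref{Sub-solution}. On the left, $\underline{\mathbf{p}}$ vanishes identically on $\left(-\infty,\overline{\xi}\right]$ with $\overline{\xi}=\min\Xi_{\#}\geq0$ (this is precisely where the normalization $A_{\varepsilon}\geq\max_{i}n_{\mu_{1,c}+\varepsilon,i}/n_{\mu_{1,c},i}$ was used), so $\underline{\mathbf{p}}\left(-R\right)=\mathbf{0}$ for every $R>0$. On the right, $\underline{p}_{i}\left(R\right)\leq\overline{p}_{i}\left(R\right)=\text{e}^{-\mu_{1,c}R}n_{\mu_{1,c},i}\leq\text{e}^{-\mu_{1,c}R}$; this is already enough whenever $\min_{j}k_{j}\geq1$, and in general I would exploit the fact that the parameter $\varepsilon\in\left(0,\overline{\varepsilon}\right)$ in \propref{Sub-solution} is still free. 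Indeed the bump $\underline{\mathbf{p}}$ has amplitude tending to $\mathbf{0}$ as $\varepsilon\to0^{+}$ (its maximum is comparable to $\tfrac{\varepsilon}{\mu_{1,c}+\varepsilon}$ times a quantity that stays bounded because $A_{\varepsilon}\geq1$ and the second term defining $A_{\varepsilon}$ vanishes, the difference $\tfrac{\kappa_{\mu_{1,c}+\varepsilon}}{\mu_{1,c}+\varepsilon}-\tfrac{\kappa_{\mu_{1,c}}}{\mu_{1,c}}$ tending to $+\infty$); hence fixing $\varepsilon$ small enough forces $\underline{\mathbf{p}}\leq\mathbf{k}$ on all of $\mathbb{R}$, so that $\underline{p}_{i}\left(\pm R\right)\leq k_{i}$ for every $R$ and any $R^{\star}>0$ works.

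The delicate point — and the one I would watch carefully — is the uniformity in $c>c^{\star}$: as $c\to+\infty$ one has $\mu_{1,c}\to0$, so $\text{e}^{-\mu_{1,c}R}$ no longer decays in $R$, and the admissible window $\left(0,\overline{\varepsilon}\right)$, with $\overline{\varepsilon}=\min\left(\mu_{2,c}-\mu_{1,c},\mu_{1,c}\right)$, collapses. One must therefore follow how $\underline{\mathbf{p}}$ degenerates along $c\to+\infty$ and check that, with $\varepsilon$ chosen as a suitable function of $c$, the quantity $\sup_{\mathbb{R}}\underline{p}_{i}$ stays below $\min_{j}k_{j}$ for every such $c$; this is rescued by the uniform facts $\text{e}^{-\mu_{1,c}R}\leq1$ for $R>0$, $\mathbf{n}_{\mu_{1,c}}\to\mathbf{n}_{PF}\left(\mathbf{L}\right)$, and boundedness of $A_{\varepsilon}$ in that limit. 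With the uniform bound on $\underline{\mathbf{p}}$ in hand, the display above yields $\max_{\left[-R,R\right]}p_{i}\leq g_{i}\left(0\right)$ for every $i$, $c>c^{\star}$ and $R\geq R^{\star}$.
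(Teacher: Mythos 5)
Your overall strategy is the paper's: the paper does state that the corollary is ``a direct consequence of \propref{Global_bounds_for_the_elliptic_problem_with_drift}'', and your reduction of the claim to the boundary estimate $\underline{p}_{i}\left(\pm R\right)\leq k_{i}$ is exactly what that consists of. Your first two observations are also correct: $\underline{\mathbf{p}}\left(-R\right)=\mathbf{0}$ for every $R>0$ because the normalization of $A_{\varepsilon}$ forces $\overline{\xi}=\min\Xi_{\#}\geq0$, and on the right $\underline{p}_{i}\left(R\right)\leq\overline{p}_{i}\left(R\right)$. You are also right to notice the delicate point about uniformity in $c$.

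The step that is not correct is the justification of the amplitude estimate. You write that $\sup\underline{p}_{i}$ is $\tfrac{\varepsilon}{\mu_{1,c}+\varepsilon}$ times a bounded quantity ``because $A_{\varepsilon}\geq1$ and the second term defining $A_{\varepsilon}$ vanishes, the difference $\tfrac{\kappa_{\mu_{1,c}+\varepsilon}}{\mu_{1,c}+\varepsilon}-\tfrac{\kappa_{\mu_{1,c}}}{\mu_{1,c}}$ tending to $+\infty$''. Both halves of this are false: as $\varepsilon\to0^{+}$ one has
\[
\frac{\kappa_{\mu_{1,c}+\varepsilon}}{\mu_{1,c}+\varepsilon}-\frac{\kappa_{\mu_{1,c}}}{\mu_{1,c}}\longrightarrow0,
\]
and since this appears in the \emph{denominator} of the second term defining $A_{\varepsilon}$, that term blows up; $A_{\varepsilon}\to+\infty$, not to a bounded quantity. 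The same error recurs in your final paragraph, where you invoke ``boundedness of $A_{\varepsilon}$''. Fortunately, the conclusion you want does not need $A_{\varepsilon}$ to be bounded. Writing $v_{i}\left(\xi\right)=\text{e}^{-\mu_{1,c}\xi}n_{\mu_{1,c},i}-A_{\varepsilon}\text{e}^{-\left(\mu_{1,c}+\varepsilon\right)\xi}n_{\mu_{1,c}+\varepsilon,i}$ and evaluating at the unique interior critical point $\xi_{i}^{\star}$, the relation $v_{i}'\left(\xi_{i}^{\star}\right)=0$ gives $A_{\varepsilon}\text{e}^{-\varepsilon\xi_{i}^{\star}}n_{\mu_{1,c}+\varepsilon,i}=\tfrac{\mu_{1,c}}{\mu_{1,c}+\varepsilon}n_{\mu_{1,c},i}$, so $A_{\varepsilon}$ cancels and
\[
\sup_{\mathbb{R}}\underline{p}_{i}=v_{i}\left(\xi_{i}^{\star}\right)=\text{e}^{-\mu_{1,c}\xi_{i}^{\star}}n_{\mu_{1,c},i}\,\frac{\varepsilon}{\mu_{1,c}+\varepsilon}\leq\frac{\varepsilon}{\mu_{1,c}+\varepsilon},
\]
using $\xi_{i}^{\star}>\overline{\xi}\geq0$ and $\left|\mathbf{n}_{\mu_{1,c}}\right|=1$. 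Hence for each $c>c^{\star}$ it suffices to pick $\varepsilon\in\left(0,\overline{\varepsilon}\right)$ small enough that $\tfrac{\varepsilon}{\mu_{1,c}+\varepsilon}<\min_{j}k_{j}$ (both constraints are compatible since they only ask $\varepsilon$ to be small), after which $\underline{\mathbf{p}}\leq\mathbf{k}$ on all of $\mathbb{R}$, the boundary datum of $\left(TW\left[R,c\right]\right)$ is below $\mathbf{k}$ for every $R>0$, and \propref{Global_bounds_for_the_elliptic_problem_with_drift} gives the conclusion with any $R^{\star}>0$. This is the clean version of the argument you were groping for; the structure is right, but the reasons you gave for the amplitude bound need to be replaced by the cancellation of $A_{\varepsilon}$ above.
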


We are now in position to prove the second half of \thmref{Traveling_waves}
\ref{enu:Existence_minimal_wave_speed}. 
\begin{prop}
\label{prop:Existence_for_c_larger_than_or_equal_to_c_star} Assume
\[
D\mathbf{c}\left(\mathbf{v}\right)\geq\mathbf{0}\text{ for all }\mathbf{v}\in\mathsf{K}.
\]

Then for all $c\geq c^{\star}$, there exists a traveling wave solution
of $\left(E_{KPP}\right)$ with speed $c$. 
\end{prop}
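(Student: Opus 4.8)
The plan is to obtain the traveling waves of strictly supercritical speed by passing to the infinite-domain limit in the truncated problems of \lemref{Finite_domain_problem}, and then to reach the critical speed $c^{\star}$ itself by a second limiting argument in which the speed is sent down to $c^{\star}$.

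First, fix $c>c^{\star}$. For every $R\geq R^{\star}$ (with $R^{\star}$ as in \corref{Uniform_upper_bound_for_the_finite_domain_problem}), \lemref{Finite_domain_problem} provides a nonnegative nonzero classical solution $\mathbf{p}_{R}$ of $\left(TW\left[R,c\right]\right)$ satisfying $\underline{\mathbf{p}}\leq\mathbf{p}_{R}\leq\overline{\mathbf{p}}$ on $\left[-R,R\right]$ and, by \corref{Uniform_upper_bound_for_the_finite_domain_problem}, $\mathbf{p}_{R}\leq\mathbf{g}\left(0\right)$. This uniform bound together with classical elliptic estimates lets me extract, as $R\to+\infty$, a $\mathscr{C}_{loc}^{2}\left(\mathbb{R}\right)$-limit $\mathbf{p}$ solving $\left(TW\left[c\right]\right)$ and still satisfying $\underline{\mathbf{p}}\leq\mathbf{p}\leq\overline{\mathbf{p}}$ on $\mathbb{R}$. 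Since $\overline{\mathbf{p}}\left(\xi\right)=\text{e}^{-\mu_{1,c}\xi}\mathbf{n}_{\mu_{1,c}}\to\mathbf{0}$ as $\xi\to+\infty$, the squeeze $\mathbf{0}\leq\mathbf{p}\leq\overline{\mathbf{p}}$ forces $\lim_{\xi\to+\infty}\mathbf{p}\left(\xi\right)=\mathbf{0}$; since $\mathbf{p}\geq\underline{\mathbf{p}}$, which is positive on a half-line, $\mathbf{p}$ is nonnegative nonzero, hence positive by \corref{Positivity_for_the_elliptic_problem_with_drift}; and since $\mathbf{p}\leq\mathbf{g}\left(0\right)$, the map $\left(t,x\right)\mapsto\mathbf{p}\left(x-ct\right)$ is a bounded positive classical solution of $\left(E_{KPP}\right)$, so \propref{Persistence} (using $c>0$, with a bounded interval) gives $\left(\liminf_{\xi\to-\infty}p_{i}\left(\xi\right)\right)_{i\in\left[N\right]}\geq\nu\mathbf{1}_{N,1}\in\mathsf{K}^{+}$. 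Hence $\left(\mathbf{p},c\right)$ is a traveling wave, which completes \thmref{Traveling_waves} \enuref{Existence_minimal_wave_speed} for every $c>c^{\star}$.

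For $c=c^{\star}$ the sub-/super-solution pair degenerates (the admissible range of $\varepsilon$ in \propref{Sub-solution} shrinks to $0$ as $c\downarrow c^{\star}$), so instead I argue by a second limit. Take $c_{n}\downarrow c^{\star}$ and a profile $\mathbf{p}_{n}$ of speed $c_{n}$ from the previous step. By \corref{Uniform_upper_bound_for_the_traveling_waves} and \corref{Uniform_lower_bound_at_the_back_of_the_front} one has $\mathbf{p}_{n}\leq\mathbf{g}\left(0\right)$ and $\liminf_{\xi\to-\infty}p_{n,i}\left(\xi\right)\geq\nu$, while $\mathbf{p}_{n}\left(\xi\right)\to\mathbf{0}$ as $\xi\to+\infty$. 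Set $\ell=\tfrac{1}{2}\min\left(\nu,\eta_{c^{\star}}\right)$ with $\eta_{c^{\star}}$ from \lemref{Uniform_lower_bound_for_positive_infima}, let $\xi_{n}=\sup\{\xi\ |\ \max_{i\in\left[N\right]}p_{n,i}\left(\xi\right)\geq\ell\}$, which is finite by the decay at $+\infty$, and consider the translates $\tilde{\mathbf{p}}_{n}:\xi\mapsto\mathbf{p}_{n}\left(\xi+\xi_{n}\right)$. These solve $\left(TW\left[c_{n}\right]\right)$, are bounded by $\mathbf{g}\left(0\right)$, and satisfy $\max_{i}\tilde{p}_{n,i}\left(0\right)=\ell$ together with $\max_{i}\tilde{p}_{n,i}\leq\ell$ on $[0,+\infty)$. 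By elliptic estimates a diagonal extraction yields a $\mathscr{C}_{loc}^{2}$-limit $\mathbf{p}_{\infty}$ solving $\left(TW\left[c^{\star}\right]\right)$, with $\max_{i}p_{\infty,i}\left(0\right)=\ell>0$, hence positive by \corref{Positivity_for_the_elliptic_problem_with_drift}, and with $\max_{i}p_{\infty,i}\leq\ell$ on $[0,+\infty)$. Now alternative (2) of \lemref{Uniform_lower_bound_for_positive_infima}, namely $\inf_{\left(0,+\infty\right)}p_{\infty,i}\geq\eta_{c^{\star}}>\ell$ for all $i$, is incompatible with $\max_{i}p_{\infty,i}\leq\ell$ on $\left(0,+\infty\right)$, so alternative (1) holds: $\lim_{\xi\to+\infty}\mathbf{p}_{\infty}\left(\xi\right)=\mathbf{0}$. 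Applying \propref{Persistence} to $\left(t,x\right)\mapsto\mathbf{p}_{\infty}\left(x-c^{\star}t\right)$ (legitimate since $c^{\star}>0$) gives $\left(\liminf_{\xi\to-\infty}p_{\infty,i}\left(\xi\right)\right)_{i\in\left[N\right]}\geq\nu\mathbf{1}_{N,1}\in\mathsf{K}^{+}$, so $\left(\mathbf{p}_{\infty},c^{\star}\right)$ is a traveling wave, and the proposition follows.

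The delicate step is this critical-speed limit: the normalization has to keep the limiting profile simultaneously away from $\mathbf{0}$ (which is why I anchor it at a positive level $\ell$) and away from non-vanishing states such as a positive steady state. The latter is precisely what forces the choice $\ell<\eta_{c^{\star}}$ together with taking $\xi_{n}$ at the \emph{last} crossing of the level $\ell$, so that $\max_{i}p_{\infty,i}\leq\ell$ on the whole right half-line and the persistence-type alternative in \lemref{Uniform_lower_bound_for_positive_infima} is excluded; everything else is a routine compactness-and-squeeze argument.
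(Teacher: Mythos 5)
Your proof is correct and follows the same overall strategy as the paper's: exhaust $(TW[R,c])$ as $R\to+\infty$ for $c>c^{\star}$, then take $c_{n}\downarrow c^{\star}$ with a translation normalization calibrated against the constants $\nu$ and $\eta_{c^{\star}}$ so that \lemref{Uniform_lower_bound_for_positive_infima} forces decay at $+\infty$ while the tail at $-\infty$ stays bounded away from $\mathbf{0}$. There are two small but genuine differences worth noting. First, for $c>c^{\star}$ you obtain the condition at $-\infty$ by applying \propref{Persistence} directly to $(t,x)\mapsto\mathbf{p}(x-ct)$, whereas the paper appeals to \corref{Nonexistence_for_c_smaller_than_c_star_and_positivity_at_the_back}; your route is if anything cleaner, since it avoids having to argue implicitly about the reflected profile at speed $-c$, and it yields the quantitative bound $\geq\nu\mathbf{1}_{N,1}$ for free. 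Second, for $c=c^{\star}$ your normalization is dual to the paper's: you pin down the \emph{last} time the component-wise $\max$ falls to level $\ell$, whereas the paper pins down the \emph{first} time the component-wise $\min$ drops below $\ell$. The paper's choice has the advantage that the lower bound $\geq\ell$ on $(-\infty,0)$ is preserved in the limit, giving the liminf at $-\infty$ without any additional argument, while your choice controls the profile only on $[0,+\infty)$ and therefore requires a second invocation of \propref{Persistence} to handle $-\infty$. Both choices equally well exclude alternative (2) of \lemref{Uniform_lower_bound_for_positive_infima}, and all the supporting estimates you use are available at that stage, so the argument is sound.
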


\begin{rem*}
Of course, it would be interesting to exhibit other additional assumptions
on $\mathbf{c}$ sufficient to ensure existence of traveling waves
for all $c\geq c^{\star}$. In view of known results about scalar
multistable reaction\textendash diffusion equations (we refer for
instance to Fife\textendash McLeod \cite{Fife_McLeod_19}), some additional
assumption should in any case be necessary. 
\end{rem*}
\begin{proof}
Hereafter, for all $c>c^{\star}$ and all $R>0$, the triplet $\left(\overline{\mathbf{p}},\underline{\mathbf{p}},\mathbf{p}_{R}\right)$
constructed in the preceding subsections is denoted $\left(\overline{\mathbf{p}}_{c},\underline{\mathbf{p}}_{c},\mathbf{p}_{R,c}\right)$.

For all $c>c^{\star}$, thanks to \corref{Uniform_upper_bound_for_the_finite_domain_problem},
the family $\left(\mathbf{p}_{R,c}\right)_{R>0}$ is uniformly globally
bounded. By classical elliptic estimates (Gilbarg\textendash Trudinger
\cite{Gilbarg_Trudin}) and a diagonal extraction process, we can
extract a sequence $\left(R_{n},\mathbf{p}_{R_{n},c}\right)_{n\in\mathbb{N}}$
such that, as $n\to+\infty$, $R_{n}\to+\infty$ and $\mathbf{p}_{R_{n},c}$
converges to some limit $\mathbf{p}_{c}$ in $\mathscr{C}_{loc}^{2}$.
As expected, $\mathbf{p}_{c}$ is a bounded nonnegative classical
solution of $\left(TW\left[c\right]\right)$. The fact that its limit
as $\xi\to+\infty$ is $\mathbf{0}$, as well as the fact that $\mathbf{p}_{c}$
is nonzero whence positive (\corref{Positivity_for_the_elliptic_problem_with_drift}),
are obvious thanks to the inequality $\underline{\mathbf{p}}_{c}\leq\mathbf{p}_{c}\leq\overline{\mathbf{p}}_{c}$.
At the other end of the real line, \corref{Nonexistence_for_c_smaller_than_c_star_and_positivity_at_the_back}
clearly enforces
\[
\left(\liminf\limits _{\xi\to-\infty}p_{c,i}\left(\xi\right)\right)_{i\in\left[N\right]}\in\mathsf{K}^{++}\subset\mathsf{K}^{+}.
\]

Thus $\left(\mathbf{p}_{c},c\right)$ is a traveling wave solution.

In order to construct a critical traveling wave $\left(\mathbf{p}_{c^{\star}},c^{\star}\right)$,
we consider a decreasing sequence $\left(c_{n}\right)_{n\in\mathbb{N}}\in\left(c^{\star},+\infty\right)^{\mathbb{N}}$
such that $c_{n}\to c^{\star}$ as $n\to+\infty$ and intend to apply
a compactness argument to a normalized version of the sequence $\left(\mathbf{p}_{c_{n}}\right)_{n\in\mathbb{N}}$. 

By \corref{Uniform_lower_bound_at_the_back_of_the_front}, 
\[
\liminf_{\xi\to-\infty}\min_{i\in\left[N\right]}p_{c_{n},i}\left(\xi\right)\geq\nu\text{ for all }n\in\mathbb{N}.
\]

Recall from \lemref{Uniform_lower_bound_for_positive_infima} the
definition of $\eta_{c}>0$. For all $n\in\mathbb{N}$ the following
quantity is well-defined and finite:
\[
\xi_{n}=\inf\left\{ \xi\in\mathbb{R}\ |\ \min_{i\in\left[N\right]}p_{c_{n},i}\left(\xi\right)<\min\left(\frac{\nu}{2},\frac{\eta_{c^{\star}}}{2}\right)\right\} .
\]

We define then the sequence of normalized profiles 
\[
\tilde{\mathbf{p}}_{c_{n}}:\xi\mapsto\mathbf{p}_{c_{n}}\left(\xi+\xi_{n}\right)\text{ for all }n\in\mathbb{N}.
\]
A translation of a profile of traveling wave being again a profile
of traveling wave, $\left(\tilde{\mathbf{p}}_{c_{n}},c_{n}\right)_{n\in\mathbb{N}}$
is again a sequence of traveling wave solutions. Notice the following
two immediate consequences of the normalization:
\[
\min_{i\in\left[N\right]}\tilde{p}_{c_{n},i}\left(0\right)=\min\left(\frac{\nu}{2},\frac{\eta_{c^{\star}}}{2}\right)\text{ for all }n\in\mathbb{N},
\]
\[
\inf_{\xi\in\left(-\infty,0\right)}\min_{i\in\left[N\right]}\tilde{p}_{c_{n},i}\left(\xi\right)\geq\min\left(\frac{\nu}{2},\frac{\eta_{c^{\star}}}{2}\right)\text{ for all }n\in\mathbb{N}.
\]

We are now in position to pass to the limit $n\to+\infty$. The sequence
$\left(\tilde{\mathbf{p}}_{c_{n}}\right)_{n\in\mathbb{N}}$ being
globally uniformly bounded, it admits, up to a diagonal extraction
process, a bounded nonnegative limit $\mathbf{p}_{c^{\star}}$ in
$\mathscr{C}_{loc}^{2}$. Since $c_{n}\to c^{\star}$, $\mathbf{p}_{c^{\star}}$
satisfies $\left(TW\left[c^{\star}\right]\right)$. The normalization
yields
\[
\min_{i\in\left[N\right]}p_{c^{\star},i}\left(0\right)=\min\left(\frac{\nu}{2},\frac{\eta_{c^{\star}}}{2}\right),
\]
\[
\inf_{\xi\in\left(-\infty,0\right)}\min_{i\in\left[N\right]}p_{c^{\star},i}\left(\xi\right)\geq\min\left(\frac{\nu}{2},\frac{\eta_{c^{\star}}}{2}\right).
\]

Consequently,
\[
\left(\liminf_{\xi\to-\infty}p_{c^{\star},i}\left(\xi\right)\right)_{i\in\left[N\right]}\in\mathsf{K}^{++}
\]
 and, according to \lemref{Uniform_lower_bound_for_positive_infima},
\[
\lim_{\xi\to+\infty}\mathbf{p}_{c^{\star}}\left(\xi\right)=\mathbf{0}.
\]

The pair $\left(\mathbf{p}_{c^{\star}},c^{\star}\right)$ is a traveling
wave solution indeed and this ends the proof.
\end{proof}

\section{Spreading speed}

In this section, we assume $\lambda_{PF}\left(\mathbf{L}\right)>0$
and prove \thmref{Spreading_speed}. In order to do so, we fix $\mathbf{u}_{0}\in\mathscr{C}_{b}\left(\mathbb{R},\mathbb{R}^{N}\right)$
of the form $\mathbf{u}_{0}=\mathbf{v}\mathbf{1}_{\left(-\infty,x_{0}\right)}$
with $x_{0}\in\mathbb{R}$ and $\mathbf{v}$ nonnegative nonzero and
we define $\mathbf{u}$ as the unique classical solution of $\left(E_{KPP}\right)$
set in $\left(0,+\infty\right)\times\mathbb{R}$ with initial data
$\mathbf{u}_{0}$.
\begin{rem*}
This type of spreading result, as well as its proof by means of super-
and sub-solutions, is quite classical (we refer to Aronson\textendash Weinberger
\cite{Aronson_Weinbe} and Berestycki\textendash Hamel\textendash Nadin
\cite{Berestycki_Hamel_Nadin} among others). Still, we provide it
to make clear that the lack of comparison principle for $\left(E_{KPP}\right)$
is not really an issue. 

Of course, for the scalar KPP equation, much more precise spreading
results exist (for instance the celebrated articles by Bramson \cite{Bramson_1978,Bramson_1983}
using probabilistic methods). Here, our aim is not to give a complete
description of the spreading properties of $\left(E_{KPP}\right)$
but rather to illustrate that it is, once more, very similar to the
scalar situation and that further generalizations should be possible.
\end{rem*}

\subsection{Upper estimate}
\begin{prop}
\label{prop:Spreading_speed_1} Let $c>c^{\star}$ and $y\in\mathbb{R}$.
We have
\[
\left(\lim_{t\to+\infty}\sup_{x\in\left(y,+\infty\right)}u_{i}\left(t,x+ct\right)\right)_{i\in\left[N\right]}=\mathbf{0}.
\]
\end{prop}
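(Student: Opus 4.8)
The plan is to dominate $\mathbf{u}$ by an exponentially decaying super-solution of the \emph{linearized} system and then pass to the limit $t\to+\infty$. The mechanism that makes this possible despite the lack of a comparison principle for $\left(E_{KPP}\right)$ is the KPP structure encoded in $\left(H_2\right)$: since $\mathbf{L}$ is essentially nonnegative and $\mathbf{c}(\mathsf{K})\subset\mathsf{K}$, the cone $\mathsf{K}$ is positively invariant for $\left(E_{KPP}\right)$, so $\mathbf{u}\geq\mathbf{0}$; hence $\mathbf{c}[\mathbf{u}]\circ\mathbf{u}\geq\mathbf{0}$ and
\[
\partial_t\mathbf{u}-\mathbf{D}\partial_{xx}\mathbf{u}-\mathbf{L}\mathbf{u}=-\mathbf{c}[\mathbf{u}]\circ\mathbf{u}\leq\mathbf{0},
\]
i.e. $\mathbf{u}$ is a nonnegative sub-solution of the weakly and fully coupled cooperative linear system $\partial_t\mathbf{w}-\mathbf{D}\partial_{xx}\mathbf{w}-\mathbf{L}\mathbf{w}=\mathbf{0}$, for which the Protter--Weinberger comparison principle \cite{Protter_Weinberger} applies.

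First I would build the super-solution. Fix $c_1\in(c^\star,c)$ and, using \lemref{Definition_c_star}, let $\mu=\mu_{1,c_1}>0$, so that (by the characterization recalled just before that lemma) $\xi\mapsto\text{e}^{-\mu\xi}\mathbf{n}_\mu$ solves $\left(TW_0\left[c_1\right]\right)$, where $\mathbf{n}_\mu=\mathbf{n}_{PF}(\mu^2\mathbf{D}+\mathbf{L})\gg\mathbf{0}$. Then for every $\kappa>0$ the function $\overline{\mathbf{w}}:(t,x)\mapsto\kappa\,\text{e}^{-\mu(x-c_1t)}\mathbf{n}_\mu$ is an exact solution of $\partial_t\mathbf{w}-\mathbf{D}\partial_{xx}\mathbf{w}-\mathbf{L}\mathbf{w}=\mathbf{0}$. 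Since $\mathbf{u}_0=\mathbf{v}\mathbf{1}_{(-\infty,x_0)}$ is bounded and vanishes on $[x_0,+\infty)$, while $\text{e}^{-\mu x}\geq\text{e}^{-\mu x_0}$ for $x\leq x_0$ and $\mathbf{n}_\mu\gg\mathbf{0}$, one may fix $\kappa$ large enough that $\overline{\mathbf{w}}(0,\cdot)\geq\mathbf{u}_0$ on $\mathbb{R}$. Because $\overline{\mathbf{w}}-\mathbf{u}$ is bounded from below ($\overline{\mathbf{w}}\geq\mathbf{0}$ and $\mathbf{u}$ is bounded by \propref{Absorbing_set}), the comparison principle --- used exactly as in the extinction proof, and, if one wishes to avoid growth conditions at infinity, after the usual truncation on bounded cylinders $(0,T)\times(-R,R)$ followed by a passage to the limit --- gives $\mathbf{0}\leq\mathbf{u}\leq\overline{\mathbf{w}}$ in $(0,+\infty)\times\mathbb{R}$.

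The conclusion is then immediate: for $t>0$, $x>y$ and $i\in\left[N\right]$,
\[
0\leq u_i(t,x+ct)\leq\kappa\,n_{\mu,i}\,\text{e}^{-\mu x}\,\text{e}^{-\mu(c-c_1)t}\leq\kappa\,n_{\mu,i}\,\text{e}^{-\mu y}\,\text{e}^{-\mu(c-c_1)t},
\]
so $\sup_{x\in(y,+\infty)}u_i(t,x+ct)$ is squeezed between $0$ and a quantity that tends to $0$ as $t\to+\infty$, since $\mu(c-c_1)>0$; this is exactly the asserted limit. The only delicate point is the validity of the comparison inequality on the whole line against an exponentially growing super-solution, but this is classical (boundedness from below of the difference is enough, via a Phragm\'en--Lindel\"of argument, or one simply truncates as above), so I do not expect it to be a genuine obstacle; everything else is a direct consequence of \lemref{Definition_c_star} and \propref{Absorbing_set}.
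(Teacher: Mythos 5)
Your proof is correct and takes essentially the same approach as the paper: observe that the nonnegativity of $\mathbf{u}$ together with $\left(H_2\right)$ makes $\mathbf{u}$ a sub-solution of the cooperative linearized system $\partial_t\mathbf{w}-\mathbf{D}\partial_{xx}\mathbf{w}-\mathbf{L}\mathbf{w}=\mathbf{0}$, dominate the front-like initial data by an exponential profile $\xi\mapsto\kappa\,\text{e}^{-\mu\xi}\mathbf{n}_\mu$, and conclude by Protter--Weinberger comparison. The only (inessential) difference is that you build the super-solution at an intermediate speed $c_1\in(c^\star,c)$ via $\mu_{1,c_1}$, whereas the paper takes the critical profile at speed $c^\star$ with rate $\mu_{c^\star}$, which gives a single super-solution valid for every $c>c^\star$.
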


\begin{proof}
By definition of $\mathbf{u}_{0}$, there exists $\xi_{1}\in\mathbb{R}$
such that
\[
\overline{\mathbf{p}}:\xi\mapsto\text{e}^{-\mu_{c^{\star}}\left(\xi-\xi_{1}\right)}\mathbf{n}_{\mu_{c^{\star}}}
\]
(which is a positive solution of $\left(TW_{0}\left[c^{\star}\right]\right)$
by \lemref{Definition_c_star}) satisfies $\overline{\mathbf{p}}\geq\mathbf{u}_{0}$.
Then, defining $\overline{\mathbf{u}}:\left(t,x\right)\mapsto\overline{\mathbf{p}}\left(x-c^{\star}t\right)$,
we obtain by the nonnegativity of $\mathbf{c}$ on $\mathsf{K}$ $\left(H_{2}\right)$
\begin{align*}
\partial_{t}\overline{\mathbf{u}}-\mathbf{D}\partial_{xx}\overline{\mathbf{u}}-\mathbf{L}\overline{\mathbf{u}} & =\mathbf{0}\\
 & \geq-\mathbf{c}\left[\mathbf{u}\right]\circ\mathbf{u}\\
 & =\partial_{t}\mathbf{u}-\mathbf{D}\partial_{xx}\mathbf{u}-\mathbf{L}\mathbf{u}
\end{align*}
 and then, applying the parabolic strong maximum principle to the
operator $\partial_{t}-\mathbf{D}\partial_{xx}-\mathbf{L}$, we deduce
that $\overline{\mathbf{u}}-\mathbf{u}$ is nonnegative in $[0,+\infty)\times\mathbb{R}$.
Consequently, for all $x\in\mathbb{R}$, $t>0$ and $c>c^{\star}$,
\[
\mathbf{0}\leq\mathbf{u}\left(t,x+ct\right)\leq\overline{\mathbf{p}}\left(x+\left(c-c^{\star}\right)t\right),
\]
 and by component-wise monotonicity of $\overline{\mathbf{p}}$, for
all $y\in\mathbb{R}$ and all $x\geq y$,
\[
\mathbf{0}\leq\mathbf{u}\left(t,x+ct\right)\leq\overline{\mathbf{p}}\left(y+\left(c-c^{\star}\right)t\right),
\]
which gives the result.
\end{proof}

\subsection{Lower estimate}
\begin{prop}
\label{prop:Spreading_speed_2} Let $c\in[0,c^{\star})$ and $I\subset\mathbb{R}$
be a bounded interval. We have
\[
\left(\liminf_{t\to+\infty}\inf_{x\in I}u_{i}\left(t,x+ct\right)\right)_{i\in\left[N\right]}\in\mathsf{K}^{++}.
\]
\end{prop}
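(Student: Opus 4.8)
The plan is to adapt the classical Aronson\textendash Weinberger argument: build a \emph{moving}, compactly supported subsolution out of the Dirichlet principal eigenfunction of the linearized operator with drift, and compensate for the absence of a comparison principle for $\left(E_{KPP}\right)$ exactly as in the proof of \lemref{Instability_of_0}. Fix $c\in[0,c^{\star})$. By \lemref{Value_of_the_generalized_principal_eigenvalue_with_drift}, $\max_{\mu\geq0}\left(\kappa_{\mu}+\mu c\right)<0$, so I would fix $\gamma\in(0,1)$ small enough that $\lambda_{1}\left(-\mathbf{D}\frac{\text{d}^{2}}{\text{d}x^{2}}-c\frac{\text{d}}{\text{d}x}-\left(\mathbf{L}-\gamma\lambda_{PF}\left(\mathbf{L}\right)\mathbf{I}\right)\right)<0$, and then, by monotonicity of the Dirichlet principal eigenvalue in $R$ together with \thmref{Generalized_principal_eigenvalue}, fix $R_{\gamma}>0$ — large enough also to contain the given bounded interval $I$ in the moving frame — such that $\Lambda_{c}:=\lambda_{1,Dir}\left(-\mathbf{D}\frac{\text{d}^{2}}{\text{d}x^{2}}-c\frac{\text{d}}{\text{d}x}-\left(\mathbf{L}-\gamma\lambda_{PF}\left(\mathbf{L}\right)\mathbf{I}\right),\left(-R_{\gamma},R_{\gamma}\right)\right)<0$. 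Let $\mathbf{n}$ be the associated principal eigenfunction, normalized by $\max_{i}\max n_{i}=1$ and extended by $\mathbf{0}$ outside $\left(-R_{\gamma},R_{\gamma}\right)$, and let $\alpha_{\gamma}=\max\left\{ \alpha>0\ |\ \forall\mathbf{v}\in\left[0,\alpha\right]^{N}\ \mathbf{c}\left(\mathbf{v}\right)\leq\gamma\lambda_{PF}\left(\mathbf{L}\right)\mathbf{1}_{N,1}\right\}$, which is positive by $\left(H_{3}\right)$ and continuity of $\mathbf{c}$.

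For $\delta\in\left(0,\alpha_{\gamma}\right]$, a time $s$ and a point $y$, set $\underline{\mathbf{u}}\left(t,x\right)=\delta\text{e}^{-\Lambda_{c}\left(t-s\right)}\mathbf{n}\left(x-y-c\left(t-s\right)\right)$. A direct computation shows that $\underline{\mathbf{u}}$ solves $\partial_{t}\underline{\mathbf{u}}-\mathbf{D}\partial_{xx}\underline{\mathbf{u}}-\left(\mathbf{L}-\gamma\lambda_{PF}\left(\mathbf{L}\right)\mathbf{I}\right)\underline{\mathbf{u}}=\mathbf{0}$ inside the moving cylinder $\mathsf{Q}\left(s,\tau\right)=\left\{ \left(t,x\right)\ |\ s<t<s+\tau,\ \left|x-y-c\left(t-s\right)\right|<R_{\gamma}\right\}$, whereas on any region where $\mathbf{u}\leq\alpha_{\gamma}$ component-wise one has $\mathbf{c}\left(\mathbf{u}\right)\leq\gamma\lambda_{PF}\left(\mathbf{L}\right)\mathbf{1}_{N,1}$, so that $\mathbf{u}$ is there a super-solution of the same weakly and fully coupled cooperative operator $\partial_{t}-\mathbf{D}\partial_{xx}-\left(\mathbf{L}-\gamma\lambda_{PF}\left(\mathbf{L}\right)\mathbf{I}\right)$. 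Consequently, exactly as in \lemref{Instability_of_0}, if $\mathbf{u}\left(s,\cdot\right)\geq\underline{\mathbf{u}}\left(s,\cdot\right)$ on $\left[y-R_{\gamma},y+R_{\gamma}\right]$ and $\max_{i}u_{i}<\alpha_{\gamma}$ on the moving box throughout $\left(s,s+T_{\delta}\right)$, where $T_{\delta}=\frac{\ln\alpha_{\gamma}-\ln\delta}{-\Lambda_{c}}$, then $\mathbf{w}=\mathbf{u}-\underline{\mathbf{u}}$ is a supersolution of the cooperative operator in $\mathsf{Q}\left(s,T_{\delta}\right)$, nonnegative on its parabolic boundary, so by the strong maximum principle (Protter\textendash Weinberger) $\mathbf{u}\geq\underline{\mathbf{u}}$ in $\mathsf{Q}\left(s,T_{\delta}\right)$; letting $t\uparrow s+T_{\delta}$ then forces $\max_{i}u_{i}\geq\max\underline{\mathbf{u}}=\alpha_{\gamma}$ on the box, a contradiction. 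Hence the moving box necessarily reaches the level $\alpha_{\gamma}$ within time $T_{\delta}$.

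It remains to seed and iterate. By \thmref{Strong_positivity}, $\mathbf{u}\left(1,\cdot\right)$ is positive, so it dominates $\delta_{0}\mathbf{n}\left(\cdot-y_{0}\right)$ for suitable $\delta_{0}>0$ and $y_{0}\in\mathbb{R}$, and the previous step yields a first time $\hat{t}_{0}$ at which $\max_{i}u_{i}\geq\alpha_{\gamma}$ on the corresponding moving box. Whenever $\max_{i}u_{i}\geq\alpha_{\gamma}$ on the moving box at some time $\sigma$, one applies F\"oldes\textendash Pol\'{a}\v{c}ik's parabolic Harnack inequality (as in \propref{Persistence}) to the linear weakly and fully coupled system $\partial_{t}\mathbf{v}-\mathbf{D}\partial_{xx}\mathbf{v}-\left(\mathbf{L}-\text{diag}\,\mathbf{c}\left[\mathbf{u}\right]\right)\mathbf{v}=\mathbf{0}$ — whose coefficients are globally bounded by the absorbing set (\propref{Absorbing_set}) — on a fixed enlargement of the box, obtaining $\mathbf{u}\left(\sigma+1,\cdot\right)\geq\overline{\kappa}\alpha_{\gamma}\mathbf{1}_{N,1}$ on an interval still containing the moving box at time $\sigma+1$, with $\overline{\kappa}>0$ independent of $\sigma$. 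This re-seeds the moving subsolution with the \emph{fixed} amplitude $\overline{\kappa}\alpha_{\gamma}$, so the next catch-up time occurs at most $1+T$ later, $T=\frac{\ln\left(1/\overline{\kappa}\right)}{-\Lambda_{c}}$. Iterating, the set of catch-up times is unbounded with consecutive gaps at most $1+T$; one last application of the Harnack inequality, with time-lag ranging over the compact set $\left[1,2+T\right]$ and hence with a uniform constant, upgrades this to the existence of $\eta>0$ and $T_{1}>0$ such that $\inf_{\left|x-y_{0}-c\left(t-1\right)\right|\leq R_{\gamma}}u_{i}\left(t,x\right)\geq\eta$ for all $t\geq T_{1}$ and $i\in\left[N\right]$. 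Since $R_{\gamma}$ was chosen large enough that $I+ct$ lies in this window for $t$ large, this gives $\left(\liminf_{t\to+\infty}\inf_{x\in I}u_{i}\left(t,x+ct\right)\right)_{i\in\left[N\right]}\geq\eta\mathbf{1}_{N,1}\in\mathsf{K}^{++}$.

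The main obstacle is, as throughout this paper, the absence of a comparison principle for $\left(E_{KPP}\right)$; it is circumvented exactly as in \lemref{Instability_of_0}, by noting that the inequality $\mathbf{u}\geq\underline{\mathbf{u}}$ is only needed on the part of the moving cylinder where $\mathbf{u}$ itself stays below the threshold $\alpha_{\gamma}$, and there $\left(E_{KPP}\right)$ is controlled from below by a genuinely cooperative linear system to which both the strong maximum principle and the Harnack inequality apply. The secondary difficulty is purely one of bookkeeping: turning the sequence of catch-up times into a lower bound valid for \emph{all} large $t$ rather than along a subsequence, which is why the Harnack constants must be uniform — and this uniformity is precisely what the $\mathscr{L}^{\infty}$ absorbing set of \propref{Absorbing_set} provides.
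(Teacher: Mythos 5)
Your proof is correct and takes essentially the same approach as the paper's: the paper passes to the moving frame $\mathbf{u}_{c}\left(t,y\right)=\mathbf{u}\left(t,y+ct\right)$, which turns the statement into a persistence estimate for the drift operator $\partial_{t}-\mathbf{D}\partial_{yy}-c\partial_{y}-\mathbf{L}+\text{diag}\,\mathbf{c}\left[\mathbf{u}_{c}\right]$, and then delegates the details to a drift-augmented repetition of the proof of \propref{Persistence}. You stay in the fixed frame and make the subsolution and comparison cylinder travel at speed $c$, which is the same argument modulo undoing that change of variables, with the instability step of \lemref{Instability_of_0} and the F\"oldes--Pol\'{a}\v{c}ik Harnack iteration spelled out rather than cited.
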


\begin{proof}
Recall \lemref{Value_of_the_generalized_principal_eigenvalue_with_drift}
and define
\[
\lambda_{c}=-\max\limits _{\mu\geq0}\left(\kappa_{\mu}+\mu c\right)>0
\]
 ($-\lambda_{c}$ being the generalized principal eigenvalue of $-\mathbf{D}\frac{\text{d}^{2}}{\text{d}x^{2}}-c\frac{\text{d}}{\text{d}x}-\mathbf{L}$)
and, using the fact that $\mathbf{c}$ vanishes at $\mathbf{0}$ $\left(H_{3}\right)$,
\[
\alpha_{c}=\max\left\{ \alpha>0\ |\ \forall\mathbf{v}\in\left[0,\alpha\right]^{N}\quad\mathbf{c}\left(\mathbf{v}\right)\leq\frac{\lambda_{c}}{2}\mathbf{1}_{N,1}\right\} .
\]
 Let $R_{c}$ be a sufficiently large radius satisfying
\[
\lambda_{1,Dir}\left(-\mathbf{D}\frac{\text{d}^{2}}{\text{d}\xi^{2}}-c\frac{\text{d}}{\text{d}\xi}-\left(\mathbf{L}-\frac{\lambda_{c}}{2}\mathbf{I}\right),\left(-R_{c},R_{c}\right)\right)<0.
\]
 Let $\mathbf{u}_{c}:\left(t,y\right)\mapsto\mathbf{u}\left(t,y+ct\right)$.
It is a solution of
\[
\partial_{t}\mathbf{u}_{c}-\mathbf{D}\partial_{yy}\mathbf{u}_{c}-c\partial_{y}\mathbf{u}_{c}=\mathbf{L}\mathbf{u}_{c}-\mathbf{c}\left[\mathbf{u}_{c}\right]\circ\mathbf{u}_{c}\text{ in }\left(0,+\infty\right)\times\mathbb{R}
\]
 with initial data $\mathbf{u}_{0}$. Just as in the proof of \propref{Persistence},
we can use $R_{c}$, $\alpha_{c}$ and Földes\textendash Polá\v{c}ik\textquoteright s
Harnack inequality \cite{Foldes_Polacik} to deduce the existence
of $\nu_{c}>0$ such that
\[
\left(\liminf_{t\to+\infty}\inf_{x\in I}u_{i}\left(t,x+ct\right)\right)_{i\in\left[N\right]}\geq\nu_{c}\mathbf{1}_{N,1}.
\]
This ends the proof.
\end{proof}
\begin{rem*}
We point out that $R_{c}\to+\infty$ as $c\to c^{\star}$. Hence the
proof above cannot be used directly to obtain a lower bound uniform
with respect to $c$. Although we expect indeed the existence of such
a bound, we do not know how to obtain it. 
\end{rem*}

\section{Estimates for the minimal wave speed}

In this section, we assume $\lambda_{PF}\left(\mathbf{L}\right)>0$,
\[
d_{1}\leq d_{2}\leq\text{\dots}\leq d_{N},
\]
and prove the estimates provided by \thmref{Characterization_minimal_speed}.
Recall the equality
\[
c^{\star}=\min_{\mu>0}\left(-\frac{\kappa_{\mu}}{\mu}\right).
\]

Recall as a preliminary that for all $r>0$ and $d>0$, the following
equality holds:
\[
2\sqrt{rd}=\min_{\mu>0}\left(\mu d+\frac{r}{\mu}\right).
\]
\begin{prop}
We have
\[
2\sqrt{d_{1}\lambda_{PF}\left(\mathbf{L}\right)}\leq c^{\star}\leq2\sqrt{d_{N}\lambda_{PF}\left(\mathbf{L}\right)}.
\]

If $d_{1}<d_{N}$, both inequalities are strict. If $d_{1}=d_{N}$,
both inequalities are equalities.
\end{prop}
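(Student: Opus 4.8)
The plan is to sandwich the matrix $\mu^{2}\mathbf{D}+\mathbf{L}$ between the two matrices $\mu^{2}d_{1}\mathbf{I}+\mathbf{L}$ and $\mu^{2}d_{N}\mathbf{I}+\mathbf{L}$ and then transfer the inequalities to the Perron\textendash Frobenius eigenvalues. Since $d_{1}\leq d_{i}\leq d_{N}$ for all $i\in\left[N\right]$, the diagonal matrix $\mathbf{D}$ satisfies $d_{1}\mathbf{I}\leq\mathbf{D}\leq d_{N}\mathbf{I}$, hence for every $\mu>0$
\[
\mu^{2}d_{1}\mathbf{I}+\mathbf{L}\leq\mu^{2}\mathbf{D}+\mathbf{L}\leq\mu^{2}d_{N}\mathbf{I}+\mathbf{L}.
\]
The three matrices are essentially nonnegative and irreducible, as they share the off-diagonal structure of $\mathbf{L}$, which is irreducible by $\left(H_{1}\right)$. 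The monotonicity of $\lambda_{PF}$ with respect to the entrywise order on essentially nonnegative irreducible matrices\textemdash obtained from the classical statement for nonnegative irreducible matrices by shifting all three matrices by a common sufficiently large multiple of $\mathbf{I}$, which preserves both the inequalities and the irreducibility and shifts every $\lambda_{PF}$ by that constant\textemdash together with the identity $\lambda_{PF}\left(\mathbf{A}+s\mathbf{I}\right)=\lambda_{PF}\left(\mathbf{A}\right)+s$, gives
\[
\mu^{2}d_{1}+\lambda_{PF}\left(\mathbf{L}\right)\leq\lambda_{PF}\left(\mu^{2}\mathbf{D}+\mathbf{L}\right)\leq\mu^{2}d_{N}+\lambda_{PF}\left(\mathbf{L}\right).
\]

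Dividing by $\mu>0$ and minimising over $\mu>0$, and using the identity $2\sqrt{rd}=\min_{\mu>0}\left(\mu d+\frac{r}{\mu}\right)$ recalled just above with $r=\lambda_{PF}\left(\mathbf{L}\right)>0$, I would obtain directly
\[
2\sqrt{d_{1}\lambda_{PF}\left(\mathbf{L}\right)}\leq\min_{\mu>0}\frac{\lambda_{PF}\left(\mu^{2}\mathbf{D}+\mathbf{L}\right)}{\mu}=c^{\star}\leq2\sqrt{d_{N}\lambda_{PF}\left(\mathbf{L}\right)}.
\]
If $d_{1}=d_{N}$, then $\mathbf{D}=d_{1}\mathbf{I}$, so the two outer quantities coincide and $c^{\star}$ equals both of them; this settles the equality case.

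For the strictness when $d_{1}<d_{N}$, observe that the diagonal matrices $\mathbf{D}-d_{1}\mathbf{I}$ and $d_{N}\mathbf{I}-\mathbf{D}$ are then nonnegative nonzero (their $N$-th, respectively first, diagonal entries equal $d_{N}-d_{1}>0$). Hence the strict form of Perron\textendash Frobenius monotonicity\textemdash if $\mathbf{A}$ is essentially nonnegative irreducible and $\mathbf{B}-\mathbf{A}$ is nonnegative nonzero, then $\lambda_{PF}\left(\mathbf{B}\right)>\lambda_{PF}\left(\mathbf{A}\right)$, again reduced to the classical irreducible nonnegative case by the same shift\textemdash upgrades the two middle inequalities to strict ones for every $\mu>0$:
\[
\mu d_{1}+\frac{\lambda_{PF}\left(\mathbf{L}\right)}{\mu}<\frac{\lambda_{PF}\left(\mu^{2}\mathbf{D}+\mathbf{L}\right)}{\mu}<\mu d_{N}+\frac{\lambda_{PF}\left(\mathbf{L}\right)}{\mu}.
\]
Evaluating the first strict inequality at $\mu_{c^{\star}}$ yields $2\sqrt{d_{1}\lambda_{PF}\left(\mathbf{L}\right)}\leq\mu_{c^{\star}}d_{1}+\frac{\lambda_{PF}\left(\mathbf{L}\right)}{\mu_{c^{\star}}}<c^{\star}$; evaluating the second strict inequality at the positive minimiser $\mu_{\sharp}=\sqrt{\lambda_{PF}\left(\mathbf{L}\right)/d_{N}}$ of $\mu\mapsto\mu d_{N}+\lambda_{PF}\left(\mathbf{L}\right)/\mu$ yields $c^{\star}\leq\frac{\lambda_{PF}\left(\mu_{\sharp}^{2}\mathbf{D}+\mathbf{L}\right)}{\mu_{\sharp}}<2\sqrt{d_{N}\lambda_{PF}\left(\mathbf{L}\right)}$. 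Both bounds are therefore strict.

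I do not expect a serious obstacle here: the only delicate point in the argument is the (strict) monotonicity of $\lambda_{PF}$ in the essentially nonnegative setting, which the shifting trick reduces cleanly to the standard Perron\textendash Frobenius fact for nonnegative irreducible matrices, a tool already used elsewhere in the paper.
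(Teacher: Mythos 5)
Your proof is correct and takes essentially the same route as the paper: sandwich $\lambda_{PF}\left(\mu^{2}\mathbf{D}+\mathbf{L}\right)$ between $\mu^{2}d_{1}+\lambda_{PF}\left(\mathbf{L}\right)$ and $\mu^{2}d_{N}+\lambda_{PF}\left(\mathbf{L}\right)$ via Perron--Frobenius monotonicity, divide by $\mu$, and minimize. The paper dismisses the strict inequalities as ``well-known''; you supply the details (strict monotonicity via the shifting trick and a careful choice of evaluation points), which is the only substantive difference.
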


\begin{proof}
Since $d_{1}\mathbf{1}_{N,1}\leq\mathbf{d}\leq d_{N}\mathbf{1}_{N,1}$,
we have, for all $\mu>0$,
\[
\mu d_{1}+\frac{1}{\mu}\lambda_{PF}\left(\mathbf{L}\right)\leq\lambda_{PF}\left(\mu\mathbf{D}+\frac{1}{\mu}\mathbf{L}\right)\leq\mu d_{N}+\frac{1}{\mu}\lambda_{PF}\left(\mathbf{L}\right),
\]
 whence we deduce
\[
2\sqrt{d_{1}\lambda_{PF}\left(\mathbf{L}\right)}\leq c^{\star}\leq2\sqrt{d_{N}\lambda_{PF}\left(\mathbf{L}\right)}.
\]
On one hand, it is well-known that if $d_{1}<d_{N}$, then the above
inequalities are strict. On the other hand, if $d_{1}=d_{N}$, we
have 
\[
\lambda_{PF}\left(\mu\mathbf{D}+\frac{1}{\mu}\mathbf{L}\right)=\mu d_{1}+\frac{1}{\mu}\lambda_{PF}\left(\mathbf{L}\right),
\]
 whence the equality. 
\end{proof}
Recall from \lemref{Definition_c_star} that $\mathbf{n}_{\mu_{c^{\star}}}=\mathbf{n}_{PF}\left(\mu_{c^{\star}}^{2}\mathbf{D}+\mathbf{L}\right)$. 
\begin{prop}
For all $i\in\left[N\right]$ such that $l_{i,i}>0$, we have
\[
c^{\star}>2\sqrt{d_{i}l_{i,i}}.
\]
\end{prop}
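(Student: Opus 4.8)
The plan is to reduce the statement to a single pointwise-in-$\mu$ estimate on a Perron\textendash Frobenius eigenvalue. Using the characterization $c^{\star}=\min_{\mu>0}\lambda_{PF}\left(\mu^{2}\mathbf{D}+\mathbf{L}\right)/\mu$ from \lemref{Definition_c_star}, together with the elementary identity $2\sqrt{d_{i}l_{i,i}}=\min_{\mu>0}\left(\mu d_{i}+l_{i,i}/\mu\right)$ recalled just above, it suffices to show that for every $\mu>0$ one has $\lambda_{PF}\left(\mu^{2}\mathbf{D}+\mathbf{L}\right)/\mu>\mu d_{i}+l_{i,i}/\mu$, and then to exploit the fact that the minimum defining $c^{\star}$ is actually attained (at $\mu_{c^{\star}}$) in order to upgrade the resulting non-strict bound into a strict one.

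The core of the argument is the claim that, whenever $l_{i,i}>0$, the Perron\textendash Frobenius eigenvalue of the essentially nonnegative irreducible matrix $\mu^{2}\mathbf{D}+\mathbf{L}$ strictly dominates its $i$-th diagonal entry, i.e.\ $\lambda_{PF}\left(\mu^{2}\mathbf{D}+\mathbf{L}\right)>\mu^{2}d_{i}+l_{i,i}$ for all $\mu>0$. I would prove this by writing the $i$-th coordinate of the eigenvector identity $\left(\mu^{2}\mathbf{D}+\mathbf{L}\right)\mathbf{n}_{\mu}=\lambda_{PF}\left(\mu^{2}\mathbf{D}+\mathbf{L}\right)\mathbf{n}_{\mu}$ with $\mathbf{n}_{\mu}=\mathbf{n}_{PF}\left(\mu^{2}\mathbf{D}+\mathbf{L}\right)\gg\mathbf{0}$, namely $\lambda_{PF}\left(\mu^{2}\mathbf{D}+\mathbf{L}\right)n_{\mu,i}=\left(\mu^{2}d_{i}+l_{i,i}\right)n_{\mu,i}+\sum_{j\neq i}l_{i,j}n_{\mu,j}$. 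Since $N\geq2$ and $\mu^{2}\mathbf{D}+\mathbf{L}$ is irreducible $\left(H_{1}\right)$, its $i$-th row carries at least one positive off-diagonal entry; since $\mathbf{L}$ is essentially nonnegative all the off-diagonal $l_{i,j}$ are $\geq0$, and $\mathbf{n}_{\mu}\gg\mathbf{0}$, so $\sum_{j\neq i}l_{i,j}n_{\mu,j}>0$, which yields the claim after dividing by $n_{\mu,i}>0$.

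Dividing the claim by $\mu$ and combining with $\mu d_{i}+l_{i,i}/\mu\geq2\sqrt{d_{i}l_{i,i}}$ gives $\lambda_{PF}\left(\mu^{2}\mathbf{D}+\mathbf{L}\right)/\mu>2\sqrt{d_{i}l_{i,i}}$ for every $\mu>0$; evaluating at the minimizer $\mu_{c^{\star}}$ (whose existence and uniqueness is part of \lemref{Definition_c_star} and \thmref{Characterization_minimal_speed}) then gives exactly $c^{\star}=\lambda_{PF}\left(\mu_{c^{\star}}^{2}\mathbf{D}+\mathbf{L}\right)/\mu_{c^{\star}}>2\sqrt{d_{i}l_{i,i}}$. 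The only genuinely delicate point is the strictness: the pointwise strict bound alone would, after taking an infimum over $\mu$, only survive as a non-strict inequality, so it is essential that the infimum is a minimum — this is why the hypothesis $l_{i,i}>0$ (as opposed to $l_{i,i}\geq0$) combined with the attainment of $c^{\star}$ is what makes the strict inequality work. Everything else is routine.
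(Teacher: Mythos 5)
Your proof is correct and uses exactly the same ingredients as the paper's: the $i$-th coordinate of the Perron\textendash Frobenius eigenvector identity for $\mu^{2}\mathbf{D}+\mathbf{L}$, irreducibility $\left(H_{1}\right)$ to force $\sum_{j\neq i}l_{i,j}n_{\mu,j}>0$, and the identity $\min_{\mu>0}\left(\mu d_{i}+l_{i,i}/\mu\right)=2\sqrt{d_{i}l_{i,i}}$. The only presentational difference is that you first establish the pointwise-in-$\mu$ estimate and then specialize at $\mu_{c^{\star}}$, whereas the paper writes the eigenvector identity at $\mu_{c^{\star}}$ directly; these are equivalent.

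One remark on your closing paragraph, which is misleading even though it does not affect the validity of the argument. The strictness of $c^{\star}>2\sqrt{d_{i}l_{i,i}}$ has nothing to do with $l_{i,i}>0$: it comes entirely from irreducibility producing the strictly positive off-diagonal sum at $\mu=\mu_{c^{\star}}$. You also do not need the infimum $\inf_{\mu>0}\left(\mu d_{i}+l_{i,i}/\mu\right)$ to be attained; the chain $c^{\star}=\lambda_{PF}\left(\mu_{c^{\star}}^{2}\mathbf{D}+\mathbf{L}\right)/\mu_{c^{\star}}>\mu_{c^{\star}}d_{i}+l_{i,i}/\mu_{c^{\star}}\geq2\sqrt{d_{i}l_{i,i}}$ uses only the \emph{definition} of infimum in the second step, and only the attainment of the \emph{first} minimum (from \lemref{Definition_c_star}). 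The hypothesis $l_{i,i}>0$ merely guarantees that $2\sqrt{d_{i}l_{i,i}}$ is a positive real number, so that the proposition is informative; the same argument would in fact also give $c^{\star}>0$ when $l_{i,i}=0$.
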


\begin{proof}
Let $i\in\left[N\right]$. The characterization of $c^{\star}$ (see
\lemref{Definition_c_star}) yields
\[
\mu_{c^{\star}}d_{i}+\frac{l_{i,i}}{\mu_{c^{\star}}}=c^{\star}-\frac{1}{\mu_{c^{\star}}}\sum_{j\in\left[N\right]\backslash\left\{ i\right\} }l_{i,j}\frac{n_{\mu_{c^{\star}},j}}{n_{\mu_{c^{\star}},i}},
\]
 whence, if $l_{i,i}>0$,
\[
c^{\star}\geq2\sqrt{d_{i}l_{i,i}}+\frac{1}{\mu_{c^{\star}}}\sum_{j\in\left[N\right]\backslash\left\{ i\right\} }l_{i,j}\frac{n_{\mu_{c^{\star}},j}}{n_{\mu_{c^{\star}},i}}.
\]
 From the irreducibility and essential nonnegativity of $\mathbf{L}$
$\left(H_{1}\right)$, there exists $j\in\left[N\right]\backslash\left\{ i\right\} $
such that $l_{i,j}>0$, whence $c^{\star}>2\sqrt{d_{i}l_{i,i}}$.
\end{proof}
Recall the existence of a unique decomposition of $\mathbf{L}$ of
the form
\[
\mathbf{L}=\text{diag}\mathbf{r}+\mathbf{M}\text{ with }\mathbf{r}\in\mathbb{R}^{N}\text{ and }\mathbf{M}^{T}\mathbf{1}_{N,1}=\mathbf{0}.
\]
\begin{rem*}
Regarding the Lotka\textendash Volterra mutation\textendash competition\textendash diffusion
ecological model, the decomposition $\mathbf{L}=\text{diag}\mathbf{r}+\mathbf{M}$
is ecological meaningful: $\mathbf{r}$ is the vector of the growth
rates of the phenotypes whereas $\mathbf{M}$ describes the mutations
between the phenotypes. 
\end{rem*}
\begin{prop}
Let $\left(\left\langle d\right\rangle ,\left\langle r\right\rangle \right)\in\left(0,+\infty\right)\times\mathbb{R}$
be defined as
\[
\left\{ \begin{matrix}\left\langle d\right\rangle =\frac{\mathbf{d}^{T}\mathbf{n}_{PF}\left(\mu_{c^{\star}}^{2}\mathbf{D}+\mathbf{L}\right)}{\mathbf{1}_{1,N}\mathbf{n}_{PF}\left(\mu_{c^{\star}}^{2}\mathbf{D}+\mathbf{L}\right)},\\
\left\langle r\right\rangle =\frac{\mathbf{r}^{T}\mathbf{n}_{PF}\left(\mu_{c^{\star}}^{2}\mathbf{D}+\mathbf{L}\right)}{\mathbf{1}_{1,N}\mathbf{n}_{PF}\left(\mu_{c^{\star}}^{2}\mathbf{D}+\mathbf{L}\right)}.
\end{matrix}\right.
\]
If $\left\langle r\right\rangle \geq0$, then
\[
c^{\star}\geq2\sqrt{\left\langle d\right\rangle \left\langle r\right\rangle }.
\]
\end{prop}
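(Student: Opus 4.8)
The plan is to contract the Perron--Frobenius eigenvector equation satisfied by $\mathbf{n}_{\mu_{c^{\star}}}=\mathbf{n}_{PF}\left(\mu_{c^{\star}}^{2}\mathbf{D}+\mathbf{L}\right)$ against the row vector $\mathbf{1}_{1,N}$, taking advantage of the normalization $\mathbf{M}^{T}\mathbf{1}_{N,1}=\mathbf{0}$ of the mutation part of $\mathbf{L}$, and then to invoke the elementary inequality $\mu d+\frac{r}{\mu}\geq2\sqrt{rd}$ recalled at the beginning of this section.

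First I would recall from \lemref{Definition_c_star} that $c^{\star}=-\frac{\kappa_{\mu_{c^{\star}}}}{\mu_{c^{\star}}}$, so that, writing $\mathbf{n}=\mathbf{n}_{\mu_{c^{\star}}}$, the Perron--Frobenius identity reads
\[
\left(\mu_{c^{\star}}^{2}\mathbf{D}+\mathbf{L}\right)\mathbf{n}=-\kappa_{\mu_{c^{\star}}}\mathbf{n}=\mu_{c^{\star}}c^{\star}\mathbf{n}.
\]
Left-multiplying this identity by $\mathbf{1}_{1,N}$ and substituting $\mathbf{D}=\text{diag}\,\mathbf{d}$ and $\mathbf{L}=\text{diag}\,\mathbf{r}+\mathbf{M}$, the mutation term drops out because $\mathbf{1}_{1,N}\mathbf{M}=\left(\mathbf{M}^{T}\mathbf{1}_{N,1}\right)^{T}=\mathbf{0}$, and there remains
\[
\mu_{c^{\star}}^{2}\,\mathbf{d}^{T}\mathbf{n}+\mathbf{r}^{T}\mathbf{n}=\mu_{c^{\star}}c^{\star}\,\mathbf{1}_{1,N}\mathbf{n}.
\]
Dividing by the positive scalar $\mathbf{1}_{1,N}\mathbf{n}$ and using the definitions of $\left\langle d\right\rangle$ and $\left\langle r\right\rangle$ yields precisely $\mu_{c^{\star}}\left\langle d\right\rangle+\frac{\left\langle r\right\rangle}{\mu_{c^{\star}}}=c^{\star}$.

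Finally, assuming $\left\langle r\right\rangle\geq0$, I would apply the inequality $\mu d+\frac{r}{\mu}\geq2\sqrt{rd}$ (valid for all $r\geq0$, $d>0$, $\mu>0$) with $d=\left\langle d\right\rangle>0$, $r=\left\langle r\right\rangle$ and $\mu=\mu_{c^{\star}}$, obtaining $c^{\star}\geq2\sqrt{\left\langle d\right\rangle\left\langle r\right\rangle}$. There is no genuine obstacle in this argument; the only point demanding minor care is the bookkeeping of the contraction: one must contract on the \emph{left} with $\mathbf{1}_{1,N}$ so as to hit the column-sum condition $\mathbf{1}_{1,N}\mathbf{M}=\mathbf{0}$ encoded by $\mathbf{M}^{T}\mathbf{1}_{N,1}=\mathbf{0}$, which annihilates the mutation term while leaving $\text{diag}\,\mathbf{r}$ intact as the $\mathbf{n}$-weighted average $\left\langle r\right\rangle$.
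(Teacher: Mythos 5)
Your proof is correct and follows essentially the same route as the paper: left-multiplying the Perron--Frobenius identity by $\mathbf{1}_{1,N}$ is exactly the paper's ``summing the lines of this system,'' and it kills the $\mathbf{M}$ term by the normalization $\mathbf{M}^{T}\mathbf{1}_{N,1}=\mathbf{0}$, yielding $\mu_{c^{\star}}^{2}\left\langle d\right\rangle+\left\langle r\right\rangle=\mu_{c^{\star}}c^{\star}$. The only cosmetic difference is at the final step: you invoke the AM--GM inequality $\mu d+\frac{r}{\mu}\geq 2\sqrt{rd}$ directly (indeed the form recalled at the start of the section), whereas the paper phrases it as the discriminant condition for the quadratic $\left\langle d\right\rangle\mu^{2}-c^{\star}\mu+\left\langle r\right\rangle=0$ to have a real root; these are of course equivalent.
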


\begin{proof}
Using $\left(\mathbf{r},\mathbf{M}\right)$, the characterization
of $c^{\star}$ (see \lemref{Definition_c_star}) is rewritten as
\[
\left(\mu_{c^{\star}}^{2}\mathbf{D}+\text{diag}\mathbf{r}\right)\mathbf{n}_{\mu_{c^{\star}}}+\mathbf{M}\mathbf{n}_{\mu_{c^{\star}}}=\mu_{c^{\star}}c^{\star}\mathbf{n}_{\mu_{c^{\star}}}.
\]
 Summing the lines of this system, dividing by $\sum\limits _{i=1}^{N}n_{\mu_{c^{\star}},i}$
and defining $\left\langle d\right\rangle $ and $\left\langle r\right\rangle $
as in the statement, we find 
\[
\mu_{c^{\star}}^{2}\left\langle d\right\rangle +\left\langle r\right\rangle =\mu_{c^{\star}}c^{\star}.
\]
 The equation $\left\langle d\right\rangle \mu^{2}-c^{\star}\mu+\left\langle r\right\rangle =0$
admits a real positive solution $\mu$ if and only if $\left(c^{\star}\right)^{2}-4\left\langle d\right\rangle \left\langle r\right\rangle \geq0$.
\end{proof}

\section*{Acknowledgments}

The author thanks Grégoire Nadin for the attention he paid to this
work, Vincent Calvez for fruitful discussions on the cane toads equation,
Cécile Taing for pointing out the related work by Wang and an anonymous
reviewer for a detailed report thanks to which the original manuscript
was largely improved. 

\bibliographystyle{plain}
\bibliography{ref}

\end{document}